\newcommand{\beq}{\begin{equation}}
\newcommand{\R}{{\mathbb R}}
\newcommand{\eeq}{\end{equation}}
\newcommand{\ben}{\begin{eqnarray}}
\newcommand{\een}{\end{eqnarray}}
\newcommand{\beno}{\begin{eqnarray*}}
\newcommand{\eeno}{\end{eqnarray*}}
\newtheorem{thm}{Theorem}[section]
\newtheorem{defi}[thm]{Definition}
\newtheorem{lem}[thm]{Lemma}
\newtheorem{prop}[thm]{Proposition}
\newtheorem{coro}[thm]{Corollary}
\newtheorem{rmk}[thm]{Remark}
\renewcommand{\theequation}{\thesection.\arabic{equation}}
\title{\textbf{A new proof of Savin's theorem on Allen-Cahn equations}}
\author{Kelei Wang
\\
{\small  School of Mathematics and Statistics  \& Computational
Science Hubei Key Laboratory}
\\
{\small Wuhan University, Wuhan 430072, China}\\
{\small wangkelei@whu.edu.cn } \ }
\date{}
\begin{document}
\maketitle
\begin{abstract}
In this paper we establish an improvement of tilt-excess decay
estimate for the Allen-Cahn equation, and use this to give a new
proof of Savin's theorem on the uniform $C^{1,\alpha}$ regularity of
flat level sets.  This generalizes Allard's $\varepsilon$-regularity
theorem for stationary varifolds to the setting of Allen-Cahn
equations. A new proof of Savin's theorem on the one dimensional
symmetry of minimizers in $\mathbb{R}^n$ for $n\leq 7$ is also
given.
\end{abstract}

\noindent {\sl Keywords:} {\small Allen-Cahn equation, phase
transition, improvement of tilt-excess decay, harmonic approximation, De Giorgi conjecture.}\

\vskip 0.2cm

\noindent {\sl AMS Subject Classification (2000):} {\small 35B06,
35B08, 35B25, 35J91.}

\renewcommand{\theequation}{\thesection.\arabic{equation}}
\setcounter{equation}{0}

\tableofcontents

\section{Introduction}
\numberwithin{equation}{section}
 \setcounter{equation}{0}

This paper is devoted to generalize Allard's regularity theory in
{\it Geometric Measure Theory} to the setting of Allen-Cahn
equations and discuss its application to the De Giorgi conjecture.

The Allen-Cahn equation
\begin{equation}\label{Allen-Cahn}
\Delta u=u^3-u,
\end{equation}
is a typical model of phase transition. By now, it has been studied from various aspects.
 One particular feature of this equation is its close relation with the minimal surface theory, through its
 singularly perturbed version
\[\varepsilon\Delta u_\varepsilon=\frac{1}{\varepsilon}\left(u_\varepsilon^3-u_\varepsilon\right).\]

By this connection and in view of the Bernstein theorem for minimal hypersurfaces \cite{Simons}, De Giorgi made the following conjecture in \cite{D2}:

{\it \ \ Let $u\in C^2(\R^{n+1})$ be a solution of \eqref{Allen-Cahn} such that
\[|u|\leq 1,\ \ \ \frac{\partial u}{\partial x_{n+1}}>0\ \ \mbox{in}\ \R^{n+1}.\]
Then $u$ depends only on one variable, if $n\leq 7$. }

This conjecture has been considered by many authors, including
Ghoussoub and Gui \cite{G-G}, Ambrosio and Cabr\'{e} \cite{A-C} and
Savin \cite{Savin}. Counterexamples in $\R^9$ are also constructed
by del Pino, Kowalczyk and Wei in \cite{DKW}. In particular, Savin
proved an improvement of flatness type result for minimizing
solutions (i.e. a minimizer of the energy functional). This result
says, given any $\theta_0>0$, for a minimizer $u$, if in a ball
$\mathcal{B}_l$ with $l$ large, its zero level set is trapped in a
strip $\{|x_{n+1}|<\theta\}$ with $\theta\geq\theta_0$, which is
sufficiently narrow (i.e. $\theta l^{-1}$ small), then by shrinking
the radius of the ball, possibly after a rotation of coordinates,
the zero level set of $u$ is trapped in a flatter strip.

By using this estimate, Savin proved
\begin{thm}\label{main result}
Let $u$ be a minimizing solution of \eqref{Allen-Cahn} defined on
the entire space $\mathbb{R}^{n+1}$ where $n\leq 6$. Then $u$ is one
dimensional.
\end{thm}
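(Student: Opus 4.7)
The plan is to combine a blow-down analysis with an iteration of the tilt-excess decay estimate established in the paper, in the spirit of Allard's $\varepsilon$-regularity theory. Set $u_R(x) := u(Rx)$; then $u_R$ is a minimizer of the rescaled energy $\mathcal{E}_\varepsilon(v) = \int \frac{\varepsilon}{2}|\nabla v|^2 + \varepsilon^{-1}W(v)$ with $\varepsilon = R^{-1}$. The Modica monotonicity formula for this energy, together with minimality, provides a uniform density bound, and standard $\Gamma$-convergence (Modica, Sternberg, Caffarelli--C\'ordoba) yields a subsequence $R_k \to \infty$ along which the normalized energy measures converge to the integer multiplicity varifold of a globally area-minimizing hypersurface $\Sigma_\infty \subset \mathbb{R}^{n+1}$, and the zero sets $\{u_{R_k}=0\}$ converge locally in Hausdorff distance to $\Sigma_\infty$.

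Because the ambient dimension is $n+1 \leq 7$, the Bernstein theorem of Simons and Bombieri--De Giorgi--Giusti forces $\Sigma_\infty$ to be a hyperplane. Choosing coordinates so that $\Sigma_\infty = \{x_{n+1}=0\}$ and translating back to the original scale, this furnishes, for every prescribed $\theta_0 > 0$, arbitrarily large balls $\mathcal{B}_\ell$ on which the zero level set of $u$ is trapped in a strip $\{|x_{n+1}| < \theta\}$ with $\theta \geq \theta_0$ and the flatness ratio $\theta \ell^{-1}$ as small as desired. This is precisely the hypothesis of the improvement-of-flatness estimate, itself a consequence of the improved tilt-excess decay proved in the body of the paper, so the estimate applies and, after a controlled rotation of coordinates, returns a strictly smaller flatness ratio on a ball of comparable size.

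Iterating this step produces a sequence of nested balls on which the flatness ratio decays geometrically and on which the successive coordinate rotations form a Cauchy sequence, converging to a limiting unit vector $\nu$. Running the iteration from scales tending to infinity back down to every fixed scale shows that the zero level set of $u$ coincides with a single hyperplane $\{x\cdot\nu = c\}$ throughout $\mathbb{R}^{n+1}$, and the $C^{1,\alpha}$ graphical representation produced by the iteration pins down $u$ itself as a function only of $x\cdot\nu$. Comparison with the unique (up to translation and reflection) one-dimensional heteroclinic profile $g(t) = \tanh(t/\sqrt{2})$ then identifies $u(x) = g(x\cdot\nu - c)$, proving one-dimensional symmetry.

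The main obstacle is the iteration step: the asymptotic flatness provided by the blow-down must be converted into flatness at every scale, with sufficient quantitative control that the sequence of rotations is summable and the limit normal $\nu$ is unambiguously defined. The blow-down step and the Bernstein theorem enter essentially as black boxes, and the one-dimensional rigidity at the end is standard once a flat level set is available; the real content, and the reason a new proof of Savin's theorem is possible, is the strengthened tilt-excess decay of the paper, which plays the role for Allen--Cahn that Allard's $\varepsilon$-regularity plays for stationary varifolds.
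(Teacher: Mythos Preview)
Your blow-down step and invocation of the Bernstein theorem are correct and match the paper. The gap is in the iteration. The tilt-excess decay proved here (Theorem \ref{thm tilt excess decay}) carries the side condition $\delta_\varepsilon \geq K_0\varepsilon$, i.e.\ it applies only while the excess remains large relative to $\varepsilon^2$ (see \eqref{obstruction} and the discussion after it). When you rescale $u$ by $R$ and iterate from the unit scale downward, the iteration halts once the normalized excess reaches order $\varepsilon^2 = R^{-2}$; in original coordinates this yields only the Morrey-type bound
\[
\int_{\mathcal{B}_R(X)}\bigl[1-(\nu\cdot e_R)^2\bigr]\,|\nabla u|^2 \leq C R^{\,n-2}
\]
of Lemma \ref{lem 12.3}, not vanishing excess at every fixed finite scale. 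For $n\geq 2$ this does not force the level set to be a hyperplane, and your sentence ``running the iteration from scales tending to infinity back down to every fixed scale shows that the zero level set of $u$ coincides with a single hyperplane'' is precisely where the argument breaks.

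The paper circumvents this obstruction by a different endgame. The Morrey bound is used only to pin down a \emph{unique} limiting direction $e_\infty$ for the blow-downs (so the tangent plane at infinity is well defined, not merely along subsequences). One then introduces the distance-type function $\Phi_\varepsilon = g_\varepsilon^{-1}\circ u_\varepsilon$, shows via semi-concavity that $\Phi_\varepsilon \to e_\infty\cdot X$ in $C^1$ on compact sets away from $\{x\cdot e_\infty = 0\}$ (Appendix A), and deduces that far from the transition layer $\nabla\Phi$ is uniformly close to $e_\infty$. This places one in the setting of Farina's sliding argument: $u$ is monotone in every direction of a cone around $e_\infty$, and letting the cone open to a half-space forces $\partial u/\partial x_i \equiv 0$ for $i\leq n$. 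The one-dimensional identification with $g$ then follows from the energy bound. So the missing ingredients are the uniqueness of the blow-down direction, the $C^1$ convergence of the distance function, and the moving-plane step; the iteration alone, because of the $\delta_\varepsilon\gg\varepsilon$ restriction, cannot deliver the flat level set you claim.
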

For $n>6$, if we add some further assumptions on level sets of $u$,
e.g. the global Lipschitz regularity of $\{u=0\}$, it is still
possible to prove the one dimensional symmetry of $u$. This theorem
also implies the original De Giorgi conjecture, with an additional
assumption that
\[\lim_{x_{n+1}\to\pm \infty}u(x,x_{n+1})=\pm 1.\]

This type of improvement of flatness result appears in the partial
regularity theory for various elliptic problems, although sometimes
in rather different forms. One main ingredient to establish this
improvement of flatness is the {\em blow up (or harmonic
approximation)} technique, first introduced by De Giorgi in his work
\cite{D} on the almost everywhere regularity of minimal
hypersurfaces.

Although the statement of Savin's improvement of flatness result
bears many similarities with De Giorgi theorem, the proof in
\cite{Savin} employs some new ideas. Indeed, it is based on
Caffarelli-Cordoba's proof of De Giorgi theorem in \cite{C-C}. This
approach uses the ``viscosity" side of the problem, and relies
heavily on a Krylov-Safanov type argument. In particular, Savin
first obtained a Harnack inequality (hence some kind of uniform
H\"{o}lder continuity) and then used this to prove that the blow up
sequence converges uniformly to a harmonic function.

Savin's approach can be applied to many other problems, even without
variational structure, see for example \cite{Savin 1, Savin 2, Savin
3}. However, it seems that the maximum principle and Harnack
inequality are crucial in this approach.
At present it is  still not clear how to get this kind of
improvement of flatness result for elliptic systems, where Harnack
inequality may fail. Thus, in view of the connection between
Allen-Cahn equations and minimal hypersurfaces, in this paper we
intend to explore the {\it variational} side of improvement of
flatness and establish some results paralleling classical regularity
theories in {\it Geometric Measure Theory}. As in Allard's
regularity theory \cite{Allard} (see also \cite[Section 6.5]{Lin}
for an account), we use the following {\it excess} (for more
details, see Section 2 and 3)
\[\int_{\mathcal{C}_1(0)}\left[1-\left(\nu_\varepsilon\cdot e_{n+1}\right)^2\right]\varepsilon|\nabla u_\varepsilon|^2,\]
where $\mathcal{C}_1(0)$ is the cylinder
$B_1(0)\times(-1,1)\subset\R^{n+1}$ and $\nu_\varepsilon=\nabla
u_\varepsilon/|\nabla u_\varepsilon|$ is the unit normal vector to
level sets of $u_\varepsilon$. This quantity was first used by
Hutchinson-Tonegawa \cite{H-T} to derive the integer multiplicity of
the limit varifold arising from general critical points in the
Allen-Cahn problem.

This quantity can be used to measure the flatness of level sets of
$u_\varepsilon$ (see Lemma \ref{lem excess small} below). Similar to
Allard's $\varepsilon$-regularity theorem, if the excess in a ball
is small, then after shrinking the radius of the ball and possibly
rotating the vector $e_{n+1}$ a little, the excess becomes smaller.
This {\it improvement of tilt-excess} is the main step in the proof
of Allard's $\varepsilon$-regularity theorem, and also in our
argument. In contrast to the quantity used in Savin's improvement of
flatness result,
 the excess is an energy type quantity. Indeed, if all the level sets $\{u_\varepsilon=t\}$ can be represented by graphs
 along the $(n+1)$-th direction, in the form $\{x_{n+1}=h(x,t)\}$,
 then the excess can be written as
\[\int_{\mathcal{C}_1(0)}\left[1-\left(\nu_\varepsilon\cdot e_{n+1}\right)^2\right]\varepsilon|\nabla u_\varepsilon|^2
=\int_{-1}^{1}\left(\int_{B_1(0)}\frac{|\nabla_x
h(x,t)|^2}{1+|\nabla_x h(x,t)|^2}\varepsilon|\nabla
u_\varepsilon(x,h(x,t))|dx\right)dt,\] which is almost a weighted
Dirichlet energy, provided $\sup|\nabla_x h(x,t)|$ small.

Thus the problem can be approximated by harmonic functions
(corresponding to critical points of the Dirichlet energy) if
$|\nabla_x h(x,t)|$ is small. (We will see that only the smallness
of the excess is sufficient for this purposes.) This is exactly the
content of {\it harmonic approximation} technique. Using the excess
allows us to work in the Sobolev spaces and apply standard compact
Sobolev embedding results to get the blow up limit, while in Savin's
version the main difficulty lies in the compactness for the blow up
sequence where his Harnack inequality enters.

We also note that this type of tilt-excess decay result was known by
Tonegawa, see \cite{Tonegawa}, where he showed that this result
implies the uniform $C^{1,\alpha}$ regularity of intermediate
transition layers in dimension $2$.

However,  in this tilt-excess decay estimate we need one more
assumption:
\begin{equation}\label{obstruction}
\int_{\mathcal{C}_1}\left[1-\left(\nu_\varepsilon\cdot
e\right)^2\right]\varepsilon|\nabla
u_\varepsilon|^2\gg\varepsilon^2.
\end{equation}
 Compared to Allard's $\varepsilon$-regularity theorem, this condition is not so satisfactory. It prevents us from applying this improvement of decay directly to
deduce the uniform $C^{1,\alpha}$ regularity of intermediate
transition layers. (This obstruction was also observed by Tonegewa
in \cite{Tonegawa}.) One reason for the appearance of this condition
\eqref{obstruction} is due to the fact that the energy, although
mostly concentrated on the transition part, is still distributed on
a layer of width $\varepsilon$. Note that this phenomena does not
appear in minimal surface theory.

In Savin's version of improvement of flat estimate, an assumption
similar to \eqref{obstruction} is also needed. Using our
terminology, it is equivalent to requiring that the excess is not of
the order $o(\varepsilon^2)$. Note that this is weaker than
\eqref{obstruction}. This weaker assumption is perhaps due to the
fact that in Savin's approach only a single level set is considered,
while our improvement of flat estimate involves a family of level
sets.

By exploiting the fact that $u_\varepsilon$ is close to a one
dimensional solution up to $O(\varepsilon)$ scales, an iteration of
the improvement of tilt-excess decay estimate gives a Morrey type
bound on level sets of $u_\varepsilon$, which then implies that
these level sets are graphs. Here, once again due to the obstruction
\eqref{obstruction}, this Morrey type bound does not imply the
$C^{1,\alpha}$ regularity of $\{u_\varepsilon=0\}$, but only a
Lipschitz one. However, under the condition that
$\{u_\varepsilon=0\}$ is a Lipschitz graph, Caffarelli and Cordoba
\cite{C-C 3} have shown that transition layers are uniformly bounded
in $C^{1,\alpha}$ for some $\alpha\in(0,1)$. Thus we get a full
analogue of Allard's $\varepsilon$-regularity theorem in the
Allen-Cahn setting (see Theorem \ref{main result loc}).

In this paper we do not fully avoid the use of maximum principle.
For example, it seems that the Modica inequality is indispensable in
our argument, because we need it to derive a monotonicity formula
with the correct exponent. We also need to apply the moving plane
(or sliding) method (as in Farina \cite{F}) to deduce the one
dimensional symmetry of some entire solutions. There a distance type
function is used to control the behavior of $u$ at the place far
away from the transition part. This function behaves like a distance
function and this fact follows from the Modica inequality. However,
we do avoid the use of any Harnack inequality. It may be possible to
remove the above mentioned deficiency by strengthening the
tilt-excess decay estimate, but as explained above, the current
version of Theorem \ref{thm tilt excess decay} is already sufficient
for proving Theorem \ref{main result}, see Section 11.

The above approach was first used by the author in \cite{W2}, where
we consider a De Giorgi type conjecture for the elliptic system
\[\Delta u=uv^2,\ \ \Delta v=vu^2,\ \ u,v>0~~\text{in}~~\mathbb{R}^n.\]
For the corresponding singularly perturbed system
\begin{equation*}
\left\{
\begin{aligned}
 &\Delta u_\kappa=\kappa u_\kappa v_\kappa^2, \\
 &\Delta v_\kappa=\kappa v_\kappa u_\kappa^2,
                          \end{aligned} \right.
\end{equation*}
an improvement of flatness result was established by using the
quantity
\[\int_{B_1}\big|\nabla \left( u_\kappa-v_\kappa-e\cdot x\right)\big|^2dx.\]

These two proofs are similar in the spirit. In particular, in order
to show that the blow up limit is a harmonic function,
 we mainly use the {\it stationary condition} associated to the equation, not the equation itself.
This is more apparent in the current setting, because the stationary
condition for the singularly perturbed Allen-Cahn equation is
directly linked to the corresponding one in their limit problem, the
stationary condition for varifolds (in the sense of Allard
\cite{Allard}, see also \cite{H-T}). Furthermore, since the excess
is a kind of the $H^1$ norm, to prove the strong convergence in
$H^1$ space, we implicitly use a Caccioppoli type inequality, which
is again deduced from the stationary condition by choosing a
suitable test function (see Remark \ref{rmk Caccioppoli} below).

Finally, although in our improvement of tilt-excess decay result
(Theorem \ref{thm tilt excess decay}) and $\varepsilon$-regularity
result (Theorem \ref{main result loc}), we do not assume the
solution to be a minimizer, the multiplicity one property of
transition layers is still needed here. (This is associated to the
unit density property of the limit varifold.) Thus our results do
not remove the no folding assumption in Savin's result. However, we
feel that a generalization of our technique to the case with
multiple transition layers is possible, which should be of more
interest.

The organization of this paper can be seen from the table of
contents. Part I is devoted to prove the tilt excess decay estimate,
Theorem \ref{thm tilt excess decay}. In Part II, we establish an
Allard type $\varepsilon$-regularity theorem, the uniform
$C^{1,\alpha}$ regularity of intermediate layers, see Theorem
\ref{main result loc}. In the proof of this theorem, a De Giorgi
type conjecture for a class of entire solutions (see Theorem
\ref{main result 3}) is also obtained, which includes Theorem
\ref{main result} as a special case.

\section{Settings and notations}
\numberwithin{equation}{section}
 \setcounter{equation}{0}

We shall work in the following settings. Consider the Allen-Cahn
equation in the general form as
\begin{equation}\label{equation 0}
\Delta u=W^\prime(u),
\end{equation}
where $W$ is a double well potential, that is, $W\in C^3(\R)$,
satisfying
\begin{itemize}
\item $W\geq0$, $W(\pm 1)=0$ and $W>0$ in $(-1,1)$;
\item  for some $\gamma\in(0,1)$, $W^\prime<0$ on $(\gamma,1)$ and $W^\prime>0$ on $(-1,-\gamma)$;
\item $W^{\prime\prime}\geq\kappa>0$ for all $|x|\geq\gamma$.
\end{itemize}
A typical example is $W(u)=(1-u^2)^2/4$, which gives
\eqref{Allen-Cahn}.

Through a scaling $u_\varepsilon(X):=u(\varepsilon^{-1}X)$, we get
the singularly perturbed version of the Allen-Cahn equation:
\begin{equation}\label{equation}
\varepsilon\Delta u_\varepsilon=\frac{1}{\varepsilon}W^\prime(u_\varepsilon).
\end{equation}
This equation arises as the Euler-Lagrange equation of the energy
functional (after adding suitable boundary conditions)
\begin{equation}\label{functional}
E_\varepsilon(u_\varepsilon)=\int\frac{\varepsilon}{2}|\nabla u_\varepsilon|^2+\frac{1}{\varepsilon}W(u_\varepsilon).
\end{equation}
We say $u_\varepsilon$ is a minimizer (or a minimizing solution), if
for every ball $\mathcal{B}$ in the definition domain of
$u_\varepsilon$,
\[\int_{\mathcal{B}}\frac{\varepsilon}{2}|\nabla u_\varepsilon|^2+\frac{1}{\varepsilon}W(u_\varepsilon)
\leq\int_{\mathcal{B}}\frac{\varepsilon}{2}|\nabla
v|^2+\frac{1}{\varepsilon}W(v),\] for any $v\in H^1(\mathcal{B})$
satisfying $v=u_\varepsilon$ on $\partial\mathcal{B}$.

We will always assume $|u_\varepsilon|\leq 1$, and it satisfies the
Modica inequality
\begin{equation}\label{Modica inequality}
\frac{\varepsilon}{2}|\nabla
u_\varepsilon|^2\leq\frac{1}{\varepsilon}W(u_\varepsilon).
\end{equation}
This inequality (in the exact form as above) may not be essential in
the argument, but we prefer to assume it to make the arguments
clean. (These estimates can be relaxed, cf. \cite{H-T}.) By standard
elliptic estimates, there exists a universal constant $C$ such that
\begin{equation}\label{gradient bound}
\varepsilon|\nabla
u_\varepsilon|+\varepsilon^2|\nabla^2u_\varepsilon|\leq C.
\end{equation}
In particular, $u_\varepsilon$ is a classical solution.

For any smooth vector field $Y$ with compact support, by considering the domain variation in the form
\[u_\varepsilon^t(X):=u_\varepsilon(X+tY(X)), \ \ \mbox{for}\ |t| \ \mbox{small},\]
from the definition of critical points we get
\[\frac{d}{dt}\int\frac{\varepsilon}{2}|\nabla u_\varepsilon|^2+\frac{1}{\varepsilon}W(u_\varepsilon)\Big|_{t=0}=0.\]
After some integration by parts, we obtain the stationary condition
for $u_\varepsilon$:
\begin{equation}\label{stationary condition}
\int\left(\frac{\varepsilon}{2}|\nabla u_\varepsilon|^2+\frac{1}{\varepsilon}W(u_\varepsilon)\right)\mbox{div}Y-\varepsilon DY(\nabla u_\varepsilon,\nabla u_\varepsilon)=0.
\end{equation}

Finally, by the assumption on $W$, there exists a one dimensional
solution $g(t)$ defined on $t\in(-\infty,+\infty)$, satisfying
\begin{equation}\label{1-d solution 1}
g^{\prime\prime}(t)=W^\prime(g(t)),  \ \ \ \ \lim_{t\to\pm\infty}g(t)=\pm1,
\end{equation}
where the convergence rate is exponential.

The first integral for $g$ can be written as
\begin{equation}\label{1-d solution 2}
g^\prime(t)=\sqrt{2W(g(t))}>0.
\end{equation}
For any $\varepsilon>0$, we denote $g_\varepsilon(t)=g(\varepsilon^{-1}t)$, which satisfies
\begin{equation}\label{1-d solution perturbed}
\varepsilon
g_\varepsilon^{\prime\prime}(t)=\frac{1}{\varepsilon}W^\prime(g_\varepsilon(t)).
\end{equation}

Throughout this paper, $\sigma_0$ denotes the constant defined by
\begin{equation}\label{energy of 1-d solution}
\sigma_0:=\int_{-\infty}^{+\infty}g^\prime(t)^2dt=\int_{-\infty}^{+\infty}\frac{1}{2}g^\prime(t)^2+W(g(t))dt.
\end{equation}

In this paper we adopt the following notations.
\begin{itemize}
\item A point in $\R^{n+1}$ will be denoted by $X=(x,x_{n+1})\in\R^n\times\R$.
\item $\mathcal{B}_r(X)$ denotes an open ball in $\R^{n+1}$ and $B_r(x)$ an open ball in $\R^n$. If the center is the origin $0$, we write
it as $\mathcal{B}_r$ (or $B_r$).
\item $\mathcal{C}_r(x)=B_r(x)\times (-1,1)\subset\R^{n+1}$ the finite cylinder over $B_r(x)\subset\R^{n}$.
\item $e_i$, $1\leq i\leq n+1$ denote the standard basis in $\R^{n+1}$.
\item $P$ denotes a hyperplane in $\R^{n+1}$ and $\Pi_P$  (or simply $P$) the orthogonal projection onto it. If $P=\R^n$, we simply use $\Pi$.

\item $G(n)$ denotes the Grassmann manifold of unoriented
$n$-dimensional hyperplanes in $\R^{n+1}$.

\item A varifold $V$ is a Radon measure on $\R^{n+1}\times G(n)$. We use $\|V\|$ to denote the weighted measure of $V$, that is, for any measurable set $A\subset\R^{n+1}$,
\[\|V\|(A)=V(A\times G(n)).\]

\item For a measure $\mu$, $\mbox{spt}\mu$ denotes its support.

\item $\nu_\varepsilon(X)=\frac{\nabla u_\varepsilon(X)}{|\nabla u_\varepsilon(X)|}$ if $\nabla u_\varepsilon(X)\neq0$, otherwise we take it to be an arbitrary unit vector.
\item $\mu_\varepsilon:=\varepsilon|\nabla u_\varepsilon|^2dX$.
\item $\mathcal{H}^s$ denotes the $s$-dimensional Hausdorff measure.
\item $\omega_n$ denotes the volume of the unit ball $B_1$ in $\R^n$.
\item $H^1$ is the Sobolev space with the norm $\left(\int|\nabla u|^2+|u|^2\right)^{1/2}$.
\item $\mbox{dist}_H$ is the Hausdorff distance between sets in $\R^{n+1}$.
\item Unless otherwise stated, universal constants $C$, $C_i$ and $K_i$ (large) and $c_i$ (small) depend only on the
dimension $n$ and the potential function $W$.
\end{itemize}

Throughout this paper $u_\varepsilon$ always denotes a solution of \eqref{equation}. 
 We use $\varepsilon$  to denote a sequence of parameters converging to $0$, which should be written as $\varepsilon_i$
 if we want to be precise.

\part{Tilt-excess decay}

\section{Statement}
\numberwithin{equation}{section}
 \setcounter{equation}{0}

The following quantity will play an important role in our analysis.
\begin{defi}[\bf Excess]
Let $P$ be an $n$-dimensional hyperplane in $\R^{n+1}$ and $e$ one of its unit normal vector, $B_r(x)\subset P$ an open ball and $\mathcal{C}_r(x)=B_r(x)\times(-1,1)$ the cylinder over $B_r(x)$. The excess of $u_\varepsilon$ in $\mathcal{C}_r(x)$ with respect to $P$ is
\begin{equation}\label{excess}
E(r;x,u_\varepsilon,P):=r^{-n}\int_{\mathcal{C}_r(x)}\left[1-\left(\nu_\varepsilon\cdot e\right)^2\right]\varepsilon|\nabla u_\varepsilon|^2dX.
\end{equation}
\end{defi}
If $P=\R^n$ and $e=e_{n+1}$, the excess equals
\[E(r;x,u_\varepsilon)=r^{-n}\int_{\mathcal{C}_r(x)}\varepsilon\sum_{i=1}^n\left(\frac{\partial u_\varepsilon}{\partial x_i}\right)^2dX.\]

\begin{rmk}\label{rmk 1}
For any unit vector $\nu$ and $e$, we have
\[|\nu-e||\nu+e|\geq\sqrt{2}\min\{|\nu-e|,|\nu+e|\}.\]
Therefore
\begin{eqnarray*}
1-\left(\nu\cdot e\right)^2&=&\left(1-\nu\cdot e\right)\left(1+\nu\cdot e\right)\\
&=&\frac{1}{4}|\nu-e|^2|\nu+e|^2\\
&\geq&\frac{1}{2}\min\{|\nu-e|^2,|\nu+e|^2\}.
\end{eqnarray*}

By projecting the unit sphere $\mathbb{S}^n$ to the real projective
space $\mathbb{RP}^n$ (both with the standard metric), we get
\[1-\left(\nu\cdot e\right)^2\geq c \mbox{ dist}_{\mathbb{RP}^n}(\nu,e)^2,\]
for some universal constant $c$.
\end{rmk}

Our main objective in Part I is to prove the following decay estimate.
\begin{thm}[\bf Tilt-excess decay]\label{thm tilt excess decay}
Given a constant $b\in(0,1)$, there exist five universal constants
$\delta_0,\tau_0,\varepsilon_0>0$, $\theta\in(0,1/4)$ and $K_0$
large so that the following holds. Let $u_\varepsilon$ be a solution
of \eqref{equation} with $\varepsilon\leq \varepsilon_0$ in
$\mathcal{B}_4$, satisfying the Modica inequality \eqref{Modica
inequality}, $|u_\varepsilon(0)|\leq 1-b$, and
\begin{equation}\label{close to plane 0}
4^{-n}\int_{\mathcal{B}_4}\frac{\varepsilon}{2}|\nabla u_\varepsilon|^2+\frac{1}{\varepsilon}W(u_\varepsilon)\leq \left(1+\tau_0\right)\sigma_0\omega_n.
\end{equation}
Suppose the excess with respect to $\R^n$ satisfies
\begin{equation}\label{small excess 0}
\delta_\varepsilon^2:=E(2;0,u_\varepsilon,\R^n)\leq\delta_0^2,
\end{equation}
where $\delta_\varepsilon\geq K_0\varepsilon$.
Then there exists another plane $P$, such that
\begin{equation}\label{excess decay}
E(\theta;0,u_\varepsilon,
P)\leq\frac{\theta}{2}E(2;0,u_\varepsilon,\R^n).
\end{equation}
Moreover, there exists a universal constant $C$ such that
\[\|e-e_{n+1}\|\leq CE(2;0,u_\varepsilon,\R^n)^{1/2},\]
where $e$ is the unit normal vector of $P$ pointing to the above.
\end{thm}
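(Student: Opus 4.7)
The plan is a classical contradiction plus blow-up (harmonic approximation) argument, paralleling Allard's $\varepsilon$-regularity theorem. I would assume, towards a contradiction, sequences $\varepsilon_i\to 0$, solutions $u_i$, and decay parameters $\delta_i:=\delta_{\varepsilon_i}\to 0$ with $\delta_i\ge K_0\varepsilon_i$, satisfying all the hypotheses with $\delta_0\to 0$, but for which the conclusion \eqref{excess decay} fails for \emph{every} hyperplane $P$.

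The first step is geometric. The smallness of the excess $\delta_i^2$, combined with the Modica inequality \eqref{Modica inequality} and the almost-unit-density bound \eqref{close to plane 0}, should trap the transition layer of $u_i$ in $\mathcal{C}_2$ near a single sheet; using $|u_i(0)|\le 1-b$ to fix this sheet through the origin, each level set $\{u_i=t\}$ in a slightly smaller cylinder can be written as a graph $x_{n+1}=h_i(x,t)$ of a $C^1$ function whose horizontal gradient is small. Under this description the excess becomes essentially the weighted Dirichlet energy
\[
E(2;0,u_i,\mathbb{R}^n)\;\approx\;\int_{-T}^{T}\!\int_{B_2}\frac{|\nabla_x h_i(x,t)|^2}{1+|\nabla_x h_i(x,t)|^2}\,\varepsilon_i|\nabla u_i(x,h_i(x,t))|\,dx\,dt,
\]
so the rescaled graphs $\tilde h_i:=h_i/\delta_i$ enjoy a uniform $H^1_x$ bound. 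Compact Sobolev embedding then extracts a subsequential limit $h_\infty$, and the hypothesis $\delta_i\ge K_0\varepsilon_i$ guarantees $\varepsilon_i/\delta_i\to 0$, decoupling the microscopic layer scale from the macroscopic tilt scale $\delta_i$.

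The second step is to show that $h_\infty$ is harmonic on $B_2$ and independent of $t$. For this I would use the stationary condition \eqref{stationary condition} with test vector fields $Y$ adapted to vertical perturbations of the level sets; after dividing by $\delta_i^2$ and letting $i\to\infty$, the leading term converges to the weak form of $\Delta h_\infty=0$, while the remaining terms are controlled via the Modica inequality and the one-dimensional profile $g_\varepsilon$. Choosing $Y$ appropriately also yields a Caccioppoli-type inequality that upgrades weak to strong $H^1$ convergence of $\tilde h_i$, so squared gradients pass to the limit. Standard decay for harmonic functions then furnishes a vector $a\in\mathbb{R}^n$ with
\[
\theta^{-n}\!\int_{B_\theta}|\nabla h_\infty - a|^2\,dx\;\le\; C\theta^2\!\cdot\!\theta^{-n}\!\int_{B_2}|\nabla h_\infty|^2\,dx,
\]
and taking $P$ to be the hyperplane with normal $e=(-a,1)/\sqrt{1+|a|^2}$ converts this, after pulling the inequality back to $u_i$, into $E(\theta;0,u_i,P)\le \tfrac{\theta}{2}E(2;0,u_i,\mathbb{R}^n)$ for all $i$ large once $\theta$ is fixed so that $C\theta^2\le\theta/4$, contradicting the negation of \eqref{excess decay}. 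The estimate $\|e-e_{n+1}\|\le C E^{1/2}$ then reads off from $|a|=O(\delta_i)$.

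The main obstacle will be the rigorous passage of the stationary condition to the limiting harmonic equation, because the excess only weighs the tangential gradient by $\varepsilon_i|\nabla u_i|^2$, which concentrates in an $\varepsilon_i$-thin neighbourhood of each level set. Two things must be done in parallel: slice the cylinder along the level sets via the coarea formula so that the rescaled graphs $\tilde h_i$ carry all the relevant information, and show that the non-variational error coming from the ``transverse'' part of the stress tensor $\tfrac{\varepsilon}{2}|\nabla u|^2+\tfrac{1}{\varepsilon}W(u)-\varepsilon\nabla u\otimes\nabla u$ is $o(\delta_i^2)$ rather than merely $O(\varepsilon_i^2)$. This cancellation is precisely where the Modica identity and the exponential convergence of $g$ at infinity combine with the hypothesis $\delta_i\ge K_0\varepsilon_i$; it is also the point where the obstruction \eqref{obstruction} enters and explains why the tilt-excess decay cannot be pushed below the $\varepsilon^2$ scale.
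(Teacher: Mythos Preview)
Your strategy matches the paper's almost exactly: contradiction plus blow-up, graph representation of level sets, $H^1$ bounds on the rescaled heights, harmonicity of the limit via the stationary condition with vertical test fields, strong convergence via a Caccioppoli-type identity, and harmonic decay to close.

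The one place you are too quick is the first geometric step. Smallness of the excess does \emph{not} by itself force each level set $\{u_i=t\}$ to be a global $C^1$ (or even Lipschitz) graph over $B_2$; the excess is an integral quantity and pointwise control of $\nabla_x h_i$ can fail on a set of positive measure. The paper handles this by a Lipschitz approximation in the spirit of Allard: it applies the weak-$L^1$ estimate for the Hardy--Littlewood maximal function of the sliced excess
\[
f_\varepsilon(x)=\int_{-1}^1\bigl[1-(\nu_\varepsilon\cdot e_{n+1})^2\bigr]\,\varepsilon|\nabla u_\varepsilon|^2\,dx_{n+1}
\]
to isolate a good set $A_\varepsilon$ on which a blow-up to the one-dimensional profile gives the Lipschitz graph structure, while the bad set $B_\varepsilon$ has $\mu_\varepsilon$-measure $O(\delta_\varepsilon^2)$ and is carried along as a controlled error in every subsequent integral. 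The unit-density hypothesis \eqref{close to plane 0} is then invoked separately (Lemma~\ref{lem one graph} and Lemma~\ref{lem continuation from O(1) to O(e)}) to rule out multiple sheets over the good set. Once this good/bad decomposition is in place, the rest of your outline---including the two choices of test vector field $Y=\varphi\psi\, e_{n+1}$ and $Y=\phi^2\psi^2(x_{n+1}-\lambda_\varepsilon\delta_\varepsilon)\,e_{n+1}$ in \eqref{stationary condition}, and the role of $\delta_\varepsilon\gg\varepsilon$ in making the limit independent of $t$---is exactly what the paper does.
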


Roughly speaking, this theorem says, if the excess (with respect to
some hyperplane) in a ball is small enough, then after shrinking the
radius of the ball and perhaps tilting the hyperplane a little, the
excess becomes smaller. This decay estimate will be used in Part II
to prove the uniform Lipschitz regularity of intermediate layers.

The condition \eqref{close to plane 0} says there is only a single
transition layer, which corresponds to the unit density assumption
in Allard's $\varepsilon$-regularity theorem. In the next section we
shall see that \eqref{small excess 0} always holds (with respect to
a suitable hyperplane), provided that \eqref{close to plane 0} is
satisfied with $\tau_0$ sufficiently small (depending on
$\delta_0$). However, the assumption that
$\delta_\varepsilon\gg\varepsilon$ is crucial here, which is not so
satisfactory compared to Allard's and Savin's version.

We shall prove this theorem by contradiction. 
Thus assume there exists a sequence of $\varepsilon_i$ (for
simplicity the subscript $i$ will be dropped), and a sequence of
solutions $u_{\varepsilon}$ satisfying all of the assumptions in
Theorem \ref{thm tilt excess decay}, that is,
\begin{enumerate}
\item there exists a sequence of $\tau_\varepsilon\to0$ such that
\begin{equation}\label{close to plane 1}
4^{-n}\int_{\mathcal{B}_4}\frac{\varepsilon}{2}|\nabla
u_{\varepsilon}|^2+\frac{1}{\varepsilon}W(u_{\varepsilon})\leq
\left(1+\tau_\varepsilon\right)\sigma_0\omega_n,
\end{equation}

\item  the excess satisfies
\begin{equation}\label{small excess}
\delta_{\varepsilon}^2:=E(2;0,u_{\varepsilon},\R^n)\to0,
\end{equation}
where
\begin{equation}\label{absurd assumption 3}
\frac{\delta_{\varepsilon}}{\varepsilon}\to+\infty,\ \ \ \mbox{as}\
\varepsilon\to0,
\end{equation}
\end{enumerate}
but for any unit vector $e$ with
\begin{equation}\label{absurd assumption}
\|e-e_{n+1}\|\leq CE(2;0,u_{\varepsilon},\R^n)^{\frac{1}{2}},
\end{equation}
where the constant $C$ will be determined below (by the constant in \eqref{8.01}), we have
\begin{equation}\label{absurd assumption 2}
E(\theta;0,u_{\varepsilon},
P)\geq\frac{\theta}{2}E(2;0,u_{\varepsilon},\R^n).
\end{equation}
Here $P$ is the hyperplane orthogonal to $e$ and $\theta$ is also a
constant to be determined later (see \eqref{8.3.1} and
\eqref{8.3.2}).

The remaining part, up to and including Section 8, will be devoted
to derive a contradiction from the assumptions \eqref{close to plane
1}-\eqref{absurd assumption 2}. The proof is divided into four
steps:
\begin{enumerate}
\item [{\bf Step 1.}] It is shown that $\{u_\varepsilon=t\}$ (for $t\in(-1+b,1-b)$) can be represented by Lipschitz graphs
 over $\R^n$,
$x_{n+1}=h_\varepsilon^t(x)$, except a bad set of small measure
(controlled by $E(2;0,u_\varepsilon,\R^n)$). This is achieved by the
weak $L^1$ estimate for Hardy-Littlewood maximal functions.

\item [{\bf Step 2.}] By writing the excess using the $(x,t)$
coordinates ($t$ as in Step 1), $h_\varepsilon^t/\delta_\varepsilon$
are uniformly bounded in $H^1_{loc}(B_1)$. Then we can assume that
they converge weakly to a limit $h_\infty$. Here we need the
assumption $\delta_\varepsilon\gg \varepsilon$ to guarantee the
limit is independent of $t$.

\item [{\bf Step 3.}] By choosing $X=\varphi\psi e_{n+1}$ in the
stationary condition \eqref{stationary condition}, where $\varphi\in
C_0^\infty(B_1)$ and $\psi\in C_0^\infty((-1,1))$, and then passing
to the limit, it is shown that $h_\infty$ is harmonic in $B_1$.

\item [{\bf Step 4.}] By choosing $X=\varphi\psi x_{n+1}e_{n+1}$ in the
stationary condition \eqref{stationary condition} and then passing
to the limit, it is shown that (roughly speaking)
$h_\varepsilon^t/\delta_\varepsilon$ converges strongly in
$H^1_{loc}(B_1)$. The tilt-excess decay estimate then follows from
some basic estimates on harmonic functions.
\end{enumerate}

After establishing some preliminary results in the next section,
Step 1-4 will be given in Section 5-8 respectively.

\section{Compactness results}
\numberwithin{equation}{section}
 \setcounter{equation}{0}

In this section, we study the convergence of various quantities
associated to $u_\varepsilon$ and establish some preliminary results
for the proof of Theorem \ref{thm tilt excess decay}.

Recall that we have assumed the Modica inequality \eqref{Modica
inequality}. An important consequence of this inequality is the
following monotonicity formula (see for example \cite{Mocica 3}).
\begin{prop}[\bf Monotonicity formula]\label{monotonicity formula}
For any $X\in\mathcal{B}_3$,
\[r^{-n}\int_{\mathcal{B}_r}\frac{\varepsilon}{2}|\nabla u_\varepsilon|^2+\frac{1}{\varepsilon}W(u_\varepsilon)\]
is non-decreasing in $r\in(0,1)$.
\end{prop}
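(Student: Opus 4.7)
The plan is to prove that $\Phi(r) := r^{-n}\int_{\mathcal{B}_r(X)} e(u_\varepsilon)\, dZ$, with the energy density $e(u_\varepsilon) := \tfrac{\varepsilon}{2}|\nabla u_\varepsilon|^2 + \tfrac{1}{\varepsilon}W(u_\varepsilon)$, has non-negative derivative by deriving an explicit formula for $\Phi'(r)$ from the stationary condition \eqref{stationary condition}, and then invoking the Modica inequality \eqref{Modica inequality}. Since $X \in \mathcal{B}_3$ and $r \in (0,1)$, the ball $\mathcal{B}_r(X)$ stays inside the domain where $u_\varepsilon$ is a classical solution, so every integration by parts is legitimate.

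The main computation is to plug the radial vector field $Y(Z) = (Z-X)\,\eta_\delta(|Z-X|/r)$ into \eqref{stationary condition}, where $\eta_\delta \in C_0^\infty([0,1+\delta))$ is a smooth approximation of $\chi_{[0,1]}$. Writing $\rho := |Z-X|$, a direct calculation gives $\mathrm{div}\, Y = (n+1)\eta_\delta + (\rho/r)\eta_\delta'$ and $DY(\nabla u_\varepsilon, \nabla u_\varepsilon) = \eta_\delta |\nabla u_\varepsilon|^2 + (\eta_\delta'/(\rho r))\bigl((Z-X)\cdot \nabla u_\varepsilon\bigr)^2$. Substituting into \eqref{stationary condition} and passing to the limit $\delta \to 0$ via the coarea formula (using the continuity of $e(u_\varepsilon)$ guaranteed by \eqref{gradient bound}, so that boundary integrals are well-defined for \emph{every} $r$ rather than almost every $r$), I obtain the identity
\[n \int_{\mathcal{B}_r(X)}\! e(u_\varepsilon) + \int_{\mathcal{B}_r(X)}\!\left(\tfrac{1}{\varepsilon}W(u_\varepsilon) - \tfrac{\varepsilon}{2}|\nabla u_\varepsilon|^2\right) + \varepsilon r\!\int_{\partial \mathcal{B}_r(X)}\!(\partial_\rho u_\varepsilon)^2 \,=\, r\!\int_{\partial\mathcal{B}_r(X)}\! e(u_\varepsilon),\]
where I have used $(Z-X)\cdot\nabla u_\varepsilon = \rho\,\partial_\rho u_\varepsilon$ on $\partial\mathcal{B}_r(X)$. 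The algebraic step needed here is the elementary rewriting $(n+1)e(u_\varepsilon)-\varepsilon|\nabla u_\varepsilon|^2 = n\,e(u_\varepsilon) + \bigl(\tfrac{1}{\varepsilon}W(u_\varepsilon) - \tfrac{\varepsilon}{2}|\nabla u_\varepsilon|^2\bigr)$, which is what makes the monotonicity come out with the correct Euclidean exponent $n$.

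Finally, I would combine the above with the elementary identity $r^{n+1}\Phi'(r) = -n\int_{\mathcal{B}_r(X)} e(u_\varepsilon) + r\int_{\partial\mathcal{B}_r(X)} e(u_\varepsilon)$ to eliminate the boundary energy integral, which yields
\[r^{n+1}\Phi'(r) \,=\, \int_{\mathcal{B}_r(X)}\!\left(\tfrac{1}{\varepsilon}W(u_\varepsilon) - \tfrac{\varepsilon}{2}|\nabla u_\varepsilon|^2\right) + \varepsilon r\!\int_{\partial\mathcal{B}_r(X)}\!(\partial_\rho u_\varepsilon)^2.\]
The first term on the right is non-negative by the Modica inequality \eqref{Modica inequality}, and the second is manifestly non-negative, so $\Phi'(r)\geq 0$ and the monotonicity follows. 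The only step that really needs attention is the passage from the smoothed cutoff to the sharp one, so that the identity holds for every $r \in (0,1)$; this is handled in a standard way by Fubini/coarea together with the smoothness of $u_\varepsilon$, and there is no conceptual obstacle beyond it. In particular, the Modica inequality is used only at the very end, but it is essential: without it, the bulk term on the right-hand side would be unsigned and the argument would collapse.
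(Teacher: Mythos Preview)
Your proposal is correct and is precisely the standard derivation of the monotonicity formula for Allen--Cahn: plug a radially cut-off position field into the stationary condition \eqref{stationary condition}, pass to the sharp cutoff, rewrite $(n+1)e(u_\varepsilon)-\varepsilon|\nabla u_\varepsilon|^2$ as $n\,e(u_\varepsilon)+\bigl(\tfrac{1}{\varepsilon}W-\tfrac{\varepsilon}{2}|\nabla u_\varepsilon|^2\bigr)$, and conclude using the Modica inequality. The paper does not supply its own proof here but simply cites Modica \cite{Mocica 3}, whose argument is essentially the one you have written.
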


By combining Proposition \ref{monotonicity formula} with
\eqref{close to plane 1}, we see
\begin{coro}
For any $\mathcal{B}_r(X)\subset\mathcal{B}_3$, we have
\begin{equation}\label{energy bound on ball}
\int_{\mathcal{B}_r(X)}\frac{\varepsilon}{2}|\nabla u_\varepsilon|^2+\frac{1}{\varepsilon}W(u_\varepsilon)\leq 8^n\sigma_0\omega_nr^n.
\end{equation}
\end{coro}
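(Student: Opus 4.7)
The plan is simply to chain together Proposition 4.1 (the monotonicity formula) with the global energy bound \eqref{close to plane 1}. I split the argument on the size of $r$.

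First, suppose $r\leq 1$, so that $\mathcal{B}_r(X)\subset\mathcal{B}_3$ gives $|X|\leq 3$ and therefore $\mathcal{B}_1(X)\subset\mathcal{B}_4$. Applying monotonicity at the center $X$ between radii $r$ and $1$ yields
\[
r^{-n}\int_{\mathcal{B}_r(X)}\left(\frac{\varepsilon}{2}|\nabla u_\varepsilon|^2+\frac{1}{\varepsilon}W(u_\varepsilon)\right)\leq \int_{\mathcal{B}_1(X)}\left(\frac{\varepsilon}{2}|\nabla u_\varepsilon|^2+\frac{1}{\varepsilon}W(u_\varepsilon)\right).
\]
Then I enlarge the right-hand integration domain from $\mathcal{B}_1(X)$ to $\mathcal{B}_4$, which is legal because $|X|\leq 3$, and invoke \eqref{close to plane 1} together with $\tau_\varepsilon\leq 1$ (valid for all small $\varepsilon$ since $\tau_\varepsilon\to 0$) to obtain
\[
r^{-n}\int_{\mathcal{B}_r(X)}\left(\frac{\varepsilon}{2}|\nabla u_\varepsilon|^2+\frac{1}{\varepsilon}W(u_\varepsilon)\right)\leq (1+\tau_\varepsilon)\sigma_0\omega_n\cdot 4^n\leq 2\cdot 4^n\sigma_0\omega_n\leq 8^n\sigma_0\omega_n,
\]
which is the claim.

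For the remaining range $r\in(1,3]$, the monotonicity formula is not needed: the trivial inclusion $\mathcal{B}_r(X)\subset\mathcal{B}_4$ combined with \eqref{close to plane 1} gives
\[
\int_{\mathcal{B}_r(X)}\left(\frac{\varepsilon}{2}|\nabla u_\varepsilon|^2+\frac{1}{\varepsilon}W(u_\varepsilon)\right)\leq 2\cdot 4^n\sigma_0\omega_n\leq 8^n\sigma_0\omega_n r^n,
\]
where the last inequality uses $r\geq 1$ and $n\geq 1$.

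There is no real obstacle here; the only points worth checking are that $\mathcal{B}_1(X)\subset\mathcal{B}_4$ for every $X\in\mathcal{B}_3$ (so that monotonicity up to radius $1$ is admissible in the stated range of Proposition 4.1), and that the resulting constant $2\cdot 4^n$ is absorbed into the slightly wasteful $8^n$ quoted in the statement.
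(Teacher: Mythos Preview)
Your proof is correct and follows exactly the approach the paper indicates (it merely says the corollary follows ``by combining Proposition \ref{monotonicity formula} with \eqref{close to plane 1}'' without giving details). Your case split on $r\le 1$ versus $r>1$ is the natural way to carry this out, and the constant $2\cdot 4^n$ you obtain is indeed absorbed by the stated $8^n$.
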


We use the main result in Hutchinson-Tonegawa \cite{H-T} to study
the convergence of $u_\varepsilon$.
 Define the varifold
$V_\varepsilon$ by
\[<V_\varepsilon,\Phi(X,S)>=\int\Phi(x,I-\nu_\varepsilon\otimes\nu_\varepsilon)\varepsilon|\nabla u_\varepsilon|^2dx,\ \ \ \forall \   \Phi\in  C_0^\infty(\mathcal{C}_2\times G(n)).\]
 Hutchinson and Tonegawa proved:
\begin{enumerate}
\item
  As $\varepsilon\to 0$, $V_\varepsilon$ converges in the sense of varifolds to a stationary, rectifiable varifold $V$ with integer density (modulo division by the constant
  $\sigma_0$).
\item The measures
$\mu_\varepsilon$ converge to $\|V\|$ weakly.
\item  The discrepancy quantity
\begin{equation}\label{discrepancy}
\left(\frac{1}{\varepsilon}W(u_\varepsilon)-\frac{\varepsilon}{2}|\nabla
u_\varepsilon|^2\right)\rightarrow0,\ \mbox{in}\ L^1_{loc}.
\end{equation}
\item For any $t\in(-1,1)$ fixed, $\{u_\varepsilon=t\}$
converges to $\mbox{spt}\|V\|$ in the Hausdorff distance.
\end{enumerate}
 The last statement implies $0\in\mbox{spt}\|V\|$, because $0\in\{|u_\varepsilon|\leq
1-b\}$ ($b$ as in Theorem \ref{thm tilt excess decay}).

With the help of the bound \eqref{close to plane 1}, we can give a
description of the limit varifold.
\begin{prop}[{\bf Limit varifold}] \label{prop lim varifold}
The limit measure satisfies
$\|V\|=\sigma_0\mathcal{H}^n\lfloor_{\R^n}$.
\end{prop}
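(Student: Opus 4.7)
The plan is to combine the smallness of the excess, which pins down the tangent planes of the limit varifold, with the tight energy bound, which pins down its multiplicity, to identify $V$ as a multiplicity-one horizontal hyperplane. I proceed in three steps: varifold convergence to fix the tangent directions, monotonicity rigidity to compute the density at the origin and extract cone structure, and finally a combination of the two.

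First, I show that $V$-a.e.\ tangent plane equals the horizontal hyperplane $\R^n$. I test the varifold convergence $V_\varepsilon\to V$ against $\Phi(X,S)=\phi(X)\bigl(1-(e_{n+1}\cdot\nu_S)^2\bigr)$, where $\phi\in C_0^\infty(\mathcal{C}_2)$ and $\nu_S$ is a unit normal to the hyperplane $S\in G(n)$ (the integrand in $S$ is well-defined on the Grassmannian, being even in $\nu_S$). The pairing $\langle V_\varepsilon,\Phi\rangle$ equals $\int \phi\,[1-(\nu_\varepsilon\cdot e_{n+1})^2]\varepsilon|\nabla u_\varepsilon|^2$, bounded by $\|\phi\|_\infty 2^n E(2;0,u_\varepsilon,\R^n)\to 0$ by \eqref{small excess}. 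Passing to the limit and letting $\phi$ exhaust $\mathcal{C}_2$ yields $1-(e_{n+1}\cdot\nu_S)^2=0$ for $V$-a.e.\ $(X,S)\in\mathcal{C}_2\times G(n)$, i.e.\ $S=\R^n$.

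Second, I compute the density of $\|V\|$ at the origin. The monotonicity formula of Proposition \ref{monotonicity formula}, together with the vanishing of the discrepancy \eqref{discrepancy} (so that $\|V\|$ is the weak limit of both $\mu_\varepsilon$ and the full energy density), passes to the limit to give that $r\mapsto r^{-n}\|V\|(\mathcal{B}_r)$ is non-decreasing. Hypothesis \eqref{close to plane 1} then yields $4^{-n}\|V\|(\mathcal{B}_4)\leq\sigma_0\omega_n$, so $\Theta^n(\|V\|,0)\leq\sigma_0$. On the other hand $V$ is integer-rectifiable modulo $\sigma_0$ and $0\in\mbox{spt}\|V\|$ by the Hausdorff convergence of the level sets, so $\Theta^n(\|V\|,0)\geq\sigma_0$. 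Hence equality holds throughout and the monotonicity ratio is constant on $(0,4]$; the standard rigidity for stationary rectifiable varifolds then forces $V$ to be a cone with vertex $0$ in $\mathcal{B}_4$.

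Finally, at every rectifiable point $X\in\mbox{spt}\|V\|\cap\mathcal{C}_2\setminus\{0\}$, the tangent plane of a cone with vertex $0$ contains the radial direction $X/|X|$; by step one this tangent plane is $\R^n$, forcing $X\in\R^n$. Therefore $\mbox{spt}\|V\|\cap\mathcal{C}_2\subset\R^n$, and the cone property extends this to $\mbox{spt}\|V\|\cap\mathcal{B}_4\subset\R^n$. A rectifiable stationary cone supported in the hyperplane $\R^n$ with constant density $\sigma_0$ must equal $\sigma_0\mathcal{H}^n\lfloor_{\R^n}$, proving the claim. The main obstacle is step two: one must carefully pass the discrete monotonicity formula to its limit and invoke the equality-case rigidity to obtain the cone structure. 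Both facts are standard, but they rely critically on the $L^1_{loc}$ vanishing of the discrepancy and on the stationarity and integer rectifiability of $V$ provided by the Hutchinson--Tonegawa result.
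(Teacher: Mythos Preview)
Your proof is correct, but it follows a genuinely different route from the paper's.

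Both arguments share Step 2: the monotonicity formula together with the upper energy bound \eqref{close to plane 1} and the lower bound $\Theta^n(\|V\|,0)\geq\sigma_0$ (from integer rectifiability and $0\in\mbox{spt}\|V\|$) force the density ratio to be constant, so $V$ is a cone with vertex $0$.

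The difference lies in how the plane is identified. The paper does \emph{not} use the excess hypothesis \eqref{small excess} at all: from the cone property and unit density at the origin it invokes Allard's $\varepsilon$-regularity theorem to conclude $\mbox{spt}\|V\|$ is smooth near $0$; a smooth cone is then a hyperplane. (Strictly speaking, the paper's argument only yields ``some hyperplane with unit density'' and leaves the identification with $\R^n$ implicit.) You instead use \eqref{small excess} in Step 1 to pin the approximate tangent plane to $\R^n$ at $\|V\|$-a.e.\ point, and then in Step 3 exploit the general fact that the tangent plane of a rectifiable cone at a point $X\neq 0$ contains the radial direction $X/|X|$; combined with Step 1 this forces $X\in\R^n$, so $\mbox{spt}\|V\|\subset\R^n$. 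Stationarity and the constancy theorem then give $\|V\|=\sigma_0\mathcal{H}^n\lfloor_{\R^n}$.

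Your approach is more elementary in that it avoids Allard's theorem, and it has the advantage of directly singling out $\R^n$ rather than an unspecified hyperplane. The paper's approach is shorter once Allard is taken for granted, and it shows that the conclusion ``$V$ is a multiplicity-one hyperplane'' already follows from \eqref{close to plane 1} alone, without \eqref{small excess}; this is consistent with Lemma \ref{lem excess small}, where the smallness of the excess is \emph{derived} from \eqref{close to plane 1} rather than assumed.
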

\begin{proof}
By taking the limit in \eqref{close to plane 1} and using the
integer multiplicity of $V$, we get
\[4^{-n}\|V\|(\mathcal{B}_4)=\sigma_0\omega_n.\]
On the other hand, the integer multiplicity of $V$ implies
\[\lim_{r\to0}r^{-n}\|V\|\geq\sigma_0\omega_n.\]
By the monotonicity formula for stationary varifolds (cf.
\cite[Theorem 6.3.2]{Lin}), we deduce that $V$ is a cone.

Recall that $V$ is a rectifiable, stationary varifold with integer
multiplicity. What we have shown says that $V$ has density one at
the origin. Hence Allard's $\varepsilon$-regularity theorem implies
that $\mbox{spt}\|V\|$ is a smooth hypersurface in a neighborhood of
the origin.

Combining the cone property with this smooth regularity, we see $V$
is the standard varifold associated to a hyperplane with unit
density.
\end{proof}

Now we show that away from $\R^n$, $u_\varepsilon$ is exponentially
close to $\pm 1$.
\begin{prop}\label{prop exponential decay}
For any $h>0$, if $\varepsilon$ is sufficiently small, we have
\[\left(1-u_\varepsilon^2\right)+|\nabla u_\varepsilon|\leq C(h)e^{-\frac{|x_{n+1}|}{C(h)\varepsilon}}\ \ \  \mbox{in}\ \ \ \mathcal{C}_2\setminus\{|x_{n+1}|\leq h\}.\]
In particular, $\{u_\varepsilon=0\}\cap \mathcal{C}_2$ lies in an
$h$-neighborhood of $\R^n\cap \mathcal{C}_2$.
\end{prop}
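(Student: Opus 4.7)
The plan is to linearize the equation near the wells $\pm 1$ and deploy an exponential barrier to transfer smallness of $1-u_\varepsilon^2$ from the boundary of a ``good'' region inward. I would organize the argument in three steps.

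First, I would establish separation from the intermediate values. Fix any $b_0 \in (0, 1 - \gamma)$. By Proposition \ref{prop lim varifold} together with the Hausdorff convergence of $\{u_\varepsilon = \pm(1-b_0)\}$ to $\mathrm{spt}\,\|V\| = \R^n$ (property (4) of Hutchinson--Tonegawa), for $\varepsilon$ small enough (depending on $h$ and $b_0$) both level sets lie in $\mathcal{B}_{7/2} \cap \{|x_{n+1}| < h/4\}$. By continuity $u_\varepsilon$ must then lie in one of the three intervals $[-1, -1+b_0)$, $(-1+b_0, 1-b_0)$, or $(1-b_0, 1]$ on each connected component of $\mathcal{B}_{7/2} \cap \{|x_{n+1}| \geq h/4\}$. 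The middle option is excluded by \eqref{energy bound on ball}: it would force $\int \varepsilon^{-1} W(u_\varepsilon) \geq c(b_0)/\varepsilon$ on a region of fixed positive measure, contradicting the uniform upper bound. Thus $|u_\varepsilon| > 1 - b_0 > \gamma$ on the entire region.

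Second, I would linearize. Working on $Q := \mathcal{B}_{7/2} \cap \{x_{n+1} \geq h/4\}$ and assuming without loss of generality that $u_\varepsilon > 1-b_0$ on $Q$ (all other cases are symmetric), set $w := 1 - u_\varepsilon \in (0, b_0)$. Since $W'(1) = 0$ and $W'' \geq \kappa$ on $[\gamma, 1]$, Taylor expansion gives $-W'(u_\varepsilon) = W''(\xi) w \geq \kappa w$, so that $\varepsilon^2 \Delta w \geq \kappa w$ in $Q$. Third, I would compare $w$ with the explicit supersolution
\[
\phi(X) \,:=\, b_0\, e^{-\alpha(x_{n+1} - h/4)/\varepsilon} \,+\, b_0\, e^{\alpha(|X|^2 - 49/4)/\varepsilon},
\]
where $\alpha > 0$ is a small universal constant (e.g.\ $\alpha^2 \leq \kappa/100$). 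A direct computation using $|X| \leq 7/2$ and $\varepsilon \leq \varepsilon_0$ small gives $\varepsilon^2 \Delta \phi \leq \kappa \phi$ on $Q$. On each of the two pieces of $\partial Q$, one of the two summands of $\phi$ attains the value $b_0$, so $\phi \geq b_0 \geq w$ on $\partial Q$. The weak maximum principle for the operator $\varepsilon^2 \Delta - \kappa$ (positive zero-order coefficient) applied to $w - \phi$ yields $w \leq \phi$ in $Q$. Restricting to $\mathcal{C}_2 \cap \{x_{n+1} \geq h\}$, both summands are bounded by $C(h)\, e^{-c x_{n+1}/\varepsilon}$ for some universal $c > 0$, giving $1 - u_\varepsilon^2 \leq 2 w \leq C(h)\, e^{-|x_{n+1}|/(C(h)\varepsilon)}$. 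The bound on $|\nabla u_\varepsilon|$ follows by rescaling to balls of radius $\varepsilon/2$, applying interior Schauder to $\Delta w = -\varepsilon^{-2} W'(u_\varepsilon) = O(w/\varepsilon^2)$, and absorbing the resulting factor $\varepsilon^{-1}$ into a slightly larger constant $C(h)$ using the lower bound $x_{n+1} \geq h > 0$. The final inclusion $\{u_\varepsilon = 0\} \cap \mathcal{C}_2 \subset \{|x_{n+1}| \leq h\}$ is then immediate.

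The main obstacle is that the ``good'' region $Q$ is not a slab or ball but a half-ball whose boundary includes the curved portion $\partial \mathcal{B}_{7/2}$, on which $w$ is only known to be $\leq b_0$ (a fixed positive number) rather than exponentially small. A single one-variable exponential in $x_{n+1}$ would therefore not dominate $w$ on $\partial Q$; the sum-of-exponentials barrier above is designed precisely so that each summand handles one piece of $\partial Q$ while both remain exponentially small inside $\mathcal{C}_2 \cap \{x_{n+1} \geq h\}$.
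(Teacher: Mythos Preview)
Your proof is correct and follows essentially the same strategy as the paper: first use the Hutchinson--Tonegawa convergence to show $|u_\varepsilon|>\gamma$ in the region $\{|x_{n+1}|>h/2\}$, linearize near the wells to obtain an inequality of the form $\Delta w\ge (c/\varepsilon^2)w$, and then extract exponential decay. The only cosmetic differences are that the paper works with $1-u_\varepsilon^2$ rather than $1-u_\varepsilon$, invokes the uniform convergence $u_\varepsilon^2\to 1$ directly (rather than your level-set plus energy argument), and cites Lemma~\ref{lem B1} applied on balls $\mathcal{B}_{|x_{n+1}|-h/2}(X)$ instead of building your two-term barrier on the half-ball $Q$; your barrier is a slightly more explicit way of handling the curved portion of $\partial Q$, but the content is the same.
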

\begin{proof}
By \cite{H-T}, $u_\varepsilon^2$ converges to $1$ uniformly on any
compact set outside $\mbox{spt}\|V\|=\R^n$. In particular, for all
$\varepsilon$ small,
\[u_\varepsilon^2\geq\gamma \quad \mbox{in}\ \mathcal{C}_3\setminus\{|x_{n+1}|\geq \frac{h}{2}\}.\]
By a direct calculation, there exists a universal constant $c$ such
that
\[\Delta(1-u_\varepsilon^2)\geq \frac{c}{\varepsilon^2}(1-u_\varepsilon^2) \quad
\mbox{in}\  \mathcal{C}_3\setminus\{|x_{n+1}|\geq\frac{h}{2}\}.\] By
this inequality we can apply Lemma \ref{lem B1} to get the
exponential decay of $1-u_\varepsilon^2$ in $\{|x_{n+1}|>h\}$. The
estimate for $|\nabla u_\varepsilon|$ follows from standard interior
gradient estimates.
\end{proof}
\begin{rmk}\label{rmk Hausdorff convergence}
If $u_\varepsilon$ converges to $1$ (or $-1$) on both sides of
$\R^n$, the multiplicity of $V$ will be greater than $1$ (see
\cite[Theorem 1, (4)]{H-T}). This contradicts Proposition \ref{prop
lim varifold}.

Thus $u_\varepsilon$ converges to $1$ locally uniformly on one side
of $\{x_{n+1}=0\}$, say in $\mathcal{C}_2\cap\{x_{n+1}>0\}$, and to
$-1$ locally uniformly in $\mathcal{C}_2\cap\{x_{n+1}<0\}$. Together
with the previous proposition, this implies
 \[\mbox{dist}_H\left(\{u_\varepsilon=0\}\cap \mathcal{C}_1, \R^n\cap\mathcal{C}_1\right)\to 0.\]
\end{rmk}

The following lemma says that \eqref{small excess} is a consequence
of \eqref{close to plane 1}.
\begin{lem}\label{lem excess small}
Let $u_\varepsilon$ be a sequence of solutions satisfying
\eqref{close to plane 1} and the Modica inequality \eqref{Modica
inequality} in $\mathcal{B}_4$. Then the excess with respect to
$\R^n$, satisfies
\[\lim_{\varepsilon\to0}E(2;0,u_\varepsilon)\to 0.\]
\end{lem}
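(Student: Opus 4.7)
The plan is to express the excess $E(2;0,u_\varepsilon,\R^n)$ as the pairing of the Hutchinson--Tonegawa varifold $V_\varepsilon$ against a fixed continuous test function on $\R^{n+1}\times G(n)$, and then invoke the varifold convergence $V_\varepsilon\to V$ together with Proposition \ref{prop lim varifold}, which identifies $V$ as the unit density varifold on $\R^n$. Once these ingredients are in place, the conclusion is immediate.

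Following the convention used to define $V_\varepsilon$, I identify a point of $G(n)$ with the orthogonal projection matrix $S=I-\nu\otimes\nu$ onto the corresponding $n$-plane, and introduce the continuous function
$$F(S):=Se_{n+1}\cdot e_{n+1}=1-(\nu\cdot e_{n+1})^2,$$
which is well defined on $G(n)$ (depending only on $(\nu\cdot e_{n+1})^2$) and vanishes precisely at $S=I-e_{n+1}\otimes e_{n+1}$, i.e.\ at the plane $\R^n$. Since $\mathcal{C}_2\subset\mathcal{B}_{\sqrt 5}\subset\mathcal{B}_3$, I can choose a cutoff $\varphi\in C_0^\infty(\mathcal{B}_3)$ with $0\leq\varphi\leq 1$ and $\varphi\equiv 1$ on $\mathcal{C}_2$, and set $\Phi(X,S):=\varphi(X)F(S)\in C_0^\infty(\R^{n+1}\times G(n))$. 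Directly from the definition of $V_\varepsilon$,
$$\langle V_\varepsilon,\Phi\rangle=\int\varphi(X)\bigl[1-(\nu_\varepsilon\cdot e_{n+1})^2\bigr]\varepsilon|\nabla u_\varepsilon|^2\,dX\geq 2^n E(2;0,u_\varepsilon,\R^n).$$

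Passing to the limit $\varepsilon\to 0$ via the varifold convergence gives
$$\lim_{\varepsilon\to 0}\langle V_\varepsilon,\Phi\rangle=\int_{\mathrm{spt}\|V\|}\varphi(X)\,F(T_X)\,d\|V\|(X),$$
where $T_X$ denotes the approximate tangent plane of $V$ at $X$. By Proposition \ref{prop lim varifold}, $\mathrm{spt}\|V\|=\R^n$ and $T_X=\R^n$ for $\|V\|$-a.e.\ $X$, so $F(T_X)\equiv 0$ on $\mathrm{spt}\|V\|$ and the right hand side is zero. Combined with the lower bound above, this proves $E(2;0,u_\varepsilon,\R^n)\to 0$.

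There is essentially no obstacle in this argument: all the nontrivial work has already been carried out in establishing the Hutchinson--Tonegawa convergence and in identifying the limit varifold in Proposition \ref{prop lim varifold}. The only verification needed is the continuity of $F$ on $G(n)$ and the compact support of $\Phi$ inside $\mathcal{B}_4$, both of which are automatic.
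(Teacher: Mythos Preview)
Your argument is correct, but it is not the route the paper takes. The paper proves this lemma by testing the stationary condition \eqref{stationary condition} with the vector field $Y=\eta(X)\,x_{n+1}\,e_{n+1}$, which yields an exact identity relating the excess to terms that carry an explicit factor of $x_{n+1}$; since all the limiting measures are supported on $\R^n$ (this uses Proposition \ref{prop lim varifold} just as you do), those $x_{n+1}$-weighted terms vanish in the limit, and the Modica inequality finishes the job. Your proof instead bypasses the stationary condition entirely and goes straight through the Hutchinson--Tonegawa varifold convergence: the excess is dominated by $\langle V_\varepsilon,\Phi\rangle$ for a test function $\Phi(X,S)=\varphi(X)\,Se_{n+1}\cdot e_{n+1}$ that vanishes on the plane $\{S=\R^n\}$, so the limit is zero because $V$ is the unit-density varifold on $\R^n$.

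What each approach buys: yours is shorter and more conceptual, and it makes transparent that the smallness of the excess is nothing more than the statement that the varifold limit has the correct tangent plane. The paper's approach, on the other hand, is a deliberate warm-up for the machinery used later: the exact same device of plugging well-chosen vector fields (of the form $\varphi\psi e_{n+1}$ and $\varphi\psi x_{n+1}e_{n+1}$) into the stationary condition drives the proofs in Sections 7 and 8 and the Caccioppoli-type inequality in Remark \ref{rmk Caccioppoli}. One small point worth making explicit in your write-up: the varifold convergence and the identification of $V$ in Proposition \ref{prop lim varifold} hold on all of $\mathcal{B}_4$, not just on $\mathcal{C}_2$ where the paper happens to state the pairing; this is what justifies choosing $\varphi\in C_0^\infty(\mathcal{B}_3)$ with $\varphi\equiv 1$ on $\overline{\mathcal{C}_2}$.
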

\begin{proof}
For any $\eta\in C_0^\infty(\mathcal{C}_2)$, take the vector field
$Y=(0,\cdots,0,\eta x_{n+1})$ and substitute it into the stationary
condition \eqref{stationary condition}. This leads to
\begin{eqnarray}\label{stationary with normal variation}
0=\int_{\mathcal{C}_2}&&\left(\frac{\varepsilon}{2}|\nabla u_\varepsilon|^2+\frac{1}{\varepsilon}W(u_\varepsilon)\right)\left(\eta+\frac{\partial\eta}{\partial x_{n+1}}x_{n+1}\right)\\
&&-\eta\nu_{\varepsilon,n+1}^2\varepsilon|\nabla u_\varepsilon|^2
-x_{n+1}\sum_{i=1}^{n+1}\frac{\partial\eta}{\partial
x_i}\nu_{\varepsilon,i}\nu_{\varepsilon,n+1}\varepsilon|\nabla
u_\varepsilon|^2.\nonumber
\end{eqnarray}
By \eqref{discrepancy} and our assumptions on $u_\varepsilon$, both
the measures
\[\left(\frac{\varepsilon}{2}|\nabla u_\varepsilon|^2+\frac{1}{\varepsilon}W(u_\varepsilon)\right)dx,
\ \ \ \nu_{\varepsilon,i}\nu_{\varepsilon,n+1}\varepsilon|\nabla
u_\varepsilon|^2dx\] converge to some measures supported on $\R^n$.
Thus
\[\lim_{\varepsilon\to0}\int_{\mathcal{C}_2}
\left(\frac{\varepsilon}{2}|\nabla u_\varepsilon|^2
+\frac{1}{\varepsilon}W(u_\varepsilon)\right)\frac{\partial\eta}{\partial
x_{n+1}}x_{n+1} -x_{n+1}\sum_{i=1}^{n+1}\frac{\partial\eta}{\partial
x_i}\nu_{\varepsilon,i}\nu_{\varepsilon,n+1}\varepsilon|\nabla
u_\varepsilon|^2=0.\] Substituting this into \eqref{stationary with
normal variation} and applying the Modica inequality \eqref{Modica
inequality}, we finish the proof.
\end{proof}

\begin{rmk}\label{rmk Caccioppoli}
Although we will not use the Caccioppoli type inequality explicitly,
here we show how to use the stationary condition \eqref{stationary
condition} to derive it.

Take a $\psi\in C_0^\infty((-1,1))$ satisfying $0\leq \psi\leq 1$, $\psi\equiv1$ in $(-1/2,1/2)$, $|\psi^\prime|\leq 3$.
For any $\phi\in C_0^\infty(B_1)$, take $\eta(x,x_{n+1})=\phi(x)^2\psi(x_{n+1})^2$ and replace $x_{n+1}$ by $x_{n+1}-\lambda$ in \eqref{stationary with normal variation}, where $\lambda\in(-1,1)$ is an arbitrary constant. By this choice we get
\begin{eqnarray}\label{1.5.1}
0=\int_{\mathcal{C}_1}&&\left[\frac{\varepsilon}{2}|\nabla u_\varepsilon|^2+\frac{1}{\varepsilon}W(u_\varepsilon)\right]
\left[\phi^2\psi^2+2\phi^2\psi\psi^\prime\left(x_{n+1}-\lambda\right)\right]                \nonumber\\
&&-\phi^2\psi^2\nu_{\varepsilon,n+1}^2\varepsilon|\nabla u_\varepsilon|^2-\left(x_{n+1}-\lambda\right)\sum_{i=1}^{n}2\phi\psi^2\frac{\partial\phi}{\partial x_i}\nu_{\varepsilon,i}\nu_{\varepsilon,n+1}\varepsilon|\nabla u_\varepsilon|^2       \\
&&-\left(x_{n+1}-\lambda\right)2\phi^2\psi\psi^\prime\nu_{\varepsilon,n+1}^2\varepsilon|\nabla u_\varepsilon|^2.        \nonumber
\end{eqnarray}

First consider those terms containing $\psi^\prime(x_{n+1})$. Since $\psi^\prime\equiv 0$ in $B_1\times\{|x_{n+1}|<1/2\}$, with the help of Proposition \ref{prop exponential decay}, we get
\[
\int_{\mathcal{C}_1}\left[\frac{\varepsilon}{2}|\nabla
u_\varepsilon|^2+\frac{1}{\varepsilon}W(u_\varepsilon)\right]
2\phi^2\psi\psi^\prime\left(x_{n+1}-\lambda\right)
-\left(x_{n+1}-\lambda\right)2\phi^2\psi\psi^\prime\nu_{\varepsilon,n+1}^2\varepsilon|\nabla
u_\varepsilon|^2 =O(e^{-c\varepsilon^{-1}}).
\]
Substituting this into \eqref{1.5.1} leads to
\begin{eqnarray}\label{1.5.2}
&&\int_{\mathcal{C}_1}\left[\frac{\varepsilon}{2}|\nabla u_\varepsilon|^2-\nu_{\varepsilon,n+1}^2\varepsilon|\nabla u_\varepsilon|^2+\frac{1}{\varepsilon}W(u_\varepsilon)\right]
\phi^2\psi^2         \\
&=&\int_{\mathcal{C}_1}\left(x_{n+1}-\lambda\right)\sum_{i=1}^{n}2\phi\psi^2\frac{\partial\phi}{\partial x_i}\nu_{\varepsilon,i}\nu_{\varepsilon,n+1}\varepsilon|\nabla u_\varepsilon|^2+O(e^{-c/\varepsilon}).        \nonumber
\end{eqnarray}
By the Cauchy inequality,
\begin{eqnarray*}
&&\int_{\mathcal{C}_1}\left(x_{n+1}-\lambda\right)\sum_{i=1}^{n}2\phi\psi^2\frac{\partial\phi}{\partial x_i}\nu_{\varepsilon,i}\nu_{\varepsilon,n+1}\varepsilon|\nabla u_\varepsilon|^2\\
&\leq&\frac{1}{4}\int_{\mathcal{C}_1}\phi^2\psi^2\sum_{i=1}^{n}\nu_{\varepsilon,i}^2\varepsilon|\nabla u_\varepsilon|^2+64\int_{\mathcal{C}_1}|\nabla\phi|^2\psi^2\left(x_{n+1}-\lambda\right)^2\varepsilon|\nabla u_\varepsilon|^2.
\end{eqnarray*}
Substituting this into \eqref{1.5.2}, by noting that
\[\sum_{i=1}^{n}\nu_{\varepsilon,i}^2=1-\left(\nu_\varepsilon\cdot e_{n+1}\right)^2,\]
and
\[\frac{\varepsilon}{2}|\nabla u_\varepsilon|^2-\nu_{\varepsilon,n+1}^2\varepsilon|\nabla u_\varepsilon|^2+\frac{1}{\varepsilon}W(u_\varepsilon)\geq\left[1-\left(\nu_\varepsilon\cdot e_{n+1}\right)^2\right]\varepsilon|\nabla u_\varepsilon|^2,\]
we obtain the following Caccioppoli type inequality
\begin{equation}\label{Caccioppoli inequality}
\int_{\mathcal{C}_1}\phi^2\psi^2\left[1-\left(\nu_\varepsilon\cdot
e_{n+1}\right)^2\right]\varepsilon|\nabla u_\varepsilon|^2 \leq
2^8\int_{\mathcal{C}_1}|\nabla\phi|^2\psi^2\left(x_{n+1}-\lambda\right)^2\varepsilon|\nabla
u_\varepsilon|^2 +Ce^{-c\varepsilon^{-1}}.
\end{equation}

\end{rmk}

\begin{rmk}
Since we only have a control on $1-(\nu_\varepsilon\cdot
e_{n+1})^2$, in view of Remark \ref{rmk 1}, $\nu_\varepsilon$ may be
close to $e_{n+1}$ or $-e_{n+1}$. To exclude one of these two
possibilities, we need to use the unit density assumption
\eqref{close to plane 1}. A subtle point here is that, without such
a assumption, we cannot say that $1-\nu_\varepsilon\cdot e_{n+1}$
(or $1+\nu_\varepsilon\cdot e_{n+1}$) is small everywhere. This is
related to the possible interface foliation (and consequently the
higher multiplicity of the limit varifold $V$), see the examples
constructed by del Pino-Kowalczyk-Wei-Yang in \cite{DKWY}.
\end{rmk}

\section{Lipschitz approximation}
\numberwithin{equation}{section}
 \setcounter{equation}{0}

Let
\[f_\varepsilon(x)=\int_{-1}^1\left[1-\left(\nu_\varepsilon(x,x_{n+1})\cdot e_{n+1}\right)^2\right]
\varepsilon|\nabla u_\varepsilon(x,x_{n+1})|^2dx_{n+1}.\]
By Lemma \ref{lem excess small}, $f_\varepsilon\to 0$ in $L^1(B_1)$.
Consider the Hardy-Littlewood maximal function
\[Mf_\varepsilon(x):=\sup_{r\in(0,1)}r^{-n}\int_{B_r(x)}f_\varepsilon(y)dy.\]
For any $l>0$, by the weak $L^1$ estimate, there exists a universal
constant $C$ such that
\begin{equation}\label{weak L1 bound}
\mathcal{H}^n\left(\{Mf_\varepsilon\geq l\}\cap
B_1\right)\leq\frac{C}{l}\|f_\varepsilon\|_{L^1(B_1)}
=C\frac{\delta_\varepsilon^2}{l}.
\end{equation}
Denote the set $B_1\setminus\{Mf_\varepsilon\geq l\}$ by
$W_\varepsilon$. (Its dependence on the constant $l$ will not be
written explicitly.) Note that since the integrand in the definition
of $f_\varepsilon$ and hence $f_\varepsilon(x)$ are continuous
functions, $W_\varepsilon$ is an open set.

Given $b\in(0,1)$ and $l>0$, we say a point $X\in \{|u_\varepsilon|<1-b\}\cap\mathcal{C}_1$ is good, if
\[\sup_{0<r<1}r^{-n}\int_{\mathcal {B}_r(X)}\left[1-\left(\nu_\varepsilon\cdot e_{n+1}\right)^2\right]\varepsilon
|\nabla u_\varepsilon|^2dX<l.\] Good points form a set
$A_\varepsilon$ and we let
$B_\varepsilon=(\{|u_\varepsilon|<1-b\}\cap\mathcal{C}_1)\setminus
A_\varepsilon$ be the set of bad points. Note that since
$\left[1-\left(\nu_\varepsilon(x,x_{n+1})\cdot
e_{n+1}\right)^2\right] \varepsilon|\nabla
u_\varepsilon(x,x_{n+1})|^2$ is continuous, $A_\varepsilon$ is an
open set and $B_\varepsilon$ is relatively closed in
$\{|u_\varepsilon|<1-b\}\cap\mathcal{C}_1$. Clearly
$W_\varepsilon\subset\Pi(A_\varepsilon)$.

Similar to the weak $L^1$ estimate for Hardy-Littlewood maximal
function, $B_\varepsilon$ is small in the following sense.
\begin{lem}\label{lem 4.3}
There exists a universal constant $C$ such that
\[\mu_\varepsilon(B_\varepsilon)\leq C\frac{\delta_\varepsilon^2}{l}.\]
\end{lem}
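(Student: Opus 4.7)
\medskip
\noindent\textbf{Proof proposal.} The plan is to run a Vitali-style covering argument on $B_\varepsilon$, essentially mimicking the weak $L^1$ bound \eqref{weak L1 bound} for the Hardy--Littlewood maximal function but adapted to the measure $\mu_\varepsilon$ on $\R^{n+1}$ rather than $\mathcal{H}^n$ on $\R^n$.

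First, for each $X\in B_\varepsilon$ I will use the very definition of bad point to choose a radius $r_X\in(0,1)$ such that
\[
r_X^{-n}\int_{\mathcal{B}_{r_X}(X)}\left[1-\left(\nu_\varepsilon\cdot e_{n+1}\right)^2\right]\varepsilon|\nabla u_\varepsilon|^2\geq \tfrac{l}{2}.
\]
Applying the Vitali $5r$--covering lemma to the family $\{\mathcal{B}_{r_X}(X)\}_{X\in B_\varepsilon}$ yields a countable pairwise disjoint subfamily $\{\mathcal{B}_{r_i}(X_i)\}$ whose $5$--dilates cover $B_\varepsilon$.

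Second, I will bound the $\mu_\varepsilon$--measure of each enlarged ball from above using the corollary of the monotonicity formula: \eqref{energy bound on ball} together with the Modica inequality \eqref{Modica inequality} gives $\mu_\varepsilon(\mathcal{B}_{5r_i}(X_i))\leq C r_i^n$ with $C$ universal. Hence
\[
\mu_\varepsilon(B_\varepsilon)\leq \sum_i \mu_\varepsilon(\mathcal{B}_{5r_i}(X_i))\leq C\sum_i r_i^n\leq \frac{2C}{l}\sum_i\int_{\mathcal{B}_{r_i}(X_i)}\left[1-\left(\nu_\varepsilon\cdot e_{n+1}\right)^2\right]\varepsilon|\nabla u_\varepsilon|^2,
\]
where the last step uses the lower bound on each $r_i$--ball. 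By disjointness, the sum of integrals is at most the integral over $\bigcup_i \mathcal{B}_{r_i}(X_i)\subset B_2\times(-2,2)$.

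Third, I will compare this global integral with the excess. The part in $\mathcal{C}_2$ is exactly $2^n\delta_\varepsilon^2$, while the part in $\{|x_{n+1}|\geq 1\}$ is exponentially small, of order $e^{-c/\varepsilon}$, by Proposition \ref{prop exponential decay} applied with, say, $h=1/2$. Since in Theorem \ref{thm tilt excess decay} we assume $\delta_\varepsilon\geq K_0\varepsilon$, this exponential remainder is $o(\delta_\varepsilon^2)$ and can be absorbed into a universal constant times $\delta_\varepsilon^2$. Combining the three steps yields $\mu_\varepsilon(B_\varepsilon)\leq C\delta_\varepsilon^2/l$.

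The argument is essentially routine; the only mild subtlety I anticipate is the geometric mismatch between the Vitali balls (which can stick slightly out of $\mathcal{C}_2$ in the $x_{n+1}$--direction) and the cylinder in which the excess is measured, which is handled cleanly by the exponential decay of $|\nabla u_\varepsilon|$ away from $\R^n$.
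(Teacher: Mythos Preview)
Your proposal is correct and follows essentially the same Vitali covering argument as the paper: pick a near-extremal radius at each bad point, extract a disjoint subfamily, bound $\mu_\varepsilon$ of the dilates by $Cr_i^n$ via \eqref{energy bound on ball}, and sum. The only difference is that you are more careful than the paper about the balls $\mathcal{B}_{r_i}(X_i)$ possibly protruding from $\mathcal{C}_2$ in the $x_{n+1}$--direction; the paper tacitly absorbs this into the constant, whereas you invoke Proposition~\ref{prop exponential decay} explicitly, which is a legitimate (and arguably cleaner) way to close the estimate.
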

\begin{proof}
For any $X\in B_\varepsilon$, by definition there exists an $r_X\in
(0,1)$ satisfying
\[r_X^n\leq\frac{1}{l}\int_{\mathcal{B}_{r_X}(X)}\left[1-\left(\nu_\varepsilon\cdot e_{n+1}\right)^2\right]\varepsilon
|\nabla u_\varepsilon|^2dX.\] By Vitali covering lemma, choose a
countable set of $X_i\in B_\varepsilon$ such that
$\mathcal{B}_{r_i}(X_i)$ (here $r_i:=r_{X_i}$) are disjoint, and
\[B_\varepsilon\subset \bigcup_{i}\mathcal{B}_{5r_i}(X_i).\]
Then
\begin{eqnarray*}
\mu_\varepsilon(B_\varepsilon)&\leq&\sum_i\mu_\varepsilon(\mathcal{B}_{5r_i}(X_i))\\
&\leq&C\sum_ir_i^n \ \ \ \ \ \ \ \ \ \ \ \ \ \ \ \ \ \ \ (\mbox{by \eqref{energy bound on ball}}) \\
&\leq&Cl^{-1}\int_{\mathcal{B}_{r_i}(X_i)}\left[1-\left(\nu_\varepsilon\cdot
e_{n+1}\right)^2\right]\varepsilon |\nabla u_\varepsilon|^2dX\\
&\leq &Cl^{-1}\delta_\varepsilon^2.  \qedhere
\end{eqnarray*}
\end{proof}

Another fact about $B_\varepsilon$ is
\begin{lem}\label{lem 3.2}
There exists a universal constant $C$ such that $\mathcal{H}^n(\Pi
(B_\varepsilon))\leq Cl^{-1}\delta_\varepsilon^2$.
\end{lem}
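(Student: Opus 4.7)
The plan is to reuse the Vitali covering constructed in the proof of Lemma \ref{lem 4.3} but to measure the projection of the balls instead of their ambient weighted mass. The point is that the radius selection inequality
\[
r_i^n \leq l^{-1}\int_{\mathcal{B}_{r_i}(X_i)}\bigl[1-(\nu_\varepsilon\cdot e_{n+1})^2\bigr]\varepsilon|\nabla u_\varepsilon|^2
\]
has its left-hand side at exactly the scale of $\mathcal{H}^n$ of an $n$-disc of radius $r_i$, which is precisely what one gets by projecting $\mathcal{B}_{5r_i}(X_i)$ down to $\R^n$.

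Concretely, I would take the same disjoint family $\{\mathcal{B}_{r_i}(X_i)\}$ with $X_i\in B_\varepsilon$ and $r_i\in(0,1)$ satisfying $B_\varepsilon\subset\bigcup_i\mathcal{B}_{5r_i}(X_i)$. Since $\Pi$ is $1$-Lipschitz, $\Pi(\mathcal{B}_{5r_i}(X_i))\subset B_{5r_i}(\Pi(X_i))\subset\R^n$, so countable subadditivity of $\mathcal{H}^n$ together with the radius rule gives
\[
\mathcal{H}^n(\Pi(B_\varepsilon))\leq\sum_i\omega_n(5r_i)^n\leq\frac{5^n\omega_n}{l}\sum_i\int_{\mathcal{B}_{r_i}(X_i)}\bigl[1-(\nu_\varepsilon\cdot e_{n+1})^2\bigr]\varepsilon|\nabla u_\varepsilon|^2.
\]
Disjointness then collapses the sum into a single integral over $\bigcup_i\mathcal{B}_{r_i}(X_i)\subset B_2\times(-2,2)$, the containment following from $X_i\in\mathcal{C}_1$ and $r_i<1$.

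It then remains to compare this integral with $\delta_\varepsilon^2$. The region $B_2\times(-2,2)$ splits as $\mathcal{C}_2$ plus the slab $B_2\times\{1\leq|x_{n+1}|\leq 2\}$; on $\mathcal{C}_2$ the integral is at most $2^n\delta_\varepsilon^2$ by definition of the excess, while on the slab the Modica-inequality argument behind Proposition \ref{prop exponential decay}, run on a slightly enlarged cylinder (harmless since the proof works on any compact subset of $\mathcal{B}_3\setminus\R^n$), forces $|\nabla u_\varepsilon|$ to be exponentially small and contributes only $O(e^{-c/\varepsilon})$. Under the standing hypothesis $\delta_\varepsilon\geq K_0\varepsilon$ this exponential term is negligible compared to $\delta_\varepsilon^2$, producing the claimed bound. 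This is essentially a bookkeeping variant of Lemma \ref{lem 4.3}, so I do not expect any serious obstacle; the only mild care needed is in the exponential-decay step just described.
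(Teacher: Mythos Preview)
Your argument is correct, but the paper's proof takes a shorter and different route: it simply observes the inclusion $\Pi(B_\varepsilon)\subset B_1\setminus W_\varepsilon$ and then quotes the weak $L^1$ bound \eqref{weak L1 bound} already established for the maximal function $Mf_\varepsilon$. The inclusion holds because if $X=(x,x_{n+1})$ is a bad point with witnessing radius $r$, then $\mathcal{B}_r(X)\subset B_r(x)\times\R$ and (up to the same exponentially small tail you invoke) the integrand is supported in $|x_{n+1}|<1$, so $r^{-n}\int_{B_r(x)}f_\varepsilon\geq l$ and hence $x\notin W_\varepsilon$.

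What you do instead is recycle the ambient Vitali cover from Lemma~\ref{lem 4.3} and project the balls to $\R^n$. This is perfectly valid and has the minor virtue of being self-contained and making the exponential-decay bookkeeping explicit; on the other hand it effectively re-derives the weak $L^1$ estimate in $\R^n$ rather than using the one already on the page, and it forces you to deal with the slab $\{1\leq|x_{n+1}|\leq 2\}$ by hand. The paper's route is more economical once $W_\varepsilon$ and \eqref{weak L1 bound} have been set up.
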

\begin{proof}
This is because $\Pi(B_\varepsilon)\subset B_1\setminus
W_\varepsilon$. Hence \eqref{weak L1 bound} applies.
\end{proof}

Next we show that in $A_\varepsilon$,
level sets of $u_\varepsilon$ are essentially Lipschitz graphs.
\begin{lem}\label{lem 5.3}
Given a constant $b\in(0,1)$, if $l$ is small enough, for any
$t\in(-1+b,1-b)$, $\{u_\varepsilon=t\}\cap A_\varepsilon$ can be
represented locally by
 a Lipschtz graph
$\{x_{n+1}=h_\varepsilon^t(x)\}$. The Lipschitz constant of
$h_\varepsilon^t$ is controlled by a constant $c_0(b,l)$ depending
on $b$ and $l$, which satisfies $\lim_{l\to0}c_0(b,l)=0$.
\end{lem}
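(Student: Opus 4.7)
The plan is to extract from the scale-invariant bound defining $A_\varepsilon$ a pointwise smallness of the tilt $1-(\nu_\varepsilon\cdot e_{n+1})^2$, fix the sign of $\nu_\varepsilon\cdot e_{n+1}$ using the unit density of the limit varifold, and then read off the Lipschitz graph via the implicit function theorem.

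\textbf{Pointwise tilt bound.} I would first prove that there is a modulus $\eta(l)$ with $\eta(l)\to 0$ as $l\to 0$, such that
\[1-(\nu_\varepsilon(X)\cdot e_{n+1})^2\leq\eta(l)\quad\text{for every }X\in A_\varepsilon\cap\{|u_\varepsilon|<1-b\}.\]
Suppose for contradiction that the left-hand side exceeds some $\eta>0$ at a point $X_0$. Under the unit-density assumption \eqref{close to plane 1}, a rescaling/compactness argument based on Proposition~\ref{prop lim varifold} and Remark~\ref{rmk Hausdorff convergence} shows that in $\{|u_\varepsilon|<1-b/2\}$ the rescaled function $u_\varepsilon(\varepsilon\,\cdot\,)$ is close in $C^2_{loc}$ to the one-dimensional profile $g$; in particular $\varepsilon|\nabla u_\varepsilon|\geq c(b)>0$ there. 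Combined with \eqref{gradient bound}, this gives $|\nabla\nu_\varepsilon|\leq C(b)\varepsilon^{-1}$ near $X_0$, so $1-(\nu_\varepsilon\cdot e_{n+1})^2\geq\eta/2$ on $\mathcal{B}_{c(b)\sqrt{\eta}\,\varepsilon}(X_0)$. Setting $r=c(b)\sqrt{\eta}\,\varepsilon$, this yields
\[r^{-n}\int_{\mathcal{B}_r(X_0)}\left[1-(\nu_\varepsilon\cdot e_{n+1})^2\right]\varepsilon|\nabla u_\varepsilon|^2\,dX\geq c'(b)\,\eta^{(n+2)/2}.\]
Taking $l$ below this lower bound forces $X_0\notin A_\varepsilon$, a contradiction.

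\textbf{Sign and graph.} The above leaves the ambiguity $\nu_\varepsilon\cdot e_{n+1}\approx\pm 1$. The minus case is excluded using Remark~\ref{rmk Hausdorff convergence}: since $u_\varepsilon\to+1$ locally uniformly on $\mathcal{C}_2\cap\{x_{n+1}>0\}$ and to $-1$ on $\mathcal{C}_2\cap\{x_{n+1}<0\}$, the closeness to $g$ just established forces $\nu_\varepsilon\cdot e_{n+1}\geq\sqrt{1-\eta(l)}$; otherwise $u_\varepsilon$ would effect a full $-1\to+1$ transition in the wrong direction. Hence at any $X\in\{u_\varepsilon=t\}\cap A_\varepsilon$ with $t\in(-1+b,1-b)$ we have $\partial_{n+1}u_\varepsilon\geq c(b)\varepsilon^{-1}$, and the implicit function theorem yields a local $C^1$ graph $x_{n+1}=h_\varepsilon^t(x)$ satisfying
\[|\nabla_x h_\varepsilon^t|=\frac{|\nabla_x u_\varepsilon|}{|\partial_{n+1}u_\varepsilon|}\leq\frac{\sqrt{1-(\nu_\varepsilon\cdot e_{n+1})^2}}{|\nu_\varepsilon\cdot e_{n+1}|}\leq\frac{\sqrt{\eta(l)}}{\sqrt{1-\eta(l)}}=:c_0(b,l),\]
which tends to zero as $l\to 0$.

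The principal obstacle is the pointwise tilt bound, and within it the lower bound $\varepsilon|\nabla u_\varepsilon|\geq c(b)$ on the transition region $\{|u_\varepsilon|<1-b\}$. This genuinely uses the unit density from \eqref{close to plane 1}: without it transition layers could fold and $|\nabla u_\varepsilon|$ could vanish in the interior. The lower bound is standard once one knows that every $\varepsilon$-blow-up of $u_\varepsilon$ at a point of $\{|u_\varepsilon|<1-b\}$ is a one-dimensional profile, which follows from the classification of entire Allen--Cahn solutions with planar multiplicity-one limit varifold.
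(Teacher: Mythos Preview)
Your overall strategy---extract a pointwise tilt bound on $A_\varepsilon$, then apply the implicit function theorem---is the same as the paper's, and the Lipschitz constant formula at the end is correct. The problem is the justification you give for the key intermediate step, the gradient lower bound $\varepsilon|\nabla u_\varepsilon|\geq c(b)$ in the transition region.

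You argue that this bound holds because ``every $\varepsilon$-blow-up of $u_\varepsilon$ at a point of $\{|u_\varepsilon|<1-b\}$ is a one-dimensional profile, which follows from the classification of entire Allen--Cahn solutions with planar multiplicity-one limit varifold.'' That classification is precisely Theorem~\ref{main result 3}, which in the paper is proved \emph{after} the tilt-excess decay (Theorem~\ref{thm tilt excess decay}) and hence after this lemma; invoking it here is circular. Proposition~\ref{prop lim varifold} and Remark~\ref{rmk Hausdorff convergence} only describe the macroscopic limit; they do not by themselves say anything about $\varepsilon$-scale blow-ups at individual transition points. Without the classification, an $\varepsilon$-blow-up at a generic point of $\{|u_\varepsilon|<1-b\}$ is just an entire solution with the right energy density---there is no a priori reason it must be $g$, and in particular no reason $|\nabla v(0)|$ must be bounded below.

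The fix is to use what is actually available at a point $X_0\in A_\varepsilon$: the defining excess bound at \emph{every} scale. Rescaling $v(X)=u_\varepsilon(X_0+\varepsilon X)$ and choosing $r=\varepsilon$ in the definition of $A_\varepsilon$ gives $\int_{\mathcal{B}_1}\sum_{i=1}^n(\partial v/\partial x_i)^2\leq l$. Now run the compactness argument with this in hand: for a sequence with $l_i\to 0$ the limit satisfies $\partial v/\partial x_i\equiv 0$ for $i\leq n$, so it is one-dimensional, and the energy bound then forces $v(X)=g(x_{n+1}+t)$. This yields \emph{simultaneously} the gradient lower bound $|\nabla v|\geq c(b)$ and the tilt bound $|\partial_{n+1}v|\geq(1-c_1(b,l))|\nabla v|$ on $\mathcal{B}_1$, with $c_1(b,l)\to 0$ as $l\to 0$. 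This is exactly what the paper does, and it makes your propagation-and-integration detour unnecessary: the tilt bound at $X_0$ comes out directly, not by contradiction via a lower bound on the excess.
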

\begin{proof}
Fix a point $X_0\in A_\varepsilon$ with $u_\varepsilon(X_0)=t$. After a rescaling
\[v(X)=u_\varepsilon(X_0+\varepsilon X),\]
we are in the situation that
\begin{equation}\label{3.1}
\Delta v=W^\prime(v),\ \ \ \mbox{in}\ \mathcal{B}_{\varepsilon^{-1}},
\end{equation}
\begin{equation}\label{3.2}
\int_{\mathcal{B}_R(0)}\frac{1}{2}|\nabla v|^2+W(v)\leq CR^n,\ \ \forall \ R\in(0,\varepsilon^{-1}),
\end{equation}
\begin{equation}\label{3.3}
\int_{\mathcal{B}_1(0)}\sum_{i=1}^n\left(\frac{\partial v}{\partial x_i}\right)^2\leq l.
\end{equation}
We claim that there exists an $l_0$ small such that for all $l\leq
l_0$, there exist two constants $c_1(b,l)\in(0,1/2)$ and $c_2(b)$ so
that
\begin{equation}\label{Lip graph at O(1)}
\Big|\frac{\partial v}{\partial x_{n+1}}\Big|\geq
\left(1-c_1(b,l)\right)|\nabla v|\geq c_2(b)\ \ \mbox{in}\
\mathcal{B}_1.
\end{equation}

Assume by the contrary, there exists a sequence of $v_i$ satisfying
all of these conditions \eqref{3.1}-\eqref{3.3} with $l$ replaced by
$l_i$, which goes to $0$ as $i\to0$. By standard elliptic estimates
and Arzel\`{a}-Ascoli theorem, $v_i$ converges to a function $v$ in
$C^2_{loc}(\R^{n+1})$. $v$ is still a solution of \eqref{3.1} in
$\mathbb{R}^{n+1}$. Because $|v|\leq 1$ and
\[|v(0)|=\lim_{i\to+\infty}|v_i(0)|\leq 1-b,\]
by the strong maximum principle, $|v|<1$ in $\R^{n+1}$. After
passing to the limit in \eqref{3.3} (where $l$ is replaced by $l_i$)
and \eqref{3.2}, we see $v(X)\equiv g(x_{n+1}+t)$ for some $t\in\R
$. (For more details about this derivation, see the proof of Lemma
\ref{lem B2}.) Then by \eqref{1-d solution 2},
\[\Big|\frac{\partial v}{\partial x_{n+1}}(X)\Big|=|\nabla v(X)|=\sqrt{2W(v(X))}\geq c(b)\  \ \ \ \mbox{in}\ \mathcal{B}_1.\]
Thus for all $i$ large, $v_i$ satisfies \eqref{Lip graph at O(1)}.
This also implies that $c_1(b,l)$ converges to $0$ as $l\to0$.


By \eqref{Lip graph at O(1)}, the level set
$\{v=v(0)\}\cap\mathcal{B}_1(0)$ is locally a Lipschitz graph in the
form $\{x_{n+1}=h(x)\}$, with its Lipschitz constant $c_0(b,l)\leq
2c_1(b,l)$. Coming back to $u_\varepsilon$ we finish the proof.
\end{proof}

By Lemma \ref{lem B2} and \cite[Proposition 5.6]{H-T}, for any $L>0$
and $X=(x,x_{n+1})\in A_\varepsilon$, if we have chosen $l$
sufficiently small,
\begin{equation}\label{lem behavior at O(e)}
\Pi^{-1}(x)\cap\{u_\varepsilon=u_\varepsilon(X)\}\cap\mathcal{B}_{L\varepsilon}(X)=\{X\}.
\end{equation}

The above results only provide a clear picture of
$\{u_\varepsilon=t\}\cap A_\varepsilon$ at $O(\varepsilon)$ scales.
By the unit density assumption \eqref{close to plane 1}, we can
further claim that
\begin{lem}\label{lem one graph}
Given $b\in(0,1)$, for every $t\in(-1+b,1-b)$ and $x\in
\Pi(A_\varepsilon)$, in $\Pi^{-1}(x)\cap\{u_\varepsilon=t\}$ there
exists exactly one point, $(x, h_\varepsilon^t(x))$ (as in Lemma
\ref{lem 5.3}). Moreover, $h_\varepsilon^t$ is Lipschitz on
$\Pi(A_\varepsilon)$.
\end{lem}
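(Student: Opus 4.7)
The plan is to combine three facts: near any good point the level set $\{u_\varepsilon=t\}$ is a monotone Lipschitz graph in the $x_{n+1}$ direction, as already established in Lemma \ref{lem 5.3} and \eqref{lem behavior at O(e)}; every crossing of $\{u_\varepsilon=t\}$ in the fiber lies in a slab $\{|x_{n+1}|<h\}$ of arbitrarily small width by Proposition \ref{prop exponential decay} and Remark \ref{rmk Hausdorff convergence}; and the unit density of the limit varifold (Proposition \ref{prop lim varifold}) together with the one-sided limits $u_\varepsilon\to\pm 1$ pin down a common increasing orientation at every Allen--Cahn transition in the region.

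Fix $x\in\Pi(A_\varepsilon)$ and $t\in(-1+b,1-b)$. Choose any good point $X_0=(x,s_0)\in A_\varepsilon$ and rescale $v(Y):=u_\varepsilon(X_0+\varepsilon Y)$ as in the proof of Lemma \ref{lem 5.3}. A $C^2_{\mathrm{loc}}$ subsequential limit equals $g(y_{n+1}+c^*)$. Since this limit is strictly monotone and sweeps through $(-1+b,1-b)$, for any large $L$ there is a unique $s_t\in(s_0-L\varepsilon,s_0+L\varepsilon)$ with $u_\varepsilon(x,s_t)=t$, and $\partial_{n+1}u_\varepsilon(x,s_t)\geq c(b)/\varepsilon>0$. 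Setting $h_\varepsilon^t(x):=s_t$ gives existence of a crossing in the fiber.

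For uniqueness, suppose there were a second crossing $(x,s')$ with $s'\neq s_t$. By Proposition \ref{prop exponential decay} combined with Remark \ref{rmk Hausdorff convergence}, both crossings lie inside $\{|x_{n+1}|<h\}$, while at $x_{n+1}=\pm h$ the values of $u_\varepsilon(x,\cdot)$ are already close to $\pm 1$. Two consecutive crossings in the fiber must therefore bracket a local extremum of $u_\varepsilon(x,\cdot)-t$, so at least one of them, call it $(x,s^\ast)$, satisfies $\partial_{n+1}u_\varepsilon(x,s^\ast)\leq 0$. On the other hand, blowing up $u_\varepsilon$ at $(x,s^\ast)$ at the $\varepsilon$-scale produces a bounded entire Allen--Cahn solution $w_\infty$ with $w_\infty(0)=t$, whose limit varifold inherits unit density from Proposition \ref{prop lim varifold} (via the discrepancy estimate \eqref{discrepancy}), and whose one-sided limits $w_\infty\to\pm 1$ as $y_{n+1}\to\pm\infty$ are dictated by Remark \ref{rmk Hausdorff convergence}; the rigidity argument behind \eqref{lem behavior at O(e)} then forces $w_\infty(Y)=g(y_{n+1}+c^{\ast\ast})$, so in particular $\partial_{n+1}u_\varepsilon(x,s^\ast)\geq c(b)/\varepsilon>0$, contradicting the choice of $(x,s^\ast)$.

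The Lipschitz regularity of $h_\varepsilon^t$ on $\Pi(A_\varepsilon)$ now follows by gluing the local Lipschitz graphs of Lemma \ref{lem 5.3}: on a neighborhood of each $x\in\Pi(A_\varepsilon)$, $h_\varepsilon^t$ coincides with the local graph provided by that lemma (by the uniqueness just proved), and a standard covering argument along segments promotes the local Lipschitz constant $c_0(b,l)$ to a global one, with a trivial fallback bound $|h_\varepsilon^t(x)-h_\varepsilon^t(x')|\leq 2h$ on any portion of the segment leaving $\Pi(A_\varepsilon)$. The main obstacle is the blowup at $(x,s^\ast)$ in the uniqueness step: unlike at a good point, the horizontal excess at the $\varepsilon$-scale need not vanish there, so one cannot directly invoke the proof of Lemma \ref{lem 5.3} and must instead combine the varifold convergence to a unit-density plane, the discrepancy estimate \eqref{discrepancy}, and the Modica inequality to deduce both the one-dimensional profile and its increasing orientation.
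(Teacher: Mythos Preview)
Your uniqueness step has a real gap at the blow-up at the hypothetical extra crossing $(x,s^\ast)$. You correctly flag that $(x,s^\ast)$ need not lie in $A_\varepsilon$, so the horizontal excess at the $\varepsilon$-scale there is uncontrolled and the proof of Lemma~\ref{lem 5.3} does not apply. Your proposed remedy---varifold convergence to a unit-density plane, the discrepancy estimate~\eqref{discrepancy}, and the Modica inequality---does not close the gap: Proposition~\ref{prop lim varifold} and Remark~\ref{rmk Hausdorff convergence} describe the macroscopic limit of $u_\varepsilon$, not the $\varepsilon$-scale structure near an individual (possibly bad) point, and in particular they do not force $w_\infty\to\pm1$ as $y_{n+1}\to\pm\infty$, since the blow-up center remains inside the transition layer. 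What \emph{is} true is that monotonicity bounds the energy density of $w_\infty$ at every scale by $(1+\tau_0)\sigma_0\omega_n$; but concluding from this alone that $w_\infty$ is one-dimensional is precisely Theorem~\ref{main result 3}, whose proof (Section~11) relies on the tilt-excess decay and hence on the present lemma---so the reasoning is circular.

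The paper takes a different route. It isolates a quantitative multiplicity-one statement, Lemma~\ref{lem continuation from O(1) to O(e)}, proved by its own compactness argument, and then applies it \emph{at a good point $X_0\in A_\varepsilon$} at every scale $r\in(\varepsilon,1/R_0)$. The outcome is the cone condition~\eqref{cone of monotonicity}--\eqref{cone of monotonicity 1}: the level set through $X_0$ (and the whole region $\{u_\varepsilon>u_\varepsilon(X_0)\}$) is trapped in a cone of opening $c_2(l)$ at $X_0$ across all intermediate scales. On the fiber this cone degenerates, and together with~\eqref{lem behavior at O(e)} it forces uniqueness. The same cone gives the Lipschitz bound on $\Pi(A_\varepsilon)$ directly: if $X_0,X_1\in A_\varepsilon$ lie on the same level set with $|X_0-X_1|\ge\varepsilon$, the cone at $X_0$ yields $|(X_1)_{n+1}-(X_0)_{n+1}|\le c_2(l)|x_1-x_0|$, and Lemma~\ref{lem 5.3} handles $|X_0-X_1|<\varepsilon$. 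Your ``covering along segments'' with fallback bound $2h$ does not achieve this: the segment may leave $\Pi(A_\varepsilon)$ even when $|x-x'|$ is small, and $2h/|x-x'|$ is then unbounded.
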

This lemma is a consequence of the following lemma, provided that we
have chosen first $R_0$ large in the following lemma and then $l$
sufficiently small in the definition of $A_\varepsilon$. The proof
of Lemma \ref{lem one graph} will be completed after Remark \ref{rmk
5.7}.

\begin{lem}\label{lem continuation from O(1) to O(e)}
For any $b\in(0,1)$ and $\delta>0$, there exist three constants
$R_0$ large and $\tau_1, l_2$ small so that the following holds.
Suppose that $u_\varepsilon$ is a solution of \eqref{equation} in
$\mathcal{B}_{R_0}$, where $\varepsilon\leq 1$, satisfying
$|u_\varepsilon(0)|\leq 1-b$, the Modica inequality \eqref{Modica
inequality},
\[R_0^{-n}\int_{\mathcal{B}_{R_0}}\frac{\varepsilon}{2}|\nabla u_\varepsilon|^2+\frac{1}{\varepsilon}W(u_\varepsilon)\leq \left(1+\tau_1\right)\sigma_0\omega_n,\]
 and
\[R_0^{-n}\int_{\mathcal{B}_{R_0}}\left[1-\left(\nu_\varepsilon\cdot e_{n+1}\right)^2\right]
\varepsilon|\nabla u_\varepsilon|^2\leq l_2,\] then
$\{u_\varepsilon=u_\varepsilon(0)\}\cap \mathcal{B}_1$ is contained
in the $\delta$-neighborhood of $\R^n\cap\mathcal{B}_1$.
\end{lem}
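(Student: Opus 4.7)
The plan is to argue by contradiction. Assuming the lemma fails, one extracts sequences $R_j \to \infty$, $\tau_j \to 0$ and $l_j \to 0$, which one is free to arrange so that $R_j^n l_j \to 0$, along with solutions $u_j$ of \eqref{equation} on $\mathcal{B}_{R_j}$ with parameters $\varepsilon_j \in (0,1]$ satisfying all of the stated hypotheses (with $R_0 = R_j$, $\tau_1 = \tau_j$, $l_2 = l_j$), but admitting points $Y_j \in \{u_j = u_j(0)\} \cap \mathcal{B}_1$ with $\mathrm{dist}(Y_j, \R^n) \geq \delta$. After a further subsequence, one of the following holds: (a) $\varepsilon_j \to 0$, or (b) $\varepsilon_j \to \varepsilon_\infty \in (0,1]$; I would treat these separately.

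In case (a), I would apply the Hutchinson--Tonegawa theory on $\mathcal{B}_2$. Proposition \ref{monotonicity formula} transfers the density bound at scale $R_j$ to all smaller radii, while the excess hypothesis combined with $R_j^n l_j \to 0$ forces $\int_{\mathcal{B}_2}[1-(\nu_j\cdot e_{n+1})^2]\varepsilon_j|\nabla u_j|^2 \to 0$. The limit varifold $V$ is therefore stationary, rectifiable, of integer multiplicity, with tangent plane $\R^n$ almost everywhere on $\mathrm{spt}\|V\|$; since $|u_j(0)|\leq 1-b$ the uniform convergence $|u_j|\to 1$ outside $\mathrm{spt}\|V\|$ places $0\in\mathrm{spt}\|V\|$, so integer multiplicity yields the lower bound $r^{-n}\|V\|(\mathcal{B}_r)\geq \sigma_0\omega_n$, matching the upper bound coming from the energy hypothesis. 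Equality in the monotonicity formula forces $V$ to be a cone at the origin, and then exactly as in Proposition \ref{prop lim varifold} one concludes $V = \sigma_0\mathcal{H}^n\lfloor_{\R^n}$ on $\mathcal{B}_2$. Proposition \ref{prop exponential decay} then gives $|u_j|\to 1$ uniformly on $\overline{\mathcal{B}_1}\cap\{|x_{n+1}|\geq\delta\}$, contradicting $|u_j(Y_j)| = |u_j(0)| \leq 1-b$.

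In case (b), standard interior elliptic estimates for the semilinear equation (with $\varepsilon_j$ bounded away from $0$) produce $u_j \to u_\infty$ in $C^2_{\mathrm{loc}}(\R^{n+1})$ along a subsequence, where $u_\infty$ is an entire solution of $\varepsilon_\infty\Delta u_\infty = \varepsilon_\infty^{-1}W'(u_\infty)$ satisfying the Modica inequality, $|u_\infty(0)|\leq 1-b$, and $R^{-n}\int_{\mathcal{B}_R}\bigl[\tfrac{\varepsilon_\infty}{2}|\nabla u_\infty|^2+\tfrac{1}{\varepsilon_\infty}W(u_\infty)\bigr]\leq \sigma_0\omega_n$ for every $R>0$ (from monotonicity applied at scale $R_j$ and then passing to the limit). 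The excess hypothesis combined with $R_j^n l_j \to 0$ forces $\int_{\mathcal{B}_R}\sum_{i=1}^n(\partial_i u_\infty)^2 = 0$ for every $R$, so $u_\infty = f(x_{n+1})$. Since $f$ is a bounded, finite-energy solution of the one-dimensional Allen--Cahn ODE with $|f(0)|<1$, the classification of 1D profiles gives $f = \pm g_{\varepsilon_\infty}(\cdot - c)$ for some $c\in\R$, whence $\{u_\infty = u_\infty(0)\} = \R^n$ and $\partial_{n+1}u_\infty$ is bounded away from zero on this hyperplane. The implicit function theorem together with $C^2_{\mathrm{loc}}$ convergence then forces $\{u_j = u_j(0)\}\cap\overline{\mathcal{B}_1}\to\R^n\cap\overline{\mathcal{B}_1}$ in Hausdorff distance, contradicting $\mathrm{dist}(Y_j,\R^n)\geq\delta$.

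The main technical obstacle is the identification of the limit in case (a): the matching upper and lower bounds on mass density come from two different sources, namely the unit-density energy hypothesis \eqref{close to plane 1} for the upper bound and integer multiplicity from Hutchinson--Tonegawa together with $0\in\mathrm{spt}\|V\|$ for the lower bound. The freedom to take $l_j$ as small as one wishes given $R_j$ (arranging $R_j^n l_j \to 0$) is essential in both cases, so that the large-scale smallness of the excess passes cleanly into zero excess on every fixed scale in the limit.
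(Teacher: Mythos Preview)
Your argument is correct and follows the same overall architecture as the paper's proof: contradiction, then a dichotomy according to whether $\varepsilon_j\to 0$ or $\varepsilon_j\to\varepsilon_\infty>0$. There are two implementation differences worth noting. First, you send $R_j\to\infty$, which forces you to arrange $R_j^n l_j\to 0$ so that the excess passes to fixed scales; the paper instead keeps $R_0$ fixed (to be determined) and lets only $\tau_k,l_k\to 0$, which makes this step automatic and, in Case~1, requires determining $R_0$ large enough to rule out the constant and periodic one-dimensional profiles on $(-R_0/2,R_0/2)$ --- your entire-solution limit handles this more cleanly. Second, in the $\varepsilon_j\to 0$ case the paper is more direct: once the tangent plane of the limit varifold $V$ equals $\R^n$ $\|V\|$-a.e., it concludes immediately that $V=\sigma_0\sum_j i(T_j)$ is a finite sum of horizontal planes with unit density, and then the energy upper bound $\|V\|(\mathcal{B}_{R_0})\le\sigma_0\omega_n R_0^n$ forbids a second plane through the limit point $X_\infty$. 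Your route via matching density bounds, the cone property, and Allard regularity (as in Proposition~\ref{prop lim varifold}) reaches the same conclusion but invokes heavier machinery; the paper's argument avoids Allard entirely here. Finally, in your case~(b) the implicit function theorem is unnecessary: as in the paper, it suffices to pass the identity $u_j(Y_j)=u_j(0)$ to the limit and use strict monotonicity of the one-dimensional profile to force $Y_\infty\in\R^n$.
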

This result can be seen as a quantitative version of the
multiplicity one property for the limit varifold $V$.

\begin{proof}
Assume that there exists a sequence of solutions $u_k$ satisfying
the assumptions in this lemma, with $\varepsilon$ replaced by
$\varepsilon_k\in(0,1]$,
\begin{equation}\label{3.4.0}
R_0^{-n}\int_{\mathcal{B}_{R_0}}\frac{\varepsilon_k}{2}|\nabla
u_k|^2+\frac{1}{\varepsilon_k}W(u_{\varepsilon_k})\leq
\left(1+\tau_k\right)\sigma_0\omega_n,
\end{equation}
where $\tau_k\to0$, and
\begin{equation}\label{3.4}
R_0^{-n}\int_{\mathcal{B}_{R_0}}\varepsilon_k\sum_{i=1}^n\left(\frac{\partial
u_k}{\partial x_i}\right)^2dX\to0,
\end{equation}
but there exists $X_k=(x_k,x_{n+1,k})\in
\{u_k=u_k(0)\}\cap\mathcal{B}_1$ with $|x_{n+1,k}|\geq\delta$. The
constant $R_0$ will be determined below. Without loss of generality,
assume that $X_k$ converges to some point
$X_\infty=(x_\infty,x_{n+1,\infty})\in \mathcal{B}_1$ with
$|x_{n+1,\infty}|\geq\delta$.

The proof is divided into two cases.\\
{\bf Case 1.} $\varepsilon_k$ converges to some $\varepsilon_0>0$
(after subtracting a subsequence).

 By standard elliptic estimates and Arzel\`{a}-Ascoli theorem, $u_k$ converges to a function $u_\infty$
 in $C^2(\mathcal{B}_{R_0-1})$. Because $|u_k|<1$,
$|u_\infty|\leq 1$ in $\mathcal{B}_{R_0-1}$. $u_\infty$ is a
solution of \eqref{equation} with $\varepsilon$ replaced by
$\varepsilon_0$. Since $|u_\infty(0)|\leq 1-b<1$, by the strong
maximum principle, $|u_\infty|<1$ strictly in $\mathcal{B}_{R_0-1}$.
Passing to the limit in \eqref{3.4.0} leads to
\begin{equation}\label{5.001}
\int_{\mathcal{B}_{R_0-1}}\frac{\varepsilon_0}{2}|\nabla
u_\infty|^2+\frac{1}{\varepsilon_0}W(u_\infty)\leq
\sigma_0\omega_n\left(R_0-1\right)^n.
\end{equation}
Since $\varepsilon_0\leq 1$, we cannot have $u_\infty\equiv
u_\infty(0)$, because otherwise
\[\int_{\mathcal{B}_{R_0-1}}\frac{\varepsilon_0}{2}|\nabla u_\infty|^2+\frac{1}{\varepsilon_0}W(u_\infty)
\geq
\frac{1}{\varepsilon_0}W(u_\infty(0))\omega_{n+1}\left(R_0-1\right)^{n+1}>\sigma_0\omega_n\left(R_0-1\right)^n,\]
if we choose $R_0$ large to fufill the last inequality. (It depends
only on $b$, the dimension $n$ and the potential $W$.)

By passing to the limit in \eqref{3.4} we obtain
\[R_0^{-n}\int_{\mathcal{B}_{R_0}}\varepsilon_0\sum_{i=1}^n\left(\frac{\partial u_\infty}{\partial x_i}\right)^2dX=0.\]
Thus $u_\infty(X)\equiv \tilde{u}(x_{n+1})$.

$\tilde{u}$ is a one dimensional solution. By \eqref{5.001}, we have
\begin{equation}\label{1d energy bound}
\int_{-R_0/2}^{R_0/2}\frac{\varepsilon_0}{2}\Big|\frac{d
\tilde{u}}{dt}\Big|^2+\frac{1}{\varepsilon_0}W(\tilde{u})dt \leq
C\sigma_0,
\end{equation}
 for some universal constant $C$ independent of
$R_0$. Since $\varepsilon_0\leq 1$, we claim that if $R_0$ is
sufficiently large,
\begin{equation}\label{1d monotonicity}
\frac{\partial u_\infty}{\partial x_{n+1}}(X)\neq0,\ \ \ \mbox{in}\
(-1,1).
\end{equation}
This can be proved by a contradiction argument, using the following
fact: Except the heteroclinic solution $g$, all the other solutions
of \eqref{equation 0} in $\R^1$ are periodic, hence their energy on
$\R$ is infinite.

 By \eqref{1d monotonicity}, $u_\infty\neq u_\infty(0)$ in
$\mathcal{B}_1\setminus\R^n$. However, by the convergence of $X_k$
and uniform convergence of $u_k$, $u_\infty(X_\infty)=u_\infty(0)$.
Because $X_\infty\in \mathcal{B}_1\setminus\R^n$, this is a a
contradiction.

{\bf Case 2.} $\varepsilon_k\to0$.

Let $V_k$ be the varifold associated to $u_k$ as defined in Section
4. For any $\eta\in C_0^\infty(\mathcal{B}_{R_0})$, let
\[\Phi(X,S)=\eta(X)<Se_{n+1},e_{n+1}>\in C_0^\infty(\mathcal{B}_{R_0}\times G(n)).\]
By \eqref{3.4},
\[<V_k,\Phi>=\int_{\mathcal{B_{R_0}}}\eta(X)\varepsilon_k\sum_{i=1}^n\left(\frac{\partial u_k}{\partial x_i}\right)^2dX\to0.\]
Let $V_\infty$ be the limit varifold of $V_k$, which is stationary
rectifiable with unit density by Hutchinson-Tonegawa theorem. Then
\[0=<V,\Phi>=\int \eta(X)<Te_{n+1},e_{n+1}>d\|V_\infty\|,\]
where $T$ is the weak tangent plane of $V$ at $X$. Hence $T=\R^n$
$\|V_\infty\|$-a.e. and $V_\infty=\sigma_0\sum_j i(T_j)$ in
$B_{R_0/2}\times(-R_0/2,R_0/2)$, where $T_j=\R^n\times \{(0, t_j)\}$
for some $j$ and $i(T_j)$ is the standard varifold associated to it
with unit density. By our assumptions, there are at least two
components, say $T_0$ and $T_1$, containing the points $0$ and
$X_\infty$ respectively.

However, passing to the limit in \eqref{3.4.0} gives
\[\|V\|(\mathcal{B}_{R_0})\leq\sigma_0\omega_nR_0^n=\sigma_0\|T_0\|(\mathcal{B}_{R_0}).\]
Thus we cannot have any more components other than $T_0$. This also
leads to a contradiction.
\end{proof}
\begin{rmk}\label{rmk 5.7}
It will be useful to write the dependence of $\delta$ and $l_2$
reversely as $\delta=c_2(l_2)$. This function is a modulus of
continuity, i.e. a non-decreasing function satisfying
$\lim_{l_2\to0}c_2(l_2)=0$.
\end{rmk}

For any $X_0=(x_0,x_{0,n+1})\in A_\varepsilon$ and
$r\in(\varepsilon,1/R_0)$, applying the previous lemma to
\[\tilde{u}_{\varepsilon,r}(X)=u_\varepsilon(X_0+r X)\]
gives
\begin{equation}\label{cone of monotonicity}
\{u_\varepsilon=u_\varepsilon(X_0)\}\cap
\left(\mathcal{B}_{1/2}(X_0)\setminus
\mathcal{B}_\varepsilon(X_0)\right)\subset\{|x_{n+1}-x_{0,n+1}|\leq
c_2(l)|x-x_0|\}.
\end{equation}
Together with \eqref{lem behavior at O(e)}, this implies that for
every $t\in(-1+b,1-b)$ and $x\in \Pi(A_\varepsilon)$, there exists
at most one point in $\Pi^{-1}(x)\cap\{u_\varepsilon=t\}$.

On the other hand, by Remark \ref{rmk Hausdorff convergence}, for
each $x\in B_1$,
\[u_\varepsilon(x,1)>1-b, \ \ \ \ u_\varepsilon(x,-1)<-1+b.\]
Thus, by continuity of $u_\varepsilon$, there must exist one
$x_{n+1}\in(-1,1)$ satisfying $u_\varepsilon(x,x_{n+1})=t$.

 In conclusion, for any $x\in\Pi(A_\varepsilon)$, there exists a unique point $(x,x_{n+1})\in \Pi^{-1}(x)\cap\{u_\varepsilon=t\}$.
Combining \eqref{lem behavior at O(e)} with \eqref{cone of
monotonicity}, it can be seen that $h_\varepsilon^t$ is Lipschitz on
$\Pi(A_\varepsilon)$. This completes the proof of Lemma \ref{lem one
graph}.


Recall that we have assumed $u_\varepsilon>0$ in
$\mathcal{C}_1\cap\{x_{n+1}>h\}$ and $u_\varepsilon<0$ in
$\mathcal{C}_1\cap\{x_{n+1}<-h\}$ for some $h>0$, see Remark
\ref{rmk Hausdorff convergence}. (This $h$ can be made arbitrarily
small as $\varepsilon\to0$.) Hence for any
$r\in(\varepsilon,1/{R_0})$, by continuous dependence on $r$,
\eqref{cone of monotonicity} can be improved to
\begin{equation}\label{cone of monotonicity 1}
\{x_{n+1}-x_{0,n+1}>c_2(l)|x-x_0|\}\cap
\left(\mathcal{B}_{1/2}(X_0)\setminus
\mathcal{B}_\varepsilon(X_0)\right)\subset\{u_\varepsilon>u_\varepsilon(X_0)\}.
\end{equation}
When $r=\varepsilon$, combining this with Lemma \ref{lem behavior at
O(e)}, we obtain
\begin{equation}\label{derivative monotinicity}
\frac{\partial u_\varepsilon}{\partial x_{n+1}}(X)\geq
\left(1-c_1(b,l)\right)|\nabla
u_\varepsilon(X)|\geq\frac{c(b)}{\varepsilon}, \ \forall \ X\in
A_\varepsilon.
\end{equation}

In the following, for $t\in(-1+b,1-b)$, we denote the Lipschitz
functions by $h_\varepsilon^t$. By \eqref{lem behavior at O(e)}, the
definition domains of $h_\varepsilon^t$ can be made to be a common
one, $\Pi(A_\varepsilon)$. By \eqref{derivative monotinicity},
$h^t_\varepsilon$ is strictly increasing in $t\in(-1+b,1-b)$.

In the above construction, $h_\varepsilon^t$ is only defined on a
subset of $B_1$, but we can extend it to $B_1$ without increasing
its Lipschitz constant by letting (see for example \cite[Theoerm
3.1.3]{Lin})
\begin{equation}\label{Lip extension}
h_\varepsilon^t(x):=\inf_{y\in
\Pi(A_\varepsilon)}\left(h_\varepsilon^t(y)+c_3(b,l)|y-x|\right),\ \
\forall x\in B_1.
\end{equation}
Here $c_3(b,l)=\max\{c_0(b,l),c_2(l)\}$. This extension preserves
the monotonicity of $h^t_\varepsilon$ in $t$.

In Section 7 and 8, $b$ and hence $l$ may be decreased further. Thus
it is worthy to note the dependence of these Lipschitz functions on
$b$ and $l$.
\begin{rmk}\label{rmk 2}
If $l$ is decreased, the definition domain of $h^t_\varepsilon$ is
also decreased. But on the common part, these two constructions give
the same function. If we define two families by choosing two
$0<b_1<b_2<1$, these two families also coincide on $(-1+b_2,
1-b_2)$.
\end{rmk}

Notation: $D_\varepsilon=\Pi(A_\varepsilon)$.

In the following it will be useful to keep in mind that, on
$\{u_\varepsilon=t\}\cap A_\varepsilon$,
\begin{equation}\label{x,t coordinates}
\frac{\partial u_\varepsilon}{\partial x_{n+1}}=\left(\frac{\partial
h_\varepsilon^t}{\partial t}\right)^{-1},\ \ \ \frac{\partial
u_\varepsilon}{\partial x_i}=-\left(\frac{\partial
h_\varepsilon^t}{\partial t}\right)^{-1}\frac{\partial
h_\varepsilon^t}{\partial x_i},\ \ 1\leq i\leq n.
\end{equation}

\section{Estimates on $h_\varepsilon^t$}
\numberwithin{equation}{section}
 \setcounter{equation}{0}

First we give an $H^1$ bound.
\begin{lem}\label{lem H1 bound}
There exists a constant $C(b)$ independent of $\varepsilon$ such
that,
\[\int_{-1+b}^{1-b}\int_{B_1}|\nabla h_\varepsilon^t|^2dxdt\leq C(b)\delta_\varepsilon^2.\]
\end{lem}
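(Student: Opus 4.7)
The plan is to work in the $(x,t)$ coordinate system on the good region, convert the excess integrand into a weighted Dirichlet energy of $h_\varepsilon^t$, and then bootstrap from $D_\varepsilon$ to all of $B_1$ using the Lipschitz extension.

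First I would use \eqref{x,t coordinates} to compute, on $\{u_\varepsilon = t\}\cap A_\varepsilon$,
\begin{equation*}
|\nabla u_\varepsilon|^2 = \left(\partial_t h_\varepsilon^t\right)^{-2}\bigl(1 + |\nabla_x h_\varepsilon^t|^2\bigr),\qquad 1-(\nu_\varepsilon\cdot e_{n+1})^2 = \frac{|\nabla_x h_\varepsilon^t|^2}{1+|\nabla_x h_\varepsilon^t|^2},
\end{equation*}
so that the excess integrand factors as $[1-(\nu_\varepsilon\cdot e_{n+1})^2]\,\varepsilon|\nabla u_\varepsilon|^2 = \varepsilon\,(\partial_t h_\varepsilon^t)^{-2}\,|\nabla_x h_\varepsilon^t|^2$. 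Applying the coarea formula to foliate the set $\bigcup_{t\in(-1+b,1-b)}\{u_\varepsilon=t\}\cap A_\varepsilon$ by level sets and using $dx_{n+1} = \partial_t h_\varepsilon^t\, dt$, I would obtain
\begin{equation*}
\int_{-1+b}^{1-b}\!\!\int_{D_\varepsilon} \varepsilon\,|\nabla_x h_\varepsilon^t|^2\,\bigl(\partial_t h_\varepsilon^t\bigr)^{-1}\,dx\,dt \;\le\; \int_{\mathcal{C}_2}\bigl[1-(\nu_\varepsilon\cdot e_{n+1})^2\bigr]\varepsilon|\nabla u_\varepsilon|^2\,dX \;=\; 2^n\delta_\varepsilon^2.
\end{equation*}

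Next I would convert the weight $\varepsilon(\partial_t h_\varepsilon^t)^{-1}$ into a lower bound. Since $\partial_t h_\varepsilon^t = (\partial_{x_{n+1}}u_\varepsilon)^{-1}$ on $A_\varepsilon$, the monotonicity estimate \eqref{derivative monotinicity} gives $\varepsilon(\partial_t h_\varepsilon^t)^{-1} = \varepsilon\,\partial_{x_{n+1}}u_\varepsilon \ge c(b)$. Inserting this into the previous inequality yields
\begin{equation*}
\int_{-1+b}^{1-b}\!\!\int_{D_\varepsilon}|\nabla_x h_\varepsilon^t|^2\,dx\,dt \;\le\; C(b)\,\delta_\varepsilon^2.
\end{equation*}

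Finally I would handle the complement $B_1\setminus D_\varepsilon$, which is the projection of the bad set. By the Lipschitz extension \eqref{Lip extension}, $|\nabla h_\varepsilon^t| \le c_3(b,l)$ everywhere on $B_1$, while Lemma \ref{lem 3.2} controls $\mathcal{H}^n(B_1\setminus D_\varepsilon) \le Cl^{-1}\delta_\varepsilon^2$. Since $l$ is a fixed (small) universal parameter chosen in the previous section, this contributes
\begin{equation*}
\int_{-1+b}^{1-b}\!\!\int_{B_1\setminus D_\varepsilon}|\nabla h_\varepsilon^t|^2\,dx\,dt \;\le\; 2\,c_3(b,l)^2\,C l^{-1}\delta_\varepsilon^2 \;\le\; C(b)\,\delta_\varepsilon^2,
\end{equation*}
and summing the two pieces completes the proof. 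The only delicate point is justifying the coarea change of variables rigorously despite $h_\varepsilon^t$ being merely Lipschitz; this is standard once one notes that on $A_\varepsilon$ the map $(x,x_{n+1})\mapsto(x,u_\varepsilon(x,x_{n+1}))$ is a bi-Lipschitz diffeomorphism by \eqref{derivative monotinicity} and the gradient bound \eqref{gradient bound}.
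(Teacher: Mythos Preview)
Your proof is correct and follows essentially the same approach as the paper: bound the integral over $D_\varepsilon$ by converting the excess into a weighted Dirichlet energy via the $(x,t)$ coordinates and using the lower bound $\varepsilon\,\partial_{x_{n+1}}u_\varepsilon\ge c(b)$ on $A_\varepsilon$, then handle $B_1\setminus D_\varepsilon$ with the Lipschitz bound and Lemma~\ref{lem 3.2}. The only cosmetic difference is that the paper invokes the co-area formula (picking up the surface measure $d\mathcal{H}^n=\sqrt{1+|\nabla h_\varepsilon^t|^2}\,dx$ and the factor $\varepsilon|\nabla u_\varepsilon|$) whereas you use the bi-Lipschitz change of variables $(x,t)\mapsto(x,h_\varepsilon^t(x))$ with Jacobian $\partial_t h_\varepsilon^t$; the two computations are equivalent.
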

\begin{proof}
By Lemma \ref{lem 3.2}, $\mathcal{H}^n(B_1\setminus
D_\varepsilon)\leq C\delta_\varepsilon^2$. The construction in the
previous section implies that $|\nabla h^t_\varepsilon|\leq
c_3(b,l)$ in $B_1$ for all $t\in(-1+b,1-b)$. Hence
\begin{equation}\label{4.1}
\int_{-1+b}^{1-b}\int_{B_1\setminus D_\varepsilon}|\nabla
h_\varepsilon^t|^2dxdt\leq C\delta_\varepsilon^2.
\end{equation}
Next, by noting that on $A_\varepsilon$, $\varepsilon|\nabla
u_\varepsilon|\geq c(b)$ (see \eqref{Lip graph at O(1)}), we have
\begin{eqnarray*}
\delta_\varepsilon^2&\geq&\int_{A_\varepsilon}\left[1-\left(\nu_\varepsilon\cdot e_{n+1}\right)^2\right]\varepsilon|\nabla u_\varepsilon|^2dX\\
&=&\int_{-1+b}^{1-b}\left[\int_{\{u_\varepsilon=t\}\cap
A_\varepsilon} \left(1-\left(\nu_\varepsilon\cdot
e_{n+1}\right)^2\right)\varepsilon|\nabla
u_\varepsilon|d\mathcal{H}^n\right]dt
\quad\mbox{(by the co-area formula)}\\
&\geq&c(b)\int_{-1+b}^{1-b}\left(\int_{D_\varepsilon}\left[1-\frac{1}{1+|\nabla h_\varepsilon^t|^2}\right]
\sqrt{1+|\nabla h_\varepsilon^t|^2}dx\right)dt \quad\mbox{(by \eqref{x,t coordinates})}\\
&\geq&c(b)\int_{-1+b}^{1-b}\left(\int_{D_\varepsilon}|\nabla
h_\varepsilon^t|^2dx\right)dt,
\end{eqnarray*}
if we have chosen $l$ so small that the Lipschitz constants of
$h^t_\varepsilon$, $c_3(b,l)\leq 1/2$.
\end{proof}

With this lemma in hand, we first choose a
$t_\varepsilon\in(-1+b,1-b)$ such that
\[\int_{B_1}|\nabla h^{t_\varepsilon}_\varepsilon|^2dx\leq C(b)\delta_\varepsilon^2,\]
and then take a $\lambda_\varepsilon$ so that the function defined by
\begin{equation}\label{defintion blow up sequence}
\bar{h}_\varepsilon:=\frac{1}{\delta_\varepsilon}h_\varepsilon^{t_\varepsilon}-\lambda_\varepsilon,
\end{equation}
satisfies $\int_{B_1}\bar{h}_\varepsilon=0$. By this choice and the
Poincar\'{e} ienquality,
\begin{equation}\label{6.3}
\int_{B_1}\bar{h}_\varepsilon(x)^2dx\leq C\int_{B_1}|\nabla
h_\varepsilon(x)|^2dx\leq C(b).
\end{equation}
Thus we can assume, after passing to a subsequence of
$\varepsilon\to0$, $\bar{h}_\varepsilon$ converges to a function
$\bar{h}$, weakly in $H^1(B_1)$ and strongly in $L^2(B_1)$.

Let
\[\bar{h}^t_\varepsilon:=\frac{1}{\delta_\varepsilon}h_\varepsilon^{t}-\lambda_\varepsilon.\]
In $D_\varepsilon$,
\begin{equation}\label{5.8}
0\leq\frac{\partial h^t_\varepsilon}{\partial t}=\left(\frac{\partial u_\varepsilon}{\partial x_{n+1}}\right)^{-1}\leq C(b)\varepsilon,
\end{equation}
with a constant $C(b)$ depending only on $b$.
Hence
\begin{equation}\label{5.8.1}
0\leq h^{1-b}_\varepsilon-h^{-1+b}_\varepsilon\leq C(b)\varepsilon, \ \ \ \mbox{in}\ D_\varepsilon.
\end{equation}
This also holds for $x\in B_1\setminus D_\varepsilon$ by \eqref{Lip extension}.

Hence for any $-1+b<t_1<t_2<1-b$,
\begin{equation}\label{4.2}
\int_{B_1}\left(h_\varepsilon^{t_1}-h_\varepsilon^{t_2}\right)^2
\leq C(b)\varepsilon^2.
\end{equation}
Because $\delta_\varepsilon\gg\varepsilon$, for any sequence of $\tilde{t}_\varepsilon\in(-1+b,1-b)$, $\bar{h}^{\tilde{t}_\varepsilon}_\varepsilon$ still converges to $\bar{h}$ in $L^2(B_1)$.

Since $\delta_\varepsilon^{-1}\nabla h^t_\varepsilon$ are uniformly
bounded in $L^2(B_1\times (-1+b,1-b),\mathbb{R}^n)$, it can be
assumed to converge weakly to some limit in  $L^2(B_1\times
(-1+b,1-b),\mathbb{R}^n)$. By the above discussion, this limit must
be $\nabla\bar{h}$.

By Remark \ref{rmk 2}, $\bar{h}$ is independent of the choice of
$b$. Hence we have a universal constant $C$,
which is independent of
$b$ and $l$, such that
\begin{equation}\label{H1 bound on the blow up limit}
\int_{B_1}|\nabla\bar{h}|^2+\bar{h}^2\leq C.
\end{equation}

Concerning the size of $\lambda_\varepsilon$, we have
\begin{lem}\label{lem 5.2}
$\lim_{\varepsilon\to 0}|\lambda_\varepsilon\delta_\varepsilon|=0$.
\end{lem}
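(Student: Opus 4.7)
\textbf{Proof plan for Lemma \ref{lem 5.2}.}

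The normalization $\int_{B_1}\bar{h}_\varepsilon=0$ in \eqref{defintion blow up sequence} rewrites as
\[
\lambda_\varepsilon\delta_\varepsilon=\frac{1}{|B_1|}\int_{B_1}h_\varepsilon^{t_\varepsilon}(x)\,dx,
\]
so the task reduces to showing $\int_{B_1}h_\varepsilon^{t_\varepsilon}\to 0$. I would exploit the fact that $h_\varepsilon^{t_\varepsilon}$ is a height function of a level set that, by the exponential decay of Proposition \ref{prop exponential decay}, is trapped in an arbitrarily thin tubular neighborhood of $\R^n$.

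More concretely, fix $\eta>0$. Since $|t_\varepsilon|\le 1-b$ uniformly, Proposition \ref{prop exponential decay} (applied to any $h\le\eta$) forces $u_\varepsilon^2\to 1$ uniformly on $\mathcal{C}_2\setminus\{|x_{n+1}|\le\eta\}$, hence for all $\varepsilon$ sufficiently small
\[
\{u_\varepsilon=t_\varepsilon\}\cap\mathcal{C}_1\subset\{|x_{n+1}|\le\eta\}.
\]
In particular, $|h_\varepsilon^{t_\varepsilon}(y)|\le\eta$ for every $y\in D_\varepsilon$. Plugging this bound into the explicit Lipschitz extension formula \eqref{Lip extension} and using $c_3=c_3(b,l)$ together with the fact that $\mathrm{diam}\,B_1\le 2$ gives
\[
-\eta\le h_\varepsilon^{t_\varepsilon}(x)\le\eta+2c_3(b,l),\qquad\forall\,x\in B_1.
\]

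Splitting the integral over $D_\varepsilon$ and $B_1\setminus D_\varepsilon$ and invoking Lemma \ref{lem 3.2} to bound $|B_1\setminus D_\varepsilon|\le Cl^{-1}\delta_\varepsilon^2$, I get
\[
\Bigl|\int_{B_1}h_\varepsilon^{t_\varepsilon}\Bigr|\;\le\;\eta|B_1|\;+\;\bigl(\eta+2c_3(b,l)\bigr)\cdot Cl^{-1}\delta_\varepsilon^2.
\]
Since $\delta_\varepsilon\to 0$ by \eqref{small excess}, letting $\varepsilon\to 0$ with $\eta$ fixed yields $\limsup_\varepsilon|\lambda_\varepsilon\delta_\varepsilon|\le\eta$; sending $\eta\to 0$ concludes the proof. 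There is no real obstacle here—the one point to verify is that the exponential decay from Proposition \ref{prop exponential decay} applies uniformly in $t_\varepsilon\in(-1+b,1-b)$, but this is automatic since $|t_\varepsilon|$ stays uniformly away from $\pm 1$ and $u_\varepsilon^2\to 1$ uniformly on compact subsets of $\mathcal{C}_2\setminus\R^n$.
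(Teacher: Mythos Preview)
Your proof is correct and follows essentially the same route as the paper: both start from the identity $\lambda_\varepsilon\delta_\varepsilon=|B_1|^{-1}\int_{B_1}h_\varepsilon^{t_\varepsilon}$, use Proposition~\ref{prop exponential decay} to get $\sup_{D_\varepsilon}|h_\varepsilon^{t_\varepsilon}|\to 0$, and then control the contribution from $B_1\setminus D_\varepsilon$. The only cosmetic difference is in this last step: the paper converts the measure bound from Lemma~\ref{lem 3.2} into a distance bound $\mbox{dist}(x,D_\varepsilon)\leq Cl^{-1/n}\delta_\varepsilon^{2/n}$ and combines it with the Lipschitz constant to obtain the stronger statement $\sup_{B_1}|h_\varepsilon^{t_\varepsilon}|\to 0$, whereas you use the measure bound directly together with the crude uniform bound $|h_\varepsilon^{t_\varepsilon}|\leq\eta+2c_3(b,l)$ on $B_1\setminus D_\varepsilon$; your version suffices for the integral statement needed here.
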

\begin{proof}
Note that
\begin{equation}\label{5.9}
\lambda_\varepsilon\delta_\varepsilon=\int_{B_1}h_\varepsilon^{t_\varepsilon}.
\end{equation}

By Proposition \ref{prop exponential decay},
\[\lim_{\varepsilon\to0}\sup_{\mathcal{C}_1\cap\{|u_\varepsilon|\leq 1-b\}}|x_{n+1}|=0.\]
Thus
\begin{equation}\label{5.10}
\lim_{\varepsilon\to0}\sup_{t\in(-1+b,1-b)}\sup_{x\in D_\varepsilon}|h^t_\varepsilon(x)|=0.
\end{equation}

For any $x\in B_1\setminus D_\varepsilon$, by Lemma \ref{lem 3.2},
\[\mbox{dist}(x,D_\varepsilon)\leq Cl^{-\frac{1}{n}}\delta_\varepsilon^{\frac{2}{n}}.\]
Because the Lipschitz constant of $h^t_\varepsilon$ is smaller than
$c_3(b,l)\leq 1$, we obtain
\[\sup_{t\in(-1+b,1-b)}\sup_{x\in B_1\setminus D_\varepsilon}|h^t_\varepsilon(x)|\leq
\sup_{t\in(-1+b,1-b)}\sup_{x\in
D_\varepsilon}|h^t_\varepsilon(x)|+C(l)\delta_\varepsilon^{\frac{2}{n}}.\]
Combining this with \eqref{5.10} we see
\[\lim_{\varepsilon\to0}\sup_{t\in(-1+b,1-b)}\sup_{x\in B_1}|h^t_\varepsilon(x)|=0.\]
Substituting this into \eqref{5.9} we finish the proof.
\end{proof}

Next we establish a bound for the height excess
\[\int_{\mathcal{C}_{3/4}}\left(x_{n+1}-\lambda_\varepsilon\delta_\varepsilon\right)^2\varepsilon|\nabla u_\varepsilon|^2.\]
 This can be viewed
as a Poincar\'{e} inequality on the varifold $V_\varepsilon$. (The
Caccioppoli type inequality in Remark \ref{rmk Caccioppoli} is a
reverse Poincar\'{e} inequality.)
\begin{lem}\label{lem Poincare inequality}
There exists a universal constant $C$ such that
\begin{equation}\label{L2 bound}
\int_{\mathcal{C}_{3/4}}\left(x_{n+1}-\lambda_\varepsilon\delta_\varepsilon\right)^2\varepsilon|\nabla u_\varepsilon|^2
\leq C\delta_\varepsilon^2.
\end{equation}
\end{lem}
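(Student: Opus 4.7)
\noindent\emph{Proof plan.} I would decompose $\mathcal{C}_{3/4}=\Pi^{-1}(D_\varepsilon)\cap\mathcal{C}_{3/4}\ \cup\ \Pi^{-1}(B_{3/4}\setminus D_\varepsilon)\cap\mathcal{C}_{3/4}$ and estimate the two pieces separately, relying on the Lipschitz graph structure built in Section 5. The picture is that the mass $\mu_\varepsilon$ is concentrated in an $\varepsilon$-neighborhood of the graph $\{x_{n+1}=h_\varepsilon^0(x)\}$, which in turn stays within $O(\delta_\varepsilon)$ of the horizontal plane $\{x_{n+1}=\lambda_\varepsilon\delta_\varepsilon\}$ in $L^2$.

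For the good piece, I would fix $x\in D_\varepsilon$ and split $x_{n+1}-\lambda_\varepsilon\delta_\varepsilon=H(x)+s$ with $H(x):=h_\varepsilon^0(x)-\lambda_\varepsilon\delta_\varepsilon$ and $s:=x_{n+1}-h_\varepsilon^0(x)$. An $\varepsilon$-scale blow-up at $(x,h_\varepsilon^0(x))\in A_\varepsilon$ combined with the $C^1$ one-dimensional approximation by the heteroclinic $g$ (Lemma \ref{lem B2}) furnishes the pointwise estimate
\[
|\nabla u_\varepsilon(x,x_{n+1})|\leq C\varepsilon^{-1}\exp\!\big(\!-c\,|s|/\varepsilon\big),
\]
which via the substitution $\tau=s/\varepsilon$ yields both $\int_{-1}^1\varepsilon|\nabla u_\varepsilon|^2\,dx_{n+1}\leq C$ and $\int_{-1}^1 s^2\,\varepsilon|\nabla u_\varepsilon|^2\,dx_{n+1}\leq C\varepsilon^2$. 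The slice contribution is thus at most $C H(x)^2+C\varepsilon^2$; integrating over $B_{3/4}$, replacing $h_\varepsilon^0$ by $h_\varepsilon^{t_\varepsilon}$ at cost $O(\varepsilon^2)$ via \eqref{4.2}, and invoking the $L^2$ bound \eqref{6.3} on $\bar h_\varepsilon$ together with $\delta_\varepsilon\gg\varepsilon$ produces the desired bound $C\delta_\varepsilon^2$ for the good piece.

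For the bad piece, using $(x_{n+1}-\lambda_\varepsilon\delta_\varepsilon)^2\leq C$ on $\mathcal{C}_{3/4}$ (valid since $\lambda_\varepsilon\delta_\varepsilon\to 0$ by Lemma \ref{lem 5.2}), it suffices to bound $\mu_\varepsilon\!\big(\Pi^{-1}(B_{3/4}\setminus D_\varepsilon)\cap\mathcal{C}_{3/4}\big)$. I would split this mass into the transition portion, which is contained in $B_\varepsilon$ and hence has $\mu_\varepsilon$-mass $\leq C l^{-1}\delta_\varepsilon^2$ by Lemma \ref{lem 4.3}, and the bulk portion in $\{|u_\varepsilon|\geq 1-b\}$, which by Proposition \ref{prop exponential decay} is concentrated (up to exponentially small error) in a thin slab $\{|x_{n+1}|\leq h\}$; the Modica inequality combined with the $\mathcal{H}^n$-smallness $\leq C l^{-1}\delta_\varepsilon^2$ of the bad base (Lemma \ref{lem 3.2}) and a judicious choice of $h$ yields $C\delta_\varepsilon^2$ for this piece as well. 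The main technical obstacle is the graph-centered exponential decay bound for $|\nabla u_\varepsilon|$ used in the good piece, which strengthens Proposition \ref{prop exponential decay} (measuring distance from $\{x_{n+1}=h_\varepsilon^0(x)\}$ rather than from $\R^n$) and hinges on Lemma \ref{lem B2}; once it is in hand the remainder is routine bookkeeping using \eqref{4.2} and \eqref{6.3}.
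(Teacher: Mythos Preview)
There is a genuine gap in how you handle the region $\{|u_\varepsilon|>1-b\}$, and it affects both your ``good'' and ``bad'' pieces. The pointwise decay $|\nabla u_\varepsilon(x,x_{n+1})|\leq C\varepsilon^{-1}e^{-c|s|/\varepsilon}$ for $x\in D_\varepsilon$ is not a consequence of a blow-up plus Lemma~\ref{lem B2}: a blow-up gives $C^1$ closeness to $g$ only on a fixed rescaled ball, and to upgrade this to exponential decay at height $|s|$ above the graph you would need $\mathcal{B}_{c|s|}(x,x_{n+1})\subset\{|u_\varepsilon|>\gamma\}$. That ball projects onto $B_{c|s|}(x)$, which may contain points of $B_1\setminus D_\varepsilon$; over such bad columns the transition layer is only confined to the coarse slab $\{|x_{n+1}|<h\}$ by Proposition~\ref{prop exponential decay}, not to an $O(\varepsilon)$-neighborhood of the graph, so it may well enter the ball. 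Without the pointwise bound neither of your slice estimates follows for individual $x\in D_\varepsilon$. The same obstruction hits your bad-column bulk: after discarding the exponentially small tail outside $\{|x_{n+1}|<h\}$, the Modica inequality and the gradient bound give only $\varepsilon|\nabla u_\varepsilon|^2\leq C\varepsilon^{-1}$ pointwise, so the remaining mass is at most $Ch\varepsilon^{-1}\mathcal{H}^n(B_{3/4}\setminus D_\varepsilon)\leq C h l^{-1}\varepsilon^{-1}\delta_\varepsilon^2$, which diverges as $\varepsilon\to0$ for any fixed $h$; no ``judicious choice'' of $h$ compatible with Proposition~\ref{prop exponential decay} saves this.

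The paper avoids the problem by decomposing as $\{|u_\varepsilon|<1-b_2\}\cup\{|u_\varepsilon|>1-b_2\}$ rather than by columns. On the first set the argument is essentially your good-column transition estimate (coordinate change on $A_\varepsilon$, crude mass bound on $B_\varepsilon$). On the second set the missing device is the differential inequality $\Delta(\varepsilon|\nabla u_\varepsilon|^2)\geq\kappa\varepsilon^{-2}(\varepsilon|\nabla u_\varepsilon|^2)$, valid in $\{|u_\varepsilon|>1-b_1\}$, tested against $(x_{n+1}-\lambda_\varepsilon\delta_\varepsilon)^2\eta\,\zeta(u_\varepsilon)$ with $\zeta$ a cutoff equal to $1$ on $\{|t|>1-b_2\}$ and $0$ on $\{|t|<1-b_1\}$. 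Integrating by parts, the factor $\kappa\varepsilon^{-2}$ becomes an $\varepsilon^2$ prefactor on the right, and all terms carrying $\zeta'$ or $\zeta''$ are supported in $\{1-b_1<|u_\varepsilon|<1-b_2\}$, hence controlled by the transition-region estimate already in hand. This multiplier identity is what transfers the Step~1 bound into the bulk without any pointwise $\varepsilon$-scale localization of the layer.
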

\begin{proof}
The proof is divided into two steps. In the following we shall fix
two numbers $0<b_2<b_1<1$ so that $W^{\prime\prime}\geq \kappa$ in
$(-1,-1+b_1)\cup(1-b_1,1)$.

{\bf Step 1.} Here we give an estimate in the part
$\{|u_\varepsilon|< 1-b_2\}\cap\mathcal{C}_1$, i.e.
\begin{equation}\label{L2 bound 1}
\int_{\{|u_\varepsilon|<1-b_2\}\cap\mathcal{C}_1}\left(x_{n+1}-\lambda_\varepsilon\delta_\varepsilon\right)^2\varepsilon|\nabla u_\varepsilon|^2
\leq C\delta_\varepsilon^2.
\end{equation}

First, by \eqref{defintion blow up sequence} and \eqref{4.2},
\begin{equation}\label{6.4}
\int_{-1+b_2}^{1-b_2}\int_{B_1}\left(h_\varepsilon^t-\lambda_\varepsilon\delta_\varepsilon\right)^2dxdt\leq C\delta_\varepsilon^2.
\end{equation}
Then by a change of variables, the gradient bound \eqref{gradient
bound} and the Lipschitz bound on $h_\varepsilon^t$, we obtain
\begin{eqnarray}\label{6.5}
\int_{A_\varepsilon}\left(x_{n+1}-\lambda_\varepsilon\delta_\varepsilon\right)^2\varepsilon|\nabla u_\varepsilon|^2
&=&\int_{-1+b_2}^{1-b_2}\int_{D_\varepsilon}\left(h_\varepsilon^t-\lambda_\varepsilon\delta_\varepsilon\right)^2
\left(1+|\nabla h_\varepsilon^t|^2\right)\varepsilon\frac{\partial u_\varepsilon}{\partial x_{n+1}}dxdt   \nonumber\\
&\leq&C\int_{-1+b_2}^{1-b_2}\int_{D_\varepsilon}\left(h_\varepsilon^t-\lambda_\varepsilon\delta_\varepsilon\right)^2
dxdt\\
&\leq& C\delta_\varepsilon^2.\nonumber
\end{eqnarray}

In $B_\varepsilon$, by Lemma \ref{lem 4.3} and Lemma \ref{lem 5.2},
\begin{equation}\label{6.6}
\int_{B_\varepsilon}\left(x_{n+1}-\lambda_\varepsilon\delta_\varepsilon\right)^2\varepsilon|\nabla
u_\varepsilon|^2 \leq
C\left[\sup_{\{|u_\varepsilon|<1-b\}}\left(x_{n+1}-\lambda_\varepsilon\delta_\varepsilon\right)^2\right]\mu_\varepsilon(B_\varepsilon)
\leq C\delta_\varepsilon^2.
\end{equation}
Combining \eqref{6.5} and \eqref{6.6} we get \eqref{L2 bound 1}.

{\bf Step 2.} We claim that in the part $\{|u_\varepsilon|>1-b_2\}\cap\mathcal{C}_{3/4}$,
\begin{equation}\label{L2 bound 2}
\int_{\{|u_\varepsilon|>1-b_2\}\cap\mathcal{C}_{3/4}}\left(x_{n+1}-\lambda_\varepsilon\delta_\varepsilon\right)^2\varepsilon|\nabla u_\varepsilon|^2
\leq C\delta_\varepsilon^2.
\end{equation}

 Choose a function $\zeta\in C^\infty(\R)$, satisfying
\begin{equation*}
\left\{
\begin{aligned}
 &\zeta(t)\equiv 1, \ &\mbox{in}\ \{|t|>1-b_2\}, \\
  &\zeta(t)\equiv 0, \ &\mbox{in}\ \{|t|<1-b_1\}, \\
 &|\zeta^\prime|\leq\frac{2}{b_1-b_2},\quad |\zeta^{\prime\prime}|\leq \frac{8}{\left(b_1-b_2\right)^2} \ \ &\mbox{in}\ \{1-b_1\leq|t|\leq1-b_2\}.
                          \end{aligned} \right.
\end{equation*}
We also fix a function $\eta\in
C_0^\infty(B_1\times\{|x_{n+1}|<4/3\})$ so that $0\leq\eta\leq 1$,
$\eta\equiv 1$ in $\mathcal{C}_{3/4}$.

It can be directly checked that
\begin{equation}\label{equation for gradient estiamtes}
\Delta\left(\varepsilon|\nabla u_\varepsilon|^2\right)\geq\frac{\kappa}{\varepsilon^2}\left(\varepsilon|\nabla u_\varepsilon|^2\right),\ \ \ \mbox{in}\ \ \{|u_\varepsilon|>1-b_1\}.
\end{equation}
Multiplying this equation by $\left(x_{n+1}-\lambda_\varepsilon\delta_\varepsilon\right)^2\eta\zeta(u_\varepsilon)$ and integrating by parts, we obtain
\begin{eqnarray}\label{6.7}
&&\int_{\mathcal{B}_2}\left(x_{n+1}-\lambda_\varepsilon\delta_\varepsilon\right)^2\eta\zeta(u_\varepsilon)\varepsilon|\nabla u_\varepsilon|^2      \nonumber\\
&\leq&\frac{\varepsilon^2}{\kappa}
\int_{\mathcal{B}_2}\Delta\left[\left(x_{n+1}-\lambda_\varepsilon\delta_\varepsilon\right)^2\eta\right]
\zeta(u_\varepsilon)\varepsilon|\nabla u_\varepsilon|^2   \nonumber\\
&+&\frac{\varepsilon^2}{\kappa}\int_{\mathcal{B}_2}4\left(x_{n+1}-\lambda_\varepsilon\delta_\varepsilon\right)\frac{\partial u_\varepsilon}{\partial x_{n+1}}\eta\zeta^\prime(u_\varepsilon)\varepsilon|\nabla u_\varepsilon|^2 \\
&+&\frac{\varepsilon^2}{\kappa}\int_{\mathcal{B}_2}2\left(x_{n+1}-\lambda_\varepsilon\delta_\varepsilon\right)^2\left(\nabla\eta\cdot\nabla u_\varepsilon\right)\zeta^\prime(u_\varepsilon)\varepsilon|\nabla u_\varepsilon|^2   \nonumber\\
&+&\frac{\varepsilon^2}{\kappa}
\int_{\mathcal{B}_2}\left(\zeta^{\prime\prime}(u_\varepsilon)|\nabla u_\varepsilon|^2+\zeta^\prime(u_\varepsilon)\Delta u_\varepsilon\right)
\left(x_{n+1}-\lambda_\varepsilon\delta_\varepsilon\right)^2\eta\varepsilon|\nabla u_\varepsilon|^2. \nonumber
\end{eqnarray}
In the right hand side, the first term is bounded by
\begin{equation}\label{6.11}
\frac{\varepsilon^2}{\kappa}
\int_{\mathcal{B}_2}\Delta\left[\left(x_{n+1}-\lambda_\varepsilon\delta_\varepsilon\right)^2\eta\right]
\zeta(u_\varepsilon)\varepsilon|\nabla u_\varepsilon|^2\leq C\varepsilon^2,
\end{equation}
because both
$\Delta\left[\left(x_{n+1}-\lambda_\varepsilon\delta_\varepsilon\right)^2\eta\right]$ and $\zeta(u_\varepsilon)$
are bounded by a universal constant.

Note that the supports of $\zeta^\prime(u_\varepsilon)$ and $\zeta^{\prime\prime}(u_\varepsilon)$ belong to $\{|u_\varepsilon|<1-b_2\}$. By the
Cauchy inequality, the second term is bounded by
\begin{eqnarray}\label{6.8}
&&\frac{\varepsilon^2}{\kappa}\int_{\mathcal{B}_2}4\left(x_{n+1}-\lambda_\varepsilon\delta_\varepsilon\right)\frac{\partial u_\varepsilon}{\partial x_{n+1}}\eta\zeta^\prime(u_\varepsilon)\varepsilon|\nabla u_\varepsilon|^2\\
&\leq&C\varepsilon\left[\int_{\{|u_\varepsilon|<1-b_2\}\cap\mathcal{B}_2}
\left(x_{n+1}-\lambda_\varepsilon\delta_\varepsilon\right)^2\eta^2\varepsilon|\nabla u_\varepsilon|^2\right]^{\frac{1}{2}}
\left[\int_{\{|u_\varepsilon|<1-b_2\}\cap\mathcal{B}_2}
\left(\varepsilon\frac{\partial u_\varepsilon}{\partial x_{n+1}}\right)^2\varepsilon|\nabla u_\varepsilon|^2\right]^{\frac{1}{2}} \nonumber\\
&\leq&C\varepsilon\delta_\varepsilon.\nonumber
\end{eqnarray}
Here we have used Proposition \ref{prop exponential decay},
\eqref{L2 bound 1} and the fact that $\varepsilon|\frac{\partial
u_\varepsilon}{\partial x_{n+1}}|\leq C$.

Similarly, the third term can be controlled as
\begin{eqnarray}\label{6.9}
&&\frac{\varepsilon^2}{\kappa}\int_{\mathcal{B}_2}\left(x_{n+1}-\lambda_\varepsilon\delta_\varepsilon\right)^2\left(\nabla\eta\cdot\nabla u_\varepsilon\right)\zeta^\prime(u_\varepsilon)\varepsilon|\nabla u_\varepsilon|^2  \nonumber\\
&\leq&C\left(\sup|\nabla\eta|\right)\left(\sup\varepsilon|\nabla
u_\varepsilon|\right)\varepsilon\int_{\{|u_\varepsilon|<1-b_2\}\cap(B_1\times(-4/3,4/3))}
\left(x_{n+1}-\lambda_\varepsilon\delta_\varepsilon\right)^2\varepsilon|\nabla u_\varepsilon|^2 \\
&\leq&C\varepsilon\delta_\varepsilon^2.\nonumber
\end{eqnarray}

Finally, in the last term, by employing \eqref{gradient bound}, we
obtain
\begin{eqnarray}\label{6.10}
&&\frac{\varepsilon^2}{\kappa}
\int_{\mathcal{B}_2}\left(\zeta^{\prime\prime}(u_\varepsilon)|\nabla u_\varepsilon|^2+\zeta^\prime(u_\varepsilon)\Delta u_\varepsilon\right)
\left(x_{n+1}-\lambda_\varepsilon\delta_\varepsilon\right)^2\eta\varepsilon|\nabla u_\varepsilon|^2  \nonumber\\
&\leq&C\int_{\{|u_\varepsilon|<1-b_2\}\cap\mathcal{B}_2}
\left(x_{n+1}-\lambda_\varepsilon\delta_\varepsilon\right)^2\eta\varepsilon|\nabla u_\varepsilon|^2 \\
&\leq&C\delta_\varepsilon^2.\nonumber
\end{eqnarray}

Substituting \eqref{6.11}-\eqref{6.10} into \eqref{6.7}, and noting the fact that $\delta_\varepsilon\gg\varepsilon$, we obtain
\eqref{L2 bound 2}.

Combining \eqref{L2 bound 1} and \eqref{L2 bound 2} we finish the proof.
\end{proof}

Once we have this bound, we can further sharpen several estimates in
the above proof to show that
\begin{coro}\label{coro 6.4}
For any $\sigma>0$, there exists a constant $b>0$ such that
\[\int_{\{|u_\varepsilon|>1-b\}\cap\mathcal{C}_{3/4}}\left(x_{n+1}-\lambda_\varepsilon\delta_\varepsilon\right)^2\varepsilon|\nabla u_\varepsilon|^2
\leq \sigma\delta_\varepsilon^2+C\varepsilon^2.\]
\end{coro}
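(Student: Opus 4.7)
The plan is to refine Step 2 of Lemma \ref{lem Poincare inequality} by iterating the cutoff estimate with a geometrically shrinking parameter. Fix $b_*>0$ small enough that $W''\geq\kappa$ on $\{|u|>1-b_*\}$, set $b_k:=2^{-k}b_*$, and write
\[
I(r):=\int_{\{|u_\varepsilon|>1-r\}\cap\mathcal{C}_{3/4}}(x_{n+1}-\lambda_\varepsilon\delta_\varepsilon)^2\,\varepsilon|\nabla u_\varepsilon|^2\,dX.
\]
The target is a one-step contraction $I(b_k)\leq\alpha I(b_{k-1})+E$ with universal $\alpha\in(0,1)$ and error $E=C\varepsilon^2+C\varepsilon\delta_\varepsilon+C\varepsilon\delta_\varepsilon^2$ that is independent of $k$.

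To establish the one-step estimate I would repeat the computation \eqref{6.7}--\eqref{6.10} with a cutoff $\zeta_k\in C^\infty(\R)$ satisfying $\zeta_k\equiv 1$ on $\{|t|>1-b_k\}$, $\zeta_k\equiv 0$ on $\{|t|<1-b_{k-1}\}$, and $|\zeta_k^{(j)}|\leq C/b_{k-1}^j$ for $j=1,2$. The key pointwise observation is that on the support $\{b_k<1-|u_\varepsilon|<b_{k-1}\}$ of $\zeta_k'$, Modica's inequality together with the quadratic behaviour $W(u)\sim(1-|u|)^2$ near $\pm 1$ gives $\varepsilon^2|\nabla u_\varepsilon|^2\leq Cb_{k-1}^2$ and $|W'(u_\varepsilon)|\leq Cb_{k-1}$. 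These bounds precisely cancel the $b_{k-1}^{-2}$ and $b_{k-1}^{-1}$ growth of $\zeta_k''$ and $\zeta_k'$, rendering all four terms of the analogue of \eqref{6.7} controlled by constants independent of $k$: the first by $C\varepsilon^2$ as in \eqref{6.11}, the second and third by $C\varepsilon\delta_\varepsilon$ and $C\varepsilon\delta_\varepsilon^2$ after Cauchy--Schwarz combined with the coarea bound $\int_{\mathrm{supp}\,\zeta_k'}\varepsilon|\nabla u_\varepsilon|^2\leq Cb_{k-1}^2$, and the critical fourth term by $C_0(I(b_{k-1})-I(b_k))$. Absorbing the last onto the left produces the contraction with $\alpha=C_0/(1+C_0)\in(0,1)$.

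Given $\sigma>0$, iterating $K$ times from the bound $I(b_*)\leq C\delta_\varepsilon^2$ of Lemma \ref{lem Poincare inequality} yields
\[
I(b_K)\leq\alpha^K C\delta_\varepsilon^2+\frac{C}{1-\alpha}\bigl(\varepsilon^2+\varepsilon\delta_\varepsilon+\varepsilon\delta_\varepsilon^2\bigr).
\]
Choose $K$ large with $\alpha^K C\leq\sigma/2$ and set $b:=b_K$. By the assumption $\varepsilon=o(\delta_\varepsilon)$ in \eqref{absurd assumption 3}, the errors $\varepsilon\delta_\varepsilon$ and $\varepsilon\delta_\varepsilon^2$ are $o(\delta_\varepsilon^2)$, so for $\varepsilon$ sufficiently small (depending on $\sigma$, $b_*$, $K$) the right-hand side is at most $\sigma\delta_\varepsilon^2+C\varepsilon^2$, establishing the corollary. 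The step requiring the most care will be verifying the $k$-independence of the constant in $E$; this hinges on the tightness of Modica's inequality giving $\varepsilon|\nabla u_\varepsilon|\leq\sqrt{2W(u_\varepsilon)}\leq Cb_{k-1}$ on $\mathrm{supp}\,\zeta_k'$, which is exactly the scale needed to neutralize the growth of $\zeta_k'$ and $\zeta_k''$ and to make the iteration feasible without geometric blow-up of constants.
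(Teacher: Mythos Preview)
Your iteration scheme is correct in spirit and genuinely different from the paper's argument. Both proofs start from the differential inequality \eqref{equation for gradient estiamtes} and share the same crucial observation: on $\mathrm{supp}\,\zeta_k'$ the Modica inequality gives $\varepsilon^2|\nabla u_\varepsilon|^2\leq 2W(u_\varepsilon)\leq Cb_{k-1}^2$ and $\varepsilon^2|\Delta u_\varepsilon|=|W'(u_\varepsilon)|\leq Cb_{k-1}$, so the combination $\varepsilon^2\bigl(\zeta_k''(u_\varepsilon)|\nabla u_\varepsilon|^2+\zeta_k'(u_\varepsilon)\Delta u_\varepsilon\bigr)$ is bounded by a constant independent of the cutoff scale. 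The paper then proceeds in a \emph{single} step: it bounds the resulting annular integral over $\{1-2b<|u_\varepsilon|<1-b\}$ directly by $o_b(1)\cdot\int_{\{|u_\varepsilon|<1-b\}}(x_{n+1}-\lambda_\varepsilon\delta_\varepsilon)^2\varepsilon|\nabla u_\varepsilon|^2$, using the $(x,t)$ coordinates and the graph structure of $A_\varepsilon,B_\varepsilon$ from Section~5 (see \eqref{6.5.1}--\eqref{6.5.2}). Your route avoids that machinery entirely by turning the same pointwise cancellation into the contraction $I(b_k)\leq\alpha I(b_{k-1})+E$ and iterating; this is a cleaner, more self-contained argument once the error $E$ is under control.

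Two details in your sketch need attention. First, the quantity $I$ should carry the weight $\eta$, not the sharp cutoff $1_{\mathcal{C}_{3/4}}$: the fourth term in the analogue of \eqref{6.7} is an integral over $\mathrm{supp}\,\zeta_k'$ weighted by $\eta$, which lives on the larger $\mathrm{supp}\,\eta\supsetneq\mathcal{C}_{3/4}$, so the bound $C_0(I(b_{k-1})-I(b_k))$ is only valid with $I$ weighted by $\eta$. This is harmless since $\eta\equiv 1$ on $\mathcal{C}_{3/4}$ recovers the stated corollary at the end. Second, your ``coarea bound'' $\int_{\mathrm{supp}\,\zeta_k'}\varepsilon|\nabla u_\varepsilon|^2\leq Cb_{k-1}^2$ is asserted without a clean justification (it would need a uniform level-set area bound), but in fact you do not need it: the trivial bound $\int_{\mathrm{supp}\,\zeta_k'}\varepsilon|\nabla u_\varepsilon|^2\leq C$ from the total energy already yields $|T_2|\leq C\varepsilon\delta_\varepsilon$, and for $T_3$ one uses Lemma~\ref{lem Poincare inequality} applied on a slightly larger cylinder to bound $\int_{\mathrm{supp}\,\nabla\eta\cap\mathrm{supp}\,\zeta_k'}(x_{n+1}-\lambda_\varepsilon\delta_\varepsilon)^2\varepsilon|\nabla u_\varepsilon|^2\leq C\delta_\varepsilon^2$, giving $|T_3|\leq C\varepsilon\delta_\varepsilon^2$ as you claim. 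With these adjustments the iteration closes.
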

\begin{proof}
The starting point is the estimate \eqref{6.7}, where $\xi$ is now
assumed to have its support in $(-1+b,1-b)$, and satisfies
$\xi\equiv 1$ in $(-1+2b,1-2b)$,
$|\zeta^\prime|^2+|\zeta^{\prime\prime}|\leq 64b^{-2}$. The constant
$b$ will be determined later.

 We only
need to give a better control in \eqref{6.10}. Estimates in
\eqref{6.11}, \eqref{6.8} and \eqref{6.9} will be kept. Using the
Cauchy inequality, they can be bounded by
$\sigma\delta_\varepsilon^2+C\varepsilon^2$.

Replace \eqref{6.10} by
\begin{eqnarray}\label{6.10.1}
&&\frac{\varepsilon^2}{\kappa}
\int_{\mathcal{B}_2}\left(\zeta^{\prime\prime}(u_\varepsilon)|\nabla
u_\varepsilon|^2+\zeta^\prime(u_\varepsilon)\Delta
u_\varepsilon\right)
\left(x_{n+1}-\lambda_\varepsilon\delta_\varepsilon\right)^2\eta\varepsilon|\nabla u_\varepsilon|^2  \nonumber\\
&\leq&C\int_{\{1-2b<|u_\varepsilon|<1-b\}\cap\mathcal{B}_2}
\left(x_{n+1}-\lambda_\varepsilon\delta_\varepsilon\right)^2\eta\varepsilon|\nabla
u_\varepsilon|^2.
\end{eqnarray}
Here the constant $C$ is independent of $b$. This is because,
instead of using the bound \eqref{gradient bound} as in the proof of
the previous lemma, we can use
\[\varepsilon^2|\nabla u_\varepsilon|^2\leq 2W(u_\varepsilon),\quad
\varepsilon^2|\Delta u_\varepsilon| \leq
|W^\prime(u_\varepsilon)|,\] which follow from the Modica inequality
and the equation \eqref{equation}.
Thus
\[\varepsilon^2\left(\zeta^{\prime\prime}(u_\varepsilon)|\nabla
u_\varepsilon|^2+\zeta^\prime(u_\varepsilon)\Delta
u_\varepsilon\right)\] is bounded independent of $b\in(0,1)$.

In view of this, to complete the proof we only need to prove that,
for any $\sigma$, there exists a constant $b\in(0,1)$ such that,
\begin{equation}\label{6.10.1}
\int_{\{1-2b<|u_\varepsilon|<1-b\}\cap\mathcal{B}_2}
\left(x_{n+1}-\lambda_\varepsilon\delta_\varepsilon\right)^2\varepsilon|\nabla
u_\varepsilon|^2\leq
\sigma\int_{\{|u_\varepsilon|<1-b\}\cap\mathcal{B}_2}
\left(x_{n+1}-\lambda_\varepsilon\delta_\varepsilon\right)^2\varepsilon|\nabla
u_\varepsilon|^2.
\end{equation}

To this aim, first note that, because (see Proposition \ref{prop
exponential decay})
\[\lim_{\varepsilon\to0}\sup_{\{|u_\varepsilon|<1-b\}}\left(x_{n+1}-\lambda_\varepsilon\delta_\varepsilon\right)^2=0,\]
\eqref{6.6} can be improved to
\begin{equation}\label{6.10.2}
\int_{B_\varepsilon}\left(x_{n+1}-\lambda_\varepsilon\delta_\varepsilon\right)^2\varepsilon|\nabla
u_\varepsilon|^2 \leq \frac{\sigma}{2}\delta_\varepsilon^2,\quad
\forall \ \varepsilon \mbox{ small}.
\end{equation}

Next, by \eqref{4.2}, \eqref{6.5} can be rewritten as
\begin{eqnarray}\label{6.5.1}
&&\int_{A_\varepsilon}\left(x_{n+1}-\lambda_\varepsilon\delta_\varepsilon\right)^2\varepsilon|\nabla
u_\varepsilon|^2\nonumber\\
&=&\int_{-1+b}^{1-b}\int_{D_\varepsilon}\left(h_\varepsilon^t-\lambda_\varepsilon\delta_\varepsilon\right)^2
\left(1+|\nabla h_\varepsilon^t|^2\right)\varepsilon\frac{\partial u_\varepsilon}{\partial x_{n+1}}dxdt   \\
&=&\left[\int_{D_\varepsilon}\left(h_\varepsilon^{t_\varepsilon}
-\lambda_\varepsilon\delta_\varepsilon\right)^2\right]\left[\int_{-1+b}^{1-b}\int_{D_\varepsilon}
\left(1+|\nabla h_\varepsilon^t|^2\right)\varepsilon\frac{\partial
u_\varepsilon}{\partial x_{n+1}}dxdt\right]+O(\varepsilon^2).
\nonumber\\
&\geq&
c\left[\int_{D_\varepsilon}\left(h_\varepsilon^{t_\varepsilon}
-\lambda_\varepsilon\delta_\varepsilon\right)^2\right]+O(\varepsilon^2).\nonumber
\end{eqnarray}
In the last step we have used the fact that
$\varepsilon\frac{\partial u_\varepsilon}{\partial x_{n+1}}\geq c$
in $A_\varepsilon\cap\{|u_\varepsilon|<1/2\}$.

Now consider the integral on $(1-2b,1-b)$. By noting that
$\varepsilon\frac{\partial u_\varepsilon}{\partial x_{n+1}}$ is
small in $\{|u_\varepsilon|>1-2b\}$ (using the Modica inequality
\eqref{Modica inequality}), we obtain
\begin{eqnarray}\label{6.5.2}
&&\int_{A_\varepsilon\cap\{1-2b<|u_\varepsilon|<1-b\}}\left(x_{n+1}-\lambda_\varepsilon\delta_\varepsilon\right)^2\varepsilon|\nabla
u_\varepsilon|^2\nonumber\\
&=&\left[\int_{D_\varepsilon}\left(h_\varepsilon^{t_\varepsilon}
-\lambda_\varepsilon\delta_\varepsilon\right)^2\right]\left[\int_{1-2b<|t|<1-b}\int_{D_\varepsilon}
\left(1+|\nabla h_\varepsilon^t|^2\right)\varepsilon\frac{\partial
u_\varepsilon}{\partial x_{n+1}}dxdt\right]+O(\varepsilon^2)\\
&=&o_b(1)\left[\int_{D_\varepsilon}\left(h_\varepsilon^{t_\varepsilon}
-\lambda_\varepsilon\delta_\varepsilon\right)^2\right]+O(\varepsilon^2).\nonumber
\end{eqnarray}

Combining \eqref{6.5.1} and \eqref{6.5.2} we get
\begin{equation*}
\int_{A_\varepsilon\cap\{1-2b<|u_\varepsilon|<1-b\}}\left(x_{n+1}-\lambda_\varepsilon\delta_\varepsilon\right)^2\varepsilon|\nabla
u_\varepsilon|^2
=o_b(1)\int_{A_\varepsilon}\left(x_{n+1}-\lambda_\varepsilon\delta_\varepsilon\right)^2\varepsilon|\nabla
u_\varepsilon|^2+O(\varepsilon^2).
\end{equation*}
With \eqref{6.10.2} this implies \eqref{6.10.1}, if we have chosen
$b$ small enough. This completes the proof.
\end{proof}

Finally we give a uniform estimate on $\frac{\partial
h_\varepsilon^t}{\partial t}$ in a good set.
\begin{lem}\label{lem 4.2}
There exists a set $E_\varepsilon\subset D_\varepsilon$ with
$\mathcal{H}^n(D_\varepsilon\setminus E_\varepsilon)\leq
C\delta_\varepsilon$ so that the following holds. For any
$t\in(-1+b,1-b)$ and $X_\varepsilon\in\Pi^{-1}(E_\varepsilon)\cap
A_\varepsilon$ with $u_\varepsilon(X_\varepsilon)=t$,
\[\varepsilon\left[\frac{\partial h_\varepsilon^t}{\partial t}(X_\varepsilon)\right]^{-1}
=g^\prime(g^{-1}(t))+o_\varepsilon(1).\] Here $o_\varepsilon(1)$
means a quantity converging to $0$ as $\varepsilon\to0$, independent
of $X_\varepsilon$ and $t$.
\end{lem}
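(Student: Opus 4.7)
The plan is to combine a Hardy--Littlewood style restriction with a blow-up at scale $\varepsilon$, and recognize the limit as the one-dimensional heteroclinic profile $g$.

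Define
\[
E_\varepsilon:=\bigl\{x\in D_\varepsilon : Mf_\varepsilon(x)<\delta_\varepsilon\bigr\},
\]
where $f_\varepsilon$ and $Mf_\varepsilon$ are as in Section~5. The weak $L^1$ estimate \eqref{weak L1 bound}, applied with threshold $l=\delta_\varepsilon$, gives $\mathcal{H}^n(D_\varepsilon\setminus E_\varepsilon)\le C\delta_\varepsilon^2/\delta_\varepsilon=C\delta_\varepsilon$, which meets the measure requirement. By construction, for every $x\in E_\varepsilon$ and every $r\in(0,1)$,
\[
\int_{B_r(x)\times(-1,1)}\bigl[1-(\nu_\varepsilon\cdot e_{n+1})^2\bigr]\,\varepsilon|\nabla u_\varepsilon|^2\le r^n\delta_\varepsilon.
\]

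Next, I argue by contradiction. Suppose the conclusion fails; then there exist $\varepsilon_k\to 0$, $t_k\in(-1+b,1-b)$ and $X_k=(x_k,x_{n+1,k})\in\Pi^{-1}(E_{\varepsilon_k})\cap A_{\varepsilon_k}$ with $u_{\varepsilon_k}(X_k)=t_k$ and
\[
\bigl|\varepsilon_k\,\partial_{x_{n+1}}u_{\varepsilon_k}(X_k)-g'(g^{-1}(t_k))\bigr|\ge\sigma
\]
for some fixed $\sigma>0$ (using \eqref{x,t coordinates} to replace $\varepsilon[\partial_t h_\varepsilon^t]^{-1}$ by $\varepsilon\,\partial_{x_{n+1}}u_\varepsilon$). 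By Proposition \ref{prop exponential decay}, $x_{n+1,k}\to 0$, so after extracting a subsequence, $t_k\to t_\infty\in[-1+b,1-b]$ and $X_k\to(x_\infty,0)$. Rescale
\[
v_k(Y):=u_{\varepsilon_k}(X_k+\varepsilon_k Y).
\]
Then $v_k$ solves $\Delta v_k=W'(v_k)$ on $\mathcal{B}_{R}$ for every fixed $R$ and all large $k$, it inherits $|v_k|\le 1$, the Modica inequality, and $v_k(0)=t_k$. A change of variables combined with the pointwise bound on $Mf_{\varepsilon_k}(x_k)$ gives, for every fixed $R$,
\[
\int_{\mathcal{B}_R}\sum_{i=1}^n(\partial_i v_k)^2\,dY=\varepsilon_k^{-n}\!\!\int_{\mathcal{B}_{R\varepsilon_k}(X_k)}\!\bigl[1-(\nu_{\varepsilon_k}\cdot e_{n+1})^2\bigr]\varepsilon_k|\nabla u_{\varepsilon_k}|^2\le R^n\delta_{\varepsilon_k}\to 0.
\]

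By the gradient bound \eqref{gradient bound} and standard elliptic estimates, along a subsequence $v_k\to v_\infty$ in $C^2_{\mathrm{loc}}(\R^{n+1})$, where $v_\infty$ is a bounded entire solution of $\Delta v=W'(v)$ with $v_\infty(0)=t_\infty$. The vanishing of the transversal gradient in the limit forces $\partial_i v_\infty\equiv 0$ for $1\le i\le n$, so $v_\infty(Y)=\tilde v(Y_{n+1})$ with $\tilde v''=W'(\tilde v)$ and $|\tilde v(0)|=|t_\infty|\le 1-b<1$. The one-dimensional first integral reads $(\tilde v')^2=2W(\tilde v)+C$, while the Modica inequality inherited by $v_\infty$ forces $C\le 0$; the only bounded non-constant solutions with $C\ge 0$ are periodic (hence incompatible with $C\le 0$ together with $W\ge 0$) or, in the case $C=0$, translates of $\pm g$. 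So $\tilde v$ is a translate of $\pm g$. The monotonicity \eqref{derivative monotinicity} at $X_k$ yields $\partial_{Y_{n+1}}v_k(0)=\varepsilon_k\partial_{x_{n+1}}u_{\varepsilon_k}(X_k)\ge c(b)>0$, and this survives in the limit, so $\tilde v'(0)>0$ and therefore $\tilde v(s)=g(s+c)$ with $g(c)=t_\infty$, i.e.\ $c=g^{-1}(t_\infty)$.

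Consequently $\varepsilon_k\,\partial_{x_{n+1}}u_{\varepsilon_k}(X_k)\to\tilde v'(0)=g'(g^{-1}(t_\infty))$, and the continuity of $g'\circ g^{-1}$ on $[-1+b,1-b]$ together with $t_k\to t_\infty$ gives $\varepsilon_k\partial_{x_{n+1}}u_{\varepsilon_k}(X_k)-g'(g^{-1}(t_k))\to 0$, contradicting the assumed lower bound $\sigma$. The main delicacy is the classification of the blow-up limit: excluding periodic one-dimensional solutions requires the Modica inequality, and fixing the sign of $\tilde v'$ (so that $\tilde v$ is $g$ and not its reflection $-g(-\cdot)$) needs the monotonicity \eqref{derivative monotinicity}, which in turn traces back to the unit-density assumption \eqref{close to plane 1} through Lemma \ref{lem one graph}.
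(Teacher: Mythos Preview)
Your proof follows the paper's route almost verbatim: same choice $E_\varepsilon=\{Mf_\varepsilon<\delta_\varepsilon\}\cap D_\varepsilon$, same blow-up $v_k(Y)=u_{\varepsilon_k}(X_k+\varepsilon_k Y)$, same use of the maximal-function bound to kill the tangential derivatives in the limit. The contradiction framing is equivalent to the paper's direct compactness argument, and your use of \eqref{derivative monotinicity} to fix the sign of $\tilde v'$ is a detail the paper leaves implicit.

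There is, however, a genuine error in your classification of the one-dimensional limit. You write that Modica's inequality forces $C\le 0$ in the first integral $(\tilde v')^2=2W(\tilde v)+C$, and then assert that the bounded non-constant solutions ``with $C\ge 0$ are periodic (hence incompatible with $C\le 0$).'' This has the sign structure backwards. For the ODE $\tilde v''=W'(\tilde v)$ with a double-well $W$, solutions with $C>0$ are strictly monotone (since $(\tilde v')^2>0$ everywhere) and cannot stay in $[-1,1]$; the periodic orbits sit at $C<0$, and they \emph{do} satisfy Modica's inequality pointwise (at a turning point $\tilde v'=0$ one simply has $0\le 2W(\tilde v)$). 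So Modica alone does not exclude periodic limits. What does exclude them is the energy bound: from \eqref{energy bound on ball} the rescaled functions satisfy $\int_{\mathcal{B}_r}\tfrac12|\nabla v_k|^2+W(v_k)\le Cr^n$ for all $r>0$, hence so does $v_\infty$, and for a function of $x_{n+1}$ alone this forces $\int_{\R}\tfrac12(\tilde v')^2+W(\tilde v)<\infty$, which a periodic solution violates. This is exactly the argument the paper invokes (via the proof of Lemma~\ref{lem B2}). Replace your Modica-based classification by this energy argument and the proof is complete.
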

\begin{proof}
Let $E_\varepsilon=D_\varepsilon\cap\{Mf_\varepsilon<\delta_\varepsilon\}$. By \eqref{weak L1 bound},
\[\mathcal{H}^n(D_\varepsilon\setminus E_\varepsilon)\leq\mathcal{H}^n(B_1\setminus \{Mf_\varepsilon\geq\delta_\varepsilon\})\leq C\delta_\varepsilon\to0.\]

For any $X_\varepsilon\in\Pi^{-1}(E_\varepsilon)\cap A_\varepsilon$, consider
\[v_\varepsilon(X):=u_\varepsilon(X_\varepsilon+\varepsilon X), \ \ \ \mbox{for}\ X\in \mathcal{B}_{\varepsilon^{-1}/2}.\]
$v_\varepsilon$ is a solution of \eqref{equation 0}.  By definition,
$v_\varepsilon(0)=u_\varepsilon(X_\varepsilon)=t\in(-1+b,1-b)$
because $X_\varepsilon\in A_\varepsilon$. As usual assume
$v_\varepsilon$ converges to a function $v_\infty$ in
$C^2_{loc}(\mathbb{R}^{n+1})$, which is also a solution of
\eqref{equation 0} on $\mathbb{R}^{n+1}$.

By the definition of Hardy-Littlewood maximal function and our choice of $E_\varepsilon$,
\[\sup_{0<r<\varepsilon^{-1}/2}r^{-n}\int_{\mathcal{B}_r}\sum_{i=1}^n\left(\frac{\partial v_\varepsilon}{\partial x_i}\right)^2\leq\delta_\varepsilon\to0.\]
After passing to the limit, we see $v_\infty$ depends only on $x_{n+1}$. Then by \eqref{energy bound on ball}, we have the energy bound
\[\int_{\mathcal{B}_r}\frac{1}{2}|\nabla v_\infty|^2+W(v_\infty)\leq 8^n\sigma_0\omega_n r^n,\ \ \ \forall \ r>0.\]
From this we deduce that $v_\infty(X)\equiv g(x_{n+1}+g^{-1}(t))$
(see again the proof of Lemma \ref{lem B1}).

By definition and the $C^1_{loc}$ convergence of $v_\varepsilon$,
\[\varepsilon\frac{\partial u_\varepsilon}{\partial x_{n+1}}(X_\varepsilon)=\frac{\partial v_\varepsilon}{\partial x_{n+1}}(0)\to\frac{\partial v_\infty}{\partial x_{n+1}}(0)=g^\prime(g^{-1}(t)).\]
The claim then follows from \eqref{x,t coordinates}.
\end{proof}

\section{The blow up limit}
\numberwithin{equation}{section}
 \setcounter{equation}{0}

This section is devoted to prove
\begin{prop}\label{prop harmonic limit}
$\bar{h}$ is harmonic in $B_1$.
\end{prop}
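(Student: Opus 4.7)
The plan is to apply the stationary condition \eqref{stationary condition} with the test vector field $Y=\varphi(x)\psi(x_{n+1})e_{n+1}$, where $\varphi\in C_0^\infty(B_1)$ and $\psi\in C_0^\infty((-1,1))$ is chosen with $\psi\equiv 1$ on $[-1/2,1/2]$. A direct computation gives $\mathrm{div}\,Y=\varphi\psi'$ and $DY(\nabla u_\varepsilon,\nabla u_\varepsilon)=\sum_{j=1}^{n}(\partial_j\varphi)\psi(\partial_j u_\varepsilon)(\partial_{n+1}u_\varepsilon)+\varphi\psi'(\partial_{n+1}u_\varepsilon)^2$, so that \eqref{stationary condition} rearranges into
$$\int\Bigl(\varepsilon\sum_{j=1}^n (\partial_j u_\varepsilon)^2+d_\varepsilon\Bigr)\varphi\psi'\,dX=\int\varepsilon\psi\sum_{j=1}^n(\partial_j\varphi)(\partial_j u_\varepsilon)(\partial_{n+1}u_\varepsilon)\,dX,$$
where $d_\varepsilon=\varepsilon^{-1}W(u_\varepsilon)-\tfrac{\varepsilon}{2}|\nabla u_\varepsilon|^2\geq 0$ is the Modica discrepancy. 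Because $\psi'\equiv 0$ on $[-1/2,1/2]$, while on $\mathcal{C}_1\cap\{|x_{n+1}|\geq 1/2\}$ Proposition \ref{prop exponential decay} yields $|\nabla u_\varepsilon|+d_\varepsilon=O(e^{-c/\varepsilon})$, the left-hand side is $O(e^{-c/\varepsilon})$. Dividing through by $\delta_\varepsilon\geq K_0\varepsilon$ then gives
$$\lim_{\varepsilon\to 0}\,\delta_\varepsilon^{-1}\int\varepsilon\psi\sum_{j=1}^n(\partial_j\varphi)(\partial_j u_\varepsilon)(\partial_{n+1}u_\varepsilon)\,dX=0.$$

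To identify the limit I would split the integration into three pieces. On the bad set $B_\varepsilon$, Cauchy--Schwarz applied to $\int\varepsilon(\partial_ju_\varepsilon)^2\leq C\delta_\varepsilon^2$ combined with the measure bound $\mu_\varepsilon(B_\varepsilon)\leq C\delta_\varepsilon^2/l$ from Lemma \ref{lem 4.3} gives a contribution of order $\delta_\varepsilon^2/\sqrt{l}$. On $\{|u_\varepsilon|\geq 1-b\}\cap\mathcal{C}_1$, the total energy is bounded by $o_b(1)$ (since the transition profile $g$ concentrates its energy on a bounded interval of $\mathbb{R}$), so Cauchy--Schwarz similarly bounds its contribution by $\delta_\varepsilon\,o_b(1)^{1/2}$. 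Both are $o(\delta_\varepsilon)$ after first letting $\varepsilon\to 0$ and then $b\to 0$, with $l$ kept fixed until the end.

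On the main piece $A_\varepsilon\cap\{|u_\varepsilon|<1-b\}$, I would use the Lipschitz graph parametrization $x_{n+1}=h_\varepsilon^t(x)$, the identities \eqref{x,t coordinates} together with $\partial_ih_\varepsilon^t=\delta_\varepsilon\partial_i\bar h_\varepsilon^t$, and the Jacobian $\partial_th_\varepsilon^t$ of the change of variables to rewrite this piece of the integral as
$$-\delta_\varepsilon\int_{-1+b}^{1-b}\!\!\int_{D_\varepsilon}\varepsilon(\partial_th_\varepsilon^t)^{-1}\psi(h_\varepsilon^t(x))\sum_{j=1}^n(\partial_j\varphi)(\partial_j\bar h_\varepsilon^t)\,dx\,dt.$$
After dividing by $\delta_\varepsilon$, Lemma \ref{lem 4.2} provides $\varepsilon(\partial_th_\varepsilon^t)^{-1}\to g'(g^{-1}(t))$ a.e.\ on $E_\varepsilon$ with the uniform $L^\infty$ bound \eqref{5.8}; $\psi(h_\varepsilon^t(x))\to\psi(0)=1$ uniformly from the estimate $\sup|h_\varepsilon^t|\to 0$ used in the proof of Lemma \ref{lem 5.2}; and $\chi_{D_\varepsilon}\to 1$ in $L^p$ by Lemma \ref{lem 3.2}. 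Hence the factor multiplying $\partial_j\bar h_\varepsilon^t$ converges strongly in $L^2$ to $g'(g^{-1}(t))(\partial_j\varphi)(x)$. Coupled with the weak convergence $\partial_j\bar h_\varepsilon^t\rightharpoonup\partial_j\bar h$ in $L^2(B_1\times(-1+b,1-b))$ (from the $H^1$ bound of Lemma \ref{lem H1 bound}, the estimate \eqref{4.2}, and the $t$-independence of the limit), the strong$\times$weak pairing passes to the limit and produces
$$0=-\Bigl(\int_{-1+b}^{1-b}g'(g^{-1}(t))\,dt\Bigr)\int_{B_1}\nabla\bar h\cdot\nabla\varphi\,dx.$$
Letting $b\to 0$ and using $\int_{-1}^{1}g'(g^{-1}(t))\,dt=\int_{\mathbb{R}}g'(s)^2\,ds=\sigma_0>0$, I conclude $\int_{B_1}\nabla\bar h\cdot\nabla\varphi\,dx=0$ for every $\varphi\in C_0^\infty(B_1)$, which means $\bar h$ is harmonic.

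The main technical obstacle will be the clean passage to the limit in the main integral: one must combine the weakly-$L^2$ convergent sequence $\partial_j\bar h_\varepsilon^t$ with a factor $\varepsilon(\partial_th_\varepsilon^t)^{-1}\psi(h_\varepsilon^t(x))\chi_{D_\varepsilon}$ that is only known to converge a.e.\ on the good subset $E_\varepsilon$ (together with the uniform $L^\infty$ bound), and simultaneously control the region $\{|u_\varepsilon|\geq 1-b\}$ where the graph parametrization $h_\varepsilon^t$ is unavailable.
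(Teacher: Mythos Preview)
Your proposal is correct and follows essentially the same route as the paper: the same test vector field $Y=\varphi(x)\psi(x_{n+1})e_{n+1}$ in the stationary condition, the same use of Proposition~\ref{prop exponential decay} to kill the $\psi'$ terms, the same three-way splitting into $B_\varepsilon$, $\{|u_\varepsilon|\geq 1-b\}$, and $A_\varepsilon$, and the same change of variables via \eqref{x,t coordinates} on the good set. The only cosmetic difference is that you phrase the limit on $A_\varepsilon$ as a single ``strong-$L^2$ coefficient $\times$ weak-$L^2$ gradient'' pairing, whereas the paper writes out the further split $D_\varepsilon=E_\varepsilon\cup(D_\varepsilon\setminus E_\varepsilon)$ explicitly and estimates each piece by hand; these are equivalent, since the uniform bound \eqref{5.8} together with $\mathcal{H}^n(D_\varepsilon\setminus E_\varepsilon)+\mathcal{H}^n(B_1\setminus D_\varepsilon)\to 0$ is exactly what gives the strong $L^2$ convergence of your coefficient.
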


Fix a $\psi\in C_0^\infty((-1,1))$, such that $0\leq \psi\leq 1$,
$\psi\equiv1$ in $(-1/2,1/2)$ and $|\psi^\prime|\leq 4$. For any
$\varphi\in C_0^\infty(B_1)$, let
$X(x,x_{n+1})=\varphi(x)\psi(x_{n+1})e_{n+1}$, which is a smooth
vector field with compact support in $\mathcal{C}_1$.

To prove Proposition \ref{prop harmonic limit}, we substitute this
vector field into the stationary condition \eqref{stationary
condition}. Roughly speaking, if we view the level set of
$u_\varepsilon$ as the graph of a function $h$, because $h$ almost
satisfies an elliptic equation, this procedure amounts to
multiplying the equation of $h$ by a $C_0^\infty$ function and then
integrating by parts, which of course is a standard method in the
elliptic equation theory.

Note that
\[DX(x,x_{n+1})=\psi(x_{n+1})\nabla\varphi(x)\otimes e_{n+1}+\varphi(x)\psi^\prime(x_{n+1}) e_{n+1}\otimes e_{n+1},\]
\[\mbox{div}X(x,x_{n+1})=\varphi(x)\psi^\prime(x_{n+1}).\]
Since $\mbox{div}X$ vanishes in $B_1\times(-1/2,1/2)$, by Proposition \ref{prop exponential decay},
\begin{equation}\label{5.1}
\int_{\mathcal{C}_1}\left[\frac{\varepsilon}{2}|\nabla u_\varepsilon|^2
+\frac{1}{\varepsilon}W(u_\varepsilon)\right]\mbox{div}X=O(e^{-\frac{c}{\varepsilon}}).
\end{equation}
Similarly,
\begin{equation}\label{5.2}
\int_{\mathcal{C}_1}\varphi(x)\psi^\prime(x_{n+1})\varepsilon\left(\frac{\partial
u_\varepsilon}{\partial
x_{n+1}}\right)^2=O(e^{-\frac{c}{\varepsilon}}).
\end{equation}
Thus from the stationary condition \eqref{stationary condition} we
deduce that
\begin{equation}\label{5.02}
\int_{\mathcal{C}_1}\varepsilon\psi\left(\sum_{i=1}^n\frac{\partial
u_\varepsilon}{\partial x_i}\frac{\partial \varphi}{\partial
x_i}\right)\frac{\partial u_\varepsilon}{\partial
x_{n+1}}=O(e^{-\frac{c}{\varepsilon}})=o(\delta_\varepsilon),
\end{equation}
where in the last equality we have used the assumption \eqref{absurd
assumption 3}.

First note that
\begin{eqnarray}\label{5.3}
&&\int_{\mathcal{C}_1\cap\{|u_\varepsilon|\geq 1-b\}}\varepsilon\psi\left(\sum_{i=1}^n\frac{\partial u_\varepsilon}{\partial x_i}\frac{\partial \varphi}{\partial x_i}\right)\frac{\partial u_\varepsilon}{\partial x_{n+1}}dxdx_{n+1}\nonumber\\
&\leq&C\left(\sup_{B_1}|\nabla\varphi|\right)\left[\int_{\mathcal{C}_1\cap\{|u_\varepsilon|\geq 1-b\}}\varepsilon\left(\sum_{i=1}^n\frac{\partial u_\varepsilon}{\partial x_i}\right)^2\right]^{1/2}\left[\int_{\mathcal{C}_1\cap\{|u_\varepsilon|\geq 1-b\}}\varepsilon\left(\frac{\partial u_\varepsilon}{\partial x_{n+1}}\right)^2\right]^{1/2}\\
&\leq&C(\varphi)o_b(1)\delta_\varepsilon,\nonumber
\end{eqnarray}
where $o_b(1)$ converges to $0$ as $b\to0$ (by Lemma \ref{lem B3}).

Next in $B_\varepsilon$,
\begin{eqnarray}\label{5.4}
&&\int_{B_\varepsilon}\varepsilon\psi\left(\sum_{i=1}^n\frac{\partial u_\varepsilon}{\partial x_i}\frac{\partial \varphi}{\partial x_i}\right)\frac{\partial u_\varepsilon}{\partial x_{n+1}}dxdx_{n+1}\nonumber\\
&\leq&C\left(\sup_{B_1}|\nabla\varphi|\right)\left[\int_{B_\varepsilon}\varepsilon\left(\sum_{i=1}^n\frac{\partial u_\varepsilon}{\partial x_i}\right)^2\right]^{1/2}\left[\int_{B_\varepsilon}\varepsilon\left(\frac{\partial u_\varepsilon}{\partial x_{n+1}}\right)^2\right]^{1/2}\\
&\leq&C(\varphi)\delta_\varepsilon\mu_\varepsilon(B_\varepsilon)^{1/2}\nonumber\\
&\leq&C(\varphi)\delta_\varepsilon^2 \quad\mbox{(by Lemma \ref{lem
4.3})}.\nonumber
\end{eqnarray}
Substituting \eqref{5.3} and \eqref{5.4} into \eqref{5.02} and
noting that $\psi\equiv 1$ on $A_\varepsilon$ (recall that
$A_\varepsilon\subset B_1\times\{|x_{n+1}|<1/2\}$), we see
\begin{equation}\label{5.5}
\int_{A_\varepsilon}\varepsilon\left(\sum_{i=1}^n\frac{\partial
u_\varepsilon}{\partial x_i}\frac{\partial \varphi}{\partial
x_i}\right)\frac{\partial u_\varepsilon}{\partial
x_{n+1}}dxdx_{n+1}=o(\delta_\varepsilon)+o_b(1)\delta_\varepsilon.
\end{equation}
By using the transformation $(x,x_{n+1})=(x,h_\varepsilon^t(x))$ and
\eqref{x,t coordinates}, this integral can be transformed into
\begin{equation}\label{5.6}
\int_{-1+b}^{1-b}\int_{D_\varepsilon}\varepsilon\left(\frac{\partial
h^t_\varepsilon}{\partial t}\right)^{-1}\sum_{i=1}^n\frac{\partial
h^t_\varepsilon}{\partial x_i}\frac{\partial \varphi}{\partial
x_i}dxdt=o(\delta_\varepsilon)+o_b(1)\delta_\varepsilon.
\end{equation}
We need to further divide $D_\varepsilon$ into two parts, using the
set $E_\varepsilon$ introduced in Lemma \ref{lem 4.2}.

In the first part $D_\varepsilon\setminus E_\varepsilon$, by
\eqref{gradient bound}, \eqref{x,t coordinates}, Lemma \ref{lem H1
bound} and Lemma \ref{lem 4.2},
\begin{eqnarray*}
&&\int_{-1+b}^{1-b}\int_{D_\varepsilon\setminus E_\varepsilon}\varepsilon\left(\frac{\partial h^t_\varepsilon}{\partial x_{n+1}}\right)^{-1}\sum_{i=1}^n\frac{\partial h^t_\varepsilon}{\partial x_i}\frac{\partial \varphi}{\partial x_i}dxdt\\
&\leq&\left(\sup_{A_\varepsilon}\Big|\varepsilon\left(\frac{\partial h^t_\varepsilon}{\partial t}\right)^{-1}\Big|\right)\left(\sup_{B_1}|\nabla\varphi|\right)\left[\int_{-1+b}^{1-b}\int_{D_\varepsilon\setminus E_\varepsilon}\sum_{i=1}^n\left(\frac{\partial h^t_\varepsilon}{\partial x_i}\right)^2dxdt\right]^{1/2}\mathcal{H}^n(D_\varepsilon\setminus E_\varepsilon)^{1/2}\\
&\leq&C(\varphi)\delta_\varepsilon^{\frac{3}{2}}=o(\delta_\varepsilon).
\end{eqnarray*}

In $E_\varepsilon$,
\begin{eqnarray*}
\int_{-1+b}^{1-b}\int_{E_\varepsilon}\varepsilon\left(\frac{\partial
h^t_\varepsilon}{\partial t}\right)^{-1}\sum_{i=1}^n\frac{\partial
h^t_\varepsilon}{\partial x_i}\frac{\partial \varphi}{\partial
x_i}dxdt
=\int_{-1+b}^{1-b}\int_{E_\varepsilon}g^{\prime}(g^{-1}(t))\sum_{i=1}^n\frac{\partial
h^t_\varepsilon}{\partial x_i}\frac{\partial \varphi}{\partial
x_i}dxdt+o(\delta_\varepsilon),
\end{eqnarray*}
where in the last equality we have used
\[\sup_{E_\varepsilon}\Big|\varepsilon\left(\frac{\partial h^t_\varepsilon}{\partial t}\right)^{-1}-g^\prime(g^{-1}(t))\Big|=o_\varepsilon(1)\to 0,\]
and the bound
\begin{eqnarray*}
\int_{-1+b}^{1-b}\int_{E_\varepsilon}\sum_{i=1}^n\frac{\partial h^t_\varepsilon}{\partial x_i}\frac{\partial \varphi}{\partial x_i}dxdt
&\leq&\left[\int_{-1+b}^{1-b}\int_{B_1}|\nabla h^t_\varepsilon|^2dxdt\right]^{1/2}\left[\int_{-1+b}^{1-b}\int_{B_1}|\nabla \varphi|^2dxdt\right]^{1/2}\\
&\leq&C(\varphi)\delta_\varepsilon.\quad \mbox{(by Lemma \ref{lem H1
bound})}
\end{eqnarray*}

By Cauchy inequality we also have
\begin{eqnarray*}
&&\int_{-1+b}^{1-b}\int_{B_1\setminus
E_\varepsilon}g^\prime(g^{-1}(t))\sum_{i=1}^n\frac{\partial
h^t_\varepsilon}{\partial x_i}\frac{\partial \varphi}{\partial
x_i}dxdt\\
&\leq& C\left(\sup_{B_1}|\nabla\varphi|\right)\left[\int_{-1+b}^{1-b}\int_{B_1}|\nabla h^t_\varepsilon|^2dxdt\right]^{1/2}\mathcal{H}^n(B_1\setminus E_\varepsilon)^{1/2}\\
&\leq&C(\varphi)\delta_\varepsilon^{3/2}=o(\delta_\varepsilon),
\end{eqnarray*}
where we have used Lemma \ref{lem H1 bound}, Lemma \ref{lem 3.2} and
Lemma \ref{lem 4.2}.

Putting these three integrals together, by \eqref{5.6} we get
\[\int_{-1+b}^{1-b}\int_{B_1}g^\prime(g^{-1}(t))\sum_{i=1}^n\frac{\partial h^t_\varepsilon}{\partial x_i}\frac{\partial \varphi}{\partial x_i}dxdt=o_b(1)\delta_\varepsilon+o(\delta_\varepsilon).\]
By the weak convergence of  $\delta_\varepsilon^{-1}\nabla
h^t_\varepsilon$ to $\nabla\bar{h}$ in $L^2(B_1\times (-1+b,1-b))$,
we can let $\varepsilon\to 0$ to obtain
\begin{equation}\label{5.7}
\left[\int_{-1+b}^{1-b}g^\prime(g^{-1}(t))dt\right]\left[\int_{B_1}\sum_{i=1}^n\frac{\partial
\bar{h}}{\partial x_i}\frac{\partial \varphi}{\partial
x_i}dx\right]=o_b(1).
\end{equation}
For $b\in(0,1/2)$,
\[\int_{-1+b}^{1-b}g^\prime(g^{-1}(t))dt=\int_{g^{-1}(-1+b)}^{g^{-1}(1-b)}g^\prime(s)^2ds\geq c\sigma_0.\]
At the first step, we can choose a smaller $\tilde{b}$ and get
another family $\tilde{h}^t_\varepsilon$ for
$t\in(-1+\tilde{b},1-\tilde{b})$. Assume its limit is $\tilde{h}$.
By Remark \ref{rmk 2}, $\tilde{h}^t_\varepsilon=h^t_\varepsilon$ for
$t\in(-1+b,1-b)$. Then by \eqref{4.2}, $\tilde{h}=\bar{h}$. In other
words, the limit $\bar{h}$ does not depend on $b$.

After taking $b\to0$ in \eqref{5.7}, we get
\[\int_{B_1}\sum_{i=1}^n\frac{\partial \bar{h}}{\partial x_i}\frac{\partial \varphi}{\partial x_i}dx=0.\]
Since $\varphi\in C_0^\infty(B_1)$ can be arbitrary and $\bar{h}\in
H^1(B_1)$, standard harmonic function theory implies that $\bar{h}$
is harmonic in $B_1$ and we finish the proof of Proposition
\ref{prop harmonic limit}.

\section{Proof of the tilt-excess decay}
\numberwithin{equation}{section}
 \setcounter{equation}{0}

Recall that $\bar{h}$ is a harmonic function satisfying (see \eqref{H1 bound on the blow up limit})
\[\int_{B_1}|\nabla\bar{h}|^2+\bar{h}^2\leq C.\]
By standard interior gradient estimates for harmonic functions we
get
\begin{equation}\label{8.01}
|\nabla\bar{h}(0)|\leq C,\ \ \ \ \sup_{B_r}|\nabla\bar{h}-\nabla\bar{h}(0)|\leq Cr, \ \ \ \forall r\in(0,1/2).
\end{equation}
Thus
\begin{equation}\label{8.02}
\int_{B_r}|\nabla\bar{h}-\nabla\bar{h}(0)|^2\leq Cr^{n+2}, \ \ \ \forall r\in(0,1/2).
\end{equation}

In this section we complete the proof of Theorem \ref{thm tilt
excess decay}. We first consider the special case when
$\nabla\bar{h}(0)=0$, and then reduce the general case to this one.

\subsection{The case $\nabla\bar{h}(0)=0$}

Take a $\psi\in C_0^\infty((-1,1))$ satisfying $0\leq \psi\leq 1$,
$\psi\equiv1$ in $(-1/2,1/2)$, $|\psi^\prime|\leq 3$. For any
$r\in(0,1/4)$, choose an $\phi\in C_0^\infty(B_{2r})$ such that $0\leq\phi\leq 1$,
$\phi\equiv1$ in $B_r$. In the stationary condition
\eqref{stationary condition}, take the vector field
\[Y=\phi(x)^2\psi(x_{n+1})^2\left(x_{n+1}-\lambda_\varepsilon\delta_\varepsilon\right)
e_{n+1},\]
 where $\lambda_\varepsilon$ is the constant appearing in \eqref{defintion blow up sequence}.

As in Section 7, by viewing the level set of $u_\varepsilon$ as the
graph of a function $h$, because $h$ almost satisfies an elliptic
equation, taking such a vector field as a test function corresponds
to the procedure of multiplying the equation of $h$ by $h\phi^2$ and
then integrating by parts, which is again a standard method in the
elliptic equation theory. (It is used to derive the Caccioppoli
inequality.)

By this choice of $Y$ we get
\begin{eqnarray}\label{stationary with normal variation 2}
0=\int_{\mathcal{C}_1}&&\left[\frac{\varepsilon}{2}|\nabla u_\varepsilon|^2+\frac{1}{\varepsilon}W(u_\varepsilon)\right]
\left[\phi^2\psi^2+2\phi^2\psi\psi^\prime\left(x_{n+1}-\lambda_\varepsilon\delta_\varepsilon\right)\right]                \nonumber\\
&&-\phi^2\psi^2\nu_{\varepsilon,n+1}^2\varepsilon|\nabla u_\varepsilon|^2-\left(x_{n+1}-\lambda_\varepsilon\delta_\varepsilon\right)\sum_{i=1}^{n}2\phi\psi^2\frac{\partial\phi}{\partial x_i}\nu_{\varepsilon,i}\nu_{\varepsilon,n+1}\varepsilon|\nabla u_\varepsilon|^2       \\
&&-\left(x_{n+1}-\lambda_\varepsilon\delta_\varepsilon\right)2\phi^2\psi\psi^\prime\nu_{\varepsilon,n+1}^2\varepsilon|\nabla u_\varepsilon|^2.        \nonumber
\end{eqnarray}

As in the proof of Caccioppoli inequality \eqref{Caccioppoli
inequality}, those terms containing $\psi^\prime$ are bounded by
$O(e^{-\frac{1}{C\varepsilon}})$.
By the Modica inequality \eqref{Modica inequality}, \eqref{stationary with normal variation 2} can be transformed to
\begin{eqnarray*}
&&\int_{\mathcal{C}_1}\phi^2\psi^2\left[1-\left(\nu_\varepsilon\cdot e_{n+1}\right)^2\right]\varepsilon|\nabla u_\varepsilon|^2\\
&\leq&\int_{\mathcal{C}_1}2\phi\psi^2\left(x_{n+1}-\lambda_\varepsilon\delta_\varepsilon\right)
\sum_{i=1}^{n}\frac{\partial\phi}{\partial
x_i}\nu_{\varepsilon,i}\nu_{\varepsilon,n+1}\varepsilon|\nabla
u_\varepsilon|^2+O(e^{-\frac{1}{C\varepsilon}}).
\end{eqnarray*}

Since $1-\psi^2\equiv 0$ in $\{|x_{n+1}|\leq 1/2\}$, as before we have
\[\int_{\mathcal{C}_1}\phi^2\left(1-\psi^2\right)\left[1-\left(\nu_\varepsilon\cdot e_{n+1}\right)^2\right]\varepsilon|\nabla u_\varepsilon|^2=O(e^{-\frac{1}{C\varepsilon}}).\]
Thus we obtain
\begin{eqnarray}\label{7.2}
&&\int_{\mathcal{C}_1}\phi^2\left[1-\left(\nu_\varepsilon\cdot
e_{n+1}\right)^2\right]\varepsilon|\nabla u_\varepsilon|^2\\
\nonumber
&\leq&\int_{\mathcal{C}_1}2\phi\psi^2\left(x_{n+1}-\lambda_\varepsilon\delta_\varepsilon\right)
\sum_{i=1}^{n}\frac{\partial\phi}{\partial
x_i}\nu_{\varepsilon,i}\nu_{\varepsilon,n+1}\varepsilon|\nabla
u_\varepsilon|^2+O(e^{-\frac{1}{C\varepsilon}}).
\end{eqnarray}

Now we consider the convergence of the integral in the right hand side of \eqref{7.2}.

\begin{lem}\label{lem 7.1}
We have
\begin{eqnarray*}
&&\lim_{\varepsilon\to0}\delta_\varepsilon^{-2}\int_{\mathcal{C}_1}2\phi\psi^2\left(x_{n+1}-\lambda_\varepsilon\delta_\varepsilon\right)
\sum_{i=1}^{n}\frac{\partial\phi}{\partial x_i}\nu_{\varepsilon,i}\nu_{\varepsilon,n+1}\varepsilon|\nabla u_\varepsilon|^2\\
&=&\left[\int_{-1}^1g^\prime(g^{-1}(t))dt\right]\left[
\int_{B_1}\phi^2|\nabla\bar{h}(x)|^2dx\right].
\end{eqnarray*}
\end{lem}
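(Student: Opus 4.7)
The plan is to change variables on the good set $A_\varepsilon$ to $(x,t)$ via $x_{n+1}=h_\varepsilon^t(x)$, dispose of the complementary regions as error terms, and then pass to the limit by a weak-strong $L^2$ pairing combined with the convergence results from Sections 5--6. Using the identities \eqref{x,t coordinates} and noting that the Jacobian of the change of variables is $\partial h_\varepsilon^t/\partial t$, one has
\[\nu_{\varepsilon,i}\nu_{\varepsilon,n+1}\varepsilon|\nabla u_\varepsilon|^2=-\varepsilon\left(\frac{\partial h_\varepsilon^t}{\partial t}\right)^{-2}\frac{\partial h_\varepsilon^t}{\partial x_i}\quad\text{on }A_\varepsilon,\]
so that, with $h_\varepsilon^t-\lambda_\varepsilon\delta_\varepsilon=\delta_\varepsilon\bar h_\varepsilon^t$, the integral over $A_\varepsilon$ divided by $\delta_\varepsilon^2$ reads
\[-\int_{-1+b}^{1-b}\!\int_{D_\varepsilon}2\phi\,\psi(h_\varepsilon^t)^2\,\bar h_\varepsilon^t\,\nabla\phi\cdot\bigl(\delta_\varepsilon^{-1}\nabla h_\varepsilon^t\bigr)\,\varepsilon\Bigl(\frac{\partial h_\varepsilon^t}{\partial t}\Bigr)^{-1}dx\,dt.\]
The contributions from $\mathcal{C}_1\setminus A_\varepsilon$ are error terms: on $B_\varepsilon$, Lemma \ref{lem 4.3} gives $\mu_\varepsilon(B_\varepsilon)\le C\delta_\varepsilon^2/l$ and the factor $|x_{n+1}-\lambda_\varepsilon\delta_\varepsilon|$ is $o(1)$ by Proposition \ref{prop exponential decay} and Lemma \ref{lem 5.2}, producing $o(\delta_\varepsilon^2)$; on $\{|u_\varepsilon|\ge 1-b\}\cap\mathcal{C}_1$, Cauchy--Schwarz combines Corollary \ref{coro 6.4} (which yields a factor $\sqrt{\sigma\delta_\varepsilon^2+C\varepsilon^2}$ with $\sigma\to 0$ as $b\to 0$) with the excess bound to give $o_b(1)\delta_\varepsilon^2$.

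To pass to the limit I would use four ingredients: (i) $\psi(h_\varepsilon^t)\to 1$ uniformly, since $h_\varepsilon^t\to 0$ uniformly in $(x,t)$; (ii) $\bar h_\varepsilon^t\to\bar h$ strongly in $L^2(B_1\times(-1+b,1-b))$, as established in Section 6; (iii) $\delta_\varepsilon^{-1}\nabla h_\varepsilon^t\rightharpoonup\nabla\bar h$ weakly in $L^2$ by Lemma \ref{lem H1 bound}; and (iv) $\varepsilon(\partial_th_\varepsilon^t)^{-1}\chi_{D_\varepsilon}\to g'(g^{-1}(t))$ in $L^2$, which follows from the uniform convergence on $E_\varepsilon$ (Lemma \ref{lem 4.2}), the uniform bound $|\varepsilon(\partial_th_\varepsilon^t)^{-1}|\le C(b)$ on $D_\varepsilon$, and $|D_\varepsilon\setminus E_\varepsilon|+|B_1\setminus D_\varepsilon|\to 0$. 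The key observation is that the harmonic limit $\bar h$ is bounded on $\text{supp}(\phi)\Subset B_1$ while $\varepsilon(\partial_th_\varepsilon^t)^{-1}$ is uniformly bounded in $L^\infty$, so the triangle inequality $\|f_\varepsilon g_\varepsilon-fg\|_{L^2}\le\|g_\varepsilon\|_\infty\|f_\varepsilon-f\|_{L^2}+\|f\|_\infty\|g_\varepsilon-g\|_{L^2}$ upgrades (i), (ii), (iv) to strong $L^2$-convergence of the scalar factor $\phi\psi^2\bar h_\varepsilon^t\,\varepsilon(\partial_th_\varepsilon^t)^{-1}\chi_{D_\varepsilon}\to\phi\bar h\,g'(g^{-1}(t))$; pairing this against the weak limit in (iii) yields
\[\lim_{\varepsilon\to 0}\delta_\varepsilon^{-2}\,(\text{integral over }A_\varepsilon)=-\Bigl[\int_{-1+b}^{1-b}g'(g^{-1}(t))\,dt\Bigr]\int_{B_1}\bar h\,\nabla(\phi^2)\cdot\nabla\bar h\,dx.\]

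To conclude, since $\bar h$ is harmonic on $B_1$ by Proposition \ref{prop harmonic limit} and $\phi^2\bar h\in H^1_0(B_1)$, integration by parts gives $-\int_{B_1}\bar h\,\nabla(\phi^2)\cdot\nabla\bar h\,dx=\int_{B_1}\phi^2|\nabla\bar h|^2\,dx$. Sending $b\to 0$ then replaces $\int_{-1+b}^{1-b}$ by $\int_{-1}^{1}$, absorbs the $o_b(1)$ errors, and uses Remark \ref{rmk 2} to ensure the limit $\bar h$ does not depend on $b$. The main obstacle is the strong $L^2$-convergence of the scalar factor, which relies crucially on the $L^\infty$-boundedness of the harmonic limit $\bar h$ on $\text{supp}(\phi)$ together with the uniform $L^\infty$-bound $|\varepsilon(\partial_th_\varepsilon^t)^{-1}|\le C(b)$; without either of these, only $L^p$-convergence for $p<2$ would be available and the weak-strong pairing with $\delta_\varepsilon^{-1}\nabla h_\varepsilon^t$ would fail.
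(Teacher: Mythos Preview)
Your argument is correct and follows the same overall architecture as the paper: split $\mathcal{C}_1$ into $\{|u_\varepsilon|\geq 1-b\}$, $B_\varepsilon$, and $A_\varepsilon$; dispose of the first two as $o_b(1)\delta_\varepsilon^2$ and $o(\delta_\varepsilon^2)$ respectively; change to $(x,t)$ coordinates on $A_\varepsilon$; pass to the limit; integrate by parts using the harmonicity of $\bar h$; and finally let $b\to 0$.

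The one point where you diverge from the paper is in how the bad set $D_\varepsilon\setminus E_\varepsilon$ is handled. The paper isolates the integral over $D_\varepsilon\setminus E_\varepsilon$ explicitly and bounds it via H\"older together with the Sobolev embedding $H^1\hookrightarrow L^{2p}$ applied to $(h_\varepsilon^t-\lambda_\varepsilon\delta_\varepsilon)\phi$, obtaining a factor $\mathcal{H}^n(D_\varepsilon\setminus E_\varepsilon)^{(p-1)/2p}$ that forces the term to be $o(\delta_\varepsilon^2)$. You instead absorb this step into the single assertion that $\varepsilon(\partial_t h_\varepsilon^t)^{-1}\chi_{D_\varepsilon}\to g'(g^{-1}(t))$ strongly in $L^2$, and then use the $L^\infty$ bound on the harmonic limit $\bar h$ over $\text{supp}(\phi)\Subset B_1$ in your triangle inequality. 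This is a legitimate and somewhat cleaner packaging: it replaces the Sobolev step by the interior regularity of $\bar h$, which is already available from Proposition~\ref{prop harmonic limit}. Both routes rely on the same three inputs (Lemma~\ref{lem 4.2}, the uniform bound \eqref{8.2.1}, and the measure estimates on $B_1\setminus E_\varepsilon$), so the difference is purely organizational.
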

\begin{proof}
In $\{|u_\varepsilon|\geq 1-b\}$,
\begin{eqnarray*}
&&\Big|\int_{\{|u_\varepsilon|\geq 1-b\}\cap\mathcal{C}_1}2\phi\psi^2\left(x_{n+1}-\lambda_\varepsilon\delta_\varepsilon\right)
\sum_{i=1}^{n}\frac{\partial\phi}{\partial x_i}\nu_{\varepsilon,i}\nu_{\varepsilon,n+1}\varepsilon|\nabla u_\varepsilon|^2\Big|\\
&\leq&C\left(\sup_{B_1}\big|\phi\psi^2\nabla\phi
\big|\right)\left[\int_{\{|u_\varepsilon|\geq
1-b\}}\sum_{i=1}^{n}\nu_{\varepsilon,i}^2\varepsilon|\nabla
u_\varepsilon|^2\right]^{\frac{1}{2}}
\left[\int_{\{|u_\varepsilon|\geq 1-b\}}\left(x_{n+1}-\lambda_\varepsilon\delta_\varepsilon\right)^2\varepsilon|\nabla u_\varepsilon|^2\right]^{\frac{1}{2}}\\
&=&o_b(1)\delta_\varepsilon^2. \quad \mbox{(by the definition of
$\delta_\varepsilon$ and Corollary \ref{coro 6.4})}
\end{eqnarray*}

In $B_\varepsilon$,
\begin{eqnarray*}
&&\Big|\int_{B_\varepsilon}2\phi\psi^2\left(x_{n+1}-\lambda_\varepsilon\delta_\varepsilon\right)
\sum_{i=1}^{n}\frac{\partial\phi}{\partial x_i}\nu_{\varepsilon,i}\nu_{\varepsilon,n+1}\varepsilon|\nabla u_\varepsilon|^2\Big|\\
&\leq&C\left(\sup_{\{|u_\varepsilon|\leq
1-b\}}\big|x_{n+1}-\lambda_\varepsilon\delta_\varepsilon\big|\right)\left(\sup_{B_1}\big|\phi\psi^2\nabla\phi
\big|\right)\left[\int_{B_\varepsilon}\sum_{i=1}^{n}\nu_{\varepsilon,i}^2\varepsilon|\nabla
u_\varepsilon|^2\right]^{\frac{1}{2}}
\left[\int_{B_\varepsilon}\varepsilon|\nabla u_\varepsilon|^2\right]^{\frac{1}{2}}\\
&=&o(\delta_\varepsilon^2),
\end{eqnarray*}
where we have used the definition of excess, Lemma \ref{lem 4.3} and
the fact that $\{|u_\varepsilon|\leq 1-b\}$ belongs to a small
neighborhood of $\{x_{n+1}=0\}$ (see Proposition \ref{prop
exponential decay}), which together with Lemma \ref{lem 5.2} implies
that
\begin{equation}\label{7.4}
\lim_{\varepsilon\to0}\sup_{\{|u_\varepsilon|\leq 1-b\}}\big|x_{n+1}-\lambda_\varepsilon\delta_\varepsilon\big|=0.
\end{equation}

Because $A_\varepsilon\subset\{|x_{n+1}|\leq 1/2\}$,
$\psi(x_{n+1})\equiv 1$ in $A_\varepsilon$. Hence we have, by using
the $(x,t)$ coordinates,
\begin{eqnarray}\label{8.03}
&&\int_{A_\varepsilon}2\phi\psi^2\left(x_{n+1}-\lambda_\varepsilon\delta_\varepsilon\right)
\sum_{i=1}^{n}\frac{\partial\phi}{\partial x_i}\nu_{\varepsilon,i}\nu_{\varepsilon,n+1}\varepsilon|\nabla u_\varepsilon|^2\\
&=&-\int_{-1+b}^{1-b}\int_{D_\varepsilon}2\phi\left(\nabla\phi\cdot\nabla
h^t_\varepsilon\right)
\left(h^t_\varepsilon-\lambda_\varepsilon\delta_\varepsilon\right)
 \varepsilon\left(\frac{\partial h^t_\varepsilon}{\partial t}\right)^{-1}dxdt.      \nonumber
\end{eqnarray}

In $A_\varepsilon$, by \eqref{x,t coordinates} and \eqref{gradient
bound},
\begin{equation}\label{8.2.1}
\varepsilon\left(\frac{\partial h^t_\varepsilon}{\partial
t}\right)^{-1} =\varepsilon\frac{\partial u_\varepsilon}{\partial
x_{n+1}}\leq C.
\end{equation}
 Let $E_\varepsilon$ be the set defined in Lemma \ref{lem
4.2}. By the Cauchy inequality, Lemma \ref{lem H1 bound},
\eqref{8.2.1}, \eqref{defintion blow up sequence}, \eqref{4.2} and
Sobolev inequality,
\begin{eqnarray*}
&&\int_{-1+b}^{1-b}\int_{D_\varepsilon\setminus
E_\varepsilon}2\phi\left(\nabla\phi\cdot\nabla
h^t_\varepsilon\right)
\left(h^t_\varepsilon-\lambda_\varepsilon\delta_\varepsilon\right)\varepsilon\left(\frac{\partial
h^t_\varepsilon}{\partial t}\right)^{-1}
 dxdt\\
 &\leq&C\left[\int_{-1+b}^{1-b}\int_{D_\varepsilon\setminus E_\varepsilon}\left(\nabla\phi\cdot\nabla h^t_\varepsilon\right)^2dxdt\right]^{\frac{1}{2}}
\left[\int_{-1+b}^{1-b}\int_{D_\varepsilon\setminus E_\varepsilon}
\left(h^t_\varepsilon-\lambda_\varepsilon\delta_\varepsilon\right)^2\phi^2
 dxdt\right]^{1/2}\\
&\leq& C\delta_\varepsilon\mathcal{H}^n(D_\varepsilon\setminus
E_\varepsilon)^\frac{p-1}{2p}
\left[\int_{-1+b}^{1-b}\left(\int_{B_1}
\left(h^t_\varepsilon-\lambda_\varepsilon\delta_\varepsilon\right)^{2p}\phi^{2p}
 dx\right)^{1/p}dt\right]^{1/2}\\
 &\leq& C\delta_\varepsilon\mathcal{H}^n(D_\varepsilon\setminus E_\varepsilon)^\frac{p-1}{2p}
\left[\int_{-1+b}^{1-b}\int_{B_1}
\big|\nabla(h^t_\varepsilon-\lambda_\varepsilon\delta_\varepsilon)\phi\big|^2
 dxdt+O(\varepsilon^2)\right]^{1/2}\\
 &\leq&C\mathcal{H}^n(D_\varepsilon\setminus E_\varepsilon)^\frac{p-1}{2p}\delta_\varepsilon^2=o(\delta_\varepsilon^2).
\end{eqnarray*}
In the above $p>1$ is a constant depending only on the dimension $n$.
This estimate gives
\begin{equation}\label{8.0.6}
\int_{-1+b}^{1-b}\int_{D_\varepsilon\setminus E_\varepsilon}2\phi\left(\nabla\phi\cdot\nabla h^t_\varepsilon\right)
\left(h^t_\varepsilon-\lambda_\varepsilon\delta_\varepsilon\right)
 \varepsilon\left(\frac{\partial h^t_\varepsilon}{\partial t}\right)^{-1}dxdt=o(\delta_\varepsilon^2).
 \end{equation}
Hence by \eqref{x,t coordinates},
\begin{eqnarray}\label{7.3}
&&\delta_\varepsilon^{-2}\int_{\mathcal{C}_1}2\phi\psi^2\left(x_{n+1}-\lambda_\varepsilon\delta_\varepsilon\right)
\sum_{i=1}^{n}\frac{\partial\phi}{\partial x_i}\nu_{\varepsilon,i}\nu_{\varepsilon,n+1}\varepsilon|\nabla u_\varepsilon|^2\\
&=&-\delta_\varepsilon^{-2}\int_{-1+b}^{1-b}\int_{E_\varepsilon}2\phi\left(\nabla\phi\cdot\nabla
h^t_\varepsilon\right)
\left(h^t_\varepsilon-\lambda_\varepsilon\delta_\varepsilon\right)
\varepsilon\left(\frac{\partial h^t_\varepsilon}{\partial
t}\right)^{-1}
 dxdt+o_b(1)+o_\varepsilon(1).     \nonumber
\end{eqnarray}

In $E_\varepsilon$, by Lemma \ref{lem H1 bound}, \eqref{defintion blow up sequence}-\eqref{4.2} and the Cauchy inequality, we have
\[\Big|\int_{-1+b}^{1-b}\int_{E_\varepsilon}2\phi\left(\nabla\phi\cdot\nabla h^t_\varepsilon\right)
\left(h^t_\varepsilon-\lambda_\varepsilon\delta_\varepsilon\right)
 dxdt\Big|\leq C\delta_\varepsilon^2.\]
Then by Lemma \ref{lem 4.2},
\begin{eqnarray*}
&&\int_{-1+b}^{1-b}\int_{E_\varepsilon}2\phi\left(\nabla\phi\cdot\nabla h^t_\varepsilon\right)
\left(h^t_\varepsilon-\lambda_\varepsilon\delta_\varepsilon\right)
\varepsilon\left(\frac{\partial h^t_\varepsilon}{\partial t}\right)^{-1}
 dxdt\\
 &=&\int_{-1+b}^{1-b}\int_{E_\varepsilon}2\phi\left(\nabla\phi\cdot\nabla h^t_\varepsilon\right)
\left(h^t_\varepsilon-\lambda_\varepsilon\delta_\varepsilon\right)
g^{\prime}(g^{-1}(t)) dxdt+o(\delta_\varepsilon^2).
\end{eqnarray*}

Finally, similar to \eqref{8.0.6}, we have
\begin{equation}\label{8.0.7}
\int_{-1+b}^{1-b}\int_{B_1\setminus
E_\varepsilon}2\phi\left(\nabla\phi\cdot\nabla
h_\varepsilon^t\right)
\left(h^t_\varepsilon-\lambda_\varepsilon\delta_\varepsilon\right)g^\prime(g^{-1}(t))dxdt
=o(\delta_\varepsilon^2).
\end{equation}

This, combined with Lemma \ref{lem 4.2}, implies that
\begin{eqnarray*}
&&\delta_\varepsilon^{-2}\int_{\mathcal{C}_1}2\phi\psi^2\left(x_{n+1}-\lambda_\varepsilon\delta_\varepsilon\right)\left(\nabla\phi\cdot\nu_\varepsilon
\right)\nu_{\varepsilon,n+1}\varepsilon|\nabla u_\varepsilon|^2\\
&=&-\delta_\varepsilon^{-2}\int_{-1+b}^{1-b}\int_{B_1}2\phi\left(\nabla\phi\cdot\nabla
h^t_\varepsilon\right)
\left(h^t_\varepsilon-\lambda_\varepsilon\delta_\varepsilon\right)g^\prime(g^{-1}(t))
 dxdt+o_b(1)+o_\varepsilon(1).
\end{eqnarray*}

By the Rellich compactness embedding theorem, Lemma \ref{lem H1
bound} and \eqref{defintion blow up sequence}-\eqref{4.2}, it can be
directly checked that
\begin{eqnarray*}
&&\lim_{\varepsilon\to0}
\delta_\varepsilon^{-2}\int_{-1+b}^{1-b}\int_{B_1}2\phi\left(\nabla\phi\cdot\nabla h^t_\varepsilon\right)
\left[h^t_\varepsilon-\lambda_\varepsilon\delta_\varepsilon\right]g^\prime(g^{-1}(t))
 dxdt\\
 &=&\left[\int_{-1+b}^{1-b}g^\prime(g^{-1}(t))dt\right]\left[\int_{B_1}2\phi\left(\nabla\phi\cdot\nabla \bar{h}\right)
\bar{h}dx\right].
\end{eqnarray*}
Since $\bar{h}$ is a harmonic function (see Proposition \ref{prop
harmonic limit}), an integration by parts gives
\[\int_{B_1}2\phi\left(\nabla\phi\cdot\nabla \bar{h}\right)
\bar{h}dx=-\int_{B_1}\phi^2|\nabla \bar{h}|^2
dx.\]

Now we have proved that
\begin{eqnarray*}
&&\lim_{\varepsilon\to0}\delta_\varepsilon^{-2}\int_{\mathcal{C}_1}2\phi\psi^2\left(x_{n+1}-\lambda_\varepsilon\delta_\varepsilon\right)
\sum_{i=1}^{n}\frac{\partial\phi}{\partial x_i}\nu_{\varepsilon,i}\nu_{\varepsilon,n+1}\varepsilon|\nabla u_\varepsilon|^2\\
&=&\left[\int_{-1+b}^{1-b}g^\prime(g^{-1}(t))dt\right]\left[\int_{B_1}\phi^2|\nabla
\bar{h}|^2 dx\right]+o_b(1).
\end{eqnarray*}
As in the proof of Proposition \ref{prop harmonic limit}, we can let $b\to0$ to finish the proof.
\end{proof}
Note that
\[\int_{-1}^1g^\prime(g^{-1}(t))dt=\int_{-\infty}^{+\infty}g^\prime(s)^2ds=\sigma_0.\]

By \eqref{8.02}, we can choose a $\theta\in(0,1/2)$ so that
\begin{equation}\label{8.3.1}
\theta^{-n}\int_{B_{2\theta}}|\nabla\bar{h}|^2\leq C\theta^2\leq
\frac{\theta}{4\max\{\sigma_0,1\}}.
\end{equation}
 Then by choosing $r=2\theta$ in the
definition of $\phi$, \eqref{7.2} and Lemma \ref{lem 7.1} give, for
all $\varepsilon$ small,
\[
\theta^{-n}\int_{\mathcal{C}_{\theta}}\left[1-\left(\nu_\varepsilon\cdot
e_{n+1}\right)^2\right]\varepsilon|\nabla u_\varepsilon|^2
\leq\frac{\theta}{3}\delta_\varepsilon^2,\] which contradicts the
initial assumption \eqref{absurd assumption 2}. This completes the
proof of Theorem \ref{thm tilt excess decay} in the special case
$\nabla\bar{h}(0)=0$.

\subsection{The general case}

In general $\nabla\bar{h}(0)$ may not be $0$, and we only have an estimate as in \eqref{8.01}. Here we show how to reduce this problem to the special case treated in the previous subsection.

For each $\varepsilon$, take a rotation $T_\varepsilon\in SO(n+1)$
so that
\begin{equation}\label{8.4.1}
T_\varepsilon
e_{n+1}=e_\varepsilon:=\frac{e_{n+1}+\delta_\varepsilon\nabla\bar{h}(0)}
{\left(1+\delta_\varepsilon^2|\nabla\bar{h}(0)|^2\right)^{1/2}}.
\end{equation}
Next define
\[\tilde{u}_\varepsilon(X):=u_\varepsilon(T_\varepsilon X),\]
which is still a solution of \eqref{equation} in $\mathcal{B}_4$.

By \eqref{8.01},
\begin{equation}\label{7.5}
|e_\varepsilon-e_{n+1}|\leq C\delta_\varepsilon.
\end{equation}
We can also choose $T_\varepsilon$ so that it satisfies the
following estimates.
\begin{lem}
\begin{equation}\label{7.6}
\|T_\varepsilon-I\|\leq C\delta_\varepsilon,\ \ \ \  \|\Pi\circ
T_\varepsilon-I_{\R^n}\|\leq C\delta_\varepsilon^2.
\end{equation}
\end{lem}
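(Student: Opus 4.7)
My plan is to construct $T_\varepsilon$ as the \emph{minimal} rotation sending $e_{n+1}$ to $e_\varepsilon$, that is, the unique element of $SO(n+1)$ that acts as a rotation by angle $\theta_\varepsilon$ in the $2$-plane $\Sigma$ spanned by $e_{n+1}$ and $e_\varepsilon$, and as the identity on the orthogonal complement $\Sigma^{\perp}$. This is well-defined whenever $e_\varepsilon \neq \pm e_{n+1}$; in the degenerate case $\nabla\bar h(0)=0$ we may simply take $T_\varepsilon = I$. Setting $v := \nabla\bar h(0)$ and (if $v\neq 0$) $w := v/|v| \in \R^n$, the plane $\Sigma$ is exactly $\mathrm{span}\{(w,0),(0,1)\}$, and the definition \eqref{8.4.1} together with $\sin\theta_\varepsilon = e_\varepsilon\cdot(w,0)$, $\cos\theta_\varepsilon = e_\varepsilon\cdot e_{n+1}$ gives
\[
\sin\theta_\varepsilon \;=\; \frac{\delta_\varepsilon |v|}{\sqrt{1+\delta_\varepsilon^{2}|v|^{2}}},\qquad
\cos\theta_\varepsilon \;=\; \frac{1}{\sqrt{1+\delta_\varepsilon^{2}|v|^{2}}}.
\]
By \eqref{8.01} we have $|v|\leq C$, hence $\theta_\varepsilon = O(\delta_\varepsilon)$.

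Next I would compute $T_\varepsilon - I$ block-wise. Decomposing any $(x,x_{n+1})\in\R^{n+1}$ as $x = (x\cdot w)w + x^\perp$ with $x^\perp\perp w$, the explicit formula
\[
T_\varepsilon(x,x_{n+1}) \;=\; \bigl(\,[(x\cdot w)\cos\theta_\varepsilon + x_{n+1}\sin\theta_\varepsilon]\,w + x^\perp,\; -(x\cdot w)\sin\theta_\varepsilon + x_{n+1}\cos\theta_\varepsilon\,\bigr)
\]
shows that $T_\varepsilon - I$ has operator norm bounded by a universal multiple of $\sin\theta_\varepsilon + (1-\cos\theta_\varepsilon) \leq C\sin\theta_\varepsilon \leq C\delta_\varepsilon$, which yields the first estimate in \eqref{7.6}.

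For the second estimate, restrict to vectors of the form $(x,0)$ with $x\in \R^n$ and then project the image onto $\R^n$. The formula above gives
\[
(\Pi\circ T_\varepsilon)(x,0) - x \;=\; (x\cdot w)\,(\cos\theta_\varepsilon - 1)\,w.
\]
Since $1-\cos\theta_\varepsilon = 2\sin^{2}(\theta_\varepsilon/2) \leq \tfrac{1}{2}\theta_\varepsilon^{2} \leq C\delta_\varepsilon^{2}$, we obtain $\|\Pi\circ T_\varepsilon - I_{\R^n}\| \leq C\delta_\varepsilon^{2}$, as desired. The gain of one extra power of $\delta_\varepsilon$ in the second estimate comes from the fact that the horizontal/horizontal block of the rotation matrix deviates from the identity only through $\cos\theta_\varepsilon - 1$, while the $\sin\theta_\varepsilon$ contributions all involve the $(n+1)$-coordinate.

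There is essentially no analytic obstacle here; it is purely linear algebra. The only point requiring a small amount of care is the possible degeneracy when $v=0$, handled by the convention $T_\varepsilon = I$ (which trivially satisfies both bounds), and the verification that this choice of $T_\varepsilon$ coincides with the Euclidean optimal rotation so that no spurious constants enter. With that, both inequalities in \eqref{7.6} follow directly.
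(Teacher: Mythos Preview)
Your proof is correct and is essentially the same as the paper's: the paper chooses coordinates so that $\nabla\bar h(0)=|\nabla\bar h(0)|e_n$ and then defines $T_\varepsilon$ to be the rotation in the $(e_n,e_{n+1})$-plane fixing $e_1,\dots,e_{n-1}$, which is exactly your ``minimal rotation'' in the plane $\Sigma=\mathrm{span}\{(w,0),e_{n+1}\}$. The computations of $\|T_\varepsilon-I\|$ and of $\Pi\circ T_\varepsilon e_n-e_n=(\cos\theta_\varepsilon-1)e_n$ are identical in content, just written in coordinate-free versus coordinate-adapted notation.
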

\begin{proof}
Choose a basis in $\R^n$ so that
$\nabla\bar{h}(0)=|\nabla\bar{h}(0)|e_n$. We have defined
$T_\varepsilon e_{n+1}$. Now take
\[T_\varepsilon e_i=e_i,\ \ \ \mbox{for}\ 1\leq i\leq n-1,\]
\[T_\varepsilon e_n=\frac{e_n-\delta_\varepsilon|\nabla\bar{h}(0)|e_{n+1}}
{\left(1+\delta_\varepsilon^2|\nabla\bar{h}(0)|^2\right)^{1/2}}.\]
In particular, $T_\varepsilon$ is only a rotation in the $(e_n,
e_{n+1})$-plane.

Since $\delta_\varepsilon|\nabla\bar{h}(0)|\leq 1/2$ (recall that
$\delta_\varepsilon$ converges to $0$ and we have a universal bound
on $|\nabla\bar{h}(0)|$), the first inequality in \eqref{7.6} can be
directly verified. For the second one, first we have
\begin{eqnarray*}
|\Pi\circ T_\varepsilon e_n-e_n|&=&\Big|\frac{e_n}{\left(1+\delta_\varepsilon^2|\nabla\bar{h}(0)|^2\right)^{1/2}}-e_n\Big|\\
&=&1-\frac{1}{\left(1+\delta_\varepsilon^2|\nabla\bar{h}(0)|^2\right)^{1/2}}\\
&\leq&C\delta_\varepsilon^2|\nabla\bar{h}(0)|^2.
\end{eqnarray*}
For $1\leq i\leq n-1$, we have $\Pi\circ T_\varepsilon e_i=e_i$.
This finishes the proof.
\end{proof}

Similar to $\nu_\varepsilon$, define the unit normal vector
$\tilde{\nu}_\varepsilon$ associated to $\tilde{u}_\varepsilon$ as
in Section 2. We claim that
\begin{lem}\label{lem 7.2}
There exists a universal constant $C$ such that
\[\int_{\mathcal{C}_{3/4}}\left[1-\left(\tilde{\nu}_\varepsilon\cdot e_{n+1}\right)^2\right]
\varepsilon|\nabla\tilde{u}_\varepsilon|^2\leq
C\delta_\varepsilon^2.\]
\end{lem}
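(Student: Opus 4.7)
The strategy is to reduce the claim to a Caccioppoli type inequality in the tilted direction $e_\varepsilon$, then bound the resulting height term using Lemma \ref{lem Poincare inequality} together with the smallness $|e_\varepsilon-e_{n+1}|\le C\delta_\varepsilon$ from \eqref{7.5}. First, since $\tilde u_\varepsilon(X)=u_\varepsilon(T_\varepsilon X)$ gives $\nabla\tilde u_\varepsilon(X)=T_\varepsilon^{T}\nabla u_\varepsilon(T_\varepsilon X)$, one has $\tilde\nu_\varepsilon\cdot e_{n+1}=\nu_\varepsilon(T_\varepsilon X)\cdot T_\varepsilon e_{n+1}=\nu_\varepsilon(T_\varepsilon X)\cdot e_\varepsilon$ and $|\nabla\tilde u_\varepsilon|=|\nabla u_\varepsilon\circ T_\varepsilon|$. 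The change of variables $Y=T_\varepsilon X$ therefore transforms the claim into
\[
\int_{T_\varepsilon(\mathcal{C}_{3/4})}\!\!\!\bigl[1-(\nu_\varepsilon(Y)\cdot e_\varepsilon)^2\bigr]\,\varepsilon|\nabla u_\varepsilon(Y)|^2\,dY\le C\delta_\varepsilon^2.
\]
Since $T_\varepsilon e_{n+1}=e_\varepsilon$, in the orthonormal frame adapted to $e_\varepsilon$ one has $T_\varepsilon(\mathcal{C}_{3/4})=B'_{3/4}\times(-1,1)$, where $B'_{3/4}\subset P_\varepsilon$ is the $3/4$-ball in the hyperplane orthogonal to $e_\varepsilon$.

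Next I would derive the rotated Caccioppoli inequality by repeating the computation in Remark \ref{rmk Caccioppoli} with $e_{n+1}$ replaced by $e_\varepsilon$. Choose $\phi\in C_0^\infty(B'_1)$ with $\phi\equiv 1$ on $B'_{3/4}$ and $\psi\in C_0^\infty((-1,1))$ with $\psi\equiv 1$ on $(-1/2,1/2)$, $|\psi'|\le 3$. Substituting the vector field $Y=\phi(\Pi_{P_\varepsilon}X)^{2}\psi(X\cdot e_\varepsilon)^{2}\bigl(X\cdot e_\varepsilon-\lambda_\varepsilon\delta_\varepsilon\bigr)e_\varepsilon$ into the stationary condition \eqref{stationary condition}, the terms carrying $\psi'$ are $O(e^{-c/\varepsilon})$: on $\{|X\cdot e_\varepsilon|>1/2\}$ the estimate $|x_{n+1}|\ge |X\cdot e_\varepsilon|-C\delta_\varepsilon|X|\ge 1/4$ holds for small $\delta_\varepsilon$, so Proposition \ref{prop exponential decay} applies. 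Combining the remaining terms with the Modica inequality \eqref{Modica inequality} exactly as in the derivation of \eqref{Caccioppoli inequality}, and applying Cauchy-Schwarz via $|\nabla_{P_\varepsilon}\phi\cdot\nabla u_\varepsilon|^2\le |\nabla\phi|^2[1-(\nu_\varepsilon\cdot e_\varepsilon)^2]|\nabla u_\varepsilon|^2$, one obtains
\[
\int\phi^2\psi^2\bigl[1-(\nu_\varepsilon\cdot e_\varepsilon)^2\bigr]\varepsilon|\nabla u_\varepsilon|^2\le C\!\int|\nabla\phi|^2\psi^2\bigl(X\cdot e_\varepsilon-\lambda_\varepsilon\delta_\varepsilon\bigr)^2\varepsilon|\nabla u_\varepsilon|^2+Ce^{-c/\varepsilon}.
\]

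Finally, I would control the right hand side by reducing to $e_{n+1}$-height. Writing $X\cdot e_\varepsilon=x_{n+1}+X\cdot(e_\varepsilon-e_{n+1})$ and using $|X|\le C$ on the support together with \eqref{7.5}, one has $(X\cdot e_\varepsilon-\lambda_\varepsilon\delta_\varepsilon)^2\le 2(x_{n+1}-\lambda_\varepsilon\delta_\varepsilon)^2+C\delta_\varepsilon^2$. Inserting this and invoking Lemma \ref{lem Poincare inequality} for the first piece and \eqref{energy bound on ball} for the second,
\[
\int|\nabla\phi|^2\psi^2\bigl(X\cdot e_\varepsilon-\lambda_\varepsilon\delta_\varepsilon\bigr)^2\varepsilon|\nabla u_\varepsilon|^2\le C\delta_\varepsilon^2.
\]
Because $\phi\equiv 1$ on $B'_{3/4}$ and the region $\{|X\cdot e_\varepsilon|\ge 1/2\}\cap T_\varepsilon(\mathcal{C}_{3/4})$ contributes only $O(e^{-c/\varepsilon})$ by Proposition \ref{prop exponential decay} (as noted above), the left hand side dominates the integral over $T_\varepsilon(\mathcal{C}_{3/4})$ up to an exponentially small error, and changing variables back gives the bound for $\tilde u_\varepsilon$. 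The main technical subtlety is the rotated Caccioppoli derivation in the second paragraph: checking that the $\psi'$-supported region still lies outside the transition layer requires that $\delta_\varepsilon$ be small enough so that the tilt $e_\varepsilon-e_{n+1}$ does not bring $\{|X\cdot e_\varepsilon|>1/2\}$ near $\{x_{n+1}=0\}$, but this follows automatically since $\delta_\varepsilon\to 0$.
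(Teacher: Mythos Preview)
Your argument is correct, but it is a genuinely different route from the paper's. The paper never invokes a Caccioppoli inequality or the height-excess bound of Lemma~\ref{lem Poincare inequality} here; instead it gives a short \emph{pointwise} algebraic estimate. After the same change of variables you make, the paper expands $1-(\nu_\varepsilon\cdot e_\varepsilon)^2$ directly using the explicit form $e_\varepsilon=(e_{n+1}+\delta_\varepsilon\nabla\bar h(0))/(1+\delta_\varepsilon^2|\nabla\bar h(0)|^2)^{1/2}$, and bounds it by $1-(\nu_\varepsilon\cdot e_{n+1})^2$ plus two correction terms. The key observation is that $\nabla\bar h(0)$ is horizontal, so $|\nu_\varepsilon\cdot\nabla\bar h(0)|\le|\nabla\bar h(0)|\,[1-(\nu_\varepsilon\cdot e_{n+1})^2]^{1/2}$; one Cauchy--Schwarz then closes the estimate using only the definition of $\delta_\varepsilon$ and the energy bound. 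This is more elementary and entirely self-contained---no stationary condition, no Lemma~\ref{lem Poincare inequality}. Your approach, by contrast, is the ``Caccioppoli--Poincar\'e'' mechanism familiar from elliptic regularity: bound tilt-excess by height-excess (rotated Caccioppoli), then height-excess in the tilted frame by height-excess in the original frame plus a tilt error, and finally invoke Lemma~\ref{lem Poincare inequality}. It works, and it has the merit of not relying on the particular algebraic form of $e_\varepsilon$, but it is heavier machinery for this step. One small point to tidy: with $\phi\in C_0^\infty(B'_1)$ the support of $|\nabla\phi|^2$ spills outside $\mathcal{C}_{3/4}$, so you need Lemma~\ref{lem Poincare inequality} on a slightly larger cylinder; this is harmless since the proof there works verbatim with $3/4$ replaced by any radius $<1$, but you should say so.
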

\begin{proof}
First by noting \eqref{7.6} and a change of variables, we have
\begin{eqnarray}\label{7.7}
&&\int_{\mathcal{C}_{3/4}}\left[1-\left(\tilde{\nu}_\varepsilon\cdot e_{n+1}\right)^2\right]\varepsilon|\nabla\tilde{u}_\varepsilon|^2      \nonumber \\
&=&\int_{T_\varepsilon^{-1}\left(B_{3/4}\times\{|x_{n+1}|<1/2\}\right)}
\left[1-\left(\nu_\varepsilon\cdot
e_\varepsilon\right)^2\right]\varepsilon|\nabla
u_\varepsilon|^2+O(e^{-c/\varepsilon})\\ \nonumber
&\leq&\int_{\mathcal{C}_1}\left[1-\left(\nu_\varepsilon\cdot
e_\varepsilon\right)^2\right]\varepsilon|\nabla
u_\varepsilon|^2+O(e^{-c/\varepsilon}),
\end{eqnarray}
where $O(e^{-c/\varepsilon})$ represents the contribution from the
part near $B_1\times\{\pm1\}$ where Proposition \ref{prop
exponential decay} applies.

By \eqref{8.4.1},
\begin{eqnarray*}
1-\left(\nu_\varepsilon\cdot e_\varepsilon\right)^2&\leq& 1-\left(\nu_\varepsilon\cdot e_{n+1}\right)^2+2\left(\nu_\varepsilon\cdot e_{n+1}\right)^2\left(1-\frac{1}{\left(1+\delta_\varepsilon^2|\nabla \bar{h}(0)|^2\right)^{1/2}}\right)\\
&&+2\delta_\varepsilon\big|\nu_\varepsilon\cdot e_{n+1}\big|\big|\nu_\varepsilon\cdot\nabla\bar{h}(0)\big|.
\end{eqnarray*}
By definition,
\[\int_{\mathcal{C}_1}\left[1-\left(\nu_\varepsilon\cdot e_{n+1}\right)^2\right]\varepsilon|\nabla u_\varepsilon|^2=\delta_\varepsilon^2.\]
Next, by \eqref{8.01},
\[2\left(\nu_\varepsilon\cdot e_{n+1}\right)^2
\left(1-\frac{1}{\left(1+\delta_\varepsilon^2|\nabla
\bar{h}(0)|^2\right)^{1/2}}\right)\leq C\delta_\varepsilon^2.\]
Finally, by noting that
\[
|\nu_\varepsilon\cdot\nabla\bar{h}(0)|\leq|\nabla\bar{h}(0)|\left(\sum_{i=1}^n\nu_{\varepsilon,i}^2\right)^{\frac{1}{2}}
\leq C\left[1-\left(\nu_\varepsilon\cdot
e_{n+1}\right)^2\right]^{\frac{1}{2}},
\]
we can use the Cauchy inequality to derive that
\begin{eqnarray*}
&&\delta_\varepsilon\int_{\mathcal{C}_1}\big|\nu_\varepsilon\cdot e_{n+1}\big|\big|\nu_\varepsilon\cdot\nabla\bar{h}(0)\big|\varepsilon|\nabla u_\varepsilon|^2\\
&\leq&C\delta_\varepsilon\left(\int_{\mathcal{C}_1}|\nu_\varepsilon\cdot
e_{n+1}|^2\varepsilon|\nabla u_\varepsilon|^2\right)^{\frac{1}{2}}
\left(\int_{\mathcal{C}_1}\left[1-\left(\nu_\varepsilon\cdot e_{n+1}\right)^2\right]\varepsilon|\nabla u_\varepsilon|^2\right)^{\frac{1}{2}}\\
&\leq& C\delta_\varepsilon^2.
\end{eqnarray*}

Putting these together we get
\[\int_{\mathcal{C}_1}\left[1-\left(\nu_\varepsilon\cdot e_\varepsilon\right)^2\right]\varepsilon|\nabla u_\varepsilon|^2\leq C\delta_\varepsilon^2.\]
Substituting this into \eqref{7.7}  and noting \eqref{absurd
assumption 3} we finish the proof.
\end{proof}

With this lemma in hand, we can proceed as before to construct the
Lipschitz functions $\tilde{h}^t_\varepsilon$, and prove that
$\delta_\varepsilon^{-1}\left(\tilde{h}^t_\varepsilon-\tilde{\lambda}_\varepsilon\delta_\varepsilon\right)$
converge to a harmonic function $\tilde{h}$ (the constant
$\tilde{\lambda}_\varepsilon$ is defined as $\lambda_\varepsilon$),
weakly in $H^1(B_{3/4})$ and strongly in $L^2(B_{3/4})$.

However by the definition of $\tilde{u}_\varepsilon$, the graph of $\tilde{h}^t_\varepsilon$ is only a rotation of the one of
$h^t_\varepsilon$. More precisely, for any $x\in B_{3/4}$ and $t\in(-1+b,1-b)$,
\[\frac{\tilde{h}^t_\varepsilon(x)+\delta_\varepsilon\nabla\bar{h}(0)\cdot x}{\left(1+\delta_\varepsilon^2|\nabla\bar{h}(0)|^2\right)^{1/2}}=h_\varepsilon^t(\Pi\circ T_\varepsilon(x,\tilde{h}^t_\varepsilon(x))).\]

In fact, because $\tilde{u}_\varepsilon(X)=t$ if and only if
$T_\varepsilon X\in u_\varepsilon^{-1}(t)$,
$x_{n+1}=\tilde{h}^t_\varepsilon(x)$ if and only if
\[\left(T_\varepsilon X\right)_{n+1}=h_\varepsilon^t\left(\Pi\circ T_\varepsilon X\right),\]
which can be written as
\[\frac{x_{n+1}+\delta_\varepsilon \nabla \bar{h}(0)\cdot x}{\left(1+\delta_\varepsilon^2|\nabla\bar{h}(0)|^2\right)^{1/2}}=h_\varepsilon^t\left(x_1,\cdots,x_{n-1},\frac{x_n-\delta_\varepsilon |\nabla \bar{h}(0)|x_{n+1}}{\left(1+\delta_\varepsilon^2|\nabla\bar{h}(0)|^2\right)^{1/2}}\right).\]
From this we deduce that
\begin{eqnarray*}
\tilde{h}^t_\varepsilon(x)&=&h_\varepsilon^t\left(x_1,\cdots,x_{n-1},x_n-\delta_\varepsilon |\nabla \bar{h}(0)|\tilde{h}^t_\varepsilon(x)+O(\delta_\varepsilon^2)\right)-\delta_\varepsilon \nabla\bar{h}(0)\cdot x+O(\delta_\varepsilon^2)\\
&=&h_\varepsilon^t(x)-\delta_\varepsilon \nabla\bar{h}(0)\cdot x+o(\delta_\varepsilon).
\end{eqnarray*}
Here we have used the facts that the Lipschitz constant of
$h_\varepsilon^t$ is smaller than $1/2$ (by its construction), and
the sup bound of $\tilde{h}^t_\varepsilon$ goes to $0$ as
$\varepsilon\to0$ (by Proposition \ref{prop exponential decay}).

Hence
$\tilde{\lambda}_\varepsilon-\lambda_\varepsilon=o_\varepsilon(1)$,
and
\begin{eqnarray*}
\tilde{h}(x)=\lim_{\varepsilon\to0}\left[\frac{\tilde{h}_\varepsilon^t}{\delta_\varepsilon}-\tilde{\lambda}_\varepsilon\right]
&=&\lim_{\varepsilon\to0}\left[\frac{h_\varepsilon^t}{\delta_\varepsilon}-\lambda_\varepsilon-\nabla\bar{h}(0)\cdot x\right]\\
&=&\bar{h}(x)-\nabla\bar{h}(0)\cdot x.
\end{eqnarray*}
Combined with Proposition \ref{prop harmonic limit}, this implies
that $\tilde{h}$ is a harmonic function in $B_{3/4}$ satisfying
$\nabla\tilde{h}(0)=0$. Then we can proceed as in the previous
subsection. By choosing a smaller $\theta$ to incorporate the
constant $C$ appearing in Lemma \ref{lem 7.2}, for all $\varepsilon$
small,
\begin{equation}\label{8.3.2}
\theta^{-n}\int_{\mathcal{C}_\theta}\left[1-(\tilde{\nu_\varepsilon}\cdot
e_{n+1})^2\right]
\varepsilon|\nabla\tilde{u}_\varepsilon|^2\leq\frac{\theta}{2C}\delta_\varepsilon^2.
\end{equation}
Here $C$ is the constant appearing in Lemma \ref{lem 7.2}, due to a
change of variable associated to the rotation $T_\varepsilon$. After
rotating back, this contradicts \eqref{absurd assumption 2} and
finishes the proof of Theorem \ref{thm tilt excess decay}.

\part{Uniform $C^{1,\alpha}$ regularity of intermediate layers}

\section{Statement}
\numberwithin{equation}{section}
 \setcounter{equation}{0}

In this part we prove the following local uniform $C^{1,\alpha}$
regularity for intermediate layers. This parallels Allard's
$\varepsilon$-regularity theorem for stationary varifolds.
\begin{thm}\label{main result loc}
For any $b\in(0,1)$, there exist five universal constants
$\varepsilon_A,\tau_A, \alpha_A\in(0,1)$ and $R_A,K_A$ so that the
following holds. Let $u_\varepsilon$ be a solution of
\eqref{equation} with $\varepsilon\leq \varepsilon_A$, defined in
$\mathcal{B}_{R_A}$, satisfying $|u_\varepsilon(0)|\leq 1-b$ and
\begin{equation}\label{close to plane}
R_A^{-n}\int_{\mathcal{B}_{R_A}}\frac{\varepsilon}{2}|\nabla u_\varepsilon|^2+\frac{1}{\varepsilon}W(u_\varepsilon)\leq \left(1+\tau_A\right)\omega_n\sigma_0.
\end{equation}
Then there exists a hyperplane, say $\R^n$ (after a suitable rotation), such that for any $t\in(-1+b,1-b)$,
 $\{u_\varepsilon=t\}\cap\mathcal{C}_1$ is a $C^{1,\alpha_A}$ hypersurface, which
is represented by the graph of the function
$x_{n+1}=h^t_\varepsilon(x)$, with
\[\|h_\varepsilon^t\|_{C^{1,\alpha_A}(B_1)}\leq K_A.\]
\end{thm}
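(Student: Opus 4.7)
The plan is to iterate the tilt-excess decay estimate (Theorem~3.1) at a geometric sequence of scales, bridge the scale where the iteration stalls with a direct one-dimensional-profile analysis at $O(\varepsilon)$ scales, and finally invoke the Caffarelli--Cordoba regularity theorem \cite{C-C 3} to promote the resulting Lipschitz control to $C^{1,\alpha}$. First I would check that the starting hypothesis reduces to the hypothesis of Theorem~3.1 at scale~$2$: for $R_A$ large and $\tau_A$ small, the monotonicity formula (Proposition~4.1) forces \eqref{close to plane 0} on all intermediate scales, while Lemma~4.4 shows $E(2;0,u_\varepsilon,\mathbb{R}^n)\le \delta_0^2$ after a harmless rotation aligning $\mathbb{R}^n$ with the tangent of the limit varifold at~$0$. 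So Theorem~3.1 can be applied at scale~$2$.

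Next I would run the iteration. Put $r_k=2\theta^k$ and rescale by $v_k(X):=u_\varepsilon(r_kX/2)$, which solves \eqref{equation} with effective parameter $\varepsilon_k:=2\varepsilon/r_k$. Inductively applying Theorem~3.1 produces planes $P_k$ with unit normals $e_k$ such that
\[
E(r_{k+1};0,u_\varepsilon,P_{k+1})\le \tfrac{\theta}{2}\,E(r_k;0,u_\varepsilon,P_k),\qquad
|e_{k+1}-e_k|\le C\,E(r_k;0,u_\varepsilon,P_k)^{1/2},
\]
so long as the threshold $E(r_k)^{1/2}\ge K_0\varepsilon_k$ from \eqref{small excess 0} holds. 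This yields a geometric decay $E(r_k)\le (\theta/2)^kE_0$ and a Morrey-type estimate $E(r;0,u_\varepsilon,P_r)\le Cr^\beta$ with $\beta=\log(\theta/2)/\log\theta>1$, as well as Cauchy normals converging to a limit $e_\infty$ with $|e_k-e_\infty|\le CE(r_k)^{1/2}$. The iteration stalls at some scale $r_\ast$ satisfying $E(r_\ast)^{1/2}\sim K_0\varepsilon/r_\ast$, hence $r_\ast\sim \varepsilon^{\gamma}$ for an explicit $\gamma\in(0,1)$. Below this scale I would pass to the microscopic analysis: after the $\varepsilon$-rescaling used in Section~5, the compactness argument of Lemma~5.3 (together with the one-dimensional classification underlying Lemma~5.5) shows that in an $O(\varepsilon)$-neighborhood of any point $X_0\in\{|u_\varepsilon|<1-b\}$, $u_\varepsilon$ is $C^2_{loc}$-close to the one-dimensional profile $g$ oriented along $e_\infty$, and each level set is a smooth graph whose normal lies within $CE_0^{1/2}$ of~$e_\infty$.

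The principal obstacle is precisely the barrier $\delta_\varepsilon\ge K_0\varepsilon$ in Theorem~3.1, which stops the improvement of flatness before $C^{1,\alpha}$ control of the normal is obtained by pure iteration. I would cross this barrier by concatenating the macroscopic estimate (iteration on $r\in[r_\ast,1]$) with the microscopic one (one-dimensional profile on $r\in(0,r_\ast]$): for two points on $\{u_\varepsilon=t\}\cap\mathcal{C}_1$ separated by $\ge r_\ast$, the oscillation of the normal is controlled by the summable $|e_k-e_{k+1}|$; for points closer than $r_\ast$, the one-dimensional approximation gives the same bound. Together with the uniqueness-of-fibre argument in Lemma~5.6, this shows that for each $t\in(-1+b,1-b)$, $\{u_\varepsilon=t\}\cap\mathcal{C}_1$ is a graph $x_{n+1}=h_\varepsilon^t(x)$ over $P_\infty$ with Lipschitz constant at most $CE_0^{1/2}\le 1/2$, provided $\tau_A,\varepsilon_A$ are small. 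At this point only the Lipschitz bound has been established, not a Hölder estimate on~$\nabla h_\varepsilon^t$. To finish, I would apply \cite{C-C 3}: once $\{u_\varepsilon=0\}$ is known to be a uniformly Lipschitz graph in a fixed cylinder, the transition layers $\{u_\varepsilon=t\}$ for $t\in(-1+b,1-b)$ are uniformly bounded in $C^{1,\alpha_A}$ for some universal $\alpha_A\in(0,1)$, which is exactly the statement of Theorem~\ref{main result loc}.
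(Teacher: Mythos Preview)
Your overall architecture---iterate Theorem~3.2 to a Morrey bound, bridge the $O(\varepsilon)$ scale, then apply \cite{C-C 3}---is exactly the paper's. The genuine gap is in the bridging step. Lemma~5.3 (and likewise Lemma~5.5) establishes that the limit is one-dimensional because the excess on a \emph{fixed} ball tends to zero along the sequence, so the limit has vanishing tangential gradient there and unique continuation finishes. But the Morrey bound of Section~10 gives, in the stalled regime, only $E(r;X_0,e_r)\le C\varepsilon^2 r^{-2}$; after your $\varepsilon$-rescaling this reads $E(\rho;0,v,e_\rho)\le C\rho^{-2}$ for $\rho\ge K_1$, which is of \emph{fixed} (not vanishing) size on $\mathcal{B}_{K_1}$ and decays only as $\rho\to\infty$. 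So any subsequential limit $v_\infty$ is an entire solution with unit-density energy growth whose excess tends to zero \emph{at infinity}---and nothing in Section~5 forces such a $v_\infty$ to be one-dimensional.

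That Liouville statement is exactly Theorem~11.1, whose proof requires separate machinery: blowing down the distance-type function $\Phi=g^{-1}\circ u$, the $C^1$ convergence coming from its uniform semi-concavity (Appendix~A), and a sliding argument as in \cite{F}. The paper makes this dependence explicit: the pointwise estimate $|\nu_\varepsilon-e_{n+1}|\le 1/2$ on the whole level set (Lemma~12.3) comes from Lemma~12.2, whose contradiction argument classifies its limiting entire solution precisely through Theorem~11.1. Only then is $\{u_\varepsilon=t\}$ a global Lipschitz graph suitable for \cite{C-C 3}. So you must insert Theorem~11.1 (or an equivalent rigidity result for entire solutions) between the iteration and the final appeal to Caffarelli--Cordoba; the local compactness of Section~5 alone does not close the gap.
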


Assume the limit varifold $V$ of $u_\varepsilon$ satisfies the
assumptions in Allard's $\varepsilon$-regularity theorem at the
origin $0$. Hence it is a smooth minimal hypersurface with unit
density near $0$. By enlarging this minimal hypersurface around $0$,
the assumptions in this theorem are fulfilled and this theorem
applies, which says, in a neighborhood of $0$, intermediate layers
of $u_\varepsilon$ are hypersurfaces with uniformly $C^{1,\alpha_A}$
bound and they converge to the minimal hypersurface in a
$C^{1,\alpha_A}$ manner.

To prove this theorem, we first use Theorem \ref{thm tilt excess
decay} to obtain a Morrey type bound. As explained in Section 1, due
to the assumption $\delta_\varepsilon\geq K_0\varepsilon$ in Theorem
\ref{thm tilt excess decay}, this Morrey type bound does not give
the required $C^{1,\alpha_A}$ regularity. It only says that at every
scale up to $O(\varepsilon)$, $\{u_\varepsilon=t\}$ is close to a
{\em fixed} hyperplane, i.e. a kind of Lipschitz regularity for
$\{u_\varepsilon=t\}$ up to $O(\varepsilon)$ scales. This is already
sufficient for the proof of Theorem \ref{main result}, which is
given in Section 11. The proof of Theorem \ref{main result loc} will
be completed in Section 12, and it uses the intermediate results
established in Section 11.

\section{A Morrey type bound}
\numberwithin{equation}{section}
 \setcounter{equation}{0}

In this section $u_\varepsilon$ denotes a fixed solution satisfying all of the assumptions in Theorem \ref{main result loc}.
Here we prove
\begin{lem}\label{lem 11.1}
There exist two universal constants $K_1$ and $K_2$ so that the
following holds. For any $X_0\in
\{|u_\varepsilon|\leq1-b\}\cap\mathcal{B}_1$ and ball
$\mathcal{B}_r(X_0)$ with $r\in (K_1\varepsilon,\theta)$, we can
find a unit vector $e_r(X_0)$ so that
\begin{equation}\label{8.1}
r^{-n}\int_{\mathcal{B}_r(X_0)}\left[1-\left(\nu_\varepsilon\cdot
e_r(X_0)\right)^2\right]\varepsilon|\nabla u_\varepsilon|^2\leq
K_2^2\max\{\varepsilon^2r^{-2}, \delta_0^2r^\alpha\}.
\end{equation}
Here $\alpha=\frac{|\log \theta/2|}{|\log\theta|}\in(1,2)$.
\end{lem}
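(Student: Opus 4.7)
The plan is to iterate Theorem \ref{thm tilt excess decay} at geometrically shrinking scales around $X_0$. By the unit-density hypothesis (\ref{close to plane}) of Theorem \ref{main result loc} and the monotonicity formula (Proposition \ref{monotonicity formula}), the energy density bound (\ref{close to plane 0}) persists on every ball $\mathcal{B}_{4\rho}(X_0)\subset\mathcal{B}_{R_A}$. Combining this with Lemma \ref{lem excess small} (applied in a rescaled frame) we can arrange, after a suitable rotation, that the initial excess with respect to $\R^n$ satisfies $E(r_0;X_0,u_\varepsilon,\R^n)\leq\delta_0^2$ at some fixed scale $r_0\leq\theta$.

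The core of the argument is the iteration. At step $k$ we have a scale $r_k$, plane $P_k$ with normal $e_k$, and excess $E_k:=r_k^{-n}\int_{\mathcal{B}_{r_k}(X_0)}[1-(\nu_\varepsilon\cdot e_k)^2]\varepsilon|\nabla u_\varepsilon|^2$. Rescaling by $r_k/2$ and rotating $e_k\mapsto e_{n+1}$, we verify the hypotheses of Theorem \ref{thm tilt excess decay} in the new frame: the rescaled parameter $\tilde\varepsilon_k=2\varepsilon/r_k$ must satisfy $\tilde\varepsilon_k\leq\varepsilon_0$, the density bound (\ref{close to plane 0}) follows from the monotonicity formula, and the excess lower bound $E_k\geq 4K_0^2(\varepsilon/r_k)^2$ plays the role of $\delta_\varepsilon\geq K_0\varepsilon$. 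Theorem \ref{thm tilt excess decay} then yields $E_{k+1}\leq(\theta/2)E_k$ at a shrunk scale $r_{k+1}$ together with a new normal $e_{k+1}$ satisfying $\|e_{k+1}-e_k\|\leq CE_k^{1/2}$. Iterating produces the geometric decay $E_k\leq(\theta/2)^k\delta_0^2$, which translates by interpolation between consecutive scales into the Morrey bound $E(r;X_0,e_r(X_0))\leq K_2^2\delta_0^2 r^\alpha$ for every scale $r$ down to the iteration's stopping point. The normals $e_k$ form a Cauchy sequence because $\sum E_k^{1/2}\leq C\delta_0\sum(\theta/2)^{k/2}$ is summable, so taking $\delta_0$ small keeps the accumulated tilt admissible throughout the iteration.

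The iteration must halt at some scale $r_*$ where the lower bound on $E_{k_*}$ first fails, giving $E(r_*)\leq C\varepsilon^2/r_*^2$. For $r\in(K_1\varepsilon,r_*)$, Theorem \ref{thm tilt excess decay} no longer applies, and extending the bound below $r_*$ is the main obstacle of the proof. The strategy is to argue that the smallness $E(r_*)\lesssim(\varepsilon/r_*)^2$ already forces, via (a rescaled version of) Lemma \ref{lem 5.3}, that $\{u_\varepsilon=t\}\cap\mathcal{B}_{r_*/2}(X_0)$ is a Lipschitz graph over $P_*$ with Lipschitz constant $\lesssim\varepsilon/r_*$. Consequently the pointwise bound $1-(\nu_\varepsilon\cdot e_*)^2\lesssim(\varepsilon/r_*)^2$ holds on this graph, and combining with the universal energy bound $\int_{\mathcal{B}_r(X_0)}\varepsilon|\nabla u_\varepsilon|^2\leq Cr^n$ from (\ref{energy bound on ball}) yields $E(r;X_0,e_*)\leq C(\varepsilon/r_*)^2\leq C\varepsilon^2/r^2$ for every $r\leq r_*$. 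This finishes the Morrey bound on the full range $(K_1\varepsilon,\theta)$; $K_1$ is chosen large enough that $\tilde\varepsilon_k\leq\varepsilon_0$ throughout the iteration, and $K_2$ absorbs all the remaining universal constants.
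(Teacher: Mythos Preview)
Your iteration argument down to the stopping scale $r_*$ is correct and matches the paper. The gap is in the third paragraph, where you try to extend the bound to scales $r\in(K_1\varepsilon,r_*)$.

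Your proposed route---invoke Lemma~\ref{lem 5.3} at scale $r_*$ to conclude that $\{u_\varepsilon=t\}\cap\mathcal{B}_{r_*/2}(X_0)$ is a Lipschitz graph with Lipschitz constant $\lesssim\varepsilon/r_*$, hence $1-(\nu_\varepsilon\cdot e_*)^2\lesssim(\varepsilon/r_*)^2$ pointwise---does not work as stated. Lemma~\ref{lem 5.3} gives a Lipschitz graph only on the good set $A_\varepsilon$ (where the \emph{maximal function} of the excess density is small), not on the whole ball; and its Lipschitz constant $c_0(b,l)$ comes from a compactness argument, with no quantitative bound of the form $c_0\lesssim l^{1/2}$. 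An integral excess bound $E(r_*)\lesssim(\varepsilon/r_*)^2$ at a single scale does not give pointwise control of $\nu_\varepsilon$ throughout $\mathcal{B}_{r_*/2}(X_0)$. The naive containment bound is no help either: $\int_{\mathcal{B}_r}\leq\int_{\mathcal{B}_{r_*}}$ yields $r^{-n}\int_{\mathcal{B}_r}[\,\cdots]\lesssim\varepsilon^2 r_*^{n-2}r^{-n}$, which for $n>2$ is strictly worse than $\varepsilon^2 r^{-2}$.

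The paper handles the small scales by a different mechanism that you are missing: Lemma~\ref{lem excess small}, applied after rescaling, shows that the \emph{minimal} excess satisfies $E_k\leq\delta_0^2$ at \emph{every} scale $r_k\geq K_1\varepsilon$, not just at the top scale. Writing $E_k$ for $\varepsilon^{-2}r_k^{2}$ times the minimal excess, one then has three inequalities: the a~priori bound $E_k\leq\delta_0^2\varepsilon^{-2}r_k^2$ at every $k$, the trivial containment bound $E_{k+1}\leq\theta^{2-n}E_k$, and the decay $E_{k+1}\leq(\theta^3/2)E_k$ whenever $E_k\geq K_0^2$. From these one argues \emph{backwards}: if $E_{\tilde k}>K_0^2\theta^{2-n}$, then $E_{\tilde k-1}\geq\theta^{n-2}E_{\tilde k}>K_0^2$, so the decay applies and $E_{\tilde k-1}\geq(2/\theta^3)E_{\tilde k}>K_0^2\theta^{2-n}$. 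Hence the set of $k$ with $E_k>K_0^2\theta^{2-n}$ is an initial segment $\{0,\dots,k_0\}$, and for every $k>k_0$ one automatically has $E_k\leq K_0^2\theta^{2-n}$, which is precisely the $\varepsilon^2 r^{-2}$ bound. No pointwise or Lipschitz-graph control is needed below the stopping scale.
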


For convenience, we shall replace the cylinders $\mathcal{C}_2$ and $\mathcal{C}_\theta$ in Theorem \ref{thm tilt excess decay}
by balls $\mathcal{B}_1$ and $\mathcal{B}_\theta$ respectively. This may change the constants in that theorem by a factor, which however only depends on the dimension $n$ and does not affect our argument too much.

By the monotonicity formula (Proposition \ref{monotonicity formula}) and \eqref{close to plane}, if $R_A$ is sufficiently large, for any $X\in\mathcal{B}_1$ and $r\in(0,R_A-1)$,
\begin{equation}\label{11.2.1}
r^{-n}\int_{\mathcal{B}_r(X)}\frac{\varepsilon}{2}|\nabla u_\varepsilon|^2+\frac{1}{\varepsilon}W(u_\varepsilon)
\leq (1+2\tau_A)\omega_n\sigma_0.
\end{equation}

If $\tau_A$ is sufficiently small, we can applying Proposition
\ref{prop exponential decay} to $u_\varepsilon(rX)$, which gives
\begin{lem}\label{lem close to plane}
For any $\delta>0$, there exists a $K(\delta)$ so that the following
holds. For any $X\in\{|u_\varepsilon|\leq1-b\}\cap\mathcal{B}_1$ and
$r\in( K(\delta)\varepsilon,1)$, there exists a hyperplane $P_r(X)$
such that
\[\mbox{dist}_H\left(\{u_\varepsilon=u_\varepsilon(X)\}\cap\mathcal{B}_r(X),P_r(X)\cap\mathcal{B}_r(X)\right)\leq \delta r.\]
\end{lem}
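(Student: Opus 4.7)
The proof is a blow-down compactness argument, by contradiction. Suppose the conclusion fails for some $\delta>0$: then there exist sequences $\varepsilon_k\to 0$, points $X_k\in\{|u_{\varepsilon_k}|\le 1-b\}\cap\mathcal{B}_1$, and radii $r_k<1$ with $r_k/\varepsilon_k\to\infty$, such that for every affine hyperplane $P$,
$$\mathrm{dist}_H\bigl(\{u_{\varepsilon_k}=u_{\varepsilon_k}(X_k)\}\cap\mathcal{B}_{r_k}(X_k),\ P\cap\mathcal{B}_{r_k}(X_k)\bigr)>\delta r_k.$$

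The plan is to rescale by $r_k$ and reduce to the setting of Sections 3 and 4. Set $\tilde u_k(X):=u_{\varepsilon_k}(X_k+r_k X)$, which solves \eqref{equation} with parameter $\tilde\varepsilon_k:=\varepsilon_k/r_k\to 0$, satisfies the Modica inequality, and obeys $|\tilde u_k(0)|\le 1-b$. Applying the monotonicity formula (Proposition \ref{monotonicity formula}) together with \eqref{close to plane}, provided $R_A$ is chosen large and $\tau_A$ small enough, the rescaled energy satisfies
$$4^{-n}\int_{\mathcal{B}_4}\frac{\tilde\varepsilon_k}{2}|\nabla\tilde u_k|^2+\frac{1}{\tilde\varepsilon_k}W(\tilde u_k)\le(1+\tau_0)\sigma_0\omega_n$$
for any prescribed small $\tau_0$, placing $\tilde u_k$ squarely in the hypotheses of Section 3. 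By the Hutchinson-Tonegawa convergence theorem (the four items recalled at the start of Section 4), the associated varifolds $V_{\tilde\varepsilon_k}$ subconverge to a stationary rectifiable varifold $V_\infty$ of integer density.

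Next I argue exactly as in Proposition \ref{prop lim varifold}: the mass upper bound combined with integrality and the monotonicity formula for stationary varifolds forces $V_\infty=\sigma_0\mathcal{H}^n\lfloor P_\infty$ for some hyperplane $P_\infty$. Since $|\tilde u_k(0)|\le 1-b$, the Hausdorff convergence of level sets to $\mathrm{spt}\|V_\infty\|$ (item (4) of the Hutchinson-Tonegawa package) yields $0\in\mathrm{spt}\|V_\infty\|$, so $P_\infty$ passes through the origin. Proposition \ref{prop exponential decay} together with Remark \ref{rmk Hausdorff convergence}, applied to $\tilde u_k$, then give $\mathrm{dist}_H(\{\tilde u_k=\tilde u_k(0)\}\cap\mathcal{B}_1,\ P_\infty\cap\mathcal{B}_1)\to 0$. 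Rescaling back by $r_k$ (taking $P:=X_k+r_k P_\infty$) contradicts the failure assumption for all $k$ large.

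The only mildly delicate point is matching constants: $\tau_A$ must be chosen small enough and $R_A$ large enough, independently of $\delta$, so that the monotonicity formula delivers a $\tau_0$ below the threshold needed to force the limit $V_\infty$ to be a unit-density hyperplane. Once these universal constants are fixed, the constant $K(\delta)$ emerges implicitly from the compactness argument and depends only on $\delta$, $b$, the dimension $n$, and $W$.
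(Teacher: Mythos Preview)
Your argument is correct and is exactly the approach the paper takes: rescale by $r$, use the monotonicity formula with \eqref{close to plane} to get the energy bound \eqref{11.2.1} for the rescaled solution, then invoke the Section~4 machinery (Proposition~\ref{prop lim varifold}, Proposition~\ref{prop exponential decay}, Remark~\ref{rmk Hausdorff convergence}) to conclude that the level set lies in a $\delta$-neighborhood of some hyperplane. The paper compresses this into the single sentence ``applying Proposition~\ref{prop exponential decay} to $u_\varepsilon(rX)$'', whereas you have written out the contradiction/compactness argument explicitly.

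One small correction: in your contradiction setup you write ``$\varepsilon_k\to 0$'', but the negation of the lemma does not force this; it only gives $r_k/\varepsilon_k\to\infty$ (take $K=k$). This is harmless, since your argument only uses $\tilde\varepsilon_k=\varepsilon_k/r_k\to 0$, which is exactly what you need to feed into Hutchinson--Tonegawa. Also note that $\tilde u_k(0)=u_{\varepsilon_k}(X_k)$ varies with $k$, so strictly speaking you should pass to a subsequence with $\tilde u_k(0)\to t_0\in[-1+b,1-b]$ before invoking item~(4) of the Hutchinson--Tonegawa convergence; alternatively, Proposition~\ref{prop exponential decay} already controls the whole set $\{|\tilde u_k|\le 1-b\}$, which suffices.
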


By Lemma \ref{lem excess small}, if $r\geq K_1\varepsilon$ ($K_1$ a
constant determined by Lemma \ref{lem excess small}) and $\tau_A$ is
sufficiently small, the excess with respect to $P_r(X)$ (with unit
normal vector $e_r(X)$)
\begin{equation}\label{10.0.3}
E(r;X,u_\varepsilon,P_r(X))\leq \delta_0^2,
\end{equation}
with $\delta_0$ as in Theorem \ref{thm tilt excess decay}. Note that
in \eqref{10.0.3} it is integrated on $\mathcal{B}_r(X)$, not on a
cylinder.

Now Theorem \ref{thm tilt excess decay} applies. In the current
setting it reads as
\begin{lem}\label{lem tilt excess decay}
If $E(r;X,u_\varepsilon,P_r(X))\geq K_0^2r^{-2}\varepsilon^2$, there
exists another hyperplane $\tilde{P}_r(X)$ such that
\[E(\theta r;X,u_\varepsilon,\tilde{P}_r(X))\leq\frac{\theta}{2}E(r;X,u_\varepsilon,P_r(X)).\]
Here one of the unit normal vector of $\tilde{P}_r(X)$, $\tilde{e}_r(X)$ satisfies
\[\|\tilde{e}_r(X)-e_r(X)\|\leq CE(r;X,u_\varepsilon,P_r(X))^{\frac{1}{2}}.\]
\end{lem}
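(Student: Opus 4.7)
The plan is to derive Lemma \ref{lem tilt excess decay} as a direct consequence of Theorem \ref{thm tilt excess decay} via rescaling and rotation. Since the ball-based excess $E(r;X,u_\varepsilon,P_r(X))$ transforms covariantly under both operations, this is essentially a matter of checking that the hypotheses of the theorem survive the change of variables.

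First I would choose a rotation $T\in SO(n+1)$ sending $e_{n+1}$ to the unit normal $e_r(X)$ of $P_r(X)$, and define the rescaled function
\[
\tilde{u}(Y):=u_\varepsilon\bigl(X+\tfrac{r}{2}T Y\bigr),\qquad Y\in\mathcal{B}_4,
\]
which is admissible because $\mathcal{B}_{2r}(X)\subset\mathcal{B}_{R_A}$ when $R_A$ is chosen large enough relative to the upcoming constants. A direct calculation shows that $\tilde{u}$ satisfies the Allen-Cahn equation \eqref{equation} with new parameter $\tilde\varepsilon:=2\varepsilon/r$, and the Modica inequality is preserved under this affine rescaling.

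Next I would verify each hypothesis of Theorem \ref{thm tilt excess decay} for $\tilde{u}$ at the origin. The pointwise bound $|\tilde{u}(0)|=|u_\varepsilon(X)|\le 1-b$ is immediate. The energy bound \eqref{close to plane 0} at scale $4$ for $\tilde{u}$ follows from \eqref{11.2.1} at scale $2r\le 2\theta$ for $u_\varepsilon$, provided $\tau_A$ is chosen so that $2\tau_A\le\tau_0$ (the constant of Theorem \ref{thm tilt excess decay}). Changing variables, the tilt-excess transforms as
\[
E(2;0,\tilde{u},\mathbb{R}^n)=E(r;X,u_\varepsilon,P_r(X))\le \delta_0^2
\]
by \eqref{10.0.3}, so the smallness assumption \eqref{small excess 0} holds. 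Finally, the condition $\tilde\delta_\varepsilon\ge K_0\tilde\varepsilon$ becomes
\[
E(r;X,u_\varepsilon,P_r(X))^{1/2}\;\ge\;K_0\cdot\tfrac{2\varepsilon}{r},
\]
which is exactly the hypothesis of the lemma after absorbing the factor $2$ into the constant $K_0$.

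Then Theorem \ref{thm tilt excess decay} yields a hyperplane $\tilde{P}$ through the origin, whose unit normal $\tilde{e}$ satisfies $\|\tilde{e}-e_{n+1}\|\le C\,E(2;0,\tilde{u},\mathbb{R}^n)^{1/2}$ and
\[
E(\theta;0,\tilde{u},\tilde{P})\le\frac{\theta}{2}E(2;0,\tilde{u},\mathbb{R}^n).
\]
Pulling back by $T$ and the rescaling gives the hyperplane $\tilde{P}_r(X)$ through $X$ with normal $\tilde{e}_r(X):=T\tilde{e}$, and these two inequalities transform into exactly the conclusions of the lemma. The main (minor) obstacle is the bookkeeping for the switch between the cylinder excess of Theorem \ref{thm tilt excess decay} and the ball excess used here, as noted in the convention at the start of the section; since $\mathcal{B}_r\subset\mathcal{C}_r$ and $\mathcal{C}_r\subset\mathcal{B}_{r\sqrt{2}}$, this only alters universal constants and does not affect the decay factor $\theta/2$.
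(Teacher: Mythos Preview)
Your proposal is correct and follows exactly the approach the paper intends: the paper does not give a separate proof but simply states that ``Theorem \ref{thm tilt excess decay} applies'' and that ``in the current setting it reads as'' this lemma, having already noted the cylinder-to-ball convention change at the start of the section. Your write-up is a faithful (and more detailed) unpacking of that rescaling-and-rotation reduction, including the verification of \eqref{close to plane 0} from \eqref{11.2.1}, of \eqref{small excess 0} from \eqref{10.0.3}, and the translation of the condition $\delta_\varepsilon\ge K_0\varepsilon$ into the hypothesis $E(r;X,u_\varepsilon,P_r(X))\ge K_0^2r^{-2}\varepsilon^2$.
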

The constant $\theta$ may be different from the one in Theorem
\ref{thm tilt excess decay}, but we still have $\theta<1$.

With this lemma in hand we can prove Lemma \ref{lem 11.1}. The
following proof is similar to the one of \cite[Theorem 2.3]{W2}.
\begin{proof}[Proof of Lemma \ref{lem 11.1}]
Assume $X_0=0$. For $k\geq 0$, let $r_k=\theta^{k}$. Define
\[E_k:=\min_{e\in\mathbb{S}^{n}}\varepsilon^{-2}r_k^{2-n}
\int_{\mathcal{B}_{r_k}}\left[1-\left(\nu_\varepsilon\cdot
e\right)^2\right]\varepsilon|\nabla u_\varepsilon|^2.\] Take a unit
vector $\bar{e}_k$ to attain this minima.

As in \eqref{10.0.3}, for all $r_k\geq K_1\varepsilon$,
\begin{equation}\label{11.1}
E_k\leq\delta_0^2\varepsilon^{-2}r_k^2.
\end{equation}
Lemma \ref{lem tilt excess decay} implies that, once $E_k\geq
K_0^2$, then
\begin{equation}\label{11.2}
E_{k+1}\leq\frac{\theta^3}{2}E_k.
\end{equation}
Moreover, by the definition of $E_k$, we always have
\begin{equation}\label{11.3}
E_{k+1}\leq\theta^{2-n}E_k.
\end{equation}

Let $k_1\in\mathbb{N}$ be the unique number satisfying
$\theta^{k_1}\in[K_1\varepsilon, K_1\theta^{-1}\varepsilon)$.

Now we derive the claimed bound on $E_k$ from \eqref{11.1}-\eqref{11.3}, for $k\leq k_1$. Let
$k_0$ be the smallest number such that, for all $k>k_0$,
\begin{equation}\label{11.0.1}
E_k\leq K_0^2\theta^{2-n}.
\end{equation}
As we will see below, this is well-defined.

If $k_0=0$, for any $0\leq k\leq k_1$,
\begin{equation}\label{11.0.0.1}
\int_{\mathcal{B}_{r_k}}\left[1-\left(\nu_\varepsilon\cdot
\bar{e}_k\right)^2\right]\varepsilon|\nabla u_\varepsilon| \leq
K_0^2\theta^{2-n}\varepsilon^2r_k^{n-2}.
\end{equation}
 This can be extended to
those $r\in[K_1\varepsilon,\theta)$ by choosing a (unique) $k$ so
that $r\in[r_{k+1},r_k)$.

Next we assume there exists a $\tilde{k}>0$ such that
$E_{\tilde{k}}\geq K_0^2\theta^{2-n}$.  By \eqref{11.3},
$E_{\tilde{k}-1}\geq K_0^2$. Then \eqref{11.2} applies, which says
\[E_{\tilde{k}-1}\geq \frac{2}{\theta^3}E_{\tilde{k}}.\]
In particular,
\[E_{\tilde{k}-1}\geq E_{\tilde{k}}\geq K_0\theta^{2-n}.\]
With this estimate we can repeat the above procedure to obtain that,
for all $i\in[0,\tilde{k})$,
\[E_i\geq \frac{2}{\theta^3}E_{i+1}\geq K_0^2\theta^{2-n}.\]
From this we see $k_0$ is well defined.

The above decay estimate implies that, for all $i\leq k_0$,
\[E_i\leq \left(\frac{\theta^3}{2}\right)^iE_0,\]
in other words,
\begin{equation}\label{11.0.2}
\int_{\mathcal{B}_{r_i}}\left[1-\left(\nu_\varepsilon\cdot \bar{e}_i\right)^2\right]\varepsilon|\nabla u_\varepsilon|
\leq \delta_0^2r_i^{n+\alpha}.
\end{equation}
This estimate can also be extended to those $r\in[r_{k_0},\theta)$ by choosing an $i$ so that $r\in[r_{i+1},r_i)$.

In conclusion, for $r\in[r_{k_0},\theta)$, we have the estimate
\eqref{11.0.2}, and for $r\in(K_1\varepsilon,r_{k_0})$
\eqref{11.0.0.1} applies. By choosing a suitable universal constant
$K_2$, \eqref{8.1} follows from these two estimates.
\end{proof}

Next we show that $e_r(X_0)$ can be replaced by a fixed unit vector
(independent of $r$).
\begin{lem}\label{lem 11.2}
For any $\sigma>0$, there exist two constants $K_3:=K_3(\sigma)$ and
$K_4$ ($K_4$ universal, independent of $\sigma$) so that the
following holds. For any $X_0\in
\{|u_\varepsilon|\leq1-b\}\cap\mathcal{B}_1$ and ball
$\mathcal{B}_r(X_0)$ with $r\in(K_3\varepsilon,\theta)$, there
exists a unit vector $e(X_0)$ such that
\begin{equation}\label{11.6}
r^{-n}\int_{\mathcal{B}_r(X_0)}\left[1-\left(\nu_\varepsilon\cdot
e(X_0)\right)^2\right]\varepsilon|\nabla u_\varepsilon|^2 \leq
\sigma+K_4\delta_0r^{\alpha/2}.
\end{equation}
Here $e(X_0)$ is independent of $r\in(K_3\varepsilon,\theta)$.
\end{lem}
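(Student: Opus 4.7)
The plan is to construct $e(X_0)$ as the limit of the scale-dependent normals produced by Lemma \ref{lem 11.1} along the dyadic sequence $r_k := \theta^k$, and to control the discrepancy between the excess with respect to $e(X_0)$ and the excess with respect to the best hyperplane at scale $r_k$ by a telescoping argument based on Lemma \ref{lem tilt excess decay}.

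Tracing through the proof of Lemma \ref{lem 11.1}, I would choose the unit normals $e_k := e_{r_k}(X_0)$ so that they arise from successive applications of Lemma \ref{lem tilt excess decay}; in particular, the signs are determined inductively since $\|e_{k+1} - e_k\| \leq C E(r_k; X_0, u_\varepsilon, P_{r_k}(X_0))^{1/2}$ and the right hand side is strictly smaller than $\sqrt{2}$ in the regime of interest. In the dyadic range where $\delta_0 r_k^{\alpha/2} \geq \varepsilon r_k^{-1}$, Lemma \ref{lem 11.1} gives $E(r_k; X_0, u_\varepsilon, P_{r_k}(X_0)) \leq K_2^2 \delta_0^2 r_k^{\alpha}$, so the telescope
\[
\|e_k - e_j\| \leq C K_2 \delta_0 \sum_{i=k}^{j-1} \theta^{i\alpha/2} \leq C' \delta_0 r_k^{\alpha/2}
\]
holds for any $k \leq j$ in that range. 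Next I would fix $K_3 = K_3(\sigma)$ large enough that $\theta^{-n} K_2^2 \varepsilon^2 r^{-2} \leq \sigma/2$ whenever $r \geq K_3 \varepsilon$, let $k_*$ be the largest dyadic index with $r_{k_*} \geq K_3 \varepsilon$, and declare $e(X_0) := e_{k_*}$.

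To verify \eqref{11.6}, given $r \in (K_3 \varepsilon, \theta)$, I would pick $k$ with $r_{k+1} < r \leq r_k$, so the telescope yields $\|e(X_0) - e_k\| \leq C'' \delta_0 r^{\alpha/2}$. The elementary pointwise identity $|(1 - (\nu \cdot e)^2) - (1 - (\nu \cdot e')^2)| \leq 3 \|e - e'\|$ for unit vectors $\nu, e, e'$, combined with the energy bound \eqref{energy bound on ball}, converts this into
\[
r^{-n}\int_{\mathcal{B}_r(X_0)} \left[1-(\nu_\varepsilon \cdot e(X_0))^2\right]\varepsilon|\nabla u_\varepsilon|^2 \leq r^{-n}\int_{\mathcal{B}_r(X_0)} \left[1-(\nu_\varepsilon \cdot e_k)^2\right]\varepsilon|\nabla u_\varepsilon|^2 + C \delta_0 r^{\alpha/2}.
\]
The first term on the right is controlled by $\theta^{-n} K_2^2 \max\{\varepsilon^2 r_k^{-2}, \delta_0^2 r_k^{\alpha}\}$ via Lemma \ref{lem 11.1} and the inclusion $\mathcal{B}_r(X_0) \subset \mathcal{B}_{r_k}(X_0)$; by the choice of $K_3$ this is at most $\sigma/2 + C \delta_0^2 r^{\alpha}$, and since $\delta_0^2 r^{\alpha} \leq \delta_0 r^{\alpha/2}$ (for $\delta_0$ and $r$ small), the full bound $\sigma + K_4 \delta_0 r^{\alpha/2}$ follows with a universal $K_4$.

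The main subtlety is the transition between the two regimes in Lemma \ref{lem 11.1}: the $\varepsilon^2 r^{-2}$ residual never decays as $r$ shrinks and hence cannot be absorbed into the $\delta_0 r^{\alpha/2}$ term; this forces the introduction of the additive $\sigma$ via the $\sigma$-dependent cutoff $K_3(\sigma)$. A secondary technical point is that the telescoping bound $\|e(X_0) - e_k\|$ enters linearly, not quadratically, because the pointwise comparison of $(\nu \cdot e)^2$ is first order in the perturbation $\|e - e'\|$; this is precisely why the exponent in \eqref{11.6} is $r^{\alpha/2}$ rather than the $r^\alpha$ of \eqref{8.1}.
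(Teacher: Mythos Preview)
Your overall strategy matches the paper's: telescope the scale-dependent normals $e_k$ from Lemma~\ref{lem 11.1}, take $e(X_0)$ to be the normal at the smallest admissible dyadic scale, and compare excesses via a pointwise first-order bound. However, there is a genuine gap in how you treat the second regime of Lemma~\ref{lem 11.1}, i.e.\ the range of indices $k$ where the term $K_2^2\varepsilon^2 r_k^{-2}$ dominates.

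First, your source for the step bound $\|e_{k+1}-e_k\|\leq C E_k^{1/2}$ is Lemma~\ref{lem tilt excess decay}, but that lemma only applies under the hypothesis $E_k\geq K_0^2\varepsilon^2 r_k^{-2}$, which fails precisely in regime~2; so for those scales you have produced no $e_{k+1}$ at all, let alone one close to $e_k$. The paper obtains the step bound by a different, unconditional mechanism: combining the energy lower bound of Lemma~\ref{lem lower bound for energy} with the triangle inequality in $\mathbb{RP}^n$ (Remark~\ref{rmk 1}) yields
\[
c\, r_k^n\,\mbox{dist}_{\mathbb{RP}^n}(e_k,e_{k+1})^2
\;\leq\;
\int_{\mathcal{B}_{r_k}}\left[1-(\nu_\varepsilon\cdot e_k)^2\right]\varepsilon|\nabla u_\varepsilon|^2
+\int_{\mathcal{B}_{r_k}}\left[1-(\nu_\varepsilon\cdot e_{k+1})^2\right]\varepsilon|\nabla u_\varepsilon|^2,
\]
where the $e_k$ are the excess-minimizers at each scale; no hypothesis on the size of $E_k$ is needed.

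Second, even granting the step bound, your claim that the full telescope gives $\|e(X_0)-e_k\|\leq C''\delta_0 r^{\alpha/2}$ is false as stated. In regime~2 the increments are $\sim K_2\varepsilon\theta^{-j}$, a geometric sum with ratio $\theta^{-1}>1$; its total from $k_0$ to $k_*$ is of order $\varepsilon/r_{k_*}\sim 1/K_3$, which does not decay with $r$ and is certainly not $O(\delta_0 r^{\alpha/2})$. This residual has to be absorbed into the additive $\sigma$ by a further enlargement of $K_3(\sigma)$; the paper carries this out explicitly via the cutoff index $k_2$ defined in \eqref{10.1.2} and the estimates \eqref{11.0.4}--\eqref{11.0.5}. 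Once you replace the tilt-excess mechanism by the energy-lower-bound mechanism and track the regime-2 contribution to the telescope as part of $\sigma$ rather than $\delta_0 r^{\alpha/2}$, your argument is correct.
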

\begin{proof}
Keep notations as in the proof of Lemma \ref{lem 11.1}.

For any $r\in(K_1\varepsilon,\theta)$, combining Remark \ref{rmk 1}
and Lemma \ref{lem lower bound for energy}, we get
\begin{eqnarray*}
&&\int_{\mathcal{B}_{2r}(X_0)}\left[1-\left(\nu_\varepsilon\cdot
e_{2r}(X_0)\right)^2\right]\varepsilon|\nabla u_\varepsilon|^2 +
\int_{\mathcal{B}_r(X_0)}\left[1-\left(\nu_\varepsilon\cdot e_r(X_0)\right)^2\right]\varepsilon|\nabla u_\varepsilon|^2\\
&\geq&c\int_{\mathcal{B}_r(X_0)}\left[\mbox{dist}_{\mathbb{RP}^n}
(\nu_\varepsilon, e_r(X_0))^2+\mbox{dist}_{\mathbb{RP}^n}(\nu_\varepsilon, e_{2r}(X_0))^2\right]\varepsilon|\nabla u_\varepsilon|^2\\
&\geq&c\mbox{ dist}_{\mathbb{RP}^n}(e_{2r}(X_0), e_r(X_0))^2\int_{\mathcal{B}_r(X_0)}\varepsilon|\nabla u_\varepsilon|^2\\
&\geq&c\mbox{ dist}_{\mathbb{RP}^n}(e_{2r}(X_0), e_r(X_0))^2r^n.
\end{eqnarray*}

For $k<k_0$, by Lemma \ref{lem 11.1} this gives
\[\mbox{dist}_{\mathbb{RP}^n}(e_{k+1}(X_0), e_k(X_0))\leq K_2\delta_
0r_k^{\frac{\alpha}{2}}=K_2\delta_0\theta^{\frac{\alpha}{2}k}.\]
Summing in $i$ from $k$ to $k_0$, we see
\begin{equation}\label{11.0.3}
\mbox{dist}_{\mathbb{RP}^n}(e_{k_0}(X_0), e_k(X_0))\leq
\frac{K_2}{1-\theta^{\alpha/2}}\delta_0\theta^{\frac{\alpha}{2}k}=\frac{K_2}{1-\theta^{\alpha/2}}\delta_0r_k^{\frac{\alpha}{2}},
\ \ \ \ \forall k<k_0.
\end{equation}

For $k\in[k_0, k_1)$, we have
\begin{equation}\label{10.1.1}
\mbox{dist}_{\mathbb{RP}^n}(e_{k+1}(X_0), e_k(X_0))\leq
K_2\varepsilon r_k^{-1}=K_2\varepsilon\theta^{-k}.
\end{equation}
Let $k_2\leq k_1$ be the largest number satisfying
\begin{equation}\label{10.1.2}
\frac{K_2}{\theta^{-1}-1}\varepsilon\theta^{-k_2-1}+K_2^2\varepsilon^2\theta^{-2k_2-2}\leq
\frac{\theta^n}{4}\sigma.
 \end{equation}
Note that there exists a constant $K_3(\sigma)$ such that
\[r_{k_2}=\theta^{k_2}\leq K_3(\sigma)\varepsilon.\]

 Summing \eqref{10.1.1} from $k$ to $k_2$, we get
\begin{equation}\label{11.0.4}
\mbox{dist}_{\mathbb{RP}^n}(e_{k_2}(X_0), e_k(X_0))\leq
\varepsilon\frac{K_2}{\theta^{-1}-1}\theta^{-k_2-1} \leq
\frac{\theta^n}{4}\sigma, \ \ \ \ \forall k_0\leq k\leq k_2.
\end{equation}
In particular,
\begin{equation}\label{11.0.5}
\mbox{dist}_{\mathbb{RP}^n}(e_{k_2}(X_0), e_{k_0}(X_0))\leq
\frac{\theta^n}{4}\sigma.
\end{equation}

Let $e(X_0)=e_{k_2}(X_0)$, by \eqref{11.0.3}-\eqref{11.0.5} we
obtain, for any $k\in(0,k_2)$,
$$\mbox{dist}_{\mathbb{RP}^n}(e_k(X_0),e(X_0))\leq \frac{\theta^n}{4}\sigma+\frac{K_2}{1-\theta^{\alpha/2}}\delta_0r_k^{\frac{\alpha}{2}}.$$

For any $k\geq 0$, similar to Remark \ref{rmk 1}, we have
\[1-\left(\nu_\varepsilon\cdot e(X_0)\right)^2\leq\left[1-\left(\nu_\varepsilon\cdot e_k\right)^2\right]
+2\mbox{dist}_{\mathbb{RP}^n}(e_k,e(X_0)).\]
 Together with
\eqref{8.1} and \eqref{10.1.2}, this gives
\[r_k^{-n}\int_{\mathcal{B}_{r_k}}\left[1-\left(\nu_\varepsilon\cdot e(X_0)\right)^2\right]\varepsilon|\nabla u_\varepsilon|^2
\leq
\frac{3\theta^n}{4}\sigma+\left(\frac{2K_2}{1-\theta^{\alpha/2}}+K_2^2\right)\delta_0
 r_k^{\alpha/2}.\]
For any $r\in(K_3\varepsilon,\theta)$, by choosing a $k$ so that
$r\in(r_k,r_{k+1}]$, we obtain
\begin{equation}\label{11.7}
r^{-n}\int_{\mathcal{B}_r}\left[1-\left(\nu_\varepsilon\cdot
e(X_0)\right)^2\right]\varepsilon|\nabla u_\varepsilon|^2 \leq
\sigma+\theta^{-n}\left(\frac{2K_2}{1-\theta^{\alpha/2}}+K_2^2\right)\delta_0
 r_k^{\alpha/2}.
\end{equation}
By taking
\[K_4:=\theta^{-n}\left(\frac{2K_2}{1-\theta^{\alpha/2}}+K_2^2\right),\]
which is indeed a universal constant and does not depend on
$\sigma$, we get \eqref{11.6}.
\end{proof}

The following result will be used in the proof of Lipschitz regularity of $\{u_\varepsilon=0\}$.
\begin{coro}\label{coro 11.3}
For any $X_0\in\{u_\varepsilon=0\}\cap\mathcal{B}_1$,
\[|e(X_0)-e_{n+1}|\leq C\left(\sigma^{1/2}+\delta_0^{1/2}\right).\]
\end{coro}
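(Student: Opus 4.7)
The plan is to compare the two unit vectors $e(X_0)$ and $e_{n+1}$ through a triangle-inequality argument in the projective metric of Remark \ref{rmk 1}, using that the excess with respect to each is small on a common ball $\mathcal{B}_{r_0}(X_0)$.

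First I would fix a convenient scale $r_0 = \theta$ and invoke Lemma \ref{lem 11.2} to obtain
\[
r_0^{-n}\int_{\mathcal{B}_{r_0}(X_0)}\left[1-\left(\nu_\varepsilon\cdot e(X_0)\right)^2\right]\varepsilon|\nabla u_\varepsilon|^2 \leq \sigma + K_4\delta_0 r_0^{\alpha/2}.
\]
In parallel, from the hypothesis \eqref{close to plane} together with Lemma \ref{lem excess small} (and the monotonicity formula), by choosing $\tau_A$ small and $R_A$ large I can guarantee that the excess with respect to $e_{n+1}$ on $\mathcal{B}_2$ is bounded by $C\delta_0^2$, and hence
\[
r_0^{-n}\int_{\mathcal{B}_{r_0}(X_0)}\left[1-\left(\nu_\varepsilon\cdot e_{n+1}\right)^2\right]\varepsilon|\nabla u_\varepsilon|^2 \leq C r_0^{-n}\delta_0^2.
\]

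Next I would apply Remark \ref{rmk 1}: $1-(\nu\cdot e)^2 \geq c\,\mathrm{dist}_{\mathbb{RP}^n}(\nu,e)^2$, combined with the triangle inequality on $\mathbb{RP}^n$,
\[
\mathrm{dist}_{\mathbb{RP}^n}(e(X_0),e_{n+1})^2 \leq 2\left[\mathrm{dist}_{\mathbb{RP}^n}(\nu_\varepsilon,e(X_0))^2 + \mathrm{dist}_{\mathbb{RP}^n}(\nu_\varepsilon,e_{n+1})^2\right].
\]
Multiplying by $\varepsilon|\nabla u_\varepsilon|^2$ and integrating over $\mathcal{B}_{r_0}(X_0)$, while using the lower bound $\int_{\mathcal{B}_{r_0}(X_0)}\varepsilon|\nabla u_\varepsilon|^2 \geq c r_0^n$ (this is precisely where $X_0 \in \{u_\varepsilon=0\}$ is used, via Lemma \ref{lem lower bound for energy} as in the proof of Lemma \ref{lem 11.2}), I would conclude
\[
\mathrm{dist}_{\mathbb{RP}^n}(e(X_0),e_{n+1})^2 \leq C\left(\sigma + K_4\delta_0 r_0^{\alpha/2} + r_0^{-n}\delta_0^2\right) \leq C(\sigma + \delta_0),
\]
where the last step absorbs $r_0^{-n}\delta_0^2 \leq \delta_0$ (valid since $r_0=\theta$ is a fixed universal constant and $\delta_0$ is sufficiently small), and $K_4\delta_0 r_0^{\alpha/2} \leq C\delta_0$.

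To upgrade from projective distance to the honest norm $|e(X_0) - e_{n+1}|$, I need to fix the sign of $e(X_0)$. By Remark \ref{rmk Hausdorff convergence}, $u_\varepsilon$ converges to $+1$ in $\mathcal{C}_1 \cap \{x_{n+1}>0\}$ and to $-1$ in $\mathcal{C}_1 \cap \{x_{n+1}<0\}$, so $\nu_\varepsilon$ points predominantly in the $+e_{n+1}$ direction on the transition region; choosing $e(X_0)$ so that $e(X_0)\cdot e_{n+1} \geq 0$ then yields $|e(X_0)-e_{n+1}| \leq \sqrt{2}\,\mathrm{dist}_{\mathbb{RP}^n}(e(X_0),e_{n+1})$, giving the desired bound $|e(X_0)-e_{n+1}| \leq C(\sigma^{1/2}+\delta_0^{1/2})$. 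The main bookkeeping issue is ensuring that the smallness of $\tau_A$ (chosen after $\delta_0$) forces the global excess w.r.t. $e_{n+1}$ to be no larger than $\delta_0^2$, and that the sign convention for $e(X_0)$ is compatible throughout the inductive construction in Lemma \ref{lem 11.2}; both are routine given the machinery already in place.
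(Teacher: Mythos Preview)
Your proposal is correct and follows essentially the same route as the paper: take $r=\theta$ in Lemma \ref{lem 11.2} to bound the excess with respect to $e(X_0)$, combine this with the excess bound with respect to $e_{n+1}$ coming from \eqref{close to plane} and Lemma \ref{lem excess small}, and then use the projective triangle inequality together with the energy lower bound of Lemma \ref{lem lower bound for energy} to compare the two directions. The sign-fixing step at the end is also handled in the same way as in the paper.
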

\begin{proof}
By taking $r=\theta$ in \eqref{11.6}, we have
\[
\theta^{-n}\int_{\mathcal{B}_{\theta}(X_0)}\left[1-\left(\nu_\varepsilon\cdot
e(X_0)\right)^2\right] \varepsilon|\nabla u_\varepsilon|^2\leq
\sigma+K_4\delta_0.
\]
On the other hand, by \eqref{close to plane} and Lemma \ref{lem excess small}, we also have
\[\theta^{-n}\int_{\mathcal{B}_{\theta}(X_0)}\left[1-\left(\nu_\varepsilon\cdot e_{n+1}\right)^2\right]
\varepsilon|\nabla u_\varepsilon|^2\leq C\delta_0^2.\] Similar to
the proof of the previous lemma, combining these two and using Lemma
\ref{lem lower bound for energy}, we get
\begin{eqnarray*}
&&\mbox{dist}_{\mathbb{RP}^n}(e(X_0),e_{n+1})^2\\
&\leq &\theta^{-n}\int_{\mathcal{B}_{1/4}(X_0)}\left[1-\left(\nu_\varepsilon\cdot e(X_0)\right)^2\right]
\varepsilon|\nabla u_\varepsilon|^2+\theta^{-n}\int_{\mathcal{B}_{1/4}(X_0)}\left[1-\left(\nu_\varepsilon\cdot e_{n+1}\right)^2\right]
\varepsilon|\nabla u_\varepsilon|^2\\
&\leq & C\left(\sigma+\delta_0\right).
\end{eqnarray*}

Finally, we can fix $e(X_0)$ so that it points to the above. Thus
the estimate on the distance in $\mathbb{RP}^n$ can be lifted to an
estimate in $\mathbb{S}^n$.
\end{proof}

What we have proved can be roughly stated as follows: level sets of
$u_\varepsilon$ are Lipschitz graphs in the form of
$x_{n+1}=h_\varepsilon(x)$ up to the scale $K_3\varepsilon$.
However, this may break down for smaller scales, because in Lemma
\ref{lem 11.2} $K_3$ depends on $\sigma$. To obtain further control
on the scale smaller than $K_3\varepsilon$, we first give a direct
proof of Theorem \ref{main result} and then use this to prove the
full regularity of level sets of $u_\varepsilon$.

\section{A direct proof of Theorem \ref{main result}}
\numberwithin{equation}{section}
 \setcounter{equation}{0}

This section is devoted to a direct proof of Theorem \ref{main result}. In fact, we prove something more.
\begin{thm}\label{main result 3}
Suppose that $u$ is a smooth solution of \eqref{equation 0} on $\R^{n+1}$, satisfying
\begin{equation}\label{limit at infinity}
\lim_{R\to+\infty}R^{-n}\int_{\mathcal{B}_R}\frac{1}{2}|\nabla u|^2+W(u)\leq\left(1+\tau_A\right)\omega_n\sigma_0.
\end{equation}
Then there exists a unit vector $e$ and a constant $t\in\R$ such
that $u(X)\equiv g(e\cdot X+t)$.
\end{thm}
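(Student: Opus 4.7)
We may assume $u$ is nonconstant, so that $|u|<1$ strictly on $\R^{n+1}$ by the strong maximum principle, and assume further that $\{u=0\}$ is nonempty (which follows from the standard blow-down analysis via the monotonicity formula---valid here because Modica's theorem gives the Modica inequality for any bounded entire solution of \eqref{equation 0}---together with \eqref{limit at infinity} and the integer multiplicity of the limit varifold as in Proposition \ref{prop lim varifold}). Fix $X_0\in\{u=0\}$.

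The plan is to apply Theorem \ref{main result loc} at every large scale $R$ to conclude that every intermediate level set of $u$ is an affine hyperplane, and then to solve the resulting ODE. Set $v_R(X):=u(X_0+RX)$ for $R\geq 1$; this solves $\varepsilon\Delta v_R=\varepsilon^{-1}W'(v_R)$ with $\varepsilon=R^{-1}$, satisfies $v_R(0)=0$, and a change of variables gives
\[
R_A^{-n}\int_{\mathcal{B}_{R_A}}\tfrac{\varepsilon}{2}|\nabla v_R|^2+\tfrac{1}{\varepsilon}W(v_R)=(RR_A)^{-n}\int_{\mathcal{B}_{RR_A}(X_0)}\tfrac12|\nabla u|^2+W(u)\leq (1+\tau_A)\omega_n\sigma_0
\]
for $R$ large, by \eqref{limit at infinity} together with a trivial comparison of balls centered at $X_0$ and at the origin. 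Theorem \ref{main result loc} then provides a common hyperplane $P_R$ such that every $\{v_R=t\}\cap\mathcal{C}_1$, $t\in(-1+b,1-b)$, is the graph over $P_R$ of some $h_R^t\in C^{1,\alpha_A}(B_1)$ with $\|h_R^t\|_{C^{1,\alpha_A}(B_1)}\leq K_A$ independently of $R$ and $t$. Undoing the rescaling, $\{u=t\}$ is in $P_R$-adapted coordinates the graph of $H_R^t(x):=R\,h_R^t(x/R)$ on $B_R$, and the H\"older seminorm of its slope shrinks as
\[
[\nabla H_R^t]_{C^{0,\alpha_A}(B_R)}=R^{-\alpha_A}[\nabla h_R^t]_{C^{0,\alpha_A}(B_1)}\leq K_A R^{-\alpha_A}\longrightarrow 0.
\]
Passing to a subsequence $R\to\infty$ along which $P_R\to P_\infty$ in the Grassmannian and $H_R^t\to H_\infty^t$ in $C^1_{\mathrm{loc}}(\R^n)$, the limit has constant gradient and therefore $\{u=t\}$ is an affine hyperplane $P_t$; distinct $P_{t_1}, P_{t_2}$ must be parallel, for otherwise they would intersect on an $(n-1)$-plane at which $u$ would take two different values.

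Hence there is a single unit vector $e$ perpendicular to every $P_t$, and letting $b\downarrow 0$ (using $\{|u|<1\}=\R^{n+1}$) gives $u(X)=\phi(e\cdot X)$ for some $\phi\in C^2(\R)$ with $|\phi|<1$ that solves $\phi''=W'(\phi)$. Slicing $\mathcal{B}_R$ perpendicular to $e$ and passing to the limit in \eqref{limit at infinity} yields $\int_\R \tfrac12|\phi'|^2+W(\phi)\,ds\leq(1+\tau_A)\sigma_0<\infty$, and since the only bounded non-constant finite-energy solutions of $\phi''=W'(\phi)$ on $\R$ are translates of $\pm g$ (all other orbits of this planar conservative system are periodic and carry infinite energy on $\R$), we conclude $u(X)=g(e\cdot X+t)$ for some $t\in\R$, after possibly flipping the sign of $e$. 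The main difficulty is the geometric step: verifying that the reference hyperplanes $P_R$ actually stabilize (not merely admit a convergent subsequence), that parallelism of distinct level sets persists upon passing to $R\to\infty$, and that the resulting direction $e$ is independent of the choice of base point $X_0$. Once these geometric facts are secured, the one-dimensional reduction is routine.
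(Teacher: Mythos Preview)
Your argument has a circularity problem. In this paper, the proof of Theorem \ref{main result loc} is not completed until Section 12, and that completion \emph{relies on} Theorem \ref{main result 3}: the unnamed lemma preceding Lemma \ref{lem 11.5} is stated to be ``an easy consequence of Theorem \ref{main result 3}'', Lemma \ref{lem 11.5} invokes it, and Lemma \ref{lem 13.3} (hence the Lipschitz regularity of level sets, hence Theorem \ref{main result loc}) is deduced from Lemma \ref{lem 11.5}. So you cannot invoke Theorem \ref{main result loc} as a black box here.

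Setting the circularity aside, your strategy is sound and is in fact simpler than the paper's once Theorem \ref{main result loc} is available independently. The ``main difficulty'' you flag---stabilization of $P_R$---is not actually needed: for any two points $Y_0,Y_1\in\{u=t\}$, choose $R$ large enough that both lie in $\mathcal{B}_{R/2}(X_0)$; since $\{u=t\}$ is a single $C^{1,\alpha_A}$ graph in the $P_R$-direction over $B_R$ with $[\nabla H_R^t]_{C^{0,\alpha_A}}\leq K_AR^{-\alpha_A}$, the unit normals at $Y_0$ and $Y_1$ differ by at most $CK_AR^{-\alpha_A}\to 0$, so they coincide. Connectedness of $\{u=t\}$ also comes for free, since Theorem \ref{main result loc} gives a single graph at each scale. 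The paper instead argues directly from the Morrey-type bound of Lemma \ref{lem 11.1} (transferred to the entire solution as Lemma \ref{lem 12.3}), establishes uniqueness of the blow-down direction $e_\infty$, uses the distance-type function $\Phi_\varepsilon$ and its $C^1$ convergence away from the interface (Appendix A), and concludes via Farina's sliding method. That route is logically prior to Theorem \ref{main result loc} and is what makes the whole package non-circular.
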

In the following we will show that if $u$ is a minimizing solution of \eqref{equation 0} on $\R^{n+1}$, where $n\leq 6$, then \eqref{limit at infinity} is satisfied. Thus Theorem \ref{main result} is a corollary of this theorem.

Since $u$ is an entire solution, by the main result of
\cite{Modica}, $u$ satisfies the Modica inequality and hence the
monotonicity formula, Proposition \ref{monotonicity formula} for any
$X\in\R^{n+1}$ and $r>0$. This monotonicity ensures the existence of
the limit in \eqref{limit at infinity}. It also implies that, for
any ball $\mathcal{B}_R(X)\subset\R^{n+1}$,
\[R^{-n}\int_{\mathcal{B}_R(X)}\frac{1}{2}|\nabla u|^2+W(u)\leq\left(1+\tau_A\right)\omega_n\sigma_0.\]

With this bound, we can study the asymptotic behavior of $u$ through
the scaling
\[u_\varepsilon(X):=u(\varepsilon^{-1}X).\]
As before, by Hutchinson-Tonegawa theory, the varifolds
$V_\varepsilon$ associated to $u_\varepsilon$ converge to a
stationary varifold $V$ with integer multiplicity.

Furthermore, we claim that
\begin{prop}\label{prop 11.2}
$V$ is a cone with respect to the origin $0$.
\end{prop}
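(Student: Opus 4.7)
The plan is to identify the density $\Theta(R):=R^{-n}\|V\|(\mathcal{B}_R(0))$ as a constant independent of $R>0$, and then invoke the equality case of the monotonicity formula for stationary integer-multiplicity varifolds to conclude that $V$ is a cone with vertex at the origin.

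First I would transfer the asymptotic energy density at infinity of $u$ to each scale $R$ of the rescaled function $u_\varepsilon$. A change of variables $Y=\varepsilon^{-1}X$ yields
$$R^{-n}\int_{\mathcal{B}_R}\Bigl(\frac{\varepsilon}{2}|\nabla u_\varepsilon|^2+\frac{1}{\varepsilon}W(u_\varepsilon)\Bigr)\,dX \;=\; \Bigl(\frac{R}{\varepsilon}\Bigr)^{-n}\int_{\mathcal{B}_{R/\varepsilon}}\Bigl(\frac{1}{2}|\nabla u|^2+W(u)\Bigr)\,dY.$$
Since $u$ is an entire solution, Modica's inequality holds by \cite{Modica}, so the monotonicity formula of Proposition \ref{monotonicity formula} applies to $u$ itself at the origin. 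The right-hand side is therefore non-decreasing in $R/\varepsilon$; together with \eqref{limit at infinity} this gives, for every fixed $R>0$,
$$\lim_{\varepsilon\to 0}R^{-n}\int_{\mathcal{B}_R}\Bigl(\frac{\varepsilon}{2}|\nabla u_\varepsilon|^2+\frac{1}{\varepsilon}W(u_\varepsilon)\Bigr)\,dX \;=\; E_\infty \;:=\; \lim_{\rho\to+\infty}\rho^{-n}\int_{\mathcal{B}_\rho}\Bigl(\frac{1}{2}|\nabla u|^2+W(u)\Bigr)\,dY.$$

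Second, I would pass to the varifold limit. Hutchinson--Tonegawa yields $\mu_\varepsilon\to\|V\|$ weakly as Radon measures, and by the vanishing discrepancy \eqref{discrepancy} the combined measure $\bigl(\tfrac{\varepsilon}{2}|\nabla u_\varepsilon|^2+\tfrac{1}{\varepsilon}W(u_\varepsilon)\bigr)dX$ has the same weak limit $\|V\|$. For every $R>0$ outside the at most countable set $\{R:\|V\|(\partial\mathcal{B}_R)>0\}$ we therefore obtain $\Theta(R)=E_\infty$. Since $V$ is stationary with integer multiplicity, $\Theta$ is non-decreasing on $(0,\infty)$; agreement with the constant $E_\infty$ on a dense set forces $\Theta(R)\equiv E_\infty$ for every $R>0$.

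Finally, I would invoke the equality case of the stationary-varifold monotonicity formula (see, e.g., \cite[Theorem 6.3.2]{Lin}): constancy of $R^{-n}\|V\|(\mathcal{B}_R(0))$ in $R$ is equivalent to the vanishing of the extra term
$$\int_{\mathcal{B}_R\times G(n)}\frac{|X-\Pi_S X|^2}{|X|^{n+2}}\,dV(X,S)=0\quad\text{for every }R>0,$$
which forces $\mathrm{spt}\|V\|$ to be invariant under dilations centered at the origin, i.e.\ $V$ is a cone with vertex $0$. The only delicate point is the upgrade from cocountably many $R$ to every $R>0$ in the identity $\Theta(R)=E_\infty$, and this is automatic from the monotonicity of both sides; everything else is a direct application of Hutchinson--Tonegawa theory and the stationary-varifold monotonicity formula, both already in force.
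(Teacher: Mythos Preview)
Your proof is correct and follows the same approach as the paper: compute the density $R^{-n}\|V\|(\mathcal{B}_R)$ via the blow-down and the change of variables, identify it as the constant $E_\infty$ from the monotonicity formula for $u$, and then apply the equality case of the varifold monotonicity formula. If anything, you are more careful than the paper on one point: the paper writes $\|V\|(\mathcal{B}_R)=\lim_{\varepsilon\to0}\|V_\varepsilon\|(\mathcal{B}_R)$ without comment, whereas you correctly note that weak convergence of measures only guarantees this for $R$ with $\|V\|(\partial\mathcal{B}_R)=0$, and then use monotonicity of $\Theta$ to extend the identity to every $R>0$.
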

\begin{proof}
This is because for any $R>0$, by the convergence of $\|V_\varepsilon\|$ and \eqref{discrepancy},
\begin{eqnarray}\label{12.1.1}
R^{-n}\|V\|(\mathcal{B}_R)&=&\lim_{\varepsilon\to 0}R^{-n}\|V_\varepsilon\|(\mathcal{B}_R)  \nonumber\\
&=&\lim_{\varepsilon\to 0}R^{-n}\int_{\mathcal{B}_R}\frac{\varepsilon}{2}|\nabla u_\varepsilon|^2+\frac{1}{\varepsilon}W(u_\varepsilon)
\quad\mbox{(by the definition of $V_\varepsilon$)}\\
&=&\lim_{\varepsilon\to
0}\left(\varepsilon^{-1}R\right)^{-n}\int_{\mathcal{B}_{\varepsilon^{-1}R}}\frac{1}{2}|\nabla
u|^2+W(u). \quad\mbox{(by the definition of $u_\varepsilon$)}
\nonumber
\end{eqnarray}
In the last line, the existence of the limit follows from the energy
bound \eqref{limit at infinity} and the monotonicity formula,
Proposition \ref{monotonicity formula}. Note that this limit is
independent of $R$. Then by the monotonicity formula for stationary
varifolds (cf. \cite[Theorem 6.3.2]{Lin}), we deduce that $V$ is a
cone with respect to the origin.
\end{proof}

By \eqref{limit at infinity} and \eqref{12.1.1},
\[\|V\|(\mathcal{B}_1)\leq \left(1+\tau_A\right)\omega_n\sigma_0.\]
Hence we can apply Allard's $\varepsilon$-regularity theorem to
deduce that $\mbox{spt}\|V\|$ is a smooth hypersurface in a
neighborhood of the origin. Then by the previous proposition,
$\mbox{spt}\|V\|$ must be a hyperplane and $V$ is the standard
varifold associated to this plane with unit density.

Let
\[
\Phi_\varepsilon:=g_\varepsilon^{-1}\circ u_\varepsilon\] be the
distance type function (see Appendix A). Combining this blowing down
analysis and Proposition \ref{prop limit of distance}, we get
\begin{prop}\label{prop blowing down the distance fct}
As $\varepsilon\to0$, $\Phi_\varepsilon$ converges to (up to a
subsequence of $\varepsilon\to0$) a linear function in the form
$e\cdot X$ in $C_{loc}(\R^{n+1})$, where $e$ is a unit vector.
\end{prop}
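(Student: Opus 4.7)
The strategy is first to obtain a convergent subsequence via equicontinuity, and then to identify the limit using the hyperplane structure of the limit varifold $V$ already established. Since $\Phi_\varepsilon = \varepsilon g^{-1}(u_\varepsilon)$ and $g'(g^{-1}(s)) = \sqrt{2W(s)}$, a direct computation gives
$$|\nabla \Phi_\varepsilon| = \frac{\varepsilon |\nabla u_\varepsilon|}{\sqrt{2W(u_\varepsilon)}},$$
so the Modica inequality \eqref{Modica inequality} is equivalent to $|\nabla \Phi_\varepsilon| \le 1$ pointwise. Hence each $\Phi_\varepsilon$ is $1$-Lipschitz, and since $\Phi_\varepsilon(0) = \varepsilon g^{-1}(u(0))$ is uniformly bounded, Arzel\`a--Ascoli produces a subsequence along which $\Phi_\varepsilon \to \Phi_\infty$ in $C_{loc}(\R^{n+1})$, with $\Phi_\infty$ itself $1$-Lipschitz.

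Next I would identify $\Phi_\infty$ on the limit plane. By the analysis preceding the proposition, $\mathrm{spt}\|V\|$ is the hyperplane $P = \{e\cdot X = 0\}$ of unit multiplicity. The Hutchinson--Tonegawa Hausdorff convergence of level sets (used exactly as in Remark \ref{rmk Hausdorff convergence}) implies that $\{u_\varepsilon=0\}$ converges to $P$ on compact subsets of $\R^{n+1}$. Since $\Phi_\varepsilon \equiv 0$ on $\{u_\varepsilon=0\}$, locally uniform convergence then forces $\Phi_\infty \equiv 0$ on $P$. Together with $1$-Lipschitzness this yields the a priori upper bound $|\Phi_\infty(X)| \le \mathrm{dist}(X,P) = |e \cdot X|$ for every $X \in \R^{n+1}$.

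The matching lower bound will be supplied by Proposition \ref{prop limit of distance} from Appendix A, which in the unit-multiplicity setting identifies any subsequential limit of $\Phi_\varepsilon$ with the signed distance function to $\mathrm{spt}\|V\|$, the sign being dictated by the side on which $u_\varepsilon \to +1$. Specialized to our hyperplane $P$, this gives $\Phi_\infty(X) = \pm e \cdot X$ immediately; after fixing $e$ to point into the $\{u_\varepsilon \to +1\}$ half-space one obtains $\Phi_\infty(X) = e\cdot X$, which is the desired conclusion.

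The main obstacle is the reverse inequality in this last step: merely knowing that $\Phi_\infty$ is $1$-Lipschitz and vanishes on $P$ leaves many possibilities, and one must rule them out by using that $V$ has multiplicity exactly one. It is precisely this unit-density property that forces $|\nabla \Phi_\infty| = 1$ almost everywhere on $\R^{n+1}\setminus P$, and hence that $\Phi_\infty$ is affine. Since this content is exactly what Proposition \ref{prop limit of distance} packages, the proof of the present proposition reduces to verifying the hypotheses of that result (the Modica inequality, the Hutchinson--Tonegawa convergence, and the identification of $\mathrm{spt}\|V\|$ as a plane of density one), all of which are already in hand.
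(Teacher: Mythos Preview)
Your proposal is correct and follows essentially the same route as the paper, which proves the proposition in one line by combining the blowing-down analysis (identifying $V$ as a unit-density hyperplane through the origin) with Proposition~\ref{prop limit of distance}. Your steps 1--5 simply unpack content already present in Appendix~A: the $1$-Lipschitz bound is exactly Proposition~\ref{prop distance}, and the upper bound $|\Phi_\infty|\le |e\cdot X|$ is the easy half of Proposition~\ref{prop limit of distance}'s proof, so these preparatory steps are accurate but redundant once you invoke that proposition for the matching lower bound.
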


However, this argument does not show the uniqueness of this limit. Different subsequences of $\varepsilon\to0$ may lead to different limits. To obtain the uniqueness of the blowing down limit, we use the following lemma.
\begin{lem}\label{lem 12.3}
There exists a universal constant $C$, such that for any ball
$\mathcal{B}_R(X)$ with $R\geq 1$, we can find a unit vector $e_R$
to satisfy
\begin{equation}\label{12.0.1}
\int_{\mathcal{B}_R(X)}\left[1-\left(\nu\cdot
e_R\right)^2\right]|\nabla u|^2\leq CR^{n-2}.
\end{equation}
\end{lem}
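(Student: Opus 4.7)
The argument splits into two regimes, $R$ bounded and $R$ large, with the large-$R$ case reduced via rescaling to the singularly perturbed setting of Part I.

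\emph{Case 1: $R$ bounded.} If $R \in [1, R_*]$ for a universal constant $R_*$, the bound is immediate. Since $u$ is an entire solution it satisfies the Modica inequality by \cite{Modica}, so combined with \eqref{limit at infinity},
\[\int_{\mathcal{B}_R(X)} |\nabla u|^2 \;\leq\; 2\!\int_{\mathcal{B}_R(X)}\!\bigl(\tfrac12|\nabla u|^2 + W(u)\bigr) \;\leq\; C R^n \;\leq\; C R_*^2 \cdot R^{n-2},\]
and any unit vector will serve as $e_R$.

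\emph{Case 2: $R$ large.} For $R > R_*$, set $v(Y):= u(X+RY)$ and $\varepsilon := 1/R$; then $v$ solves \eqref{equation} with parameter $\varepsilon$, satisfies \eqref{Modica inequality}, and inherits the global energy bound of the form \eqref{close to plane 0}. A direct change of variables converts the inequality into
\[\int_{\mathcal{B}_1(0)} \bigl[1-(\nu_v\cdot e_R)^2\bigr]\,\varepsilon|\nabla v|^2 \;\leq\; C\varepsilon^2.\]
I would then apply the Caccioppoli-type estimate from Remark \ref{rmk Caccioppoli} to $v$, after rotating so $e_R = e_{n+1}$, which bounds the left-hand side by a weighted height integral $C\int(Y_{n+1}-\lambda)^2\,\varepsilon|\nabla v|^2$ plus an exponentially small error. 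The choice of $e_R$ and $\lambda$ is dictated by the blowdown analysis (Propositions \ref{prop 11.2}--\ref{prop blowing down the distance fct} together with Remark \ref{rmk Hausdorff convergence}): the hyperplane $\{Y_{n+1}=\lambda\}$ is taken to best approximate the transition layer $\{|v|\leq 1-b\}\cap\mathcal{B}_1$. Outside the transition layer $|\nabla v|$ is exponentially small (Proposition \ref{prop exponential decay}), so once the layer's thickness in the $e_R$-direction is at most $C\varepsilon$, the height integral is $O(\varepsilon^2)$ and the claim follows.

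\emph{Main obstacle.} The crux is the sharp $O(\varepsilon)$ localization of the transition layer of $v$ in the $e_R$-direction; the blowdown alone only gives a qualitative $o(1)$ control on this Hausdorff distance. To upgrade it to the intrinsic $O(\varepsilon)$ rate, I would pick $Y_0\in\mathcal{B}_1\cap\{|v|\le 1-b\}$ (if this intersection is empty the bound is already exponentially small by Proposition \ref{prop exponential decay}), apply Lemma \ref{lem 11.2} at $Y_0$ to obtain a local unit vector $e(Y_0)$ whose excess at scale $\theta$ is small, and then iterate the tilt-excess Morrey bound of Lemma \ref{lem 11.1} all the way down to the scale $K_1\varepsilon$. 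This produces a Lipschitz-graph representation of the level sets on scale one with slope controlled by universal constants, confining the transition to a strip of its intrinsic width $O(\varepsilon)$ around the hyperplane normal to $e(Y_0)$; taking $e_R:=e(Y_0)$ and using Corollary \ref{coro 11.3} to check that this vector is uniformly close to the global blowdown direction completes the argument.
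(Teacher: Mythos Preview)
Your Case 2 contains a real gap. The Caccioppoli inequality \eqref{Caccioppoli inequality} bounds the excess by $C\int(x_{n+1}-\lambda)^2\varepsilon|\nabla v|^2$, and for your scheme you need this height integral to be $O(\varepsilon^2)$. That would force the transition layer to lie in an $O(\varepsilon)$-neighborhood of a \emph{fixed} hyperplane $\{x_{n+1}=\lambda\}$. But the Lipschitz-graph representation you extract from Lemmas \ref{lem 11.1}--\ref{lem 11.2} only controls the \emph{slope} of the graph by a small universal constant $L$; it says nothing about its oscillation, which over $B_1$ can be as large as $L$, not $\varepsilon$. You have conflated the intrinsic $O(\varepsilon)$ \emph{thickness} of the layer (true) with $O(\varepsilon)$ \emph{flatness} of the layer about a hyperplane (false at this stage, and in fact equivalent to what you are trying to prove). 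So the Caccioppoli route only returns an $O(L^2)$ bound on the excess, not the required $O(\varepsilon^2)$.

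The paper's argument avoids this detour entirely: it simply reruns the tilt-excess iteration behind Lemma \ref{lem 11.1}, but started from an outer radius $R_\ast$ taken much larger than $R$. After rescaling by $1/R_\ast$ (so the new parameter is $\varepsilon=1/R_\ast$), the target radius $R$ corresponds to the small scale $r=R/R_\ast$, and for $R_\ast$ large this lies in the lower regime of \eqref{8.1}, where the Morrey bound reads $K_2^2\varepsilon^2r^{-2}=K_2^2R^{-2}$; the competing term $\delta_0^2 r^\alpha$ is killed by sending $R_\ast\to\infty$. Unscaling yields \eqref{12.0.1} directly. The point is that your rescaling put the target ball at the \emph{top} of the iteration (scale $1$), where only the coarse $\delta_0^2$ bound is available; the correct move is to place it near the \emph{bottom}, where the sharp $\varepsilon^2r^{-2}$ rate has already been earned by the iteration.
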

The proof is similar to the one of Lemma \ref{lem 11.1}, see also the proof of \cite[Theorem 2.3]{W2}.

Note that $e_R$ in this theorem may not be unique. In the following
we assume that for each $R>1$, such a vector $e_R$ has been fixed.

If $n=1$, as $R\to+\infty$, since $e_R$ are unit vectors, we can
take a subsequence of $R_i\to+\infty$ so that $e_{R_i}\to
e_\infty\in\mathbb{S}^1$. Assume $e_\infty=e_2$. Then by taking
limit in \eqref{12.0.1},  we get
\[\int_{\mathcal{B}_R(X)}\left(\frac{\partial u}{\partial x_1}\right)^2=0,\quad\forall R>0.\]
Thus $u(x_1,x_2)\equiv u(x_2)$.

Now consider the case $n\geq 2$.  Similar to Lemma \ref{lem 11.2}, we also have
\begin{lem}
There exists a unit vector $e_\infty$ and a universal constant $C$
such that
\begin{equation}\label{12.2}
\int_{\mathcal{B}_R(X)}\left[1-\left(\nu\cdot
e_\infty\right)^2\right]|\nabla u|^2\leq CR^{n-2},\ \ \forall R>1.
\end{equation}
\end{lem}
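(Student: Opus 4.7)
The plan is to iterate Lemma~\ref{lem 12.3} along dyadic scales, in the same spirit as the proof of Lemma~\ref{lem 11.2}. First I would fix a base point $X_0\in\mathbb{R}^{n+1}$ (taking $X_0=0$ for definiteness), set $R_k:=2^k$ for $k\geq 0$, and apply Lemma~\ref{lem 12.3} at each scale to obtain unit vectors $e_k:=e_{R_k}(X_0)$ satisfying $\int_{\mathcal{B}_{R_k}(X_0)}[1-(\nu\cdot e_k)^2]|\nabla u|^2 \leq CR_k^{n-2}$.

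The next step is to show that $\{e_k\}$ is Cauchy in $\mathbb{RP}^n$. Using Remark~\ref{rmk 1} in the form $1-(\nu\cdot e)^2\geq c\,\mbox{dist}_{\mathbb{RP}^n}(\nu,e)^2$ and the triangle inequality $\mbox{dist}(e_k,e_{k+1})^2 \leq 2\,\mbox{dist}(\nu,e_k)^2 + 2\,\mbox{dist}(\nu,e_{k+1})^2$, integration over $\mathcal{B}_{R_k}(X_0)$ gives
\[
\mbox{dist}_{\mathbb{RP}^n}(e_k,e_{k+1})^2 \int_{\mathcal{B}_{R_k}(X_0)}|\nabla u|^2 \leq C R_k^{n-2}.
\]
The crucial input here is the companion lower bound $\int_{\mathcal{B}_{R_k}(X_0)}|\nabla u|^2\geq cR_k^n$, which I would deduce from Proposition~\ref{prop 11.2} together with the subsequent identification of the blowdown limit as the unit-density varifold on a hyperplane through $0$: after the rescaling $u_\varepsilon(Y)=u(Y/\varepsilon)$, weak convergence of $\varepsilon|\nabla u_\varepsilon|^2\,dX$ to $\|V\|=\sigma_0\mathcal{H}^n\lfloor_P$ forces the rescaled energy in any fixed ball to be bounded below by a multiple of its volume. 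This yields $\mbox{dist}_{\mathbb{RP}^n}(e_k,e_{k+1})\leq CR_k^{-1}=C\cdot 2^{-k}$, a summable bound; hence $e_k\to e_\infty$ in $\mathbb{RP}^n$ with $|e_\infty-e_k|\leq CR_k^{-1}$ after a consistent sign lift. Since the blowdown plane is intrinsic to $u$ and not to $X_0$, the limit $e_\infty$ does not depend on the chosen base point.

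To transfer the excess bound from the scale-dependent vectors $e_R(X)$ to the single vector $e_\infty$, I would use the projection-based inequality
\[
1-(\nu\cdot e_\infty)^2 \leq 2\bigl[1-(\nu\cdot e_R(X))^2\bigr] + 8|e_\infty-e_R(X)|^2,
\]
which follows from $|\nu-(\nu\cdot e')e'|\leq |\nu-(\nu\cdot e)e|+2|e-e'|$ by squaring. Integrating against $|\nabla u|^2$, applying the upper bound $\int_{\mathcal{B}_R(X)}|\nabla u|^2\leq CR^n$ (from \eqref{limit at infinity} and the Modica inequality), and invoking $|e_\infty-e_R(X)|\leq CR^{-1}$ (from the Cauchy argument applied at base point $X$) produces
\[
\int_{\mathcal{B}_R(X)}[1-(\nu\cdot e_\infty)^2]|\nabla u|^2 \leq 2CR^{n-2}+8\cdot CR^{-2}\cdot CR^n \leq CR^{n-2},
\]
as claimed; for $R$ in a bounded range $[1,R_0]$ the estimate is trivial after absorbing $R_0^2$ into the constant.

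The main obstacle is the uniform lower energy bound $\int|\nabla u|^2\geq cR^n$ underpinning the Cauchy argument. This is where the full force of Proposition~\ref{prop 11.2} is needed: only because the blowdown is a single plane with unit density does the rescaled energy have a uniform lower bound at every large scale, allowing the $R^{n-2}$ excess to be converted into $R^{-1}$ decay of the tangent-plane normal. A secondary subtlety is the lift from $\mathbb{RP}^n$ to $\mathbb{S}^n$, which requires consistent sign choices along the Cauchy sequence and is safe once consecutive vectors are strictly less than $\pi/2$ apart.
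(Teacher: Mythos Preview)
Your argument is correct and is precisely what the paper intends by ``similar to Lemma~\ref{lem 11.2}'': iterate Lemma~\ref{lem 12.3} along dyadic scales, control $\mbox{dist}_{\mathbb{RP}^n}(e_k,e_{k+1})$ via the lower energy bound to obtain a Cauchy sequence with summable increments $O(R_k^{-1})$, and then pass to the limit vector using the quadratic projection inequality. The one simplification worth noting is that the needed lower bound $\int_{\mathcal{B}_R}|\nabla u|^2\geq cR^n$ is already packaged as Lemma~\ref{lem lower bound for energy} (exactly the tool invoked in the proof of Lemma~\ref{lem 11.2}), so there is no need to route through Proposition~\ref{prop 11.2} or the blowdown analysis for this step.
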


For the blowing down sequence $u_\varepsilon$, \eqref{12.2} implies that
\begin{equation}\label{11.1.1}
\int_{\mathcal{B}_1}\left[1-\left(\nu_\varepsilon\cdot
e_\infty\right)^2\right]\varepsilon|\nabla u_\varepsilon|^2\leq
C\varepsilon^{2}.
\end{equation}
 Note that this estimate just says
it does not satisfy the assumption
$\delta_\varepsilon\gg\varepsilon$ in Theorem \ref{thm tilt excess
decay}.

 For any $\eta\in C_0^\infty(\R^{n+1})$, let
$\Phi(X,S)=\eta(X)^2<Se_\infty, e_\infty>\in
C_0^\infty(\R^{n+1}\times G(n))$. Passing to the limit in
\eqref{11.1.1} gives
\[0=\lim_{\varepsilon\to0}<V_\varepsilon,\Phi>=<V,\Phi>.\]
Thus for $\|V\|$ a.a. $X$, the tangent plane of $V$ at $X$ is the hyperplane orthogonal to $e_\infty$.  It can be directly checked that $V$ must be the standard varifold associated to this hyperplane. (This can also be seen by noting that we have proved that $\mbox{spt}\|V\|$ is a hyperplane.)

The uniqueness of $V$ also implies that, the limit of
$\Phi_\varepsilon$ in Proposition \ref{prop blowing down the
distance fct} is independent of the choice of subsequences of
$\varepsilon\to 0$, i.e.,
\[\Phi_\varepsilon\rightarrow e_\infty\cdot X,\ \ \ \mbox{in}\ C_{loc}(\R^{n+1}).\]
Without loss of generality, assume $e_\infty=e_{n+1}$.

Then by Theorem \ref{thm C1 convergence interior}, for any $\delta>0$,
\[\nabla\Phi_\varepsilon\to e_{n+1}, \ \ \ \mbox{uniformly on}\ \mathcal{B}_1\cap\{|x_{n+1}|>\delta\}.\]

By compactness, this still holds true if the base point is replaced
by any point $X_0\in\{u=0\}$. Thus we arrive at
\begin{lem}
For any $\delta>0$, there exists an $L(\delta)$ such that, for any
$X\in\{|\Phi|\geq L(\delta)\}$,
\[|\nabla\Phi(X)-e_{n+1}|\leq\delta.\]
\end{lem}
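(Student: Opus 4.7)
We argue by contradiction, combining the blow-down analysis of Section 11 with a one-parameter family of rescalings centered on $\{u=0\}$. Suppose that for some $\delta_0>0$ there is a sequence $X_k\in\R^{n+1}$ with $|\Phi(X_k)|\to\infty$ and $|\nabla\Phi(X_k)-e_{n+1}|\geq\delta_0$. Since $|u|<1$ strictly on $\R^{n+1}$, the function $\Phi=g^{-1}\circ u$ is locally bounded, which forces $|X_k|\to\infty$. Passing to a subsequence and, if necessary, replacing $u$ by $-u$, assume that $d_k:=\Phi(X_k)\to+\infty$.

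Choose $Z_k\in\{u=0\}$ realizing $\tilde\rho_k:=\mbox{dist}(X_k,\{u=0\})=|X_k-Z_k|$; the set $\{u=0\}$ is closed and nonempty because its blow-down is a genuine hyperplane. The Modica inequality yields $|\nabla\Phi|\leq 1$, hence $\tilde\rho_k\geq d_k\to\infty$. Rescale around $Z_k$ at scale $\tilde\rho_k$ by setting $v^k(Y):=u(Z_k+\tilde\rho_k Y)$, which solves the singularly perturbed equation with parameter $\varepsilon_k:=1/\tilde\rho_k\to 0$ and satisfies $v^k(0)=0$. By translation invariance of the energy bound \eqref{limit at infinity} and by the globally uniform excess estimate \eqref{12.2}, the blow-down analysis of Section 11 applies to $v^k$ with base point $Z_k$ and produces the standard unit-density varifold on $\{y_{n+1}=0\}$ as its limit; in particular, by Remark \ref{rmk Hausdorff convergence}, the zero sets $\{v^k=0\}$ converge locally in Hausdorff distance to $\{y_{n+1}=0\}$.

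Let $W_k:=(X_k-Z_k)/\tilde\rho_k\in\mathbb{S}^n$ and pass to a further subsequence with $W_k\to W_\infty$. The choice of $Z_k$ as a nearest point says that the origin is a nearest point of $\{v^k=0\}$ to $W_k$; Hausdorff convergence forces the origin to be a nearest point of $\{y_{n+1}=0\}$ to $W_\infty$, whence $W_\infty\perp\{y_{n+1}=0\}$ and $W_\infty=\pm e_{n+1}$. The identity $\Phi^{v^k}_{\varepsilon_k}(W_k)=\varepsilon_k\Phi(X_k)=d_k/\tilde\rho_k\geq 0$ together with the $C^0_{loc}$ convergence of $\Phi^{v^k}_{\varepsilon_k}$ to $e_{n+1}\cdot Y$ pins the sign down to $W_\infty=+e_{n+1}$. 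Since $W_\infty$ lies in the open region $\{y_{n+1}>1/2\}$, Theorem \ref{thm C1 convergence interior} yields uniform convergence $\nabla\Phi^{v^k}_{\varepsilon_k}\to e_{n+1}$ on a neighborhood of $W_\infty$. The chain rule gives $\nabla\Phi^{v^k}_{\varepsilon_k}(W_k)=\nabla\Phi(X_k)$, so $\nabla\Phi(X_k)\to e_{n+1}$, contradicting the choice of $X_k$.

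The main technical point is to ensure that the blow-down direction at the moving base points $Z_k$ is the same $e_{n+1}$ independently of $k$; this is the content of the uniform excess bound \eqref{12.2}, together with the uniqueness argument for the limit varifold already used in Section 11 to identify the blow-down direction globally.
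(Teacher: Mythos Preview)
Your argument is essentially the paper's own: the paper states only that ``by compactness, this still holds true if the base point is replaced by any point $X_0\in\{u=0\}$,'' and you have unpacked exactly that compactness argument---pick a bad sequence $X_k$, project to nearest points $Z_k\in\{u=0\}$, rescale at scale $\tilde\rho_k$, identify the limit via the uniform energy bound and the excess estimate \eqref{12.2}, and invoke Theorem~\ref{thm C1 convergence interior}. The nearest-point trick that forces $W_\infty=\pm e_{n+1}$ is a clean way to land in the region where Theorem~\ref{thm C1 convergence interior} applies.

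One small point deserves a bit more care. Your final paragraph says the blow-down \emph{direction} at $Z_k$ is fixed to be $e_{n+1}$ by \eqref{12.2}; but \eqref{12.2} controls $1-(\nu\cdot e_\infty)^2$, which is even in $e_\infty$ and therefore pins down only the \emph{axis}, not the orientation. Concretely, Proposition~\ref{prop limit of distance} gives $\Phi^{v^k}_{\varepsilon_k}\to d_{\partial\Omega}$ with $\partial\Omega=\{y_{n+1}=0\}$, but a priori $\Omega$ could be either half-space, and your contradiction needs the limit to be $+e_{n+1}\cdot Y$, not $-e_{n+1}\cdot Y$. The fix is short: since $Z_k\in\{u=0\}$ and the blow-down of $\{u=0\}$ at the origin is $\{x_{n+1}=0\}$, one has $(Z_k)_{n+1}=o(|Z_k|)$; hence for the point $Z_k+|Z_k|e_{n+1}$ the ratio of $(n{+}1)$-th coordinate to norm stays bounded below, and the blow-down at the origin gives $u(Z_k+|Z_k|e_{n+1})\to+1$. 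This fixes the orientation at $Z_k$ for the scale $|Z_k|$, and the uniform excess bound \eqref{12.2} (via the graph structure of Lemma~\ref{lem one graph} at every intermediate scale) prevents the orientation from flipping as one passes from scale $|Z_k|$ down to scale $\tilde\rho_k$. The paper's one-line ``by compactness'' is equally silent on this, so your level of detail already exceeds the original; just be aware that \eqref{12.2} alone does not carry the sign.
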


In particular, in $\{|\Phi|>L(\delta)\}$, $u$ is increasing along
directions in the cone
\[\{e: e\cdot e_{n+1}\geq \delta\}.\]

Then we can proceed as in \cite{F} to deduce that $u$ is increasing
along directions in this cone everywhere in $\R^{n+1}$. After
letting $\delta\to 0$, we deduce that for any unit vector $e$
orthogonal to $e_{n+1}$,
\[e\cdot\nabla u\geq 0,\ \ \ -e\cdot\nabla u\geq 0,\ \ \ \mbox{in}\ \R^{n+1}.\]
Thus $\frac{\partial u}{\partial x_i}\equiv 0$ in $\R^{n+1}$, for all $1\leq i\leq n$. This then implies that $u$ depends only on $x_{n+1}$.

Finally, by using \eqref{limit at infinity}, it can be checked
directly that we must have $u(X)\equiv g(x_{n+1}+t)$ for some
$t\in\R$ (see again the proof of Lemma \ref{lem B2}).

Next we prove Theorem \ref{main result}. Let $u$ be a minimizing
solution of \eqref{equation 0} on $\R^{n+1}$, where $n\leq 6$. First
we can use standard comparison functions to deduce an energy bound.
\begin{lem}
There exists a universal constant $C$ such that
\begin{equation}\label{energy bound 1}
\int_{\mathcal{B}_R(X)}\frac{1}{2}|\nabla u|^2+W(u)\leq CR^n,
\end{equation}
for any ball $\mathcal{B}_R(X)$.
 \end{lem}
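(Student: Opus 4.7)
The plan is to use a standard comparison argument exploiting the minimizing property of $u$, constructing an explicit competitor that agrees with $u$ on $\partial\mathcal{B}_R(X)$ but equals a well of $W$ in the bulk so that the energy is concentrated in a thin boundary shell of volume $O(R^n)$.

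Specifically, for $R \geq 2$, I would take a cutoff $\phi : [0,R] \to [0,1]$ with $\phi \equiv 1$ on $[0,R-1]$, $\phi(R) = 0$, and linear on $[R-1,R]$, and define the competitor
\[
v(Y) := \phi(|Y-X|) + \bigl(1-\phi(|Y-X|)\bigr)\, u(Y).
\]
Then $v = u$ on $\partial\mathcal{B}_R(X)$ so it is admissible, and $v \equiv 1$ on $\mathcal{B}_{R-1}(X)$, where $W(v) = 0$ and $\nabla v = 0$. In the annular shell $\mathcal{B}_R(X) \setminus \mathcal{B}_{R-1}(X)$ one has $|v| \leq 1$, hence $W(v) \leq \sup_{[-1,1]} W \leq C$, and by the pointwise gradient bound \eqref{gradient bound} (applied with $\varepsilon = 1$) together with $|u| \leq 1$,
\[
|\nabla v| \leq |\nabla u| + |\phi'|\, |1-u| \leq C.
\]
Since the shell has $(n+1)$-dimensional volume $\leq C R^n$, this yields
\[
\int_{\mathcal{B}_R(X)} \tfrac{1}{2}|\nabla v|^2 + W(v) \leq C R^n,
\]
and minimality of $u$ gives the same bound for $u$.

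For $R \leq 2$, the pointwise bounds $|\nabla u|, W(u) \leq C$ already give the integral bound $C R^{n+1} \leq C R^n$, so the estimate holds for every $R > 0$. There is no real obstacle: the only minor point is to ensure $v$ is admissible (which it is by construction) and to keep the competitor energy dimensionally correct, i.e.\ to place all of $v$'s non-trivial behavior in a shell of the codimension-one scaling volume $R^n$ rather than the full volume $R^{n+1}$.
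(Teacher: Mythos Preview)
Your proof is correct and is precisely the ``standard comparison functions'' argument the paper alludes to (the paper does not spell out a proof, only the phrase). The competitor $v=\phi+(1-\phi)u$ with a width-one transition shell is the canonical construction, and your handling of the small-$R$ case via the pointwise bounds is fine.
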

As before, consider the blowing down sequence $u_\varepsilon$ and
the associated varifold $V_\varepsilon$. By \cite[Theorem 2]{H-T},
its limit varifold $V$ has unit density. In fact, in this case
$\mbox{spt}\|V\|=\partial\Omega$, where $\Omega$ has minimizing
perimeter, see \cite{Modica 2}.

Moreover, by Proposition \ref{prop 11.2}, $\partial\Omega$ is a
cone. Because the dimension $n\leq 6$, $\partial\Omega$ must be a
hyperplane, see Simons \cite{Simons}. Then \eqref{12.1.1} gives
\[\lim_{R\to +\infty}R^{-n}\int_{\mathcal{B}_R}\frac{1}{2}|\nabla u|^2+W(u)=\|V\|(\mathcal{B}_1)=\omega_n\sigma_0.\]
Hence $u$ satisfies all of the assumptions in Theorem \ref{main
result 3}. By applying Theorem \ref{main result 3} we get Theorem
\ref{main result}.

\section{The Lipschitz regularity of intermediate layers}
\numberwithin{equation}{section}
 \setcounter{equation}{0}

Now we continue the proof of Theorem \ref{main result loc}. In this section we first
prove that $\{u_\varepsilon=t\}$ can be represented by a Lipschitz graph in the $x_{n+1}$ direction. This is the consequence of Corollary \ref{coro 11.3} and
the following Lemma \ref{lem 11.5}.

Before coming to Lemma \ref{lem 11.5}, we need the following lemma,
which is an easy consequence of Theorem \ref{main result 3}.
\begin{lem}
Let $v$ be a solution of \eqref{equation 0}
in $\mathbb{R}^{n+1}$. Assume there exists a constant $\sigma$ small so that for all $r$ large,
\begin{equation}\label{11.8}
\int_{\mathcal{B}_r}\left[1-\left(\nu\cdot e_{n+1}\right)^2\right]|\nabla v|^2\leq \sigma^2 r^n,
\end{equation}
and
\begin{equation}\label{11.9}
\lim_{r\to+\infty}r^{-n}\int_{\mathcal{B}_r}\frac{1}{2}|\nabla v|^2+W(v)\leq\left(1+\tau_A\right)\sigma_0\omega_n.
\end{equation}
Then there exists a constant $t\in\R$ and a unit vector $e$
satisfying
\begin{equation}\label{11.10}
|e-e_{n+1}|\leq C\sigma,
\end{equation}
so that $v(X)\equiv g(e\cdot X+t)$.
\end{lem}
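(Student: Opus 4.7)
The plan is to apply Theorem \ref{main result 3} to first obtain the one-dimensional structure of $v$, and then use the smallness condition \eqref{11.8} to pin down the direction.

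Since assumption \eqref{11.9} is precisely the hypothesis \eqref{limit at infinity} of Theorem \ref{main result 3}, that theorem immediately yields a unit vector $e$ and a constant $t\in\R$ such that
\[v(X)\equiv g(e\cdot X+t).\]
Since $1-(\nu\cdot e_{n+1})^2$ depends only on the unoriented line spanned by $\nu$, we are free to change the sign of $e$; we fix the sign so that $e\cdot e_{n+1}\geq 0$. It remains only to estimate $|e-e_{n+1}|$ using \eqref{11.8}.

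For this one-dimensional profile, $\nabla v(X)=g'(e\cdot X+t)\,e$, and since $g'>0$ the unit normal is $\nu\equiv e$. Hence
\[\int_{\mathcal{B}_r}\!\!\left[1-(\nu\cdot e_{n+1})^2\right]|\nabla v|^2
=\bigl(1-(e\cdot e_{n+1})^2\bigr)\int_{\mathcal{B}_r}g'(e\cdot X+t)^2\,dX.\]
I would next compute the behaviour of the right-hand integral as $r\to+\infty$ by slicing $\mathcal{B}_r$ along the direction $e$. Writing $s=e\cdot X+t$, the co-area formula gives
\[\int_{\mathcal{B}_r}g'(e\cdot X+t)^2\,dX=\omega_n\int_{-r-t}^{r-t}g'(s)^2\bigl(r^2-(s-t)^2\bigr)^{n/2}ds.\]
Because $g'$ decays exponentially as $|s|\to\infty$ (see \eqref{1-d solution 1}--\eqref{1-d solution 2}), the integrand concentrates in a bounded region, so $(r^2-(s-t)^2)^{n/2}=r^n(1+o(1))$ on the relevant set. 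Together with \eqref{energy of 1-d solution} this yields
\[\lim_{r\to+\infty}r^{-n}\int_{\mathcal{B}_r}g'(e\cdot X+t)^2\,dX=\omega_n\sigma_0.\]

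Dividing \eqref{11.8} by $r^n$ and passing to the limit $r\to+\infty$ gives
\[\bigl(1-(e\cdot e_{n+1})^2\bigr)\omega_n\sigma_0\leq\sigma^2.\]
Since $e\cdot e_{n+1}\geq 0$ by our choice of sign, $1+e\cdot e_{n+1}\geq 1$, so
\[1-e\cdot e_{n+1}\leq\frac{\sigma^2}{\omega_n\sigma_0},\qquad |e-e_{n+1}|^2=2(1-e\cdot e_{n+1})\leq\frac{2\sigma^2}{\omega_n\sigma_0},\]
which is the desired bound with $C=(2/(\omega_n\sigma_0))^{1/2}$. The only nontrivial point is the slicing computation of the limit of $r^{-n}\int_{\mathcal{B}_r}g'(e\cdot X+t)^2\,dX$; the exponential decay of $g'$ makes this routine, so no step is really an obstacle here — the lemma is essentially a quantitative reading of Theorem \ref{main result 3}.
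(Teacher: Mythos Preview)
Your proof is correct and follows exactly the paper's approach: apply Theorem \ref{main result 3} via \eqref{11.9} to get the one-dimensional profile, then substitute $v(X)=g(e\cdot X+t)$ into \eqref{11.8} to read off the bound on $e$. The paper's own proof is the single sentence ``This can be directly verified by substituting $u(X)\equiv g(e\cdot X+t)$ into \eqref{11.8}'', and you have simply supplied the details of that verification.

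One small logical slip: your justification for fixing the sign of $e$ is not right. The integrand $1-(\nu\cdot e_{n+1})^2$ is indeed invariant under $e\mapsto -e$, but the representation $v(X)=g(e\cdot X+t)$ is not --- since $g$ is strictly increasing, $e$ is determined as the direction along which $v$ increases, and you cannot replace it by $-e$. In fact, without an orientation hypothesis the lemma as stated is false: $v(X)=g(-x_{n+1})$ satisfies \eqref{11.8} with $\sigma=0$ but forces $e=-e_{n+1}$. The paper only invokes this lemma inside the proof of Lemma \ref{lem 11.5}, where the assumption ``$v>0$ when $x_{n+1}\gg 0$'' is in force; that orientation is what actually pins down $e\cdot e_{n+1}>0$, and you should cite it rather than the symmetry of the excess.
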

\begin{proof}
The only thing we need to check is that \eqref{11.8} implies
\eqref{11.10}. This can be directly verified by substituting
$u(X)\equiv g(e\cdot X+t)$ into \eqref{11.8}.
\end{proof}

\begin{lem}\label{lem 11.5}
For any $b\in(0,1)$, $R>1$ and $\sigma>0$ small, there exists
$\bar{R}>R$ so that the following holds. Let $v$ be a solution of
\eqref{equation 0} in $\mathcal{B}_{\bar{R}}$, satisfying
$|v(0)|\leq1-b$, the Modica inequality \eqref{Modica inequality} and
\[\bar{R}^{-n}\int_{\mathcal{B}_{\bar{R}}}\frac{1}{2}|\nabla v|^2+W(v)\leq\left(1+\tau_A\right)\sigma_0\omega_n.\]
 Suppose that for any $r\in(R,\bar{R})$,
\[\int_{\mathcal{B}_r}\left[1-\left(\nu\cdot e_{n+1}\right)^2\right]|\nabla v|^2\leq \sigma^2r^n.\]
Assuming that $v>0$ when $x_{n+1}\gg 0$. Then by denoting
$\Phi:=g^{-1}\circ v$,
\[\sup_{\mathcal{B}_{R}}|\nabla\Phi-e_{n+1}|\leq \frac{1}{4}.\]
\end{lem}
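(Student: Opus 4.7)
The plan is a proof by contradiction combined with a compactness/blow-down argument, reducing the statement to the rigidity classification established in the previous lemma. Suppose the conclusion fails; then there exist $b\in(0,1)$, $R>1$, $\sigma>0$ small, and a sequence $\bar{R}_k\to\infty$ together with solutions $v_k$ on $\mathcal{B}_{\bar{R}_k}$ fulfilling all the listed hypotheses, but with $\sup_{\mathcal{B}_R}|\nabla\Phi_k-e_{n+1}|>1/4$, where $\Phi_k:=g^{-1}\circ v_k$. By standard interior elliptic estimates (using $|v_k|\le 1$) and the Arzel\`a--Ascoli theorem, after passing to a subsequence $v_k\to v_\infty$ in $C^2_{loc}(\R^{n+1})$, where $v_\infty$ is an entire solution of \eqref{equation 0}. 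The Modica inequality and the bound $|v_\infty(0)|\le 1-b$ pass to the limit, and the strong maximum principle then yields $|v_\infty|<1$ everywhere.

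Next, one verifies that $v_\infty$ satisfies the hypotheses of the previous lemma. For the tilt control, fix any $r\ge R$ and take $k$ large so that $\bar{R}_k>r$; the $C^1_{loc}$ convergence yields
\[\int_{\mathcal{B}_r}\bigl[1-(\nu_\infty\cdot e_{n+1})^2\bigr]|\nabla v_\infty|^2=\lim_{k\to\infty}\int_{\mathcal{B}_r}\bigl[1-(\nu_k\cdot e_{n+1})^2\bigr]|\nabla v_k|^2\le\sigma^2 r^n.\]
For the energy bound, apply Proposition \ref{monotonicity formula} to each $v_k$ to obtain $r^{-n}\int_{\mathcal{B}_r}(\tfrac12|\nabla v_k|^2+W(v_k))\le\bar{R}_k^{-n}\int_{\mathcal{B}_{\bar{R}_k}}(\cdots)\le(1+\tau_A)\sigma_0\omega_n$, and then take $k\to\infty$ followed by $r\to\infty$. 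The previous lemma then gives $v_\infty(X)\equiv g(e\cdot X+t)$ for a unit vector $e$ and a constant $t\in\R$, with either $|e-e_{n+1}|\le C\sigma$ or $|e+e_{n+1}|\le C\sigma$; the orientation assumption $v_k>0$ when $x_{n+1}\gg 0$ passes to the limit on every bounded set via the $C^2_{loc}$ convergence, ruling out $e\cdot e_{n+1}<0$, so $|e-e_{n+1}|\le C\sigma$.

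To conclude, observe that on $\overline{\mathcal{B}}_R$ the limit $v_\infty=g(e\cdot X+t)$ takes values in a compact subset of $(-1,1)$, so $\Phi_\infty=e\cdot X+t$ is smooth there; by the chain-rule identity $\nabla\Phi=\nabla v/g'(g^{-1}(v))$ together with the $C^2_{loc}$ convergence of $v_k$, one obtains $\Phi_k\to\Phi_\infty$ in $C^1(\overline{\mathcal{B}}_R)$. Consequently, for $\sigma$ small enough (say $C\sigma\le 1/8$) and $k$ large,
\[\sup_{\mathcal{B}_R}|\nabla\Phi_k-e_{n+1}|\le\sup_{\mathcal{B}_R}|\nabla\Phi_k-e|+|e-e_{n+1}|=o(1)+C\sigma<\tfrac14,\]
contradicting the standing assumption. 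The main obstacle in carrying out this plan is not technical but conceptual: ensuring that the orientation hypothesis $v_k>0$ for $x_{n+1}\gg 0$ is strong enough to single out $e\cdot e_{n+1}>0$ in the blow-down limit; once this is handled, the remainder is a routine compactness procedure built on the rigidity result of the preceding lemma.
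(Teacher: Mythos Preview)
Your proposal is correct and follows essentially the same compactness-and-rigidity argument as the paper's own proof: contradict, extract a $C^2_{loc}$ limit $v_\infty$, verify the hypotheses of the preceding lemma via the monotonicity formula and $C^1_{loc}$ convergence to conclude $v_\infty(X)=g(e\cdot X+t)$ with $|e-e_{n+1}|$ small, and obtain the contradiction from $C^1$ convergence of $\Phi_k$ on $\overline{\mathcal{B}}_R$. The orientation issue you single out is handled in the paper in the same spirit, by noting that $v_\infty>\gamma$ in the part far above $\R^n$.
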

\begin{proof}
Assume by the contrary, there exists an $R>0$, a sequence of
$R_i\to+\infty$ and a sequence of solutions $v_i$ to \eqref{equation
0} defined on $\mathcal{B}_{R_i}$, satisfying $|v_i(0)|\leq1-b$, the
Modica inequality \eqref{Modica inequality},
\begin{equation}\label{13.1}
R_i^{-n}\int_{\mathcal{B}_{R_i}}\frac{1}{2}|\nabla v_i|^2+W(v_i)\leq\left(1+\tau_A\right)\sigma_0\omega_n,
\end{equation}
and
\begin{equation}\label{13.2}
\int_{\mathcal{B}_r}\left[1-\left(\nu_i\cdot e_{n+1}\right)^2\right]|\nabla v_i|^2\leq \sigma^2r^n,
\ \ \ \forall r\in(R,R_i).
\end{equation}
But
\begin{equation}\label{13.3}
\sup_{\mathcal{B}_R}|\nabla\Phi_i-e_{n+1}|>\frac{1}{4}.
\end{equation}

Then we can assume $v_i$ converges to a smooth solution $v_\infty$ on any compact set of $\R^{n+1}$. By the monotonicity formula
and \eqref{13.1}, for any $r>0$,
\[r^{-n}\int_{\mathcal{B}_r}\frac{1}{2}|\nabla v_\infty|^2+W(v_\infty)\leq\left(1+\tau_A\right)\sigma_0\omega_n.\]
Passing to the limit in \eqref{13.2} we also have
\[\int_{\mathcal{B}_r}\left[1-\left(\nu_\infty\cdot e_{n+1}\right)^2\right]|\nabla v_\infty|^2\leq \sigma^2r^n,
\ \ \ \forall r>R.\] Then by the previous lemma (noting that
$v_\infty(0)=\lim_{i\to+\infty}v_i(0)$ and $v_\infty>\gamma$ in the
part far above $\R^n$), $v_\infty(X)\equiv g(e\cdot
X+g^{-1}(v_\infty(0)))$ for some unit vector $e$ satisfying
\[|e-e_{n+1}|\leq\frac{1}{8}.\]

Consequently,
\[\Phi_i(X):=g^{-1}\circ v_i(X)\to e\cdot X\ \ \ \mbox{in}\ C^1(\mathcal{B}_R).\]
In particular, for all $i$ large,
\[\sup_{\mathcal{B}_R}|\nabla\Phi_i-e|\leq\frac{1}{4}.\]
This is a contradiction with \eqref{13.3} and we finish the proof.
\end{proof}

We can apply this lemma to $v(X):=u_\varepsilon(X_0+\varepsilon X)$,
where $u_\varepsilon$ is as in Theorem \ref{main result loc} and
$X_0\in \{u_\varepsilon=u_\varepsilon(0)\}\cap\mathcal{B}_1$.
Combined with Corollary \ref{coro 11.3} (provided $\sigma$, and then
$\varepsilon$, are sufficiently small), this results in
\begin{lem}\label{lem 13.3}
For any $X_0\in \{u_\varepsilon=u_\varepsilon(0)\}\cap\mathcal{B}_1$, $\nabla u_\varepsilon\neq 0$ in $\mathcal{B}_{K_3\varepsilon}(X_0)$ and
\[|\nu_\varepsilon-e_{n+1}|\leq \frac{1}{2} \quad \mbox{in } \mathcal{B}_{K_3\varepsilon}(X_0).\]
\end{lem}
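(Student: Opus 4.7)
The strategy is to rescale at the $\varepsilon$-scale and reduce the statement to an application of Lemma \ref{lem 11.5}, whose hypotheses will be furnished by Lemma \ref{lem 11.2} and Corollary \ref{coro 11.3}.

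Fix $X_0\in\{u_\varepsilon=u_\varepsilon(0)\}\cap\mathcal{B}_1$ and set $v(Y):=u_\varepsilon(X_0+\varepsilon Y)$, which solves the unscaled equation \eqref{equation 0} on the large ball $\mathcal{B}_{(R_A-1)/\varepsilon}$, inherits the Modica inequality, and has $|v(0)|=|u_\varepsilon(X_0)|\le 1-b$. A change of variables in \eqref{11.2.1} gives the energy bound required by Lemma \ref{lem 11.5} (up to a harmless factor on $\tau_A$) on any fixed ball $\mathcal{B}_{\bar R}$ with $\varepsilon\bar R\le R_A-1$, and the sign condition on $v$ follows from Remark \ref{rmk Hausdorff convergence}.

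The core of the argument is to transfer the Morrey-type excess bound of Lemma \ref{lem 11.2} to the rescaled function $v$ and the fixed direction $e_{n+1}$. Lemma \ref{lem 11.2}, applied with a small parameter $\sigma$, supplies a unit vector $e(X_0)$ with
\[
r^{-n}\!\int_{\mathcal{B}_r(X_0)}\!\bigl[1-(\nu_\varepsilon\cdot e(X_0))^2\bigr]\varepsilon|\nabla u_\varepsilon|^2\le \sigma+K_4\delta_0 r^{\alpha/2},\qquad r\in(K_3\varepsilon,\theta),
\]
which via $r=\varepsilon s$ becomes
\[
s^{-n}\!\int_{\mathcal{B}_s}\!\bigl[1-(\nu\cdot e(X_0))^2\bigr]|\nabla v|^2\le \sigma+K_4\delta_0(\varepsilon s)^{\alpha/2},\qquad s\in(K_3,\theta\varepsilon^{-1}).
\]
Corollary \ref{coro 11.3} then gives $|e(X_0)-e_{n+1}|\le C(\sigma^{1/2}+\delta_0^{1/2})$ (its proof in fact uses only $|u_\varepsilon(X_0)|\le 1-b$, not $u_\varepsilon(X_0)=0$). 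Combining this with the pointwise inequality $|(1-(\nu\cdot e_{n+1})^2)-(1-(\nu\cdot e(X_0))^2)|\le 2|e(X_0)-e_{n+1}|$ and the monotonicity bound $\int_{\mathcal{B}_s}|\nabla v|^2\le Cs^n$ yields the analogous Morrey estimate for $v$ with reference direction $e_{n+1}$, in which all constants can be made arbitrarily small by choosing $\sigma,\delta_0$ small and $\varepsilon_A$ small enough that $\varepsilon\bar R\le\theta$.

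To finish, set $R:=K_3$ and let $\sigma_\star$ denote the smallness parameter supplied by Lemma \ref{lem 11.5} for this $R$, which determines the associated $\bar R$. Choose $\sigma,\delta_0,\varepsilon_A$ small enough that the above estimate is dominated by $\sigma_\star^2 s^n$ for all $s\in(R,\bar R)$. Lemma \ref{lem 11.5} then yields $\sup_{\mathcal{B}_R}|\nabla\Phi-e_{n+1}|\le 1/4$, where $\Phi:=g^{-1}\circ v$; this forces $\nabla v\ne 0$ on $\mathcal{B}_{K_3}$, hence $\nabla u_\varepsilon\ne 0$ on $\mathcal{B}_{K_3\varepsilon}(X_0)$. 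Since $\nu_\varepsilon=\nabla\Phi/|\nabla\Phi|$, writing $\nabla\Phi=e_{n+1}+w$ with $|w|\le 1/4$, a short elementary computation yields $|\nu_\varepsilon-e_{n+1}|\le 1/2$. The main technical point is the controlled transfer from the $r$-dependent plane $e(X_0)$ of Lemma \ref{lem 11.2} to the fixed $e_{n+1}$ demanded by Lemma \ref{lem 11.5}; this transfer is precisely the content of Corollary \ref{coro 11.3}.
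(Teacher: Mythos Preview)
Your proposal is correct and follows essentially the same route as the paper: rescale at $X_0$ to the unit scale, feed the Morrey-type bound of Lemma~\ref{lem 11.2} into Lemma~\ref{lem 11.5} after using Corollary~\ref{coro 11.3} to replace $e(X_0)$ by $e_{n+1}$, and then read off the gradient conclusion. Your write-up simply fills in the details the paper leaves implicit, including the useful observation that Corollary~\ref{coro 11.3} really only needs $|u_\varepsilon(X_0)|\le 1-b$, and the elementary passage from $|\nabla\Phi-e_{n+1}|\le 1/4$ to $|\nu_\varepsilon-e_{n+1}|\le 1/2$ via $|\,p/|p|-e_{n+1}|\le ||p|-1|+|p-e_{n+1}|\le 2|p-e_{n+1}|$.
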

Here we only need to note that, at the beginning we have assumed that $u_\varepsilon>u_\varepsilon(0)$ in $\{x_{n+1}>1/2\}\cap\mathcal{B}_1$.
Then by Lemma \ref{lem close to plane}, $u_\varepsilon<u_\varepsilon(0)$ in $\{x_{n+1}<-1/2\}\cap\mathcal{B}_1$.

Next by combining Lemma \ref{lem close to plane} and Lemma \ref{lem
11.2}, for any $r\geq K_3\varepsilon$,
\begin{equation}\label{13.4}
\{\left(X-X_0\right)\cdot e(X_0)\geq\frac{r}{2}\}\cap\mathcal{B}_r(X_0)\subset\{u_\varepsilon>u_\varepsilon(0)\},
\end{equation}
thanks to the continuous dependence on $r$.

By \eqref{13.4}, for any $x\in B_1$, there exists a unique
$x_{n+1}\in(-1,1)$ so that
$(x,x_{n+1})\in\{u_\varepsilon=u_\varepsilon(0)\}$. Combined with
the previous lemma, this then implies that
\[\{u_\varepsilon=u_\varepsilon(0)\}\cap\mathcal{B}_1=\{x_{n+1}=h_\varepsilon(x)\}, \ \ \ x\in B_1.\]
Here $h_\varepsilon$ is a function with its Lipschitz constant bounded by $4$. (This constant can be made as small as possible
by decreasing $\varepsilon$, $\tau_A$ and $\sigma$.)

To complete the proof of Theorem \ref{main result loc}, we directly apply the main result in \cite{C-C 3}.
Note that instead of the minimizing condition assumed in that paper, with our assumption \eqref{close to plane}
the argument still goes through.

\begin{appendices}
\section{A distance type function}
\numberwithin{equation}{section}
 \setcounter{equation}{0}

In this appendix $u_\varepsilon$ always denotes a solution of
\eqref{functional}, satisfying the Modica inequality \eqref{Modica inequality}.
 Here we introduce a distance
type function associated to $u_\varepsilon$ and study its
convergence as $\varepsilon\to0$. This is perhaps well known (see
for example \cite{ESS} for the parabolic Allen-Cahn case). However
we do not find an exact reference, so we include some details here.

Recall that $g_\varepsilon(t):=g(\varepsilon^{-1}t)$ is a one dimensional solution of \eqref{equation}.
Define
\[\Phi_\varepsilon(X)=g_\varepsilon^{-1}\left(u_\varepsilon(X)\right).\]
It satisfies
\begin{equation}\label{A1}
-\varepsilon\Delta\Phi_\varepsilon=f(\varepsilon^{-1}\Phi_\varepsilon)(1-|\nabla\Phi_\varepsilon|^2),
\end{equation}
where $f(t):=-\frac{W^\prime(g(t))}{\sqrt{2W(g(t))}}\in C^2(\R)$.
Note that
\[\lim_{t\to\pm\infty}f(t)=\pm\sqrt{W^{\prime\prime}(\pm1)},\]
where the convergence rate is exponential.

 The following result is a consequence of the Modica
inequality.
\begin{prop}\label{prop distance}
$|\nabla\Phi_\varepsilon|\leq 1$.
\end{prop}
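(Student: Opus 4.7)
The plan is to prove the inequality by a direct computation: expand $|\nabla u_\varepsilon|^2$ via the chain rule applied to $u_\varepsilon = g_\varepsilon(\Phi_\varepsilon)$, substitute the explicit form of $g_\varepsilon'$ coming from the first integral \eqref{1-d solution 2}, and then compare with the Modica inequality \eqref{Modica inequality}.

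Concretely, first I would note that since $g_\varepsilon$ is strictly increasing from $(-\infty,+\infty)$ onto $(-1,1)$ (with exponentially fast convergence at $\pm\infty$) and $|u_\varepsilon|\leq 1$, the function $\Phi_\varepsilon=g_\varepsilon^{-1}(u_\varepsilon)$ is well defined and smooth on the set $\{|u_\varepsilon|<1\}$; wherever $\nabla u_\varepsilon\neq 0$ we have
\[
\nabla u_\varepsilon=g_\varepsilon'(\Phi_\varepsilon)\,\nabla\Phi_\varepsilon.
\]
Differentiating $g_\varepsilon(t)=g(\varepsilon^{-1}t)$ and using \eqref{1-d solution 2} gives $g_\varepsilon'(t)=\varepsilon^{-1}g'(\varepsilon^{-1}t)=\varepsilon^{-1}\sqrt{2W(g_\varepsilon(t))}$, hence
\[
\bigl(g_\varepsilon'(\Phi_\varepsilon)\bigr)^2=\frac{2}{\varepsilon^2}W(u_\varepsilon).
\]

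Combining these two identities yields
\[
|\nabla u_\varepsilon|^2=\frac{2}{\varepsilon^2}W(u_\varepsilon)\,|\nabla\Phi_\varepsilon|^2.
\]
The Modica inequality \eqref{Modica inequality} rewrites as $|\nabla u_\varepsilon|^2\leq \frac{2}{\varepsilon^2}W(u_\varepsilon)$, so dividing by the (strictly positive) factor $\frac{2}{\varepsilon^2}W(u_\varepsilon)$ we conclude $|\nabla\Phi_\varepsilon|^2\leq 1$ at every point where $W(u_\varepsilon)>0$, which is exactly $\{|u_\varepsilon|<1\}$.

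The only step that requires a moment of care is the set where $|u_\varepsilon|=1$ (equivalently $W(u_\varepsilon)=0$), where the above cancellation degenerates and $\Phi_\varepsilon$ formally attains $\pm\infty$. But on such a set $\nabla u_\varepsilon=0$ by the Modica inequality, so $\{|u_\varepsilon|<1\}$ is open and $\Phi_\varepsilon$ is smooth there; moreover, by the strong maximum principle applied to \eqref{equation} (or directly from $|u_\varepsilon|\leq 1$ combined with $W^{\prime}(\pm 1)=0$), unless $u_\varepsilon\equiv\pm 1$ on a connected component, the set $\{|u_\varepsilon|=1\}$ is empty in that component, so the bound on $|\nabla\Phi_\varepsilon|$ holds on the entire domain of definition. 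This is essentially a tautological case and is not really an obstacle; the main content of the proposition is the algebraic manipulation above, which is really just a restatement of Modica's inequality in the $\Phi_\varepsilon$ variable.
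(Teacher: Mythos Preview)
Your argument is correct and is essentially identical to the paper's proof: both use the chain rule to write $|\nabla u_\varepsilon|^2 = g_\varepsilon'(\Phi_\varepsilon)^2|\nabla\Phi_\varepsilon|^2$, invoke the first integral $\frac{\varepsilon}{2}(g_\varepsilon')^2=\frac{1}{\varepsilon}W(g_\varepsilon)$, and observe that the resulting inequality $|\nabla u_\varepsilon|^2\leq g_\varepsilon'(\Phi_\varepsilon)^2$ is exactly the Modica inequality. Your extra paragraph on the set $\{|u_\varepsilon|=1\}$ is a harmless addendum that the paper omits.
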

\begin{proof}
Since $u_\varepsilon=g_\varepsilon(\Phi_\varepsilon)$,
\[|\nabla u_\varepsilon|=g_\varepsilon^\prime(\Phi_\varepsilon)|\nabla\Phi_\varepsilon|.\]
Thus $|\nabla\Phi_\varepsilon|\leq 1$ is equivalent to
\[|\nabla u_\varepsilon|^2\leq g_\varepsilon^\prime(\Phi_\varepsilon)^2.\]
The first integral for $g_\varepsilon$ is
\[\frac{\varepsilon}{2}\left(g_\varepsilon^\prime\right)^2=\frac{1}{\varepsilon}W(g_\varepsilon).\]
Then the final equivalent statement is exactly the Modica inequality for $u_\varepsilon$.
\end{proof}

By using \eqref{A1}, the limit of $\Phi_\varepsilon$, $\Phi_0$ can be characterized as a viscosity solution of the eikonal equation:  In $\{\Phi_0>0\}$, $\Phi_0$ is a viscosity solution of
\[|\nabla\Phi_0|^2-1=0.\]
In $\{\Phi_0<0\}$, $\Phi_0$ is a viscosity solution of
\[1-|\nabla\Phi_0|^2=0.\]
 This is similar to the {\it vanishing viscosity} method (see for example
Fleming-Souganidis \cite{S-W}). However, here we would like to give
a direct proof in our special setting.
\begin{prop}\label{prop limit of distance}
For any $\delta>0$, there exist three constants
$\varepsilon_\sharp,\tau_\sharp, R_\sharp>0$ so that the following
holds. Let $u_\varepsilon$ satisfy all of the assumptions in Theorem
\ref{main result loc} (with $R_A,\varepsilon_A$ and $\tau_A$
replaced by $R_\sharp$, $\varepsilon_\sharp$ and $\tau_\sharp$
repsectively),
 then there exists a set $\Omega\subset\mathcal{B}_1$, with $0\in\partial\Omega$ and $\partial\Omega$
  being a smooth minimal hypersurface, such that
\begin{equation}\label{8.1.1}
\sup_{\mathcal{B}_1}|\Phi_\varepsilon-d_{\partial\Omega}|\leq\delta.
\end{equation}
Here $d_{\partial\Omega}$ is the signed distance function to $\partial\Omega$, which is positive in $\Omega$.
\end{prop}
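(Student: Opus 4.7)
The plan is to argue by compactness and contradiction. Assume the conclusion fails; then there exist $\delta > 0$ and sequences $\varepsilon_k \to 0$, $\tau_k \to 0$, $R_k \to \infty$, and solutions $u_k$ satisfying the hypotheses of Theorem \ref{main result loc} with these parameters, but for which no admissible $\Omega$ realizes \eqref{8.1.1} with $\delta$. I would first identify the limiting geometry. As in Section 4, the varifolds $V_k$ converge to a stationary integer rectifiable varifold $V$; the energy bound \eqref{close to plane} together with the monotonicity formula forces $\|V\|(\mathcal{B}_1) \leq (1+\tau_k)\sigma_0\omega_n$, while $|u_k(0)|\leq 1-b$ forces $0\in\mathrm{spt}\|V\|$. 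Allard's $\varepsilon$-regularity theorem then yields that, for $\tau_\sharp$ small and $R_\sharp$ large, $\mathrm{spt}\|V\|$ is a smooth unit-density minimal hypersurface $\partial\Omega$ in a neighborhood of the origin containing $\mathcal{B}_1$.

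Next I would extract a uniform limit of $\Phi_{\varepsilon_k}$. By Proposition \ref{prop distance}, these functions are uniformly $1$-Lipschitz, and $\Phi_{\varepsilon_k}\equiv 0$ on $\{u_k=0\}$, which by Remark \ref{rmk Hausdorff convergence} converges to $\partial\Omega$ in Hausdorff distance; hence the $\Phi_{\varepsilon_k}$ are uniformly bounded on $\mathcal{B}_1$. Arzelà--Ascoli produces a subsequence converging uniformly to a $1$-Lipschitz limit $\Phi_0$, with $\Phi_0 = 0$ on $\partial\Omega\cap\mathcal{B}_1$ and $\mathrm{sgn}(\Phi_0)=\mathrm{sgn}(d_{\partial\Omega})$ (using the one-sided convergence $u_k\to\pm 1$ from Remark \ref{rmk Hausdorff convergence}). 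The $1$-Lipschitz bound and vanishing on $\partial\Omega$ give immediately $|\Phi_0|\leq d_{\partial\Omega}$.

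The heart of the proof is the reverse inequality $|\Phi_0|\geq d_{\partial\Omega}$. In a fixed tubular neighborhood $\mathcal{N}_r$ of $\partial\Omega$ (where $d_{\partial\Omega}$ is smooth and, by minimality of $\partial\Omega$, $\Delta d_{\partial\Omega}=O(|d_{\partial\Omega}|)$), I would compare $u_k$ with barrier functions
\[
w_k^{\pm}(X) := g\!\left(\tfrac{d_{\partial\Omega}(X)\pm\sigma_k}{\varepsilon_k}\right),
\]
where $\sigma_k \to 0$ slowly. A direct computation using \eqref{1-d solution 1} and $|\nabla d_{\partial\Omega}|=1$ shows that $w_k^{\pm}$ solves \eqref{equation} up to an error $\varepsilon_k^{-1} g'\cdot\Delta d_{\partial\Omega}$, which is of lower order than $\varepsilon_k^{-2}W'(w_k^{\pm})$ inside $\mathcal{N}_r$. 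On the lateral boundary of $\mathcal{N}_r$, Proposition \ref{prop exponential decay} applied after rescaling, together with the Hausdorff convergence of $\{u_k=0\}$ to $\partial\Omega$, ensures $u_k$ is trapped between $w_k^-$ and $w_k^+$; a maximum principle / sliding argument then propagates this trapping to the interior. This yields $|u_k(X) - g(d_{\partial\Omega}(X)/\varepsilon_k)|=o(1)$ uniformly on $\mathcal{N}_r$, and hence $\Phi_{\varepsilon_k}(X)=d_{\partial\Omega}(X)+o(1)$ there. Combined with the $1$-Lipschitz bound, this forces $\Phi_0 = d_{\partial\Omega}$ on all of $\mathcal{B}_1$, contradicting the assumed failure of \eqref{8.1.1}.

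The principal obstacle is the barrier/comparison step: the error $\varepsilon_k^{-1}\Delta d_{\partial\Omega}$ is only small near $\partial\Omega$, so one must localize carefully and impose correct ordering of $u_k$ and $w_k^{\pm}$ along the lateral boundary of $\mathcal{N}_r$, using only the Hausdorff convergence and the exponential decay estimate. A conceptually cleaner alternative would be to pass to the limit in the eikonal-type equation \eqref{A1}: because $|\nabla\Phi_{\varepsilon_k}|\leq 1$ while $f(\Phi_{\varepsilon_k}/\varepsilon_k)$ is bounded away from $0$ whenever $\Phi_{\varepsilon_k}/\varepsilon_k$ is large, one can show that $\Phi_0$ is a viscosity solution of $|\nabla\Phi_0|=1$ on $\mathcal{B}_1\setminus\partial\Omega$ with boundary data $0$ on $\partial\Omega$ and prescribed sign, which uniquely identifies $\Phi_0=d_{\partial\Omega}$; this viscosity route bypasses explicit barrier constructions but requires care near the thin layer where $\Phi_{\varepsilon_k}/\varepsilon_k$ is of order one.
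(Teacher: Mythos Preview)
Your proposal is correct and shares the paper's overall architecture: identify the limiting smooth minimal hypersurface $\partial\Omega$ via Hutchinson--Tonegawa convergence plus Allard regularity, use the $1$-Lipschitz bound (Proposition~\ref{prop distance}) together with Hausdorff convergence of $\{u_\varepsilon=0\}$ to $\partial\Omega$ for the inequality $|\Phi_\varepsilon|\leq |d_{\partial\Omega}|+o(1)$, and then establish the reverse inequality by a comparison argument. The genuine difference is in this last step.

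The paper does not build a tubular-neighborhood barrier $g\big((d_{\partial\Omega}\pm\sigma_k)/\varepsilon_k\big)$, nor does it pass to the viscosity limit in \eqref{A1} (though it explicitly mentions that route just before the proposition and then opts for a ``direct proof''). Instead, for a point $X$ with $d_{\partial\Omega}(X)\geq\delta/4$, it observes that the ball $\mathcal{B}_r(X)$ with $r=\mbox{dist}(X,\{u_\varepsilon=0\})-\delta/16$ lies entirely in $\{u_\varepsilon>0\}$, where $u_\varepsilon$ is uniformly close to $1$. A purely \emph{radial} comparison function in this ball then forces $u_\varepsilon(X)\geq g_\varepsilon(r)$, i.e.\ $\Phi_\varepsilon(X)\geq r$. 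This sidesteps exactly the obstacle you flagged: there is no curvature error $g'(d/\varepsilon)\,\Delta d_{\partial\Omega}$ to absorb, no lateral boundary ordering to arrange via sliding, and no use of the minimality of $\partial\Omega$ beyond the smoothness that Allard already provides. Your tubular barrier would also work with the care you describe, and the viscosity route is a legitimate third option, but the paper's radial comparison is the most elementary of the three, requiring only that $u_\varepsilon$ is close to $\pm 1$ on balls disjoint from $\{u_\varepsilon=0\}$.
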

\begin{proof}
 By Hutchinson-Tonegawa \cite{H-T},
 the varifolds $V_\varepsilon$
  converge to a stationary rectifiable varifold $V$ with integer multiplicity. Moreover, \eqref{close to plane} and
the monotonicity formula implies that
\[2^{-n}\|V\|(\mathcal{B}_2(X))\leq\left(1+2\tau_\sharp\right)\sigma_0\omega_n,\quad\forall X\in\mathcal{B}_2,\]
provided $R_\sharp$ has been chosen large enough.

If $\tau_\sharp$ is sufficiently small, Allard's
$\varepsilon$-regularity theorem
 implies that
$\mbox{spt}\|V\|\cap \mathcal{B}_2$ is a smooth hypersurface and
$V\cap \mathcal{B}_2$ is the standard varifold associated to this
hypersurface with unit density. This hypersurface divides
$\mathcal{B}_1$ into two parts (see Remark \ref{rmk Hausdorff
convergence}), say $\Omega$ and $\mathcal{B}_1\setminus\Omega$. As
in Remark \ref{rmk Hausdorff convergence}, $u_\varepsilon$ converges
to $1$ uniformly in any compact set of $\Omega$, and to $-1$
uniformly in any compact set of $\Omega^c$.

Thus if $\varepsilon_\sharp$ is small enough, we can assume that there exists a set $\Omega$ with $0\in\partial\Omega$ and $\partial\Omega$ being a smooth minimal hypersurface, such that
\[\mbox{dist}_H\left(\{u_\varepsilon>0\}\cap\mathcal{B}_1,\Omega\cap\mathcal{B}_1\right)\leq\frac{\delta}{8}.\]
In particular,
\[\sup_{\mathcal{B}_1}\big|\mbox{dist}_{\{u_\varepsilon=0\}}-d_{\partial\Omega}\big|\leq\frac{\delta}{8}.\]

By Proposition \ref{prop distance}, in $\{u_\varepsilon>0\}\cap\mathcal{B}_1$,
\[\Phi_\varepsilon(X)\leq \mbox{dist}_{\{u_\varepsilon=0\}}(X)\leq\mbox{dist}_{\partial\Omega}(X)+\delta.\]

Similarly, in $\{X:|\mbox{dist}_{\partial\Omega}(X)|\leq\frac{\delta}{4}\}$, $|\Phi_\varepsilon|\leq\delta/2$. Thus in this part,
\[\big|\Phi_\varepsilon-d_{\partial\Omega}\big|\leq\big|\Phi_\varepsilon\big|+\big|d_{\partial\Omega}\big|\leq\delta.\]

In order to prove \eqref{8.1.1}, it remains to show that if
$X\in\Omega\cap\{X:\mbox{dist}_{\partial\Omega}(X)\geq\frac{\delta}{4}\}$,
where $\mbox{dist}_{\{u_\varepsilon=0\}}\geq\delta/8$, we have
\[\Phi_\varepsilon(X)\geq\mbox{dist}_{\{u_\varepsilon=0\}}(X)-\frac{\delta}{16}.\]
However, if we have chosen $\varepsilon_\sharp$ sufficiently small
(compared to $\delta$), this can be proved directly by constructing
a comparison function in the ball
$\mathcal{B}_{\mbox{dist}_{\{u_\varepsilon=0\}}(X)-\frac{\delta}{16}}(X)$,
by noting that $u_\varepsilon$ is close to $1$ in this ball.
\end{proof}

In the following we assume that as $\varepsilon\to0$, $\Phi_\varepsilon$ converges to a distance function $\Phi_0$ uniformly.
Now we present a fact about the $C^1$ convergence of $\Phi_\varepsilon$ near a $C^1$ point of $\Phi_0$.

First we establish the uniform semi-concavity of $\Phi_\varepsilon$.
\begin{lem}\label{lem uniform semi-concavity}
Let $\Phi_\varepsilon$ satisfy \eqref{A1} in $\mathcal{B}_1$. Assume
$\Phi_\varepsilon>1/2$ and $|\nabla\Phi_\varepsilon|\leq1$ in
$\mathcal{B}_1$. Then
\[\nabla^2\Phi_\varepsilon(0)\leq C,\]
where $C$ is a constant depending only on the dimension $n$.
\end{lem}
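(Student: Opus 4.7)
My plan is to apply a Bernstein-type maximum principle to the equation satisfied by $w := \partial_{ee}\Phi_\varepsilon$ for an arbitrary fixed unit direction $e$. Once $w(0)\le C$ is established uniformly in $e$, the semi-concavity bound $\nabla^2\Phi_\varepsilon(0)\le C I$ follows. First I differentiate \eqref{A1} twice in $e$. Abbreviating $F := 1-|\nabla\Phi_\varepsilon|^2\ge 0$ (nonnegative by Proposition \ref{prop distance}), a direct computation produces
\[-\varepsilon\Delta w + 2f(\Phi_\varepsilon/\varepsilon)\,\nabla\Phi_\varepsilon\cdot\nabla w = \frac{f''(\Phi_\varepsilon/\varepsilon)}{\varepsilon^2}\Phi_e^2 F + \frac{f'(\Phi_\varepsilon/\varepsilon)}{\varepsilon}Fw - \frac{4f'(\Phi_\varepsilon/\varepsilon)}{\varepsilon}\Phi_e\,\nabla\Phi_\varepsilon\cdot\nabla\Phi_e - 2f(\Phi_\varepsilon/\varepsilon)|\nabla\Phi_e|^2.\]
The last term is the coercive one: since $|\nabla\Phi_e|^2\ge w^2$, it will absorb everything else.

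The crucial bulk estimates come from the assumption $\Phi_\varepsilon>1/2$, which forces $\Phi_\varepsilon/\varepsilon>1/(2\varepsilon)$. The exponential convergence $f(t)\to\sqrt{W''(1)}$ as $t\to+\infty$, together with the same exponential decay of $f'$ and $f''$, yields $f(\Phi_\varepsilon/\varepsilon)\ge c_0>0$ and $|f'(\Phi_\varepsilon/\varepsilon)|+|f''(\Phi_\varepsilon/\varepsilon)|\le Ce^{-c/\varepsilon}$ (at least for $\varepsilon$ below a universal threshold; for larger $\varepsilon$ the equation \eqref{A1} is uniformly elliptic with bounded coefficients and standard Schauder theory gives the bound directly). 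In particular $\varepsilon^{-2}|f''|+\varepsilon^{-1}|f'|\le C$. Using $|\Phi_e|,F\le 1$ and Young's inequality on the cross term,
\[\Big|\tfrac{4f'}{\varepsilon}\Phi_e\,\nabla\Phi_\varepsilon\cdot\nabla\Phi_e\Big|\le f|\nabla\Phi_e|^2 + \frac{4|f'|^2}{\varepsilon^2 f}\le f|\nabla\Phi_e|^2 + C,\]
the identity reduces to the pointwise differential inequality
\[-\varepsilon\Delta w + 2f\nabla\Phi_\varepsilon\cdot\nabla w + fw^2 \le C + C|w|\qquad\text{in }\mathcal{B}_1,\]
with $C$ universal.

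Now I apply the Bernstein cutoff trick. Pick $\eta\in C_0^\infty(\mathcal{B}_1)$ with $\eta(0)=1$ and $|\nabla\eta|+|\Delta\eta|\le K$, and set $Q:=\eta^4 w^+$. If $Q\equiv 0$ there is nothing to prove; otherwise $Q$ attains its positive maximum at some interior $X_0$ with $w(X_0)>0$ and $\eta(X_0)>0$. The identities $\nabla Q(X_0)=0$ and $\Delta Q(X_0)\le 0$ translate at $X_0$ into $\nabla w = -4\eta^{-1}w\nabla\eta$ and $\eta^4\Delta w \le (20\eta^2|\nabla\eta|^2-4\eta^3\Delta\eta)w$. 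Multiplying the displayed inequality by $\eta^4$ and substituting these, the first-order transport and the Laplacian terms contribute at most $C\eta^2 w + C\eta^3 w$, and I obtain
\[c_0\,\eta^4 w^2\le f\eta^4 w^2 \le C\eta^4 + C\eta^4 w + C\eta^2 w.\]
Two applications of Young's inequality ($C\eta^4 w\le (c_0/4)\eta^4 w^2+C'\eta^4$ and $C\eta^2 w\le (c_0/4)\eta^4 w^2 + C'$) absorb the linear terms and leave $\eta^4(X_0)w(X_0)^2\le C$, hence $Q(X_0)=\eta^2(X_0)\cdot[\eta^2(X_0)w(X_0)]\le C$. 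Since $Q(0)=w^+(0)\le Q(X_0)$, this gives $w(0)\le C$.

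The main obstacle is the cross term $-\tfrac{4}{\varepsilon}f'\Phi_e(\nabla\Phi_\varepsilon\cdot\nabla\Phi_e)$: the prefactor $1/\varepsilon$ is \emph{a priori} dangerous, and $|\nabla\Phi_e|$ is precisely the quantity we have no a priori bound on. The resolution relies on the bulk hypothesis $\Phi_\varepsilon>1/2$, which makes $|f'|/\varepsilon$ exponentially small in $1/\varepsilon$, so that Cauchy--Schwarz absorption against $-2f|\nabla\Phi_e|^2$ leaves only a universal constant. A secondary subtlety is that the first-order term $2f\nabla\Phi_\varepsilon\cdot\nabla w$ in the Bernstein computation contributes an $\eta^3 w$ error at the maximum; the choice $Q=\eta^4 w^+$ (rather than $\eta^2 w^+$) is precisely what makes this error absorbable by the coercive $\eta^4 w^2$ term.
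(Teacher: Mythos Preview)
Your proof is correct and follows essentially the same approach as the paper. Both arguments are Bernstein-type maximum principle computations: differentiate \eqref{A1} twice in a fixed direction, exploit the coercive term $-2f|\nabla\Phi_e|^2\le -2fw^2$ together with the lower bound $f\ge c_0$ and the exponential smallness of $f',f''$ (both consequences of $\Phi_\varepsilon>1/2$), and localize via a cutoff at an interior maximum. The differences are cosmetic: the paper first rescales to $\varepsilon=1$ on $\mathcal{B}_{R}$ with $R=\varepsilon^{-1}$ and uses the auxiliary function $\eta\Phi_{\xi\xi}$ with a cutoff satisfying $\eta^{-1}|\nabla\eta|^2+|\Delta\eta|\le CR^{-2}$, obtaining a quadratic inequality $Aw^2+Bw\le D$; you work directly with $\varepsilon$ and use $Q=\eta^4 w^+$, absorbing the lower-order terms by Young's inequality. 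Your remark that the case of $\varepsilon$ bounded away from zero follows from standard elliptic estimates is a clean way to dispose of the threshold issue that the paper leaves implicit.
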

The constant $1/2$ is not essential here. It can be replaced by any positive constant.

\begin{proof}
We shall work in the setting where $\varepsilon=1$ and the ball is
$\mathcal{B}_R$, where $R=\varepsilon^{-1}$. For simplicity, all
subscripts will be dropped.

Take a unit vector $\xi$. By directly differentiating \eqref{A1} in the direction $\xi$, we get
\[-\Delta\Phi_\xi=f^\prime(\Phi)(1-|\nabla\Phi|^2)\Phi_\xi-2f(\Phi)\sum_{k=1}^{n+1}\Phi_k\Phi_{\xi k},\]
\begin{eqnarray*}
-\Delta\Phi_{\xi\xi}&=&f^\prime(\Phi)(1-|\nabla\Phi|^2)\Phi_{\xi\xi}+f^{\prime\prime}(\Phi)(1-|\nabla\Phi|^2)\Phi_{\xi}^2\\
&&-4f^\prime(\Phi)\sum_{k=1}^{n+1}\Phi_k\Phi_{\xi k}\Phi_\xi-2f(\Phi)\sum_{k=1}^{n+1}\Phi_k\Phi_{\xi\xi k}
-2f(\Phi)\sum_{k=1}^{n+1}\Phi_{\xi k}^2.
\end{eqnarray*}

Take an $\eta\in C_0^\infty(\mathcal{B}_{R/2})$ such that $\eta\equiv 1$ in $\mathcal{B}_{R/4}$, $0\leq\eta\leq 1$, $|\nabla\eta|\leq 8R^{-1}$ and $\eta^{-1}|\nabla\eta|^2+|\Delta\eta|\leq 100R^{-2}$. Denote $w:=\eta\Phi_{\xi\xi}$. Since $w=0$ on $\partial \mathcal{B}_{R/2}$, it attains its maxima at an interior point $X_0$, where
\begin{equation}\label{A.3}
\nabla w=\eta\nabla\Phi_{\xi\xi}+\Phi_{\xi\xi}\nabla\eta=0,
\end{equation}
\begin{eqnarray*}
0\geq\Delta w&=&\Delta\Phi_{\xi\xi}\eta+2\nabla\Phi_{\xi\xi}\nabla\eta+\Phi_{\xi\xi}\Delta\eta\\
&\geq&-f^\prime(\Phi)(1-|\nabla\Phi|^2)w-f^{\prime\prime}(\Phi)(1-|\nabla\Phi|^2)\Phi_{\xi}^2\eta\\
&&+4f^\prime(\Phi)\sum_{k=1}^{n+1}\Phi_k\Phi_{\xi k}\Phi_\xi\eta+2f(\Phi)\sum_{k=1}^{n+1}\Phi_k\Phi_{\xi\xi k}\eta
+2f(\Phi)\eta\sum_{k=1}^{n+1}\Phi_{\xi k}^2\\
&&+2\nabla\Phi_{\xi\xi}\nabla\eta+w\eta^{-1}\Delta\eta.
\end{eqnarray*}
Substituting \eqref{A.3} into this, and applying the Cauchy inequality to the third term, we obtain
\begin{eqnarray*}
&&4\frac{f^\prime(\Phi)^2}{f(\Phi)}|\nabla\Phi|^2\Phi_\xi^2\eta+f^{\prime\prime}(\Phi)(1-|\nabla\Phi|^2)\Phi_{\xi}^2\eta\\
&\geq&-f^\prime(\Phi)(1-|\nabla\Phi|^2)w-2f(\Phi)\nabla\Phi\nabla\eta\eta^{-1}w+f(\Phi)\eta^{-1}w^2
+w\eta^{-1}\Delta\eta-2w\eta^{-2}|\nabla\eta|^2.
\end{eqnarray*}
This can be written as
\[Aw(X_0)^2+Bw(X_0)\leq D,\]
where $A>0$, $B$ and $D$ are constants. From this we deduce that
\[w(X_0)\leq \frac{|B|}{A}+\sqrt{\frac{|D|}{A}}.\]
More precisely,
\begin{eqnarray}\label{A.4}
w(X_0)&\leq&\frac{\big|f^\prime(\Phi)\big|}{f(\Phi)}(1-|\nabla\Phi|^2)\eta+2|\nabla\Phi||\nabla\eta|+\frac{1}{f(\Phi)}|\Delta\eta|
+\frac{2}{f(\Phi)}\eta^{-1}|\nabla\eta|^2\\\nonumber
&&+\frac{1}{\sqrt{f(\Phi)}}
\left(4\frac{f^\prime(\Phi)^2}{f(\Phi)}|\nabla\Phi|^2\Phi_\xi^2\eta+f^{\prime\prime}(\Phi)(1-|\nabla\Phi|^2)\Phi_{\xi}^2\eta
\right)^{\frac{1}{2}}
\end{eqnarray}

Since $\Phi\geq R/2$ in $\mathcal{B}_{R/2}$, by the definition of
$f$ and some standard estimates on $g(t)$,
\[f(\Phi)>c\ \ \ \ \mbox{in}\ \mathcal{B}_{R/2},\]
\[|f^\prime(\Phi)|+|f^{\prime\prime}(\Phi)|\leq Ce^{-cR}\ \ \ \ \mbox{in}\ \mathcal{B}_{R/2}.\]
Substituting these into \eqref{A.4}, by using the condition
$|\nabla\Phi|\leq 1$ and our assumptions on $\eta$, we obtain
\[\sup_{\mathcal{B}_{R/4}}\Phi_{\xi\xi}\leq w(x_0)\leq CR^{-1}.\]
Rescaling back we get the claimed estimate.
\end{proof}

\begin{thm}\label{thm C1 convergence interior}
Assume that $\Phi_\varepsilon$ converges to $\Phi_0$ in $C^0(\Omega)$,
where $\Omega\subset\R^{n+1}$ is an open set and $\Phi_0>0$ in $\Omega$.
 If $\Phi_0\in C^1(\Omega)$, then $\Phi_\varepsilon$ converges to $\Phi_0$ in $C^1_{loc}(\Omega)$.
\end{thm}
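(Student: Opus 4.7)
The plan is to combine the uniform Lipschitz bound of Proposition~\ref{prop distance} with the uniform semi-concavity estimate of Lemma~\ref{lem uniform semi-concavity} to deduce $C^1$ convergence via a standard concavity argument.

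First I would fix a compact set $K\subset\Omega$. Since $\Phi_0$ is continuous and strictly positive on $\Omega$, there exists $c>0$ with $\Phi_0\geq c$ on $K$, and the uniform $C^0$ convergence gives $\Phi_\varepsilon\geq c/2$ on $K$ for all sufficiently small $\varepsilon$. Invoking Lemma~\ref{lem uniform semi-concavity} (with the constant $1/2$ replaced by $c/2$, as noted in the remark following that lemma, since the proof only exploits the fact that $\Phi$ is bounded away from $0$, so that $f(\Phi)$ stays positive while $f'(\Phi), f''(\Phi)$ stay exponentially small), together with the bound $|\nabla\Phi_\varepsilon|\leq 1$ from Proposition~\ref{prop distance}, I obtain
\[
\nabla^2\Phi_\varepsilon(X)\leq C_K\, I \quad \text{for } X\in K',
\]
for some compact $K'\subset\mathrm{int}(K)$ and some constant $C_K$ depending only on $K$ and the dimension. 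Equivalently, the rescaled functions $\Psi_\varepsilon(X):=\Phi_\varepsilon(X)-\tfrac{C_K}{2}|X|^2$ are concave and uniformly Lipschitz on $K'$.

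Next, by uniform $C^0$ convergence, $\Psi_\varepsilon$ converge uniformly on $K'$ to $\Psi_0(X):=\Phi_0(X)-\tfrac{C_K}{2}|X|^2$, which is therefore concave as well; since $\Phi_0\in C^1(\Omega)$, so is $\Psi_0$, and for every $X\in K'$ the superdifferential satisfies $\partial^+\Psi_0(X)=\{\nabla\Psi_0(X)\}$. I would then run the following contradiction argument for uniform convergence of $\nabla\Phi_\varepsilon$ on $K'$: suppose there exist $\delta>0$, a subsequence $\varepsilon_j\to 0$, and points $X_j\in K'$ with $|\nabla\Phi_{\varepsilon_j}(X_j)-\nabla\Phi_0(X_j)|\geq\delta$. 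Using $|\nabla\Phi_\varepsilon|\leq 1$, I pass to a further subsequence so that $X_j\to X_\infty\in K'$ and $\nabla\Phi_{\varepsilon_j}(X_j)\to p$, necessarily with $|p-\nabla\Phi_0(X_\infty)|\geq\delta$. Concavity of $\Psi_{\varepsilon_j}$ gives
\[
\Psi_{\varepsilon_j}(Y)\leq \Psi_{\varepsilon_j}(X_j)+\bigl(\nabla\Psi_{\varepsilon_j}(X_j)\bigr)\cdot(Y-X_j)
\]
for all $Y$ in a fixed small ball around $X_\infty$. Passing to the limit $j\to\infty$ using the uniform convergence $\Psi_{\varepsilon_j}\to\Psi_0$, I get $p-C_K X_\infty\in\partial^+\Psi_0(X_\infty)=\{\nabla\Psi_0(X_\infty)\}$, hence $p=\nabla\Phi_0(X_\infty)$, contradicting $|p-\nabla\Phi_0(X_\infty)|\geq\delta$.

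The main obstacle I expect is the careful verification that Lemma~\ref{lem uniform semi-concavity} extends from the stated threshold $\Phi_\varepsilon>1/2$ to $\Phi_\varepsilon>c$ for arbitrary $c>0$ with the constant degenerating in a controlled way; examining the proof, the only quantities that matter are lower bounds for $f(g_\varepsilon^{-1}\circ u)$ and upper bounds for $|f'|$ and $|f''|$ in the region $\Phi_\varepsilon>c$, which by the exponential convergence of $g$ and $g'$ depend only on $c$ and $n$. Everything else in the proof is the standard soft argument that concave functions converging uniformly to a $C^1$ limit converge in $C^1_{\mathrm{loc}}$, via upper semi-continuity of the superdifferential.
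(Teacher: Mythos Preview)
Your proposal is correct and follows essentially the same route as the paper: both arguments invoke Lemma~\ref{lem uniform semi-concavity} (with the threshold $1/2$ replaced by an arbitrary positive constant, exactly as the paper remarks) to render the functions $\Phi_\varepsilon$ minus a quadratic concave, and then use the supporting-hyperplane inequality together with uniform convergence and differentiability of $\Phi_0$ to identify any subsequential limit of $\nabla\Phi_\varepsilon(X_\varepsilon)$ with $\nabla\Phi_0(X_\infty)$. The only cosmetic difference is that the paper centers the quadratic at the moving point $X_\varepsilon$ rather than at the origin, and argues directly with the inequality rather than phrasing it via the superdifferential.
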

\begin{proof}
Fix an open set $\Omega_0\subset\subset\Omega$.
Take an arbitrary sequence $X_\varepsilon\in \Omega_0$ such that $X_\varepsilon\to X_0\in\Omega_0$ as $\varepsilon\to 0$.
By the uniform semi-concavity of $\Phi_\varepsilon$ in $\Omega_0$, there exists a constant $C(\Omega_0)$ such that, for all $\varepsilon>0$
\[\tilde{\Phi}_\varepsilon(X):=\Phi_\varepsilon(X)-C(\Omega_0)|X-X_\varepsilon|^2\]
are concave in $\Omega_0$. In particular, for any unit vector $e$
and $h<\mbox{dist}(\Omega_0,\partial\Omega)$,
\begin{equation}\label{A5}
\tilde{\Phi}_\varepsilon(X_\varepsilon+he)\leq\tilde{\Phi}_\varepsilon(X_\varepsilon)+h\nabla\Phi_\varepsilon(X_\varepsilon)e.
\end{equation}

Because $|\nabla\Phi_\varepsilon(X_\varepsilon)|\leq 1$, assume
$\nabla\Phi_\varepsilon(X_\varepsilon)$ converges to a vector $\xi$.
By the uniform convergence of $\Phi_\varepsilon$ in $\Omega$,
passing to the limit in \eqref{A5} leads to
\[\Phi_0(X_0+he)\leq \Phi_0(X_0)+h\left(\xi\cdot e\right)+\frac{C(\Omega_0)h^2}{2}, \ \ \forall h>0.\]
Since $\Phi_0$ is differentiable at $X_0$, letting $h\to0$ gives
$\xi=\nabla \Phi_0(X_0)$. From this argument we get the uniform
convergence of $\nabla\Phi_\varepsilon$ in $\Omega_0$.
\end{proof}

\section{Several technical results}
\numberwithin{equation}{section}
 \setcounter{equation}{0}

Here we collect some technical results used in this paper.

The first one is an exponential decay estimate. This has been used
in many places and can be proved by various methods (see for example
\cite[Section 2]{C-C 3}), so here we only state the result.
\begin{lem}\label{lem B1}
If in the ball $\mathcal{B}_{2R}(0)$, $u\in C^2$ satisfies
\begin{equation} \label{1.1.1}
\left\{ \begin{aligned}
&\Delta u\geq M u , \\
&0\leq u\leq 1,
                          \end{aligned} \right.
\end{equation}
then
$$\sup_{\mathcal{B}_R(0)}u\leq Ce^{-cRM^{\frac{1}{2}}}.$$
\end{lem}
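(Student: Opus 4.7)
The plan is a pointwise comparison argument. The key observation is that $u$ is a classical subsolution of the linear operator $L := \Delta - M$; since the zero-order coefficient $-M$ is negative, the weak maximum principle holds for $L$ on any subdomain. Fixing $X_0 \in \mathcal{B}_R(0)$, note that $\mathcal{B}_R(X_0) \subset \mathcal{B}_{2R}(0)$, so what must be shown is that a barrier $v$ can be built in $\mathcal{B}_R(X_0)$ with $\Delta v = M v$, $v \equiv 1$ on $\partial \mathcal{B}_R(X_0)$, and $v(X_0) \leq C e^{-c R \sqrt{M}}$.

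The natural candidate is the radial function
\[
v(X) \;:=\; \frac{H(\sqrt{M}\,|X-X_0|)}{H(\sqrt{M}\,R)}, \qquad H(\rho) \;:=\; \rho^{-(n-1)/2}\, I_{(n-1)/2}(\rho),
\]
where $I_\nu$ is the modified Bessel function of the first kind. A direct computation reduces the identity $\Delta v = M v$ to the modified Bessel equation for $I_{(n-1)/2}$; the power series for $I_\nu$ shows that $H$ extends to a smooth, strictly positive function on $[0,\infty)$ with $H(0) = 2^{-(n-1)/2}/\Gamma((n+1)/2)$, while the classical asymptotics $I_\nu(\rho) \sim e^\rho / \sqrt{2\pi\rho}$ give $H(\rho) \sim c_n\, \rho^{-n/2} e^\rho$ as $\rho \to \infty$. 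Hence, once $\sqrt{M} R \geq 1$,
\[
v(X_0) \;=\; \frac{H(0)}{H(\sqrt{M} R)} \;\leq\; C_n\,(\sqrt{M} R)^{n/2}\, e^{-\sqrt{M} R} \;\leq\; C_n'\, e^{-\frac{1}{2}\sqrt{M} R},
\]
the polynomial factor being absorbed into the exponential; the regime $\sqrt{M} R < 1$ is covered trivially by enlarging $C$. Since $u \leq 1 = v$ on $\partial \mathcal{B}_R(X_0)$ and $\Delta(u-v) \geq M(u-v)$ in the interior, the weak maximum principle for $L$ yields $u \leq v$ throughout $\mathcal{B}_R(X_0)$; specialising to the centre gives $u(X_0) \leq v(X_0) \leq C e^{-c R \sqrt{M}}$, and since $X_0$ was arbitrary in $\mathcal{B}_R(0)$ this is the desired bound.

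The main obstacle is psychological rather than technical: one must recognise the correct barrier shape, namely a radial solution of $\Delta v = M v$ bounded at its centre. A fully elementary alternative bypasses Bessel functions via Caccioppoli iteration. Multiplying $\Delta u \geq M u$ by $u\eta^2$ with a cutoff $\eta$ supported in $\mathcal{B}_{\rho+h}$ and equal to $1$ on $\mathcal{B}_\rho$, integration by parts gives
\[
M \int u^2 \eta^2 + \int |\nabla u|^2 \eta^2 \;\leq\; 2\int u^2 |\nabla\eta|^2;
\]
choosing $h = C_0/\sqrt{M}$ with $C_0$ a sufficiently large dimensional constant produces a geometric contraction $\int_{\mathcal{B}_\rho} u^2 \leq \tfrac{1}{2} \int_{\mathcal{B}_{\rho+h}} u^2$, which iterated on the order of $R\sqrt{M}$ times yields exponential $L^2$ decay on $\mathcal{B}_R$. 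A standard local maximum estimate for subsolutions of linear elliptic equations (e.g.\ the Moser-type bound $\sup_{\mathcal{B}_r} u \leq C r^{-(n+1)/2} \|u\|_{L^2(\mathcal{B}_{2r})}$) then upgrades this to the claimed $L^\infty$ bound, with only a harmless dimensional loss in the exponential rate.
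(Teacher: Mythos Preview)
Your proof is correct. Both approaches you outline are valid: the Bessel-function barrier satisfies $\Delta v = Mv$ exactly (your index $\nu=(n-1)/2$ is the right one for the ambient dimension $n+1$), the comparison via the weak maximum principle for $\Delta - M$ is legitimate since $M>0$, and the asymptotics you quote give the exponential bound with the polynomial prefactor absorbed. The Caccioppoli iteration alternative is equally sound; note that $u^2$ is itself subharmonic here (since $\Delta(u^2)=2u\Delta u+2|\nabla u|^2\geq 2Mu^2\geq0$), so the passage from $L^2$ decay to the pointwise bound is immediate via the sub-mean-value property, without needing Moser iteration.

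The paper itself does not supply a proof of this lemma: it simply states the result and remarks that it ``has been used in many places and can be proved by various methods,'' referring the reader to Caffarelli--C\'ordoba. So there is nothing to compare against beyond noting that your two arguments are precisely among the ``various methods'' the author has in mind; the barrier construction is the most direct route, while the iteration argument has the advantage of avoiding special functions and generalising more readily to operators without radial symmetry.
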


The next one gives a control of the discrepancy using the excess.
\begin{lem}\label{lem B2}
Given $M,L$ and $\tau>0$, there exist two constants $\delta>0$ and
$R(M,L,\tau)>2L$ so that the following holds. Suppose that $u$ is a
solution of \eqref{equation 0} in $\mathcal{B}_R$ with $R\geq
R(M,L,\tau)$, satisfying
\[R^{-n}\int_{\mathcal{B}_R}\frac{1}{2}|\nabla u|^2+W(u)\leq M,\]
\[\left(2L\right)^{-n}\int_{\mathcal{B}_{2L}}\left[1-\left(\nu\cdot e_{n+1}\right)^2\right]|\nabla u|^2\leq\delta.\]
Then
\[L^{-n}\int_{\mathcal{B}_L}\Big|W(u)-\frac{1}{2}|\nabla u|^2\Big|\leq\tau.\]
\end{lem}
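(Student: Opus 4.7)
The plan is to argue by contradiction and compactness. Suppose the lemma fails for some $M,L,\tau>0$; then there exist sequences $\delta_k\downarrow 0$, $R_k\to\infty$ with $R_k>2L$, and solutions $u_k$ of \eqref{equation 0} on $\mathcal{B}_{R_k}$ (satisfying the Modica inequality, as is standard in the paper's setting) with
\[
R_k^{-n}\!\int_{\mathcal{B}_{R_k}}\!\tfrac{1}{2}|\nabla u_k|^2+W(u_k)\leq M,\qquad
(2L)^{-n}\!\int_{\mathcal{B}_{2L}}\![1-(\nu_k\cdot e_{n+1})^2]|\nabla u_k|^2\leq\delta_k,
\]
while $L^{-n}\int_{\mathcal{B}_L}|W(u_k)-\tfrac{1}{2}|\nabla u_k|^2|>\tau$. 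Because $|u_k|\leq 1$ and $W\in C^{3}$, standard elliptic regularity yields uniform $C^{2,\alpha}_{\mathrm{loc}}$ bounds, and passing to a subsequence, $u_k\to u_\infty$ in $C^2_{\mathrm{loc}}(\mathbb{R}^{n+1})$, with $u_\infty$ an entire, bounded solution of $\Delta u_\infty=W'(u_\infty)$. By Modica's theorem for entire bounded solutions, $u_\infty$ itself satisfies the Modica inequality pointwise.

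I would next show $u_\infty$ is one-dimensional everywhere. The excess condition passes to the limit as $\int_{\mathcal{B}_{2L}}[1-(\nu_\infty\cdot e_{n+1})^2]|\nabla u_\infty|^2=0$, forcing $\varphi_i:=\partial_i u_\infty\equiv 0$ on $\mathcal{B}_{2L}$ for $i=1,\dots,n$. Each $\varphi_i$ solves the linear elliptic equation $\Delta\varphi_i=W''(u_\infty)\varphi_i$ with smooth bounded coefficient; strong unique continuation then forces $\varphi_i\equiv 0$ on all of $\mathbb{R}^{n+1}$. Hence $u_\infty(X)=\tilde v(x_{n+1})$ for a bounded $\tilde v\in C^2(\mathbb{R})$ solving $\tilde v''=W'(\tilde v)$, with first integral $E:=\tfrac12(\tilde v')^2-W(\tilde v)$ constant in $s$; Modica for $u_\infty$ gives $E\leq 0$.

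Finally, the global energy bound forces $E=0$. The monotonicity formula (Proposition \ref{monotonicity formula}) applied to $u_k$ gives $r^{-n}\int_{\mathcal{B}_r}\tfrac12|\nabla u_k|^2+W(u_k)\leq M$ for $r\leq R_k$, passing in the limit to $r^{-n}\int_{\mathcal{B}_r}\tfrac12|\nabla u_\infty|^2+W(u_\infty)\leq M$ for every $r>0$. If $E<0$, then $W(\tilde v)\geq -E>0$ uniformly, so $\tfrac12(\tilde v')^2+W(\tilde v)=2W(\tilde v)+E\geq -E>0$, yielding $\int_{\mathcal{B}_r}\tfrac12|\nabla u_\infty|^2+W(u_\infty)\geq (-E)\omega_{n+1}r^{n+1}$, which contradicts the $Mr^n$ bound for $r$ large. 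Hence $E=0$, so $\tfrac12|\nabla u_\infty|^2\equiv W(u_\infty)$ pointwise; combined with $C^2_{\mathrm{loc}}$-convergence on $\mathcal{B}_L$, this gives $\int_{\mathcal{B}_L}|W(u_k)-\tfrac12|\nabla u_k|^2|\to 0$, contradicting the hypothesis. The delicate step is the globalization: local vanishing of the transversal gradient on $\mathcal{B}_{2L}$ must be upgraded to a globally one-dimensional profile via unique continuation, since the first-integral constant $E$ can only be pinned down to zero by invoking the sublinear $r^n$ growth of the energy on the globally 1D configuration.
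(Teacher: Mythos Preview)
Your proof is correct and follows essentially the same route as the paper: contradiction, compactness to an entire solution, unique continuation to force one-dimensionality, and then the $r^n$ energy bound from monotonicity to pin down the one-dimensional profile so that the discrepancy vanishes. The only cosmetic difference is that the paper invokes the ODE classification (non-heteroclinic bounded 1D solutions are periodic, hence have infinite energy) to identify $u_\infty$ with a translate of $g$, whereas you argue directly that the first integral $E$ must vanish---both arguments are equivalent and yield the same conclusion.
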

\begin{proof}
Assume by the contrary, there exist two constants $M$ and $\tau$,
and a sequence of solutions $u_i$, defined in $\mathcal{B}_{R_i}$
with $R_i\to+\infty$, satisfying
\begin{equation}\label{B.1.1}
R_i^{-n}\int_{\mathcal{B}_{R_i}}\frac{1}{2}|\nabla u_i|^2+W(u_i)\leq
M,
\end{equation}
\begin{equation}\label{B.1.2}
\left(2L\right)^{-n}\int_{\mathcal{B}_{2L}}\left[1-\left(\nu_i\cdot
e_{n+1}\right)^2\right]|\nabla u_i|^2\to 0,
\end{equation}
 but
 \begin{equation}\label{B.1.3}
L^{-n}\int_{\mathcal{B}_L}\Big|W(u_i)-\frac{1}{2}|\nabla
u_i|^2\Big|\geq\tau.
\end{equation}

Denote the limit of $u_i$ by $u_\infty$. By passing to the limit in
\eqref{B.1.2} and the unique continuation principle, $u_\infty$
depends only on the $x_{n+1}$ variable. By the monotonicity formula,
for any $R\in(0,R_i)$,
\[R^{-n}\int_{\mathcal{B}_{R}}\frac{1}{2}|\nabla
u_i|^2+W(u_i)\leq M.\]
This also holds for $u_\infty$ by passing to
the limit. Because $u_\infty$ is one dimensional, this implies
\[\int_{-\infty}^{+\infty}\frac{1}{2}\Big|\frac{d
u_\infty}{dx_{n+1}}\Big|^2+W(u_\infty)\leq M.\] Note that except the
heteroclinic solution $g$, all the other solutions of
\eqref{equation 0} in $\R^1$ are periodic, and hence their energy on
$\R$ is infinite. By this fact we see $u_\infty\equiv g(x_{n+1}+t)$
for some constant $t\in\R$. Hence
\[W(u_\infty)\equiv \frac{1}{2}\Big|\frac{d
u_\infty}{dx_{n+1}}\Big|^2.\]
 Consequently,
\[\lim_{i\to+\infty}\int_{\mathcal{B}_L}\Big|W(u_i)-\frac{1}{2}|\nabla
u_i|^2\Big|=0.\]
However, this is a contradiction with
\eqref{B.1.3}.
\end{proof}

The following result says the energy $\varepsilon|\nabla u_\varepsilon|^2$ is mostly concentrated on the transition part $\{|u_\varepsilon|\leq 1-b\}$.
\begin{lem}\label{lem B3}
 Let $u_\varepsilon$ be a solution of \eqref{equation} defined in $\mathcal{B}_2$,
 satisfying the Modica inequality and
  \[\int_{\mathcal{B}_2}\frac{\varepsilon}{2}|\nabla u_\varepsilon|^2+\frac{1}{\varepsilon}W(u_\varepsilon)\leq M.\]
  For any $\delta>0$, there exists a constant $b\in(0,1)$ such that
\[\int_{\{|u_\varepsilon|>1-b\}\cap\mathcal{B}_1}\varepsilon|\nabla u_\varepsilon|^2\leq\delta.\]
\end{lem}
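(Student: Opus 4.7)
The plan is to argue by contradiction combined with compactness, splitting according to the behavior of $\varepsilon$, and then to reduce the difficult case to a one-dimensional-profile tail estimate via the distance-type function of Appendix A.

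First, I would pass to the clean pointwise consequence of the Modica inequality: $\varepsilon|\nabla u_\varepsilon|^2\le \sqrt{2W(u_\varepsilon)}\,|\nabla u_\varepsilon|$. Writing $G(s)=\int_0^s\sqrt{2W(\tau)}\,d\tau$ (so $|\nabla G(u_\varepsilon)|=\sqrt{2W(u_\varepsilon)}|\nabla u_\varepsilon|$) and using the co-area formula one gets
\[
\int_{\{|u_\varepsilon|>1-b\}\cap\mathcal{B}_1}\varepsilon|\nabla u_\varepsilon|^2\,dX
 \;\le\; \int_{|t|>1-b}\sqrt{2W(t)}\,\mathcal{H}^n\bigl(\{u_\varepsilon=t\}\cap\mathcal{B}_1\bigr)\,dt,
\]
with the total weighted integral $\int_{-1}^{1}\sqrt{2W(t)}\,\mathcal{H}^n(\{u_\varepsilon=t\}\cap\mathcal{B}_1)\,dt\le M$ by AM-GM. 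Since $\sqrt{2W(t)}=O((1-|t|))$ near $\pm1$, the weight $\sqrt{2W(t)}\,dt$ on $\{|t|>1-b\}$ has total mass $O(b^{2})$, so the desired bound reduces to an appropriate control on the level-set areas near the extremes.

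I would now argue by contradiction. Suppose the conclusion fails: there are $\delta>0$, $b_k\downarrow 0$, $\varepsilon_k\in(0,1]$, and solutions $u_k=u_{\varepsilon_k}$ satisfying the hypotheses with $\int_{\{|u_k|>1-b_k\}\cap\mathcal{B}_1}\varepsilon_k|\nabla u_k|^2>\delta$ for all $k$. If $\varepsilon_k\ge\varepsilon_0>0$ along a subsequence, interior estimates (using \eqref{gradient bound}) give $C^{2}_{\mathrm{loc}}$ convergence to a solution $u_\infty$ at scale $\varepsilon_0$ with $|u_\infty|\le 1$. The strong maximum principle forces either $|u_\infty|<1$ strictly on $\overline{\mathcal{B}_1}$ (so uniformly $|u_k|<1-b_\ast$ for some $b_\ast>0$ and $k$ large, emptying $\{|u_k|>1-b_k\}\cap\mathcal{B}_1$ once $b_k<b_\ast$), or $u_\infty\equiv\pm 1$ (so $\nabla u_k\to 0$ uniformly on $\overline{\mathcal{B}_1}$ by $C^1$ convergence). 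Both alternatives contradict the lower bound $>\delta$.

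The genuine case is $\varepsilon_k\to 0$, and this is where the main difficulty lies. Here I would invoke Hutchinson--Tonegawa: the varifolds $V_k$ converge to a stationary rectifiable integer-multiplicity $V$, $\mu_k:=\varepsilon_k|\nabla u_k|^2\,dX\rightharpoonup\|V\|$, and $u_k\to\pm 1$ uniformly on compact subsets of $\mathcal{B}_{3/2}\setminus\mathrm{spt}\|V\|$. Fixing $\sigma>0$ and the $\sigma$-neighborhood $N_\sigma$ of $\mathrm{spt}\|V\|\cap\overline{\mathcal{B}_1}$, standard interior gradient estimates yield $|\nabla u_k|\to 0$ uniformly on $\overline{\mathcal{B}_1}\setminus N_\sigma$, so that $\int_{\overline{\mathcal{B}_1}\setminus N_\sigma}\varepsilon_k|\nabla u_k|^2\to 0$. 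On $N_\sigma$ I would use the distance-type function $\Phi_k=\varepsilon_k\,g^{-1}(u_k)$ of Appendix A. By Proposition \ref{prop distance}, $|\nabla\Phi_k|\le 1$, and a direct substitution gives
\[
\varepsilon_k|\nabla u_k|^2=\tfrac{g'(\Phi_k/\varepsilon_k)^2}{\varepsilon_k}\,|\nabla\Phi_k|^2\le\tfrac{g'(\Phi_k/\varepsilon_k)^2}{\varepsilon_k}\,|\nabla\Phi_k|.
\]
Since $\{|u_k|>1-b\}=\{|\Phi_k/\varepsilon_k|>L(b)\}$ with $L(b):=g^{-1}(1-b)\to+\infty$ as $b\to 0$, the co-area formula applied to the $1$-Lipschitz function $\Phi_k$ yields
\[
\int_{N_\sigma\cap\{u_k>1-b\}}\!\!\varepsilon_k|\nabla u_k|^2\,dX
 \;\le\;\int_{L(b)}^{\infty}g'(s)^2\,\mathcal{H}^n\bigl(\{\Phi_k=\varepsilon_k s\}\cap N_\sigma\bigr)\,ds,
\]
and symmetrically for the branch $u_k<-1+b$.

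The main obstacle is the final uniform bound on the level-set areas $\mathcal{H}^n(\{\Phi_k=\varepsilon_k s\}\cap N_\sigma)$ for $s$ large. I would control these by combining (i) the monotonicity formula (Proposition \ref{monotonicity formula}) at points of the level set, which gives $\mu_k(\mathcal{B}_r(X))\le Cr^n$ uniformly, with (ii) the fact that for $\varepsilon_k$ small these level sets lie inside $N_\sigma$ and are, by H--T, nearly parallel translates of $\mathrm{spt}\|V\|$, whose $\mathcal{H}^n$-measure is bounded by $M$. This yields a uniform bound $\mathcal{H}^n(\{\Phi_k=\varepsilon_k s\}\cap N_\sigma)\le C(M,\sigma)$ for $s\ge L(b)$, whence the right-hand side is majorized by $C(M,\sigma)\int_{L(b)}^{\infty}g'(s)^2\,ds$, which tends to $0$ as $b\to 0$ because of the exponential decay of $g'$ at infinity. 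Taking first $\sigma$ and then $b$ small contradicts the lower bound $>\delta$ and closes the argument.
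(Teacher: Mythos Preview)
Your argument has a real gap at the final step. The uniform bound $\mathcal{H}^n(\{\Phi_k=\varepsilon_k s\}\cap N_\sigma)\le C(M,\sigma)$ for all $k$ and all $s\ge L(b_k)$ is neither obvious nor established by what you wrote. The monotonicity formula controls $\mu_k(\mathcal{B}_r(X))$, not areas of individual level sets; and the claim that these level sets are ``nearly parallel translates of $\mathrm{spt}\|V\|$'' is available from Hutchinson--Tonegawa only for \emph{fixed} levels $t\in(-1,1)$, not uniformly as $t\to\pm1$. What you actually need is a uniform-integrability statement for the family $s\mapsto g'(s)^2\,\mathcal{H}^n(\{\Phi_k=\varepsilon_k s\}\cap\mathcal{B}_1)$: your own first co-area inequality already shows that its total integral over $\mathbb{R}$ is bounded by $2M$, but an $L^1$ bound alone does not force the tails to vanish uniformly in $k$. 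Without this, the contradiction does not close. (There is also a minor slip: in the $\varepsilon_k\to0$ branch, interior estimates do not give $|\nabla u_k|\to0$ on $\overline{\mathcal{B}_1}\setminus N_\sigma$; what is true and sufficient is that $\varepsilon_k|\nabla u_k|^2\to0$ there, by the exponential decay of Lemma~\ref{lem B1}.)

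The paper avoids this issue entirely and gives two much shorter routes. The one-line proof combines the Modica inequality $\varepsilon|\nabla u_\varepsilon|^2\le 2\varepsilon^{-1}W(u_\varepsilon)$ with \cite[Proposition~5.1]{H-T}, which already controls $\int_{\{|u_\varepsilon|>1-b\}\cap\mathcal{B}_1}\varepsilon^{-1}W(u_\varepsilon)$. The self-contained alternative (Lemma~\ref{lem excess concentrated}) is a direct elliptic argument with no compactness and no level-set areas: each $\xi_i:=\varepsilon(\partial_{x_i}u_\varepsilon)^2$ satisfies $\Delta\xi_i\ge(c/\varepsilon^2)\xi_i$ on $\{|u_\varepsilon|>\gamma\}$; since every point of $\{|u_\varepsilon|>1-b\}$ lies at distance at least $M(b)\varepsilon$ from $\{|u_\varepsilon|<1-2b\}$ (by the $1$-Lipschitz bound on $\Phi_\varepsilon$, with $M(b)\to\infty$ as $b\to0$), the mean-value inequality together with Lemma~\ref{lem B1} gives $\xi_i(X)\le Ce^{-cM(b)}(M(b)\varepsilon)^{-n-1}\int_{\mathcal{B}_{M(b)\varepsilon}(X)}\xi_i$. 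Integrating over $\{|u_\varepsilon|>1-b\}\cap\mathcal{B}_1$ and applying Fubini yields $\int_{\{|u_\varepsilon|>1-b\}\cap\mathcal{B}_1}\xi_i\le Ce^{-cM(b)}\int_{\mathcal{B}_2}\xi_i$, hence the lemma. Your distance-function idea is the right tool, but it is used here to separate scales (the $M(b)\varepsilon$ ball), not to slice by co-area.
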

This is essentially \cite[Proposition 5.1]{H-T}. We just need to note that by the Modica inequality, we can bound $\varepsilon|\nabla u_\varepsilon|^2$ by $\varepsilon^{-1}W(u_\varepsilon)$.

Here we give a different proof. More precisely, we prove
\begin{lem}\label{lem excess concentrated}
Let $u_\varepsilon$ be as in the previous lemma. For any $1\leq
i\leq n+1$ and $\delta>0$, there exists a constant $b\in(0,1)$ such
that,
\[\int_{\{|u_\varepsilon|>1-b\}\cap\mathcal{B}_1}\varepsilon\left(\frac{\partial u_\varepsilon}{\partial x_i}\right)^2\leq\delta
\int_{\{|u_\varepsilon|<1-b\}\cap\mathcal{B}_2}\varepsilon\left(\frac{\partial
u_\varepsilon}{\partial x_i}\right)^2.\]
\end{lem}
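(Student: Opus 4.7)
I will exploit the linearized Allen--Cahn equation for $v:=\partial_{x_i}u_\varepsilon$, which is obtained by differentiating \eqref{equation}:
\[\Delta v=\varepsilon^{-2}W''(u_\varepsilon)\,v.\]
Since $W''\ge\kappa>0$ on $\{|\cdot|\ge\gamma\}$, the zeroth-order coefficient is uniformly coercive on the ``deep set'' $\{|u_\varepsilon|\ge 1-b\}$ as soon as $b<1-\gamma$. A single Caccioppoli-type test gives a comparison with a universal constant $C_1$; a nested iteration over a geometric sequence of thresholds then converts $C_1$ into the desired $\delta$.

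For the single-level estimate, fix $b\in(0,(1-\gamma)/2)$ and let $\chi=\chi_0(|u_\varepsilon|)$ be a smooth cutoff with $\chi_0\equiv 0$ on $[0,1-2b]$, $\chi_0\equiv 1$ on $[1-b,1]$, $|\chi_0'|\le C/b$; let $\eta\in C_0^\infty(\mathcal B_2)$ with $\eta\equiv 1$ on $\mathcal B_1$. Testing the linearized equation against $v\chi^2\eta^2$ and integrating by parts gives
\[\int\varepsilon^{-2}W''(u_\varepsilon)\,v^2\chi^2\eta^2+\int|\nabla v|^2\chi^2\eta^2=-\int v\,\nabla v\cdot\nabla\bigl(\chi^2\eta^2\bigr),\]
and Young's inequality absorbs the $|\nabla v|^2$ term, yielding
\[\kappa\varepsilon^{-2}\!\int v^2\chi^2\eta^2\le C\!\int v^2\eta^2|\nabla\chi|^2+C\!\int v^2\chi^2|\nabla\eta|^2.\]
The critical quantitative input is $|\nabla\chi|^2\le C\varepsilon^{-2}\mathbf 1_{I_b}$ on the band $I_b:=\{1-2b<|u_\varepsilon|<1-b\}$, which follows from $|\nabla\chi|\le(C/b)|\nabla u_\varepsilon|\mathbf 1_{I_b}$ combined with the Modica inequality and the quadratic vanishing of $W$ at $\pm 1$: these force $|\nabla u_\varepsilon|^2\le 2\varepsilon^{-2}W(u_\varepsilon)\le Cb^2\varepsilon^{-2}$ on $I_b$, so the $b$-factors cancel. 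Multiplying by $\varepsilon^3$ produces the single-level estimate
\[S(b):=\int_{\{|u_\varepsilon|>1-b\}\cap\mathcal B_1}\!\!\varepsilon v^2\le C_1\!\int_{I_b\cap\mathcal B_2}\!\!\varepsilon v^2+C\varepsilon^2 E,\qquad E:=\!\int_{\mathcal B_2}\!\varepsilon v^2,\]
with $C_1$ universal.

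To promote $C_1$ into $\delta$, I iterate along the geometric sequence $b_k:=2^{-k}b_0$ for $k=1,\dots,N$. The bands $I_{b_k}=\{1-b_{k-1}<|u_\varepsilon|<1-b_k\}$ are pairwise disjoint subsets of $\{|u_\varepsilon|<1-b_N\}\cap\mathcal B_2$, and $k\mapsto S(b_k)$ is nonincreasing since $\{|u_\varepsilon|>1-b_k\}$ shrinks in $k$. Summing the $N$ single-level inequalities and using $N\,S(b_N)\le\sum_{k=1}^{N}S(b_k)$ yields
\[S(b_N)\le\frac{C_1}{N}\!\int_{\{|u_\varepsilon|<1-b_N\}\cap\mathcal B_2}\!\!\varepsilon v^2+C\varepsilon^2 E.\]
Choosing $N\ge 2C_1/\delta$ and $b:=b_N$ bounds the first term on the right by $(\delta/2)\cdot(\text{RHS of the claim})$. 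The step I expect to be most delicate is the absorption of the residual additive error $C\varepsilon^2 E$ into the RHS, which fails in the degenerate situation where $u_\varepsilon$ sits almost entirely in the deep region (e.g. a shifted 1D profile with $|u_\varepsilon|$ extremely close to $1$ throughout $\mathcal B_2$), since then both sides of the desired inequality are super-exponentially small in $1/\varepsilon$ and the $\varepsilon^2 E$ term becomes comparable to them. This case is dispatched by applying Lemma \ref{lem B1} to $w:=\varepsilon^2 v^2$, which satisfies $\Delta w\ge 2\kappa\varepsilon^{-2}w$ on $\{|u_\varepsilon|\ge\gamma\}$ and hence decays exponentially in the interior of the deep region, dominating the $\varepsilon^2 E$ contribution and closing the estimate.
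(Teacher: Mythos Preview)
Your route is genuinely different from the paper's. The paper argues pointwise: it sets $\xi:=\varepsilon(\partial_{x_i}u_\varepsilon)^2$, uses the $1$-Lipschitz bound on the distance-type function $\Phi_\varepsilon=g_\varepsilon^{-1}\circ u_\varepsilon$ to place, for each $X\in\{|u_\varepsilon|>1-b\}$, a ball $\mathcal B_{M\varepsilon}(X)$ inside the coercive zone with $M=M(b)\to\infty$ as $b\to0$, combines the subharmonic mean-value inequality with Lemma~\ref{lem B1} to get $\xi(X)\le Ce^{-cM}(M\varepsilon)^{-n-1}\int_{\mathcal B_{M\varepsilon}(X)}\xi$, and then integrates and applies Fubini. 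Your Caccioppoli test with the level-set cutoff $\chi$ followed by a telescoping sum over dyadic thresholds $b_k=2^{-k}b_0$ is a legitimate alternative that avoids the distance function entirely. The single-level estimate is correct, and the key point---that Modica together with the quadratic vanishing of $W$ at $\pm1$ kills the $b$-dependence in $|\nabla\chi|^2\le C\varepsilon^{-2}\mathbf 1_{I_b}$---is exactly the right mechanism. Steps~5--6 are also sound: the disjointness of the bands and the monotonicity of $k\mapsto S(b_k)$ do give $S(b_N)\le(C_1/N)\int_{\{|u_\varepsilon|<1-b_N\}\cap\mathcal B_2}\varepsilon v^2+NC\varepsilon^2E$.

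The gap is Step~7. Your proposed fix via Lemma~\ref{lem B1} does not close the argument. In the degenerate case you yourself flag---e.g.\ a shifted one-dimensional profile $u_\varepsilon(X)=g((x_{n+1}-T)/\varepsilon)$ with $T$ large, so that $|u_\varepsilon|>1-b$ throughout $\mathcal B_2$---the right-hand side of the lemma is \emph{identically zero} while the left-hand side is strictly positive; the inequality therefore fails as stated and no decay estimate can rescue it. (The paper's own proof glosses over the same endgame.) Even away from this pathology you have not explained how the additive error $NC\varepsilon^2E$ is dominated by $\tfrac\delta2\int_{\{|u_\varepsilon|<1-b\}\cap\mathcal B_2}\varepsilon v^2$: the total energy $E$ contains the deep-region contribution over $\mathcal B_2$ rather than $\mathcal B_1$, and without an additional hypothesis such as $\{|u_\varepsilon|<1-b\}\cap\mathcal B_2\neq\emptyset$ (which is in force in every application in the paper) there is no mechanism to compare them. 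Keeping the support information $\chi^2|\nabla\eta|^2\le C\,\mathbf 1_{\{|u_\varepsilon|>1-2b\}\cap(\mathcal B_2\setminus\mathcal B_1)}$ and feeding the error back through the pointwise exponential decay would work, but at that point you are essentially reconstructing the paper's argument to handle the residual term.
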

\begin{proof}
For simplicity, denote $\xi:=\varepsilon\left(\frac{\partial
u_\varepsilon}{\partial x_i}\right)^2$, which satisfies
\begin{equation}\label{B.0.1}
\Delta\xi\geq\frac{c}{\varepsilon^2}\xi,\quad \mbox{in}\
\{|u_\varepsilon|>\gamma\}.
\end{equation}

By the gradient bound on the distance type function
$\Phi_\varepsilon$, we know for any $M>0$, there exists $0<b<1$ such
that,
\[|X_1-X_2|\geq M\varepsilon,\quad \forall X_1\in\{|u_\varepsilon|<1-2b\},
X_2\in\{|u_\varepsilon|>1-b\}.\] In other words, for any
$X\in\{|u_\varepsilon|>1-b\}$,
$B_{M\varepsilon}(X)\subset\{|u_\varepsilon|>1-2b\}$.

By \eqref{B.0.1}, if $1-2b>\gamma$, for any
$X\in\{|u_\varepsilon|>1-b\}$,
\begin{enumerate}
\item because $\xi$ is subharmonic,
\[\sup_{\mathcal{B}_{M\varepsilon/2}(X)}\xi\leq CM^{-n-1}\varepsilon^{-n-1}\int_{\mathcal{B}_{M\varepsilon}(X)}\xi(Y)dY;\]
\item by Lemma \ref{lem B1},
\[\xi(X)\leq Ce^{-cM}\sup_{\mathcal{B}_{M\varepsilon/2}(X)}\xi.\]
\end{enumerate}
Thus
\[\xi(X)\leq Ce^{-cM}M^{-n-1}\varepsilon^{-n-1}\int_{\mathcal{B}_{M\varepsilon}(X)}\xi(Y)dY.\]
Integrating this on $\{|u_\varepsilon|>1-b\}$ and then using Fubini
theorem, we obtain
\begin{eqnarray*}
\int_{\{|u_\varepsilon|>1-b\}\cap\mathcal{B}_1}\xi(X)dX&\leq&
Ce^{-cM}M^{-n-1}\varepsilon^{-n-1}\int_{\{|u_\varepsilon|>1-b\}\cap\mathcal{B}_1}\int_{B_{M\varepsilon}(0)}\xi(X+Y)dYdX\\
&=&Ce^{-cM}M^{-n-1}\varepsilon^{-n-1}\int_{B_{M\varepsilon}(0)}\int_{\{|u_\varepsilon|>1-b\}\cap\mathcal{B}_1}\xi(X+Y)dXdY\\
&\leq&Ce^{-cM}M^{-n-1}\varepsilon^{-n-1}\int_{B_{M\varepsilon}(0)}\int_{\{|u_\varepsilon|>1-2b\}\cap\mathcal{B}_2}\xi(X)dXdY\\
&\leq&Ce^{-cM}\int_{\{|u_\varepsilon|>1-2b\}\cap\mathcal{B}_1}\xi(X)dX.
\end{eqnarray*}
Hence by choosing $b$ small enough, which implies that $M$ is
sufficiently large, we get the claimed estimate.
\end{proof}

Finally, we give a lower bound of the energy in balls.
\begin{lem}\label{lem lower bound for energy}
For any $b\in(0,1)$ and $M>0$, there exist two constants $c(M,b)$
and $R(M,b)$ so that the following holds. Assume $u$ to be a
solution of \eqref{equation 0} in $\mathcal{B}_R$ where $R\geq
2R(M,b)$, satisfying $|u(0)|\leq1-b$, the Modica inequality and the
energy bound
\[\int_{\mathcal{B}_R}\frac{1}{2}|\nabla u|^2+W(u)\leq MR^n.\]
Then for any $r\in[1,R/2]$,
\begin{equation}\label{B5}
\int_{\mathcal{B}_r}|\nabla u|^2\geq c(M,b)r^n.
\end{equation}
\end{lem}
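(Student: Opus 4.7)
The plan is to argue by contradiction and compactness. Assuming the conclusion fails, one can produce sequences (for fixed $b$ and $M$): $R_k\to\infty$, solutions $u_k$ on $\mathcal{B}_{R_k}$ satisfying all the hypotheses with $R = R_k$, and radii $r_k \in [1, R_k/2]$ such that
\[
r_k^{-n}\int_{\mathcal{B}_{r_k}}|\nabla u_k|^2 \longrightarrow 0.
\]
Up to a subsequence, either $r_k$ stays bounded (Case 1) or $r_k \to \infty$ (Case 2). Each case is treated separately.

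In Case 1, say $r_k \to r_\infty \in [1,\infty)$, standard interior elliptic regularity for the fixed equation \eqref{equation 0} with $|u_k|\leq 1$, together with Arzel\`a--Ascoli, yields along a subsequence a $C^2_{\mathrm{loc}}$ limit $u_\infty:\R^{n+1}\to\R$ that still solves \eqref{equation 0}, still satisfies the Modica inequality, has $|u_\infty(0)| \leq 1-b$, and inherits the energy bound
\[
\int_{\mathcal{B}_R}\tfrac12|\nabla u_\infty|^2 + W(u_\infty) \leq M R^n \qquad \forall\, R>0.
\]
Passing the contradiction hypothesis to the limit gives $\int_{\mathcal{B}_{r_\infty}}|\nabla u_\infty|^2 = 0$, so $u_\infty\equiv c$ on $\mathcal{B}_{r_\infty}$. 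The strong unique continuation principle for the semilinear elliptic equation $\Delta v = W'(v)$ then forces $u_\infty\equiv c$ on all of $\R^{n+1}$, with $W'(c)=0$ and $|c|\leq 1-b$. Evaluating the energy bound on the constant $c$ gives $W(c)\omega_{n+1}R^{n+1} \leq MR^n$ for every $R$, so $W(c)=0$, and the double-well assumption $W>0$ on $(-1,1)$ forces $|c|=1$. Contradiction.

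In Case 2 I rescale to the singularly perturbed form: set $v_k(X):=u_k(r_k X)$ on $\mathcal{B}_{R_k/r_k}\supset\mathcal{B}_2$ and $\varepsilon_k:=1/r_k\to 0$. Then $v_k$ solves \eqref{equation} with $\varepsilon = \varepsilon_k$, inherits the (scale-invariant) Modica inequality, and $|v_k(0)|\leq 1-b$. A direct change of variables, combined with the monotonicity formula (Proposition \ref{monotonicity formula}) applied to $u_k$, converts the energy bound into
\[
\int_{\mathcal{B}_2}\frac{\varepsilon_k}{2}|\nabla v_k|^2 + \frac{1}{\varepsilon_k}W(v_k)\,dX \leq 2^n M,
\]
while the contradiction hypothesis rescales to $\int_{\mathcal{B}_1}\tfrac{\varepsilon_k}{2}|\nabla v_k|^2 \to 0$. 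Hutchinson--Tonegawa then applies to $(v_k)$ on $\mathcal{B}_2$: the discrepancy convergence \eqref{discrepancy} upgrades the previous limit to $\int_{\mathcal{B}_1}\tfrac{1}{\varepsilon_k}W(v_k)\to 0$, and the measures $\varepsilon_k|\nabla v_k|^2\,dX$ converge weakly to $\|V\|$ for a stationary rectifiable varifold $V$. Consequently $\|V\|(\mathcal{B}_1)=0$, hence $\mbox{spt}\|V\|\cap \mathcal{B}_1=\emptyset$. Since $v_k\to\pm 1$ locally uniformly off $\mbox{spt}\|V\|$ (combine H--T with the exponential decay of Proposition \ref{prop exponential decay}, applied to $v_k$ in a small neighborhood of the origin), one gets $v_k(0)\to\pm 1$, contradicting $|v_k(0)|\leq 1-b$.

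The main obstacle I expect is the careful verification in Case 2 that Hutchinson--Tonegawa applies uniformly in $k$ on $\mathcal{B}_2$ (routine given the uniform $\varepsilon_k$-energy bound and the scale-invariant Modica inequality) and, more subtly, that the locally uniform convergence $v_k\to\pm 1$ off $\mbox{spt}\|V\|$ extends all the way to the origin, so that the pointwise constraint $|v_k(0)|\leq 1-b$ can be violated in the limit. Everything else is routine compactness plus the unique continuation step in Case 1.
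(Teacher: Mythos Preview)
Your proof is correct and follows essentially the same approach as the paper: both split into a large-radius regime handled by rescaling to the singularly perturbed equation and invoking Hutchinson--Tonegawa (your Case 2 is the paper's main step verbatim), and a bounded-radius regime where the limit solution would have to be constant. Your Case 1 argument via unique continuation is in fact cleaner than the paper's own treatment of small $r$, which simply asserts that $\int_{\mathcal{B}_1}|\nabla u|^2 \geq c(M)$ ``can be directly verified'' once one knows $u$ is not identically $u(0)$.
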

By the monotonicity formula, it is easy to get a constant $c(b)$ so
that,
\begin{equation}\label{energy lower bound}
\int_{\mathcal{B}_r}\frac{1}{2}|\nabla u|^2+W(u)\geq
c(b)r^n,\quad\forall 1<r<R.
\end{equation}
 However, this is weaker than our
statement.
\begin{proof}
We first prove that, under the assumptions of this lemma (with a
different constant $R_1(M,b)$),
\begin{equation}\label{B3}
\int_{\mathcal{B}_{R/2}}|\nabla u|^2\geq c(M,b)R^n,
\end{equation}
if $R\geq R_1(M,b)$.

Assume this is not true, that is, the claimed $R_1(M,b)$ does not
exist. Then there exists an $M>0$ and a sequence of $u_i$, which are
solutions of \eqref{equation} in $\mathcal{B}_{R_i}$ where
$R_i\to+\infty$, satisfying $|u_i(0)|\leq1-b$, the Modica inequality
and the energy bound
\[\int_{\mathcal{B}_{R_i}}\frac{1}{2}|\nabla u_i|^2+W(u_i)\leq MR_i^n,\]
but
\begin{equation}\label{B4}
R_i^{-n}\int_{\mathcal{B}_{R_i/2}}|\nabla u|^2\to 0.
\end{equation}

Let $\varepsilon_i=R_i^{-1}$ and $u_{\varepsilon_i}(X):=u_i(R_iX)$. Then
\[\int_{\mathcal{B}_1}\frac{\varepsilon_i}{2}|\nabla u_{\varepsilon_i}|^2+\frac{1}{\varepsilon_i}W(u_{\varepsilon_i})\leq M,\]
\begin{equation}\label{B6}
\int_{\mathcal{B}_{1/2}}\varepsilon_i|\nabla u_{\varepsilon_i}|^2\to 0.
\end{equation}
By the main result in \cite{H-T}, $\varepsilon_i|\nabla
u_i|^2dX\rightharpoonup\mu$ as measures, where $\mu$ is a positive
Radon measure (in fact, the weight measure associated to the limit
varifold, as in Section 4). Moreover, $u_{\varepsilon_i}\to\pm 1$
locally uniformly outside $\mbox{spt}\mu$. However, \eqref{B6}
obviously implies that $\mu(\mathcal{B}_{1/2})=0$. Thus for all $i$
large, $|u_{\varepsilon_i}|>1-b$ in $\mathcal{B}_{1/2}$. This
contradicts our assumption that $|u_{\varepsilon_i}(0)|\leq1-b$ and
proves \eqref{B3}.

By the monotonicity formula (recall that we have assumed the validation of the  Modica inequality), for any $r\in(1,R)$
\begin{equation}\label{B7}
\int_{\mathcal{B}_r}\frac{1}{2}|\nabla u|^2+W(u)\leq Mr^n.
\end{equation}
Thus the above discussion covers the case $r\in[R_1(M,b), R]$ in
\eqref{B5}, that is, \eqref{B3} holds for every $r\in[R_1(M,b), R]$.

For the remaining case, we only need to note that it is impossible to have $u\equiv u(0)$, because otherwise
\[\int_{\mathcal{B}_r}\frac{1}{2}|\nabla u|^2+W(u)=\omega_{n+1}W(u(0))r^{n+1}\geq Mr^n,\]
provided $r\geq
\frac{M}{\omega_{n+1}\left(\inf_{s\in[-1+b,1-b]}W(s)\right)}$. Then
it can be directly verified that
\[\int_{\mathcal{B}_1}|\nabla u|^2\geq c(M),\]
by using \eqref{B7} with
$r=\frac{M}{\omega_{n+1}\left(\inf_{s\in[-1+b,1-b]}W(s)\right)}$.

Choosing $R(M,b)=\max\{R_1(M,b),
\frac{M}{\omega_{n+1}\left(\inf_{s\in[-1+b,1-b]}W(s)\right)}\}$ we
 finish the proof.
\end{proof}

\end{appendices}

\bigskip

\noindent {\bf Acknowledgments.} The author is grateful to referees
for their careful reading and useful suggestions. This work was
supported by NSFC No. 11301522.

\addcontentsline{toc}{section}{References}

\end{document}